\documentclass[11pt]{amsart}
\usepackage{fullpage}
\usepackage{amscd,amsmath,amsthm,amssymb}
\usepackage{comment}
\usepackage[all,cmtip]{xy}
\usepackage{color}

\usepackage[lined,algonl,boxed,norelsize]{algorithm2e}
\let\chapter\undefined

\usepackage[
backref,
]{hyperref}
\usepackage[
alphabetic,
backrefs,
msc-links,
nobysame,
lite,
]{amsrefs} 

\usepackage{tikz, float} \usetikzlibrary {positioning}
%
%
%
             
\def\NN{{\mathbb N}}
\def\QQ{{\mathbb Q}}
\def\ZZ{{\mathbb Z}}
\def\RR{{\mathbb R}}

\def\EE{{\mathbb E}}

\def\aa{{\mathfrak a}}
\def\bff{{\mathfrak b}}

\def\II{{\mathbf I}}
\def\MM{{\mathbf M}}
\def\JJ{{\mathbf J}}
\def\OO{{\mathbf O}}
\def\UU{{\mathbf U}}
\def\Mat {{\mathbf{Mat}}}

\def\Rb{{\mathbf R}}
\def\Sb{{\mathbf S}}
\def\Tb{{\mathbf T}}

\def\ab{{\mathbf a}}
\def\bb{{\mathbf b}}
\def\ub{{\mathbf u}}
\def\vb{{\mathbf v}}
\def\wb{{\mathbf w}}
\def\xb{{\mathbf x}}
\def\yb{{\mathbf y}}
\def\zb{{\mathbf z}}
\def\pb{{\mathbf p}}
\def\mb{{\mathbf m}}
\def\eb{{\mathbf e}}
\def\sb{{\mathbf s}}

\def\B{{\mathcal B}}
\def\Oc{{\mathcal O}}

\def\A{{\mathcal A}}

\def\Lc{{\mathcal L}}
\def\M{{\mathcal M}}
\def\H{{\mathcal H}}
\def\Fc{{\mathcal F}}

\def\K{{\mathcal K}}

\def\P{{\mathcal P}}
\def\Ec{{\mathcal E}}
\def\C{{\mathcal C}}
\def\G{{\mathcal G}}
\def\Dc{{\mathcal D}}

\def\opn#1#2{\def#1{\operatorname{#2}}} 
\opn\depth{depth} 
\opn\codim{codim}
\opn\ini{in} 
\opn\LM{LM}
\opn\LC{LC}
\opn\NF{NF}
\opn\Merge{Merge}
\opn\sgn{sgn}
\opn\div{div} 
\opn\Div{Div} 
\opn\Pic{Pic}
\opn\Prin{Prin}
\opn\Del{Del}
\opn\op{op}
\opn\indeg{indeg} 
\opn\outdeg{outdeg}
\opn\red{red}
\opn\Spec{Spec} 
\opn\Supp{Supp} 
\opn\supp{supp} 
\opn\Ker{Ker} 
\opn\Coker{Coker} 
\opn\Hom{Hom}
\opn\Tor{Tor} 
\opn\id{id}
\opn\span{span}
\opn\Image{Image}
\opn\con{conv} 
\opn\relint{rel.int} 
\opn\vol{vol}
\opn\syz{{\rm syz}}
\opn\spoly{{\rm spoly}}
\opn\LM{{\rm LM}}
\opn\lm{{\rm lm}}
\opn\lcm{{\rm lcm}} 
\opn\A{\mathcal A}
\opn\dist{dist}
\opn\pd{pd}
\opn\en{en}
\def\Implies{\ifmmode\Longrightarrow \else
        \unskip${}\Longrightarrow{}$\ignorespaces\fi}
\def\implies{\ifmmode\Rightarrow \else
        \unskip${}\Rightarrow{}$\ignorespaces\fi}
\def\iff{\ifmmode\Longleftrightarrow \else
        \unskip${}\Longleftrightarrow{}$\ignorespaces\fi}
\newtheorem{Theorem}{Theorem}[section]
\newtheorem{Lemma}[Theorem]{Lemma}
\newtheorem{Corollary}[Theorem]{Corollary}
\newtheorem{Proposition}[Theorem]{Proposition}

\theoremstyle{remark}
\newtheorem{Remark}[Theorem]{Remark}

\theoremstyle{definition}
\newtheorem{Example}[Theorem]{Example}

\newtheorem{Definition}[Theorem]{Definition}

\def\qed{\ifhmode\textqed\fi
      \ifmmode\ifinner\quad\qedsymbol\else\dispqed\fi\fi}
\def\textqed{\unskip\nobreak\penalty50
       \hskip2em\hbox{}\nobreak\hfil\qedsymbol
       \parfillskip=0pt \finalhyphendemerits=0}
\def\dispqed{\rlap{\qquad\qedsymbol}}


\tikzstyle{Cwhite}=[scale = .8,circle, fill = white, minimum size=3mm] 
\tikzstyle{Cgray}=[scale = .4,circle, fill = gray, minimum size=3mm] 
\tikzstyle{Cblack2}=[scale = .4,circle, fill = black, minimum size=5mm] 
\tikzstyle{Cblack}=[scale = .7,circle, fill = black, minimum size=3mm]
\tikzstyle{C0}=[scale = .9,circle, fill = black!0, inner sep = 0pt, minimum size=3mm]
\tikzstyle{C1}=[scale = .7,circle, fill = black!0, inner sep = 0pt, minimum size=3mm]
\tikzstyle{Cred}=[scale = .4,circle, fill = red, minimum size=3mm] 


\begin{document}


\title[Divisors on graphs, binomial and monomial ideals]{Divisors on graphs, binomial and monomial ideals, and cellular resolutions}

\author {Fatemeh Mohammadi}
\address{Institut f\"ur Mathematik\\ Technische Universit\"at Berlin\\ 10623 Berlin, Germany}
\email{mohammad@math.tu-berlin.de}

\author {Farbod Shokrieh}
\address{Cornell University\\
Ithaca, New York 14853-4201\\
USA}
\email{farbod@math.cornell.edu}

\thanks{Fatemeh Mohammadi was supported by  the
Alexander von Humboldt Foundation. Farbod Shokrieh was partially supported by NSF grant DMS-1201473.}


\date{\today}

\begin{abstract}
We study various binomial and monomial ideals arising in the theory of divisors, orientations, and matroids on graphs. 
We use ideas from potential theory on graphs and from the theory of Delaunay decompositions for lattices to describe their minimal polyhedral cellular free resolutions. We show that the resolutions of all these ideals are closely related and that their $\ZZ$-graded Betti tables coincide. 
As corollaries, we give conceptual proofs of conjectures and questions posed by Postnikov and Shapiro, by Manjunath and Sturmfels, and by Perkinson, Perlman, and Wilmes. Various other results related to the theory of chip-firing games on graphs also follow from our general techniques and results.

\end{abstract}

\maketitle



\section{Introduction}

This work is concerned with the development of new connections between the theory of divisors on graphs, potential theory, the theory of lattices, Delaunay decompositions, and commutative algebra.

\subsection{Divisors on graphs}

Let $G$ be a finite graph. Let $\Div(G)$ be the free abelian group generated by $V(G)$. An element of $\Div(G)$ is a formal sum of vertices with integer coefficients and is called a {\em divisor} on $G$. 

We denote by $\M(G)$ the group of integer-valued functions on the vertices. The {\em Laplacian operator} $\Delta : \M(G) \to \Div(G)$ is defined by

\[\Delta(f) = \sum_{v \in V(G)} \sum_{\{v,w\} \in E(G)} (f(v) - f(w)) (v) \  .\]

The group of {\em principal divisors} is defined as the image of the Laplacian operator and is denoted by $\Prin(G)$. Two divisors $D_1$ and $D_2$ are called {\em linearly equivalent} if their difference is a principal divisor. This gives an equivalence relation on the set of divisors. The set of equivalence classes forms a finitely generated abelian group which is called the {\em Picard group} of $G$. If $G$ is connected, then the finite (torsion) part of the Picard group has cardinality equal to the number of spanning trees of $G$. This group has appeared in the literature under many different names; in theoretical physics and in probability it was first introduced as the ``abelian sandpile group'' or ``abelian avalanche group'' in the context of self-organized critical phenomena \cite{Bak, Dhar1, Gabrielov}. In arithmetic geometry, it appears implicitly in the study of component groups of N\'eron models of Jacobians of algebraic curves \cite{Raynaud, Lorenzini1}. In algebraic graph theory this group appeared under the name ``Jacobian group'' or ``Picard group'' in the study of flows and cuts in graphs \cite{Bacher}. The study of a certain chip-firing game on graphs led to the definition of this group under the name ``critical group'' \cite{Biggs97, Biggs99}. We recommend the recent survey article \cite{Levine1} for a short but more detailed overview of the subject.

\medskip

The theory of divisors on graphs closely mirrors the theory of divisors on algebraic curves. In fact, Baker and Norine in \cite{BN1} prove a version of Riemann-Roch theorem in this setting via a combinatorial argument. It was immediately realized (in \cite{Gathmann, MZ}) that this divisor theory has a natural extension to {\em metric graphs} (or {\em abstract tropical curves}). This theory, however, has resisted a more conceptual and cohomological interpretation.

\medskip

Associated to $G$ there is a canonical ideal which encodes the equivalences of divisors on $G$. This ideal is already implicitly defined in Dhar's seminal paper \cite{Dhar1}, but it was first introduced in \cite{CoriRossinSalvy02}. Let $K$ be a field and let $\Rb=K[\xb]$ be the polynomial ring in variables $\{x_v: v \in V(G)\}$. The canonical binomial ideal is defined as
$\II_G:= \langle \xb^{D_1} - \xb^{D_2} : \, D_1 \sim D_2 \text{ both nonnegative divisors}\rangle$. 
A related monomial ideal, which we denote by $\MM_G^q$, is a certain initial ideal of $\II_G$ which is defined after fixing a vertex $q \in V(G)$ (see \S\ref{sec:divcomm}). This ideal, for the case of complete graphs, was extensively studied in \cite{PostnikovShapiro04}. In \cite{MadhuBernd}, Riemann-Roch theory for graphs is linked to Alexander duality (see \S\ref{sec:alex}) for the ideal $\MM_G^q$.

\subsection{Minimal polyhedral cellular free resolutions}
\label{sec:res}

There is a standard way to write down a complex of graded modules from a cell complex $\C$ (\cite{BayerSturmfels}). Namely, one can {\em label} $0$-dimensional cells of $\C$ by monomials, and then extend the labeling to arbitrary faces by labeling each face $F$ with the {\em least common multiple} of the monomial labels on the vertices in $F$. The resulting {\em labeled cell complex} leads to a complex of free graded $R$-modules
\[
\Fc_{\C} = \bigoplus_{\emptyset \ne F \in \C}{R(-\mb_F)}
\]
where $\mb_F$ denotes the monomial label of the face $F$. The differential of $\Fc=\Fc_{\C}$ is the homogenized differential of the cell complex $\C$; if $[F]$ denotes the generator of ${R(-\mb_F)}$ we have
\[
\partial([F]) = \sum_{{\rm codim(F,F')=1} \atop F' \subset F}{\varepsilon (F,F') \frac{\mb_F}{\mb_{F'}} \ [F']} 
\]
where $\varepsilon (F,F') \in \{-1, +1\}$ denotes the incidence function indicating the orientation of $F'$ in the boundary of $F$ (see \cite[IX.5]{Massey} or \cite[Section 6.2]{Bruns}). Note that the length of $(\Fc, \partial)$ is the dimension of $\C$. 

\medskip

This construction is so general that the resulting complex is expected not to be exact. It is shown in \cite[Proposition~1.2]{BayerSturmfels} that the complex $(\Fc, \partial)$ is exact if and only if every subcomplex $\C_{\leq \mb}$ (i.e. the subcomplex of $\C$ consisting of all cells whose labels divide the monomial $\mb$) is acyclic over $K$ (i.e. its homology with $K$ coefficients is only in degree $0$). In the rare case that we do get an exact sequence, the pair $(\Fc, \partial)$ is called a {\em cellular free resolution}. If the cell complex is polyhedral, $(\Fc, \partial)$ is called a {\em polyhedral cellular free resolution}. If moreover all ${\mb_F}/{\mb_{F'}}$ appearing in the differential maps are non-units in $R$, then we have a {\em minimal polyhedral cellular free resolution}.

\subsection{Outline and our results}

Our first goal is to give a minimal polyhedral cellular free resolution for the ideal $\II_G$. Quite surprisingly, many ideas from potential theory on graphs, from lattices and Delaunay decomposition, and from (a generalized version of) the notion of total unimodularity (developed in \S\ref{sec:divisors} and \S\ref{sec:latticesec}) fit together nicely to give a direct and self-contained solution to this problem. This is worked out in \S\ref{sec:IGresol}. Note that as a result we obtain a whole family (as $G$ varies) of ideals with minimal polyhedral cellular free resolution. For complete graphs this is the Scarf complex and for trees this is the Koszul complex. 

\medskip

We then step back and define two more ideals; the {\em graphic Lawrence ideal} $\JJ_G$ and one of its initial ideals $\OO_G^q$ (defined after fixing a vertex), which we call the {\em graphic oriented matroid ideal}. These are special classes of more general ideals studied in \cite{Popescu} and \cite{novik2002syzygies}. They are intimately related to {\em graphic hyperplane arrangements} and to {\em Delaunay decomposition of cut lattices} reviewed in \S\ref{sec:arrg}. In \S\ref{sec:matroidlawrence} we take a close look at these ideals, review some general known results, and prove some new results for our special situation.

Roughly speaking, the ideals $\JJ_G$ and $\OO_G^q$ can be thought of as ``orientation'' variants of the ``divisor'' ideals $\II_G$ and $\MM_G^q$. A powerful technique in the theory of divisors on graphs and chip-firing games is to relate divisors to orientations. Given an orientation, one can form a divisor by reading off the associated indegrees or outdegrees (see, e.g., \cite[Theorem~2.3]{Lovasz91}, \cite[Theorem~3.3]{BN1}, \cite{HopPerk}, \cite{FarbodFatemeh}, \cite{ABKS}, and \cite{Fatemehreliability}). Our next main result shows that, algebraically, there is a good justification for the strength of this method. We show that the relation between the ideals $\JJ_G$ and $\II_G$ (and similarly $\OO_G^q$ and $\MM_G^q$) can be understood via {\em regular sequences}.  This is the content of \S\ref{sec:main}. 

\medskip

  \[
 \xymatrixcolsep{5pc} \xymatrix{
    {\JJ_G} \ar@{~>}[r]^{\text{regular sequence}} \ar@{~>}[d]_{\text{initial ideal}} & {\II_G} \ar@{~>}[d]^{\text{initial ideal}} \\
    {\OO_G^q} \ar@{~>}[r]_{\text{regular sequence}}  & {\MM_G^q} 
    }
  \]

\medskip

These regular sequences allow us to compare many algebraic properties and constructions for the ideals $\JJ_G$ and $\II_G$ (and similarly $\OO_G^q$ and $\MM_G^q$). For example, one immediate corollary is to obtain a minimal polyhedral cellular free resolution for the ideal $\II_G$ from a  minimal polyhedral cellular free resolution for the ideal $\JJ_G$. This resolution is essentially equivalent to the one obtained by our potential theoretic considerations (see Remark~\ref{rmk:ResolRelation}). We also obtain a minimal polyhedral cellular free resolution for the ideal $\MM_G^q$ from a minimal polyhedral cellular free resolution for the ideal $\OO_G^q$. It follows that all these resolutions are closely related to Delaunay decompositions of the lattice of integral coboundaries (which we call the {\em integral cut lattice}) and to the graphic hyperplane arrangement. Moreover, the $\ZZ$-graded Betti numbers of all these ideals coincide. So $\MM_G^q$ and $\OO_G^q$ are examples of ``nice'' initial ideals in the sense of \cite{conca}, meaning that one can read the Betti numbers of the original ideal from the initial ideal (see \cite{boocher,Fatemeh} for other such examples). Also, we obtain, automatically, an interpretation of the Betti numbers in terms of the number of faces of various dimensions in the graphic hyperplane arrangement, or equivalently, the number of orbits of the Delaunay cells of various dimensions in the cut or principal lattice. These interpretations also imply that Betti numbers can be read from the number of {\em acyclic partial orientations} of $G$ (see Remark~\ref{sec:nopartorient}, Example~\ref{exam:1}, and Theorem~\ref{thm:betti_coincide}). As a corollary, it follows that the Betti table of all these ideals are independent of the base field $K$. 

\medskip

For complete graphs, a minimal polyhedral cellular free resolutions for $\MM_G^q$ and $\II_G$ was given in \cite{PostnikovShapiro04} and \cite{MadhuBernd}, respectively. The case of general graphs was left open in both works. Our work generalizes these constructions to arbitrary graphs, puts their constructions into a larger context, and resolves several questions and conjectures from these papers. We should mention that minimal free resolutions and the Betti numbers for both $\MM_G^q$ and $\II_G$ were first established in \cite{FarbodFatemeh} and independently in \cite{Madhu}. The first Betti number for $I_G$ was computed in \cite{horia}. A minimal {\em cellular} resolution for $\MM_G^q$ was given in \cite{Anton}. The Betti numbers for $\MM_G^q$ was also computed in \cite{hopkins}.

\medskip

We also remark that it is possible to directly give a minimal polyhedral cellular free resolution for the ideal $\MM_G^q$ by our potential theoretic techniques in \S\ref{sec:IGresol}, but we have chosen to skip the details of this construction here as all the main ideas appear elsewhere in this writing (see Remark~\ref{rmk:Mg}). Moreover, an essentially equivalent (see Remark~\ref{rmk:cells}(ii)) solution for $\MM_G^q$ has recently (and independently) appeared in \cite{Anton}, where they leave the solution for $\II_G$ as an open problem.

\medskip

Our techniques allow us to revisit some of the foundational results on {\em chip-firing} games and related fields. For example, we remark that our potential theoretic interpretation of Gr\"obner weights relating $\II_G$ to $\MM_G^q$ gives a new proof of the result in \cite{FarbodMatt12} interpreting $q$-reduced divisors as divisors of {\em minimum total potential} (see Remark~\ref{rmk:reducedpotential}). A related problem is to describe the whole Gr\"obner cone of the initial ideal $\MM_G^q$. This was a question of Bernd Sturmfels which we completely answer in \S\ref{sec:grocone}. We show that the rays of the Gr\"obner cone associated to $\MM_G^q$ correspond, in a precise sense, to Green's functions. 

\medskip

The equality of the Betti tables of all of our ideals allows one to prove many numerical facts about one ideal by looking instead at another ideal in this family. We consider a few of such examples in \S\ref{sec:conseq}. One example is the computation of multiplicities. Another example of this observation is that we can reprove some results expressing the $h$-vectors of $\II_G$ and $\MM_G^q$ in terms of the Tutte polynomial. These results were originally proved by Merino in \cite{Merino} and by Postnikov and Shapiro in \cite{PostnikovShapiro04} using direct combinatorial methods. In our approach, we show that there is a fifth ideal $\Mat_G$, directly related to the cographic matroid of $G$, with the same Betti table. This observation gives a direct and conceptual proof of the connection with the Tutte polynomial.  
Furthermore, the Hilbert function of $\MM_G^q$ is applied in system reliability theory \cite{Fatemehreliability}, percolation on trees \cite{mohammadi2015algebraic} and signature analysis of networks \cite{mohammadi2015types}.


\section{Notation and background}
\label{sec:Background}

Throughout, we assume $\NN$ contains zero. All rings are commutative with $1$. 

\medskip

A {\em graph} means a finite, connected, unweighted multigraph with no loops. As usual, the set of vertices and edges of a graph $G$ are denoted by $V(G)$ and $E(G)$. For $A \subseteq V(G)$, we denote by $A^c$ the complement of $A$ in $V(G)$. We set $n=|V(G)|$ and $m=|E(G)|$. For a set of vertices $S$, the induced subgraph of $G$ with the vertex set $S$ is denoted by $G[S]$.

\medskip

Let $\EE(G)$ denote the set of oriented edges of $G$; for each edge in $E(G)$ there are two edges $e$ and $\bar{e}$ in $\EE(G)$. So we have $|\EE(G)|=2m$. An element $e$ of $\EE(G)$ is called an {\em oriented} edge, and $\bar{e}$ is called the {\em inverse} of $e$. We have a map 
\[
\begin{aligned}
\EE(G) &\rightarrow V(G) \times V(G) \\
e &\mapsto (e_{+}, e_{-})
\end{aligned}
\]
sending an oriented edge $e$ to its head (or its terminal vertex) $e_{+}$ and its tail (or its initial vertex) $e_{-}$. Note that $\bar{e}_{+}=e_{-}$ and $\bar{e}_{-}=e_{+}$. Given disjoint nonempty subsets $A,B$ of $V(G)$ we define 
\[
\EE(A,B) = \{e \in \EE(G): e_+ \in A ,e_- \in B\} \ .
\]

\medskip


An {\em orientation} of $G$ is a choice of subset $\Oc \subset \EE(G)$ such that $\EE(G)$ is the disjoint union of $\Oc$ and $\bar{\Oc}=\{\bar{e}: \ e \in \Oc \}$. An orientation is called {\em acyclic} if it contains no directed cycle.
A {\em partial orientation} of $G$ is a choice of subset $\P \subset \EE(G)$ that strictly contains an orientation $\Oc$ of $G$. For a partial orientation $\P$, the associated (connected) {\em partition} is the partition of $G$ into totally cyclic subgraphs with edges $\{e , \bar{e} \in \P\}$. A partial orientation is called {\em acyclic} if the induced orientation on the graph obtained by contracting all its totally cyclic components is acyclic.

\medskip

Let $\Oc$ be an orientation of $G$. A vertex $q$ is called a {\em source} for $\Oc$ if $q=e_-$ for every $e \in \Oc$ which is incident to $q$.
Let $\P$ be a partial orientation of $G$. Let $H$ be the associated connected component containing the vertex $q$. Then $q$ is called a {\em source} for $\P$ if $H$ corresponds to a source in the graph obtained by contracting all components of $\P$ (see Example~\ref{exam:1}).

\medskip

For an abelian group $A$, we let $C^0(G,A)$ denote the set of all $A$-valued functions on $V(G)$. It is endowed with the bilinear form 
\[
\langle f_1, f_2\rangle = \sum_{v\in V(G)}{f_1(v)f_2(v)} \ .
\]

Also, $C^1(G, A)$ will denote the space of all $A$-valued functions $g$ on $\EE(G)$ such that $g(\bar{e})=-g(e)$ for all $e \in \EE(G)$. After fixing an orientation $\Oc \subset \EE(G)$ we have $C^1(G, A) = C_{\Oc}^1(G, A) \oplus C_{\bar{\Oc}}^1(G, A)$, where $C_{\Oc}^1(G, A)$ denotes the space of all $A$-valued functions on $\Oc$. 
The group $C^1(G, A)$ (and therefore $C_{\Oc}^1(G, A)$) is endowed with the bilinear form 
\begin{equation}
\langle g_1, g_2 \rangle = \sum_{e \in \Oc}{g_1(e)g_2(e)} = \frac{1}{2} \sum_{e \in \EE(G)}{g_1(e)g_2(e)}
\end{equation}

The usual coboundary map $d \colon C^0(G,A) \rightarrow C^1(G,A)$ is defined by
\[
(d f) (e) = f(e_+) - f(e_-) = -(d f) (\bar{e}) \ .
\]

After fixing an orientation $\Oc \subset \EE(G)$, we obtain the restricted coboundary map $d_\Oc \colon C^0(G,A) \to C_{\Oc}^1(G,A)$.

\medskip

Let $R$ be a commutative ring with $1$. We let $C_0(G, R)$ denote the free $R$-module generated by $V(G)$. Elements of $C_0(G, R)$ are of the form $\sum_{v \in V(G)} {a_v (v)}$ for $a_v \in R$. It is endowed with a bilinear form induced by $\langle (u) , (v)\rangle =\delta_{v}(u)$ for $u,v \in V(G)$. Here $\delta_{v}(u)$ denotes the usual Kronecker delta function. 

\medskip

Likewise, we let $C_1(G, R)$ denote the free $R$-module generated by $\EE(G)$. Elements of $C_1(G, R)$ are of the form $\sum_{e \in \EE(G)} {a_e (e)}$ for $a_e \in R$. It is endowed with a bilinear form induced by 
\[
\langle (e) , (e')\rangle =
\begin{cases}
1, &\text{if $e'=e$}\\
-1, &\text{if $e'=\bar{e}$}\\
0, &\text{otherwise}
\end{cases}
\]
for $e,e' \in \EE(G)$. The usual boundary map $\partial\colon C_1(G, R) \rightarrow C_0(G, R)$ is defined by 
\[
\partial(e)=(e_+)-(e_-) \ . 
\]

The bilinear forms defined above provide canonical isomorphisms $C_0(G,R) \cong C^0(G,R)$ and $C_1(G,R) \cong C^1(G,R)$. Then the maps $\partial$ and $d$ are adjoint with respect to these bilinear forms. We let $e^\ast \in C^1(G,R)$ denote the image of $(e) \in C_1(G, R)$ under this isomorphism, i.e.
\[
e^\ast := \langle (e) , \cdot \rangle \ .  
\]
The characteristic function of $v$ or $\chi_v= \delta_{v} \in C^0(G, R)$ is the image of $(v) \in C_0(G,R)$ under the canonical isomorphism.

\medskip

\medskip

Let $K$ be a field. Associated to $G$ we define two polynomial rings:
\begin{itemize}
\item Let $\Rb=K[\xb]$ denote the polynomial ring in $n$ variables $\{x_v: v \in V(G)\}$.
\item Let $\Sb=K[\yb]$ denote the polynomial ring in $2m$ variables $\{y_e: e \in \EE(G)\}$ or $\{y_e, y_{\bar{e}}: e \in \Oc\}$ (for any orientation $\Oc$). 
\end{itemize}

\section{Divisors and potential theory on graphs}
\label{sec:divisors}

Following \cite{BN1}, we let $\Div(G)$ be the free abelian group generated by $V(G)$. Equivalently, $\Div(G) = C_0(G, \ZZ)$. An element  of $\Div(G)$ is written as $\sum_{v \in V(G)} a_v (v)$ for $a_v \in \ZZ$ and is called a {\em divisor} on $G$. The coefficient $a_v$ in $D$ is denoted by $D(v)$.  A divisor $D$ is called {\em effective} if $D(v) \geq 0$ for all $v\in V(G)$. The set of effective divisors is denoted by $\Div_{+}(G)$. We write $D \leq E$ if $E-D \in \Div_{+}(G)$. For $D \in \Div(G)$, let $\deg(D) = \sum_{v \in V(G)} D(v)$. Given disjoint nonempty subsets $A,B$ of $V(G)$ one can assign a divisor $D(A,B)= \sum_{v \in A}  |\{ w \in B: \{v,w\} \in E(G)\}| \ (v)$.

\medskip

We denote by $\M(G)$ the group of integer-valued functions on the vertices. Equivalently, $\M(G)=C^0(G, \ZZ)$. For $A \subseteq V(G)$, $\chi_A \in \M(G)$ denotes the $\{0,1\}$-valued characteristic function of $A$. The {\em Laplacian operator} $\Delta \colon \M(G) \to \Div(G)$ is defined by

\[\Delta(f) = \sum_{v \in V(G)} \sum_{\{v,w\} \in E(G)} (f(v) - f(w)) (v) \  .\]

\begin{Remark} \label{rmk:selfadjoint}
With the identification $\M(G)=C^0(G, \ZZ)$ and $\Div(G)=C_0(G, \ZZ)$ and the canonical isomorphism $C_1(G,R) \cong C^1(G,R)$, the operator $\Delta$ is identified with $\partial_\Oc d_\Oc \colon C^0(G, \ZZ) \rightarrow C_0(G,\ZZ)$, where $\partial_\Oc$ and $d_\Oc$ denote the usual (restricted) boundary and coboundary maps for an arbitrary orientation $\Oc$. Somewhat more canonically, $\Delta = \frac{1}{2}\partial d$. It follows that $\Delta$ is a self-adjoint operator, which means 
\[\sum_{v}{f(v) \Delta(g)(v)}=\sum_{v}{g(v) \Delta(f)(v)}\] for all $f, g \in \M(G)$.
\end{Remark}

\medskip

The group of {\em principal divisors} is defined as the image of the Laplacian operator and is denoted by $\Prin(G)$. It is easy to check that $\Prin(G) \subseteq \Div^0(G)$ where $\Div^0(G)$ denotes the set consisting of divisors of degree
zero. The quotient
$\Pic^0(G) = \Div^0(G) / \Prin(G)$
is a finite group whose cardinality is the number of spanning trees of $G$ (see, e.g., \cite{FarbodMatt12} and references therein). The full {\em Picard group} of $G$ is defined as
\[
\Pic(G) = \Div(G) / \Prin(G)
\]
which is isomorphic to $\ZZ \oplus \Pic^0(G)$. Since $\Prin(G) \subseteq \Div^0(G)$, the map $\deg \colon \Div(G) \rightarrow \ZZ$ descends to a well-defined map $\deg \colon \Pic(G) \rightarrow \ZZ$. Two divisors $D_1$ and $D_2$ are called {\em linearly equivalent} if they become equal in  $\Pic(G)$. In this case we write $D_1 \sim D_2$.

\subsection{Divisors and potential theory} \label{sec:potential}

For $p,q \in V(G)$ let the {\em Green's function} $j_q(p,\cdot)$ denote the unique ($\QQ$-valued) solution to the Laplace equation $\Delta f = (p)-(q)$ satisfying $f(q)=0$. If we think of graph $G$ as an electrical network (in which each edge is a resistor having unit resistance) then $j_q(p,v)$ denotes the electric potential at $v$ if one unit of current enters the network at $p$ and exits at $q$, with $q$ grounded (i.e., zero potential). It is easy to check that $j_q(p,q)=0$, $j_q(p,v)=j_q(v,p)$, and $0 \leq j_q(p,v) \leq j_q(p,p)$ (see \cite{ChinburgRumely,BX06}). \cite[Construction~3.1]{FarbodMatt12} explains how to compute these functions using basic linear algebra. For all $f \in \M(G)$ we have 
\begin{equation}\label{eq:useful}
\begin{aligned}
\sum_{v}{j_q(p,v)\Delta(f)(v)} &= \sum_{v}{f(v)\Delta(j_q(p, \cdot))(v)} = 
f(p)-f(q) \ .
\end{aligned}
\end{equation}

\medskip

There exists a positive definite, symmetric bilinear form 
\[
\langle  \cdot \, , \cdot \rangle_{\en} \colon \; \Div^0(G) \times \Div^0(G) \rightarrow \QQ
\] 
\[
\langle D_1 , D_2 \rangle_{\en} = \sum_{u,v \in V(G)}{D_1(u)j_q(u,v)D_2(v)}
\]
which is a canonical (i.e. independent of the choice of $q$) pairing on $\Div^0(G)$ (see \cite{FarbodMonodromy,FarbodMatt12}). It is called the {\em energy pairing} on $\Div^0(G)$.

\medskip

Let $\mathbf{1}$ denote the all-$1$'s divisor. For $D \in \Div(G)$ and $q\in V(G)$, following \cite{FarbodMatt12}, the {\em total potential functional} is defined as 
\[
b_q(D)=\langle \mathbf{1} - n (q), D - \deg(D)(q)\rangle_{\en} = \sum_{v}\sum_{p}{j_q(p,v)D(v)} \ . 
\]

\subsection{Divisors and commutative algebra}
\label{sec:divcomm}
Any effective divisor $D$ gives rise to a monomial
\[
\xb^D := \prod_{v \in V(G)}{x_{v}^{D(v)}} \in \Rb\ .
\]

Associated to every graph $G$ there is a canonical ideal in $\Rb$ which encodes the linear equivalences of divisors on $G$:
\[
\begin{aligned}
\II_G &:= \langle \xb^{D_1} - \xb^{D_2} : \, D_1 \sim D_2 \text{ both effective divisors}\rangle \\
&= \span_K \{ \xb^{D_1} - \xb^{D_2} : \, D_1 \sim D_2 \text{ both effective divisors}\}
\end{aligned}
\]
which was first introduced in \cite{CoriRossinSalvy02}. This ideal is graded by both $\Pic(G)$ and $\ZZ$. 

\begin{Remark}
It is shown in \cite{FarbodFatemeh} that, although $\Pic(G)$ has torsion elements, it provides a ``nice'' grading in the sense that Nakayama's lemma holds with respect to this grading and the concept of $\Pic(G)$-graded minimal free resolution makes sense in this context.
\end{Remark}

\medskip

Once we fix a vertex $q$, there is a natural term order that gives rise to a particularly nice  Gr\"obner basis for $\II_G$. This term order was also introduced in \cite{CoriRossinSalvy02}. Consider a total ordering of the set of variables $\{x_v: v \in V(G)\}$ compatible with the distances of vertices from $q$ in $G$:
\begin{equation}\label{eq:dist}
\dist(w,q) < \dist(v,q) \, \implies \,  x_w < x_v   \ .
\end{equation}
Here, the distance between two vertices in a graph is the number of edges in a shortest path connecting them. This ordering can be thought of as an ordering on the vertices induced by running the breadth-first search (BFS) algorithm starting at the root vertex $q$.
The term order $<_q$ will denote the graded reverse lexicographic ordering (grevlex) on $\Rb$ induced by the total ordering on the variables given in \eqref{eq:dist}.

\medskip

The initial ideal $\MM_G^q:=\ini_{<_q}(\II_G)$ for $(\II_G, <_q)$ is canonically defined (up to the choice of the distinguished vertex $q$). This ideal is extensively studied in \cite{PostnikovShapiro04}, where it is denoted by $M_G$. This ideal is naturally equipped with $\Div(G)$ (fine) and $\ZZ$ (coarse) gradings.

One of the main results of \cite{CoriRossinSalvy02} is the following theorem -- see also \cite[Section~5]{FarbodFatemeh} where this result is reproved and generalized to higher syzygy modules.

\begin{Theorem}
\label{thm:Cori}
A Gr\"obner basis of $(\II_G, <_q)$ is
\[
\left\{\xb^{D(A^c, A)}-\xb^{D(A, A^c)} : A \subsetneq V(G) , q\in A \right\}  \ .
\]

Moreover,
\begin{itemize}
\item[(i)] $\LM(\xb^{D(A^c, A)}-\xb^{D(A, A^c)})=\xb^{D(A^c, A)}$.
\item[(ii)] It suffices to consider only those subsets $A$ of $V(G)$ such that both $G[A]$ and $G[A^c]$ are connected. In this case we obtain a {\em minimal Gr\"obner basis} of $(\II_G, <_q)$.  
\end{itemize}
\end{Theorem}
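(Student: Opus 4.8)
The plan is to prove that the given binomials form a Gröbner basis by the standard Buchberger-style argument, but with a combinatorial shortcut: I will first identify $\MM_G^q$ explicitly and show that the claimed leading monomials generate an ideal of the right ``colength,'' then argue the binomials actually lie in $\II_G$. Concretely, for each $A \subsetneq V(G)$ with $q \in A$, set $f_A := \xb^{D(A^c,A)} - \xb^{D(A,A^c)}$. First I would check $f_A \in \II_G$: the divisors $D(A^c,A)$ and $D(A,A^c)$ are both effective, and their difference is $\Delta(\chi_{A^c}) = \partial_\Oc d_\Oc(\chi_{A^c})$, since for $v \in A^c$ the Laplacian of $\chi_{A^c}$ counts edges from $v$ to $A$ (contributing $D(A^c,A)(v)$) and for $v \in A$ it counts minus the edges from $v$ to $A^c$; hence $D(A^c,A) \sim D(A,A^c)$ and $f_A \in \II_G$ by definition. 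For part (i), with respect to $<_q$ I must check that $\xb^{D(A^c,A)} >_q \xb^{D(A,A^c)}$: both monomials have the same total degree (namely the size of the edge cut $E(A,A^c)$), so grevlex compares them by looking at the \emph{smallest} variable appearing with different exponent; since the variables supporting $D(A,A^c)$ all lie in $A$ (which contains $q$, hence contains the BFS-closest vertices) while those supporting $D(A^c,A)$ lie in $A^c$, the cut structure forces the last-differing variable to favor $\xb^{D(A^c,A)}$ — this is the one spot requiring a careful look at the interplay between the distance ordering \eqref{eq:dist} and the combinatorics of cuts, and I expect it to be the main technical obstacle.

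Next, to see these binomials form a Gröbner basis, I would run Buchberger's criterion on the S-polynomials $\spoly(f_A, f_B)$ for pairs $A, B$ and reduce them to zero using the whole set; equivalently, and more cleanly, invoke that a set of binomials $\{ \xb^{u_i} - \xb^{v_i} \}$ in a lattice ideal is a Gröbner basis iff the corresponding ``moves'' connect any two effective divisors in the same linear equivalence class via a monotone (non-increasing in the $<_q$-leading term) path. This reduces to the combinatorial statement: any effective divisor $D$ can be transformed, by repeatedly subtracting $\Delta(\chi_{A^c})$ for appropriate $A \ni q$ in a way that keeps divisors effective and strictly decreases the leading monomial, into the unique $q$-reduced divisor in its class. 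This is essentially Dhar's burning algorithm / the theory of $q$-reduced divisors: given $D$ effective and not $q$-reduced, Dhar's algorithm produces a nonempty set $A^c \subseteq V(G)\setminus\{q\}$ such that firing $A^c$ (i.e. subtracting $\Delta(\chi_{A^c})$, which subtracts $D(A^c,A)$ and adds $D(A,A^c)$) keeps $D$ effective; and one checks this operation strictly lowers the $<_q$-leading monomial because it moves ``mass'' toward $q$, i.e. toward $<_q$-smaller variables.

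Finally, for the minimality statement in (ii), I would argue in two directions. First, reducibility: if $G[A]$ or $G[A^c]$ is disconnected, say $A^c = A_1^c \sqcup A_2^c$ with no edges between the parts, then the cut $E(A,A^c)$ splits accordingly and $f_A$'s leading monomial $\xb^{D(A^c,A)}$ factors through — it is divisible by the leading monomial of $f_{A \cup A_2^c}$ (the firing of the smaller set $A_1^c$), so $f_A$ is redundant in the Gröbner basis; iterating, it suffices to keep only the $A$ with both $G[A]$, $G[A^c]$ connected. Second, minimality of what remains: I must show that for distinct such $A, A'$ (both with connected complement-pairs), neither leading monomial $\xb^{D(A^c,A)}$ divides the other. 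If $\xb^{D(A'^c,A')} \mid \xb^{D(A^c,A)}$ then $\supp(D(A'^c,A')) \subseteq \supp(D(A^c,A)) \subseteq A^c$, forcing $A'^c \subseteq A^c$ wherever degrees are positive; using connectivity of $G[A^c]$ and $G[A'^c]$ together with exponent comparisons on the cut edges, one derives $A = A'$. I would also record that the number of basis elements equals the number of sets $A \ni q$ with $G[A], G[A^c]$ connected, matching the count from the theory, which confirms no further cancellation is possible. The crux throughout is the compatibility of the BFS/distance term order with cut combinatorics in part (i), and the effectiveness-preserving descent (Dhar's algorithm) powering the Gröbner property.
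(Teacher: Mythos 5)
The paper itself does not prove Theorem~\ref{thm:Cori}: it quotes the result from Cori--Rossin--Salvy and points to \cite{FarbodFatemeh}, where the proof runs through Buchberger's criterion, reducing the $S$-polynomial of the binomials for the cuts $(A,A^c)$ and $(B,B^c)$ by the binomials for $(A\setminus B,(A\setminus B)^c)$ and $(B\setminus A,(B\setminus A)^c)$. Your route via Dhar's algorithm and connectivity of fibers is therefore genuinely different and could be made to work, but as written it leaves the load-bearing steps unproven.

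The most serious gap is part (i), which you explicitly defer as ``the main technical obstacle.'' It does need an argument, and here is one: both monomials have total degree $|E(A,A^c)|$ and disjoint supports ($\supp D(A,A^c)\subseteq A$, $\supp D(A^c,A)\subseteq A^c$), so in grevlex it suffices to show that the $<$-smallest variable occurring in either support lies in $A$. Let $w_0$ be the vertex of $\supp D(A^c,A)$ with smallest variable and take a shortest path from $q$ to $w_0$; its last edge crossing from $A$ into $A^c$ has its $A$-endpoint $u$ incident to a cut edge and satisfying $\dist(u,q)<\dist(w_0,q)$, hence $x_u<x_{w_0}$ by \eqref{eq:dist}, so the rightmost differing exponent is the positive one of $D(A,A^c)$ and $\xb^{D(A^c,A)}$ wins. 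Second, the Dhar step by itself proves the wrong implication: it shows that a non-$q$-reduced effective divisor admits a strictly $<_q$-decreasing move, i.e.\ that the $<_q$-minimal divisor of each class is $q$-reduced. The Gr\"obner property requires the converse --- every $<_q$-non-minimal effective $D$ satisfies $D\geq D(A^c,A)$ for some $A$ --- and this is exactly the \emph{uniqueness} of the $q$-reduced representative, which you invoke (``the unique $q$-reduced divisor'') but neither prove nor explicitly cite; it is an essential input, not a formality. Finally, the minimality argument in (ii) is only gestured at: from $D(A'^c,A')\leq D(A^c,A)$ one must first deduce $A'^c\subseteq A^c$ (using connectivity of $G[A]$ and $G[A'^c]$: a vertex of $A'^c\cap A$ would be joined inside $G[A]$ to $q\in A'$, producing a vertex of $A'^c\cap A$ with a neighbor in $A'$, which is impossible), and then compare exponents on $A'^c$ to see there are no edges between $A'^c$ and $A^c\setminus A'^c$, contradicting connectivity of $G[A^c]$ unless $A=A'$. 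You also treat only the case where $G[A^c]$ is disconnected; the case where $G[A]$ is disconnected (replace $A$ by the component containing $q$) needs its own, similar, line.
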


As we will see, the minimal Gr\"obner basis described in part (ii) is also a minimal generating set (see also \cite{FarbodFatemeh}).

\medskip

\subsection{Potential theory and Gr\"obner weight functionals for $\II_G$}\label{sec:wt1}
Let $\vartheta \in C^0(G, \RR)$ and think of it as a linear functional $\vartheta \colon \Div(G) \rightarrow \RR$. For $f =\sum{c_i \xb^{D_i}} \in \Rb$ the $\vartheta$-degree of $f$, denoted by $\deg_\vartheta(f)$, is the maximum value of $\vartheta(D_i)$. The $\vartheta$-initial form of $f$ is the sum of all terms $c_i \xb^{D_i}$ such that $\vartheta(D_i)$ is maximum. For an ideal $I \subset \Rb$, the $\vartheta$-initial ideal $\ini_\vartheta(I)$ is the ideal generated by all $\vartheta$-initial forms.

\medskip

Fix a term order $<$ for $\Rb$. The functional $\vartheta$ is said to {\em represent $<$ for $I$} if $\ini_\vartheta(I)=\ini_{<}(I)$. It is known that for any term order $<$ and any ideal $I$, there is a {\em non-negative} and {\em integer-valued} functional representing $<$ for $I$ (\cite[Proposition~1.11]{SturmfelsGrobnerConvex}). 

\medskip

In our situation there is a nice and direct interaction between Gr\"obner theory and potential theory.
\begin{Lemma} \label{lem:bq}
 $b_q\colon \Div(G) \rightarrow \QQ$ is a non-negative rational-valued functional representing $<_q$ for $\II_G$. 
\end{Lemma}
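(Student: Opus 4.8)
The plan is to verify the two required properties of $b_q$ directly from its definition in terms of Green's functions, namely $b_q(D) = \sum_{v}\sum_{p} j_q(p,v)D(v)$, and then show that the functional $b_q$ separates leading monomials from trailing monomials in each element of the Gr\"obner basis from Theorem~\ref{thm:Cori}. Non-negativity and rationality are immediate: the Green's function values $j_q(p,v)$ are rational (they solve a linear system with integer coefficients, as recalled in \S\ref{sec:potential}) and satisfy $0 \le j_q(p,v)$, so for an effective divisor $D$ every term $j_q(p,v)D(v)$ is a non-negative rational, hence $b_q(D) \ge 0$; and $b_q$ is by construction a linear functional $\Div(G) \to \QQ$, i.e. an element of $C^0(G,\QQ)$, so it makes sense as a weight functional in the sense of \S\ref{sec:wt1}.

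The substantive point is that $b_q$ represents $<_q$ for $\II_G$, i.e. $\ini_{b_q}(\II_G) = \ini_{<_q}(\II_G) = \MM_G^q$. By \cite[Proposition~1.11]{SturmfelsGrobnerConvex}, which guarantees the existence of a representing weight, it suffices to check that $b_q$ computes the correct initial form on each element of \emph{a} Gr\"obner basis of $(\II_G, <_q)$ — the standard criterion being that $\ini_{b_q}$ and $\ini_{<_q}$ agree on each Gr\"obner basis element, which forces equality of the initial ideals. So I would take the Gr\"obner basis $\{\xb^{D(A^c,A)} - \xb^{D(A,A^c)} : A \subsetneq V(G),\ q \in A\}$ from Theorem~\ref{thm:Cori} and show that for each such $A$,
\[
b_q(D(A^c,A)) > b_q(D(A,A^c)),
\]
matching the fact that $\LM = \xb^{D(A^c,A)}$ from Theorem~\ref{thm:Cori}(i). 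Equivalently, writing the binomial as $\xb^{E_1} - \xb^{E_2}$ with $E_1 - E_2 = \Delta(\chi_{A^c}) = -\Delta(\chi_A)$ a principal divisor, I must show $b_q(E_1 - E_2) = b_q(\Delta(\chi_{A^c})) > 0$.

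The key computation is thus to evaluate $b_q \circ \Delta$ on characteristic functions. Using the expression $b_q(D) = \langle \mathbf{1} - n(q),\, D - \deg(D)(q)\rangle_{\en}$ and the fact that $\Delta(\chi_S)$ has degree zero, one gets $b_q(\Delta(\chi_S)) = \langle \mathbf{1} - n(q),\, \Delta(\chi_S)\rangle_{\en}$, and since the energy pairing is defined via $j_q$ which inverts $\Delta$ on degree-zero divisors (with the $f(q)=0$ normalization), this should collapse to a sum of the form $\sum_{v \in S} \big(\text{something}\big)$ — concretely I expect $b_q(\Delta(\chi_S)) = \sum_{v}(\mathbf 1 - n(q))(v)\, \chi_S(v) - (\text{correction at }q)$, i.e. essentially $|S|$ when $q \notin S$ (and one must handle the $q \in A$ normalization carefully using $j_q(p,q)=0$). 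Since $A^c$ is a nonempty set not containing $q$, this yields $b_q(\Delta(\chi_{A^c})) = |A^c| > 0$, as desired. The main obstacle I anticipate is bookkeeping: carefully tracking the role of the grounded vertex $q$ through the identities $j_q(p,q)=0$ and $j_q(p,v)=j_q(v,p)$, and making sure the telescoping of $\langle \mathbf{1} - n(q), \Delta(\chi_{A^c})\rangle_{\en}$ with $\Delta j_q(p,\cdot) = (p) - (q)$ produces exactly $|A^c|$ rather than an off-by-$q$-term expression; once that identity $b_q(\Delta(\chi_S)) = |S \setminus \{q\}|$ (or the appropriate variant) is pinned down, positivity on every Gr\"obner basis element — and hence the representing property — follows immediately, and the reduction to Theorem~\ref{thm:Cori} is routine.
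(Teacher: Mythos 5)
Your proposal is correct and follows essentially the same route as the paper: reduce to checking the sign of $b_q$ on the Gr\"obner basis of Theorem~\ref{thm:Cori}, write the exponent difference as the principal divisor $\Delta(\chi_{A^c})$, and use the defining property $\Delta(j_q(p,\cdot))=(p)-(q)$ (via self-adjointness of $\Delta$) to get $b_q(\Delta(\chi_{A^c}))=|A^c|>0$. The identity you anticipate is exactly the paper's equation \eqref{eq:useful}, and your bookkeeping concern resolves precisely as you expect since $q\notin A^c$.
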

The proof is a straightforward variation of the discussion in \S~\ref{sec:grocone}, and is left for the reader.

\begin{Remark} \label{rmk:reducedpotential}
$q$-reduced divisors (or $G$-parking functions with respect to $q$) can be defined as the normal forms of $\Rb / \II_G$ with respect to the Gr\"obner basis described in Theorem~\ref{thm:Cori}. It easily follows from Lemma~\ref{lem:bq} that a $q$-reduced divisor is precisely the unique (in each equivalence class) minimizer of the $b_q$ functional. See \cite{FarbodMatt12} for a precise statement and a different proof of this fact.
\end{Remark}
\medskip

\begin{Definition}\label{def:M_intwt} 
We let $\vartheta_q$ denote the non-negative, integral functional associated to $b_q$ (i.e. obtained from $b_q$ by clearing the denominators). Clearly, $\vartheta_q$ will also represent $<_q$ for $\II_G$. 
\end{Definition}

\subsection{Gr\"obner cone of $\MM_G^q$} \label{sec:grocone}
It is possible to give a more precise statement than Lemma~\ref{lem:bq}. We show that the rays of the Gr\"obner cone associated to $\MM_G^q$, in a precise sense, correspond to Green's functions.

The weight functional $\eta \in C^0(G, \RR)$ defined by $\eta(D)=\sum_{v \in V(G)}{\eta(v) (v)}$ is in the Gr\"obner cone if and only if for any set $B \ne \emptyset$ with $q\not\in B$ we have 
\begin{equation}\label{eq:wt1char}
\eta(\Delta(\chi_B)) = \sum_{v \in V(G)}{\eta(v) \Delta(\chi_B)(v)} = \sum_{v \in V(G)}{\chi_B(v) \Delta(\eta)(v)} >0 \ . 
\end{equation}
In particular, for each vertex $p \ne q$, setting $B=\{p\}$ we must have:
\begin{equation} \label{eq:MConeCond}
\gamma_p : =\Delta(\eta)(p) >0 \ . 
\end{equation}
This condition is also sufficient because for all $B \ne \emptyset$ with $q\not\in B$ we have 
\[
\eta(\Delta(\chi_B))=\sum_{v \in V(G)}{\chi_B(v)\gamma_v}=\sum_{v \in B}{\gamma_v} \ .
\]

\medskip

It follows that $\eta \in \M(G)$ is a solution to $\Delta(\eta) = \gamma$ for the degree zero divisor $\gamma := \sum_{p \in V(G)} \gamma_p (p)$.  From the definition of the Green's function $j_q(p, v)$, and the fact that the Laplacian operator has a $1$-dimensional zero-eigenspace generated by the all-$1$ function $\mathbf{1}$, we obtain:
\begin{equation}\label{eq:wt1}
\eta=\sum_{p \in V(G)}\gamma_p j_q(p, \cdot ) + k \cdot \mathbf{1}
\end{equation}
for some constant $k \in \RR$. We summarize these observations in the following theorem.

\begin{Theorem}
The weight functional $\eta \in C^0(G, \RR)$ represents $<_q$ for $\II_G$ if and only if there exist $k \in \RR$ and real numbers $\gamma_p >0$ (for $p \in V(G)$) such that 
\[
\eta=\sum_{p \in V(G)}\gamma_p j_q(p, \cdot ) + k \cdot \mathbf{1} \ .
\]
\end{Theorem}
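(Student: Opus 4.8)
The plan is to show that the description of the Gröbner cone already worked out in the running text is complete, i.e.\ that the two displayed conditions \eqref{eq:wt1char} and \eqref{eq:MConeCond} are not merely necessary but jointly exhaustive, and that \eqref{eq:wt1} is then the general solution. The statement to prove is an ``if and only if'', so I would organize it as two implications, reusing the self-adjointness identity \eqref{eq:useful} and Theorem~\ref{thm:Cori} exactly as in the proof of Lemma~\ref{lem:bq}.

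For the forward direction, suppose $\eta$ represents $<_q$ for $\II_G$. By Theorem~\ref{thm:Cori} the relevant initial monomials are $\xb^{D(A^c,A)}$ for $A \subsetneq V(G)$ with $q \in A$; writing $B = A^c$ (so $B \ne \emptyset$ and $q \notin B$) and using $D(A,A^c) - D(A^c,A) = \Delta(\chi_A) = -\Delta(\chi_B)$ together with $\Delta \mathbf 1 = 0$, the condition ``$\eta$ picks out $\xb^{D(A^c,A)}$'' is precisely $\eta(\Delta(\chi_B)) > 0$, which is \eqref{eq:wt1char}. Specializing to $B = \{p\}$ gives $\gamma_p := \Delta(\eta)(p) > 0$ for every $p \ne q$. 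Now set $\gamma := \sum_{p \in V(G)} \gamma_p (p)$ where $\gamma_q$ is defined by $\gamma_q := -\sum_{p \ne q}\gamma_p$ so that $\deg(\gamma) = 0$; since $\sum_v \Delta(\eta)(v) = 0$ automatically (every $\Delta(f)$ has degree zero), in fact $\gamma = \Delta(\eta)$ on the nose, including the $q$-coordinate. Because $\ker \Delta$ (over $\RR$) is the line $\RR \cdot \mathbf 1$ and $\sum_p \gamma_p\, j_q(p,\cdot)$ is \emph{a} solution of $\Delta f = \gamma$ (by the defining property $\Delta j_q(p,\cdot) = (p) - (q)$, summed with the weights $\gamma_p$ and using $\sum_p \gamma_p (q) = -\gamma_q(q)$ to get the right $q$-term), any two solutions differ by an element of $\RR \cdot \mathbf 1$; hence $\eta = \sum_p \gamma_p\, j_q(p,\cdot) + k\mathbf 1$ for some $k \in \RR$, which is \eqref{eq:wt1}.

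For the converse, suppose $\eta = \sum_p \gamma_p\, j_q(p,\cdot) + k\mathbf 1$ with all $\gamma_p > 0$. Then $\Delta(\eta) = \sum_p \gamma_p\big((p)-(q)\big)$, so $\Delta(\eta)(v) = \gamma_v$ for $v \ne q$. For any nonempty $B$ with $q \notin B$, using self-adjointness of $\Delta$ (Remark~\ref{rmk:selfadjoint}) as in \eqref{eq:wt1char},
\[
\eta(\Delta(\chi_B)) = \sum_{v}\chi_B(v)\,\Delta(\eta)(v) = \sum_{v \in B}\gamma_v > 0 .
\]
By the criterion recorded just before the theorem (equivalently, by \cite[Proposition~1.11]{SturmfelsGrobnerConvex} applied to the minimal Gröbner basis of Theorem~\ref{thm:Cori}, whose leading terms are the $\xb^{D(A^c,A)}$), this positivity for all such $B$ is exactly the condition that $\ini_\eta(\II_G) = \ini_{<_q}(\II_G)$, i.e.\ $\eta$ represents $<_q$ for $\II_G$. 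This closes the loop.

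I do not anticipate a genuine obstacle: the real content has already been extracted in the paragraphs preceding the theorem, and what remains is bookkeeping. The one point demanding a little care is making the passage between ``$\eta$ represents the term order'' and the finite list of binomial inequalities \emph{sufficient} as well as necessary --- this is where one must invoke that the binomials of Theorem~\ref{thm:Cori}(ii) form a Gröbner basis (so that checking the weight on these generators suffices, by the standard argument in \cite[proof of Proposition~1.11]{SturmfelsGrobnerConvex}) --- and, dually, making sure the $q$-coordinate is handled correctly when solving $\Delta(\eta) = \gamma$, since the Green's functions $j_q(p,\cdot)$ are normalized by $j_q(p,q) = 0$ and the kernel direction $\mathbf 1$ is exactly the freedom that the constant $k$ absorbs.
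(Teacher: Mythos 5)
Your proof is correct and follows essentially the same route as the paper: reduce "represents $<_q$" to the finite list of inequalities $\eta(\Delta(\chi_B))>0$ for $\emptyset\ne B\not\ni q$ via Theorem~\ref{thm:Cori} and self-adjointness, observe these are equivalent to $\Delta(\eta)(p)>0$ for $p\ne q$, and then solve $\Delta(\eta)=\gamma$ using the Green's functions and the kernel $\RR\cdot\mathbf 1$. Your extra care with the $q$-coordinate of $\gamma$ is fine (and harmless, since $j_q(q,\cdot)\equiv 0$ makes the coefficient $\gamma_q$ in the displayed formula immaterial), so nothing further is needed.
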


\medskip

In other words $\eta$, up to constant functions, is in the interior of the cone generated by the vectors $(j_q(p,v))_{v \in V(G)}$ for various $p \in V(G)$. Note that these vectors are independent because the matrix $(j_q(p,v))_{p,v \in V(G) \backslash \{q\}}$ is invertible (see \cite[Construction~3.1]{FarbodMatt12}). 
The question of describing this Gr\"obner cone was asked by Bernd Sturmfels.

\medskip


\section{Lattices, Delaunay decompositions, total unimodularity, and infinite arrangements} 
\label{sec:latticesec}
\subsection{Lattices and Delaunay decompositions}
\label{sec:lattice}

Let $\Lambda$ be a free $\ZZ$-module (abelian group), endowed with a positive definite symmetric bilinear pairing $\beta \colon \Lambda \times \Lambda \rightarrow \ZZ$.
The pair $(\Lambda, \beta)$ (or just $\Lambda$, when $\beta$ is understood) is called a {\em free bilinear form space over $\ZZ$} or, more concisely, an {\em abstract $\ZZ$-lattice}.

\medskip

Let $(\Lambda, \beta)$ be an abstract $\ZZ$-lattice. We let $\Lambda_{\RR} := \Lambda \otimes \RR$. The bilinear pairing $\beta$ naturally extends to a bilinear pairing $\beta_\RR$ on $\Lambda_\RR$ by $\beta_\RR(a \otimes \ub,b \otimes \vb )=ab \ \beta(\ub, \vb)$.
  
The dual $\ZZ$-module $\Lambda^{\vee} := \Hom_{\ZZ}(\Lambda , \ZZ)$ is contained (via extension of scalars) in the dual real vector space $\Lambda_{\RR}^{\vee} := \Hom_{\ZZ}(\Lambda , \RR) = \Hom_{\RR}(\Lambda_\RR, \RR)= \Lambda^\vee \otimes \RR$. The {\em non-degeneracy} of $\beta$ is the statement that the homomorphism
\[
\begin{aligned}
\Psi\colon \Lambda &\rightarrow \Lambda^\vee \\
\vb &\mapsto \beta(\vb , \cdot)
\end{aligned}
\]
is injective. Clearly every positive definite bilinear pairing is automatically non-degenerate. Therefore the natural extension $\Psi_\RR \colon \Lambda_\RR \rightarrow \Lambda_\RR^\vee$ is also injective (e.g., because $\RR$ is a flat $\RR$-module). Since these vector spaces have the same dimension, it follows that $\Psi_\RR$ is indeed an isomorphism. In other words, in the language of bilinear forms, $\beta_\RR$ is a {\em perfect pairing}\footnote{A perfect pairing is sometimes called a {\em unimodular pairing} in the literature. We will avoid this terminology to avoid confusion.} on $\Lambda_\RR$. So, in this situation, any $\varphi \in \Lambda_\RR^\vee$ is of the form $\varphi (\cdot) = \beta_\RR (\ab , \cdot)$ for some $\ab \in \Lambda_\RR$. 

\medskip

Let $d \colon \Lambda_\RR \times \Lambda_\RR \rightarrow \RR$ be any distance function on $\Lambda_\RR$. The {\em Delaunay decomposition} of $\Lambda_\RR$ with respect to the lattice $\Lambda$ and the distance function $d$ (not necessarily induced by the bilinear form) is defined as the collection of cells
\[
A_{\pb} = {\rm conv.hull} \{ \sb \in \Lambda: d(\pb,\sb) \text{ is minimal}\} 
\]
as $\pb$ varies in $\Lambda_\RR$. It is a classical fact (essentially due to Voronoi and Delaunay) that the collection of Delaunay cells $\{A_{\pb}\}$ gives a locally finite, cellular decomposition (face to face tiling) of $\Lambda_\RR$ which is invariant under the action of $\Lambda$ (see, e.g., \cite{Conway}).

\medskip

\subsection{Total unimodularity}
Consider a (not necessarily minimal) finite set $\{\varphi_i\}_{i\in I}$ of generators for the free $\ZZ$-module $\Lambda^\vee$. Extension of scalars gives an inclusion $\Lambda^\vee \hookrightarrow \Lambda_\RR^\vee$. Clearly, for any subset $J \subseteq I$ such that $\{\varphi_i\}_{i\in J}$ generates $\Lambda^\vee$ as a $\ZZ$-module, we have $\{\varphi_i \}_{i\in J}$ spans $\Lambda_\RR^\vee$ as a real vector space (here we have identified $\varphi_i \otimes 1$ with $\varphi_i$). The converse is, of course, not true in general.

\medskip

\begin{Definition}
Let $(\Lambda, \beta)$ be an abstract $\ZZ$-lattice. A finite set $\{\varphi_i\}_{i\in I}$ of generators for $\Lambda^\vee$ is called {\em totally unimodular} if for any subset $J \subseteq I$ such that the collection $\{\varphi_i\}_{i\in J}$ spans $\Lambda_\RR^\vee$ as a real vector space,  the collection $\{\varphi_i\}_{i\in J}$ generates $\Lambda^\vee$ as a $\ZZ$-module.
\end{Definition}

\begin{Example}
Let $\Lambda=\ZZ^2$, generated by $\eb_1$ and $\eb_2$, endowed with the obvious bilinear pairing induced by $\langle \eb_i, \eb_j \rangle = \delta_{i}(j)$. Let $\eb_i^\ast \in (\ZZ^2)^\vee$ denote the dual basis element $\eb_i^\ast(\eb_j)=\delta_{i}(j)$. Then 
\begin{itemize}
\item $\{\eb_1^\ast, \eb_2^\ast, \eb_1^\ast+\eb_2^\ast\}$ generates $\Lambda^\vee$ and is totally unimodular.
\item $\{\eb_1^\ast, \eb_2^\ast, \eb_1^\ast+2\eb_2^\ast\}$ generates $\Lambda^\vee$ but is not totally unimodular. 

The subcollection $\{\eb_1^\ast, \eb_1^\ast+2\eb_2^\ast\}$ spans $(\RR^2)^\vee$ as a real vector space, but it does not generate $(\ZZ^2)^\vee$. For example, $\eb_2^\ast$ will not be in the $\ZZ$-module it generates.
\end{itemize}
\end{Example}
\begin{Example}\label{ex:WU}
The primary examples of total unimodularity and the most well-known examples arise from totally unimodular matrices or, more generally, weakly unimodular matrices. An $r \times m$ ($r \leq m$) integer matrix $A=(a_{ij})$ is called {\em weakly unimodular} if every $r \times r$ square submatrix of $A$ has determinant in the set $\{-1, 0 , 1\}$. If every square submatrix of $A$ has determinant in the set $\{-1, 0 , 1\}$, then $A$ is called a {\em totally unimodular} matrix. Any totally unimodular matrix is weakly unimodular. A weakly unimodular matrix which contains the identity matrix of size $r$ is automatically totally unimodular.

\medskip

Let $A$ be a weakly unimodular matrix. Let $\Lambda$ denote the row space $\Image(A^T) \hookrightarrow \ZZ^m$ with the bilinear pairing induced by the natural bilinear pairing on $\ZZ^m$. For $1 \leq j \leq m$ let $\varphi_j \in \Lambda^\vee$ denote the restriction of $\eb_j^\ast \in (\ZZ^m)^\vee$ to $\Lambda$. Concretely, if we denote the $i$-th row ($1 \leq i \leq r$) of $A$ by $\vb_i$, then each $\varphi_j$ is defined by $\varphi_j(\vb_i)=a_{ij}$. By Cramer's rule, the collection $\{\varphi_1, \ldots, \varphi_m\}$ is totally unimodular precisely because $A$ is weakly unimodular.
\end{Example}

\subsection{Infinite hyperplane arrangements}
\label{sec:infarrag}

Consider a finite collection $\{\varphi_i\}_{i\in I} \subset \Lambda_\RR^\vee$ spanning $\Lambda_\RR^\vee$ as a vector space over $\RR$. For each $\pb \in \Lambda_\RR$ we denote by $C_\pb$ the polyhedron in $\Lambda_\RR$ defined by 
\[
C_\pb = \{\sb \in \Lambda_\RR: \lfloor \varphi_i(\pb) \rfloor  \leq \varphi_i(\sb) \leq \lceil \varphi_i(\pb) \rceil  \text{ for all } i \in I \} \ .
\]
As usual, $\lfloor x \rfloor$ denotes the largest integer $n \leq x$, and  $\lceil x \rceil$ denotes the smallest integer $n \geq x$. Clearly $C_\sb=C_\pb$ for all $\sb \in \relint(C_\pb)$. We denote by $\H(\Lambda_\RR, \{\varphi_i\}_{i\in I})$ the collection of all polyhedra $C_\pb$ for $\pb \in \Lambda_\RR$.
 
\medskip

The following result is well known for the case of totally unimodular matrices (Example~\ref{ex:WU}) (see, e.g., \cite{OdaSeshadri, Erdahl}). It can be proved similar to \cite[Corollary 3.2]{OdaSeshadri}.

\begin{Theorem}\label{thm:totunim}
Fix a finite collection $\{\varphi_i\}_{i\in I} \subset \Lambda_\RR^\vee$ which spans $\Lambda_\RR^\vee$ as a vector space over $\RR$.
\begin{itemize}
\item[(i)] $\H(\Lambda_\RR, \{\varphi_i\}_{i\in I})$ is a polyhedral cell decomposition of $\Lambda_\RR$ by bounded convex polyhedra. This cell decomposition is invariant under the translation by 
\[\{\sb \in \Lambda_\RR:  \varphi_i(\sb)  \in \ZZ \text{ for all } i \in I \}\]
 which is contained in the set of $0$-dimensional polyhedra in $\H(\Lambda_\RR, \{\varphi_i\}_{i\in I})$.
\item[(ii)] If, further, $\{\varphi_i\}_{i\in I} \subset \Lambda^\vee$ and it generates $\Lambda^\vee$, then $\H(\Lambda_\RR, \{\varphi_i\}_{i\in I})$ is invariant under the translation action by elements of $\Lambda$ which is contained in the set of $0$-dimensional polyhedra in $\H(\Lambda_\RR, \{\varphi_i\}_{i\in I})$.
\item[(iii)] If, further, $\{\varphi_i\}_{i\in I}$ is totally unimodular, then $\Lambda$ coincides with the set of $0$-dimensional polyhedra in $\H(\Lambda_\RR, \{\varphi_i\}_{i\in I})$. Moreover, $\H(\Lambda_\RR, \{\varphi_i\}_{i\in I})$ coincides with the Delaunay decomposition of $\Lambda_\RR$ with respect to the lattice $\Lambda$ and the metric induced by
\begin{equation} \label{eq:norm}
\lVert \pb\rVert^2 = \sum_{i \in I}{\lvert \varphi_i(\pb) \rvert^2} \ .
\end{equation}
\end{itemize}
\end{Theorem}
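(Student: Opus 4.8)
\textbf{Proof plan for Theorem~\ref{thm:totunim}.}

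The plan is to treat the three parts in sequence, with part (i) doing most of the work and parts (ii)--(iii) following by refining the translation group and identifying the metric. For part (i), first I would observe that, since $\{\varphi_i\}_{i \in I}$ spans $\Lambda_\RR^\vee$, the map $\sb \mapsto (\varphi_i(\sb))_{i \in I}$ embeds $\Lambda_\RR$ as a linear subspace $W \subseteq \RR^I$; under this embedding $C_\pb$ is the preimage of the unit box $\prod_{i}[\lfloor \varphi_i(\pb)\rfloor, \lceil \varphi_i(\pb)\rceil]$, hence is a bounded convex polyhedron (bounded because the spanning hypothesis forces $W \cap \{$all coordinates bounded$\}$ to be bounded, i.e. the only linear functional vanishing on all $\varphi_i$ is zero). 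Boundedness plus the fact that the $\varphi_i$ have integer ``fractional parts'' behavior gives local finiteness. To see that the $C_\pb$ tile $\Lambda_\RR$ face-to-face, I would argue that every point lies in some $C_\pb$ (take $\pb = \sb$ itself), that two cells $C_\pb, C_{\pb'}$ meet only along a common face — this is the standard argument that a product of closed unit intervals with integer endpoints in $\RR^I$ forms a face-to-face tiling, intersected with the subspace $W$, which preserves the face-to-face property — and that $\relint(C_\pb)$ determines $C_\pb$, already noted in the text. The translation invariance under $\{\sb : \varphi_i(\sb) \in \ZZ \ \forall i\}$ is immediate since translating $\pb$ by such an $\sb$ shifts every $\lfloor \varphi_i(\pb)\rfloor$ and $\lceil\varphi_i(\pb)\rceil$ by the integer $\varphi_i(\sb)$; and such $\sb$ is a $0$-cell because all the defining inequalities become equalities, pinning down a single point in $W$.

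For part (ii), the hypothesis $\{\varphi_i\}_{i\in I} \subset \Lambda^\vee$ gives $\Lambda \subseteq \{\sb \in \Lambda_\RR : \varphi_i(\sb) \in \ZZ \ \forall i\}$, so part (i) immediately yields $\Lambda$-invariance and that every lattice point is a $0$-cell; nothing further is needed here beyond quoting (i). For part (iii), I must upgrade the inclusion $\Lambda \subseteq \{0\text{-cells}\}$ to an equality. A $0$-cell is a point $\pb \in \Lambda_\RR$ with $\varphi_i(\pb) \in \ZZ$ for all $i$, i.e. an element of $\Lambda_\RR$ on which every $\varphi_i$ takes an integer value; choosing a subset $J \subseteq I$ with $\{\varphi_i\}_{i \in J}$ a basis of $\Lambda_\RR^\vee$, total unimodularity says $\{\varphi_i\}_{i \in J}$ generates $\Lambda^\vee$ over $\ZZ$, so the $\ZZ$-linear map $\Lambda \to \ZZ^J$, $\vb \mapsto (\varphi_i(\vb))_i$ is an isomorphism onto $\ZZ^J$; since $\pb$ maps into $\ZZ^J$ under the corresponding real isomorphism $\Lambda_\RR \cong \RR^J$, it must lie in $\Lambda$. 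This is the step where total unimodularity (as opposed to mere generation) is essential, and I expect it to be the conceptual heart of the argument.

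Finally, to identify $\H(\Lambda_\RR, \{\varphi_i\}_{i\in I})$ with the Delaunay decomposition for the metric in \eqref{eq:norm}, I would show that for a $0$-cell (equivalently a lattice point) $\vb_0$, the Delaunay cell $A_{\vb_0}$ — the convex hull of lattice points nearest to $\vb_0$ — together with its faces reproduces exactly the cells $C_\pb$. Concretely: for $\pb \in \Lambda_\RR$ write $\varphi_i(\pb) = n_i + t_i$ with $n_i \in \ZZ$, $t_i \in [0,1)$; then $\lVert \pb - \vb\rVert^2 = \sum_i |t_i - (\varphi_i(\vb) - n_i)|^2$ for $\vb \in \Lambda$, and since the map $\vb \mapsto (\varphi_i(\vb))_i$ identifies $\Lambda$ with $\ZZ^J$ (by total unimodularity, part (iii)) while the remaining $\varphi_i$ are $\ZZ$-linear combinations of those, minimizing this sum over $\Lambda$ is the standard nearest-point problem for the box $\prod_i \{$rounding of $t_i\}$; the minimizers are exactly the lattice points whose $\varphi_i$-coordinates equal $\lfloor \varphi_i(\pb)\rfloor$ or $\lceil\varphi_i(\pb)\rceil$ for each $i$, whose convex hull is precisely $C_\pb$. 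Carrying out this last comparison cleanly — matching the combinatorics of nearest lattice points to the face structure of $C_\pb$ — is the main technical obstacle, but total unimodularity is exactly what makes the ``nearest points'' span the full box face rather than a proper sublattice slice. The $\Lambda$-invariance of both decompositions (parts (ii)/(iii) on one side, the classical Voronoi--Delaunay theorem on the other) then reduces the verification to cells incident to a single lattice point.
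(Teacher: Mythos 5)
Your parts (i) and (ii) follow the paper's route exactly (pull back the cubical decomposition of $\RR^I$ along the injection $\varPhi \colon \pb \mapsto (\varphi_i(\pb))_{i\in I}$), and they are fine. The issues are in part (iii), where you need two facts: that the $0$-cells are exactly $\Lambda$, and that $\{C_\pb\}$ is the Delaunay decomposition. For the first, you assert that a $0$-cell is a point at which \emph{every} $\varphi_i$ is integral; only the reverse inclusion is immediate, and the assertion is false without total unimodularity. Take $\Lambda=\ZZ^2$ with $\varphi_1=x$, $\varphi_2=y$, $\varphi_3=x+y$, $\varphi_4=x-y$ (a collection that even generates $\Lambda^\vee$, so satisfies the hypotheses of (ii)): the point $(\tfrac{1}{2},\tfrac{1}{2})$ is a $0$-cell, since the equality constraints $x+y=1$ and $x-y=0$ already pin it down inside the unit square, yet $\varphi_1,\varphi_2$ are not integral there. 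The correct statement is $\dim C_\pb = \dim\Lambda_\RR - \dim\span\{\varphi_i : \varphi_i(\pb)\in\ZZ\}$, so a $0$-cell is a point at which the integrally valued $\varphi_i$ \emph{span} $\Lambda_\RR^\vee$; total unimodularity then says this sub-collection generates $\Lambda^\vee$, hence all remaining $\varphi_j$ (being $\ZZ$-combinations of it) are also integral at $\pb$, and only then does your dual-basis argument apply to give $\pb\in\Lambda$. So TU must enter one step earlier than where you invoke it.

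The more serious gap is the Delaunay identification. Your computation concludes that the minimizers of $\lVert \pb-\vb\rVert$ over $\vb\in\Lambda$ are ``exactly the lattice points whose $\varphi_i$-coordinates equal $\lfloor\varphi_i(\pb)\rfloor$ or $\lceil\varphi_i(\pb)\rceil$ for each $i$,'' i.e.\ all the vertices of $C_\pb$. That is false already for $\Lambda=\ZZ$ with $\varphi_1=\id$ and $\pb=0.3$: the unique nearest lattice point is $0$, so $A_\pb=\{0\}$ while $C_\pb=[0,1]$. In general $A_\pb\ne C_\pb$ for the same $\pb$; the theorem asserts equality of the two \emph{collections} of cells. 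What must be shown is that every cell of $\H(\Lambda_\RR,\{\varphi_i\}_{i\in I})$ equals $A_{\pb_0}$ for a suitably chosen center $\pb_0$ (an ``empty sphere'' center equidistant from that cell's lattice vertices and strictly closer to them than to every other lattice point), and conversely that every $A_{\pb_0}$ is such a cell. The paper handles this by comparing $A_{\pb_0}$ with the Delaunay cell of $\ZZ^I$ at $\varPhi(\pb_0)$ and using that the preimage of the latter is the convex hull of its $0$-dimensional faces, which by the first half of (iii) all lie in $\Lambda$. You rightly flag this comparison as the main technical obstacle, but the route you sketch does not close it, and the intermediate claim it rests on is incorrect.
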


\begin{Remark} \label{rmk:delon}
\begin{itemize}
\item[]
\item[(i)] Under the total unimodularity assumption, by Theorem~\ref{thm:totunim}(iii), we obtain a finite polyhedral cell decomposition of the quotient torus $\Lambda_\RR / \Lambda$. This cell decomposition is essential in the study of our binomial ideals.
\item[(ii)] If the totally unimodular collection is coming from a weakly unimodular matrix as in Example~\ref{ex:WU}, then the norm in \eqref{eq:norm} coincides with the standard norm induced by the bilinear form $\beta_\RR$. This is because the $\varphi_j$'s are precisely the restriction of the $\eb_j^\ast$'s to $\Lambda_\RR$.
\end{itemize}
\end{Remark}

\section{Potential theory and the cellular free resolution of $\II_G$}
\label{sec:IGresol}
Here we use potential theory and the energy pairing to give a self-contained and direct solution to the problem of finding a minimal polyhedral cellular free resolution of the ideal $\II_G$.

\subsection{Principal lattice with the energy pairing}

Recall the $\ZZ$-module $\Prin(G)$ is defined as the image of the Laplacian operator $\Delta \colon \M(G) \to \Div(G)$. We have introduced two different canonical bilinear forms on this group. One is the bilinear form induced from the bilinear form on $C_0(G,\ZZ)=\Div(G)$ defined in \S\ref{sec:Background}. The bilinear form that is most relevant in this section is the one induced from the energy pairing defined in \S\ref{sec:potential}. 

\begin{Definition}
By a {\em principal lattice} we will mean the pair $(\Prin(G), \langle \cdot , \cdot \rangle_{\en})$ where
\[
\langle \cdot , \cdot \rangle_{\en} \colon \Prin(G) \times \Prin(G) \rightarrow \ZZ
\]
is the restriction of the energy pairing to $\Prin(G) \subseteq \Div^0(G)$. 
\end{Definition}

\begin{Remark}
It is easy to see (using \eqref{eq:useful}) that if $D \in \Prin(G)$ then for all $E \in \Div^0(G)$ we have $\langle E , D \rangle_{\en} \in \ZZ$ and therefore 
\begin{itemize}
\item[(i)] The restriction of the energy pairing to  $\Prin(G)$ is $\ZZ$-valued.
\item[(ii)] The energy pairing descends to a well-defined pairing on $\Pic^0(G)$, which is shown to be non-degenerate in \cite{FarbodMonodromy}. 
\end{itemize}
\end{Remark}
The principal lattice is an abstract $\ZZ$-lattice in the sense of \S\ref{sec:lattice}. Its ambient vector space $\Prin(G)_\RR = \Prin(G) \otimes \RR$ coincides with $\Div_{\RR}^0(G) = \Div^0(G) \otimes \RR \subset C_1(G,\RR)$. 

Our next goal is to find a nice collection of functionals for this lattice. For each $e \in \EE(G)$ we define the functional $\zeta_e \in \Div_{\RR}^0(G)^{\vee}$ by
\[
\zeta_e(\cdot) = \langle \partial(e) , \cdot \rangle_{\en} \ .
\]

The following lemma follows from an easy computation.

\begin{Lemma} \label{lem:lookgraphical}
\begin{itemize}
\item[]
\item[(i)] Any $D \in \Div_{\RR}^0(G)$ is of the form $D=\Delta(f)$ for some $f \in C^1(G, \RR)$. 
\item[(ii)] For $D =  \Delta(f) \in \Div_{\RR}^0(G)$ we have $\zeta_e(D)=(d f)(e)$. 
\end{itemize}
\end{Lemma}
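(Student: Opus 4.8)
The plan is to unwind all the identifications so that both statements become elementary facts about the boundary and coboundary maps. Recall from Remark~\ref{rmk:selfadjoint} that $\Delta = \partial_\Oc d_\Oc$ for any orientation $\Oc$, equivalently $\Delta = \frac12 \partial d$ after extending scalars; here I will view $\Delta$ as a map $C^1(G,\RR) \to \Div_\RR^0(G)$ by precomposing with $d$, so that $\Delta(f)$ for $f \in C^1(G,\RR)$ means $\partial(f)$ (or $\frac12\partial d$ of a potential). For part~(i), I would first note $\Div_\RR^0(G) = \Prin(G)_\RR = \Delta(\M(G))\otimes\RR$. Since $G$ is connected, the image of the usual boundary map $\partial\colon C_1(G,\RR) \to C_0(G,\RR)$ is exactly $\Div_\RR^0(G)$ (the cycle space is the kernel, the image is the degree-zero part by the standard exact sequence computing $\widetilde H_0$). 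Hence every $D \in \Div_\RR^0(G)$ is $\partial(f)$ for some $f \in C_1(G,\RR) \cong C^1(G,\RR)$, which is the assertion with the convention that $\Delta$ on $C^1(G,\RR)$ denotes $\partial$ (or the reader may first write $D = \Delta(g)$ for a vertex potential $g \in C^0(G,\RR)$ using surjectivity of the Laplacian onto $\Div_\RR^0(G)$, then set $f = d g$).

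For part~(ii), the key computation is to evaluate $\zeta_e(D) = \langle \partial(e), D\rangle_{\en}$ when $D = \Delta(f) = \partial(f)$. I would expand $\partial(e) = (e_+) - (e_-)$, so that
\[
\zeta_e(D) = \langle (e_+) - (e_-), \Delta(f)\rangle_{\en} = \sum_{v}\bigl(j_q(e_+,v) - j_q(e_-,v)\bigr)\,\Delta(f)(v).
\]
Now apply the identity~\eqref{eq:useful}, namely $\sum_v j_q(p,v)\,\Delta(f)(v) = f(p) - f(q)$ (valid for real-valued functions by linearity, after extending scalars), to each of the two terms. The $f(q)$ contributions cancel, leaving $\zeta_e(D) = f(e_+) - f(e_-) = (df)(e)$ by the definition of the coboundary map. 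This is exactly the claimed formula. One should be slightly careful about which ``$f$'': in~\eqref{eq:useful} the symbol $f$ is a vertex function, so strictly I would write $D = \Delta(g)$ with $g \in C^0(G,\RR)$, obtain $\zeta_e(D) = g(e_+) - g(e_-) = (dg)(e)$, and then observe that the natural edge-function representative is precisely $f = dg$, matching the statement of part~(i).

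The only real subtlety — and the main thing to get right — is the bookkeeping around the overloaded notation $\Delta$ and $f$: part~(i) produces $f$ as an element of $C^1(G,\RR)$, while the cleanest derivation of part~(ii) runs through a vertex potential and formula~\eqref{eq:useful}. The fix is just to be explicit that $\Delta$ on $C^1(G,\RR)$ means the boundary map $\partial$ and that the $f \in C^1(G,\RR)$ in question is $dg$ for a potential $g$ realizing $D$; the ambiguity in $g$ (up to a constant function, the kernel of $\Delta$) disappears upon applying $d$, so $f$ is well defined and the formula $\zeta_e(D) = (df)(e)$ is unambiguous. No estimates or hard arguments are needed; everything reduces to the self-adjointness of $\Delta$ already recorded in Remark~\ref{rmk:selfadjoint} and the defining property of Green's functions.
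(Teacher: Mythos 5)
Your proof is correct and follows essentially the same route as the paper: part (i) from the surjectivity of $\Delta$ onto $\Div^0_\RR(G)$ (kernel $=$ constants), and part (ii) by expanding $\zeta_e(D)=\langle (e_+)-(e_-),\Delta(f)\rangle_{\en}$ and applying \eqref{eq:useful} so the $f(q)$ terms cancel. Your careful remark about the overloaded notation is well taken — the ``$f\in C^1(G,\RR)$'' in the statement is evidently a slip for $C^0(G,\RR)$, since the paper's own proof evaluates $f$ at vertices — and your resolution (take $f=dg$ for a potential $g$, noting the constant ambiguity in $g$ dies under $d$) is exactly the right reading.
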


\begin{Proposition}\label{prop:rightfunctionals}
\begin{itemize}
\item[]
\item[(i)] $\{\zeta_e\}_{e \in \EE(G)} \subset \Prin(G)^\vee$.
\item[(ii)] $\{\zeta_e\}_{e \in \EE(G)}$ generates $\Prin(G)^\vee$.
\item[(iii)] $\{\zeta_e\}_{e \in \EE(G)}$ is totally unimodular for the principal lattice.
\end{itemize}
\end{Proposition}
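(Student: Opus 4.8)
The plan is to dispatch (i) by a one-line computation, and then prove (ii) and (iii) together by translating the conditions ``spans $\Prin(G)_\RR^\vee$'' and ``generates $\Prin(G)^\vee$'' into elementary statements about the boundary map $\partial$ and connectivity of subgraphs of $G$. For (i): if $D=\Delta(f)\in\Prin(G)$ we may choose the potential $f$ in $\M(G)=C^0(G,\ZZ)$, and then Lemma~\ref{lem:lookgraphical}(ii) gives $\zeta_e(D)=(df)(e)=f(e_+)-f(e_-)\in\ZZ$; since $\zeta_e$ is intrinsically defined on $\Div_\RR^0(G)\supseteq\Prin(G)$, this value does not depend on the chosen $f$, so $\zeta_e\in\Prin(G)^\vee$.

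Next I would record the ``spanning $\iff$ connectivity'' dictionary. Because $\langle\cdot,\cdot\rangle_{\en}$ is a perfect pairing on $\Div_\RR^0(G)=\Prin(G)_\RR$ and $\zeta_e=\langle\partial(e),\cdot\rangle_{\en}$, for a subset $J\subseteq\EE(G)$ the collection $\{\zeta_e\}_{e\in J}$ spans $\Prin(G)_\RR^\vee$ if and only if $\{\partial(e)=(e_+)-(e_-)\}_{e\in J}$ spans $\Div_\RR^0(G)$; since the span of these boundary vectors has dimension $n$ minus the number of connected components of the spanning subgraph of $G$ on the underlying edges of $J$, this happens exactly when that subgraph is connected. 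In particular $J=\EE(G)$ satisfies the hypothesis (as $G$ is connected), so (ii) is the special case of (iii), and it suffices to prove (iii).

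For (iii), let $J$ be such that $\{\zeta_e\}_{e\in J}$ spans $\Prin(G)_\RR^\vee$, so the spanning subgraph on the edges underlying $J$ is connected. Given $\psi\in\Prin(G)^\vee$, precompose with the surjection $\Delta\colon\M(G)\twoheadrightarrow\Prin(G)$; since $\M(G)=C^0(G,\ZZ)$ is free, $\psi\circ\Delta=\langle E,\cdot\rangle$ for a unique $E\in\Div(G)$, and the fact that $\psi\circ\Delta$ kills $\Ker\Delta=\ZZ\mathbf{1}$ forces $\deg E=0$. Now the combinatorial input: for a connected graph the restriction of $\partial$ to the free module on the edges underlying $J$ already surjects onto $\Div^0(G)$ --- choose a spanning tree among those edges, write each $(u)-(q)$ ($u\ne q$) as a $\ZZ$-combination of edge boundaries along the tree path, and recall that these $(u)-(q)$ form a $\ZZ$-basis of $\Div^0(G)$. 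Hence $E=\partial\bigl(\sum a_e(e)\bigr)$ with integer coefficients $a_e$ (after orienting each such edge), and for $D=\Delta(f)$ we compute, via Lemma~\ref{lem:lookgraphical}(ii), $\psi(D)=\langle E,f\rangle=\sum a_e\bigl(f(e_+)-f(e_-)\bigr)=\sum a_e\,\zeta_e(D)$, so $\psi=\sum a_e\zeta_e$ is a $\ZZ$-combination of $\{\zeta_e\}_{e\in J}$ (using $\zeta_{\bar e}=-\zeta_e$ to absorb the chosen orientations). This proves (ii) and (iii), and also re-proves (i). Alternatively, (ii) and (iii) follow from Example~\ref{ex:WU} once one observes that $\Delta(f)\mapsto df$ is an isometric isomorphism from $(\Prin(G),\langle\cdot,\cdot\rangle_{\en})$ onto the integral cut lattice $\Image(d_\Oc)\subseteq C^1_\Oc(G,\ZZ)$ carrying each $\zeta_e$ ($e\in\Oc$) to the $e$-th coordinate functional, together with the total unimodularity of the reduced incidence matrix of $G$.

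The main obstacle is (iii), and its real content is exactly the dictionary above: recognizing that, after unwinding the definitions through the energy pairing, ``total unimodularity of $\{\zeta_e\}$'' amounts to nothing more than the elementary fact that the edge boundaries of any connected spanning subgraph of $G$ already $\ZZ$-span $\Div^0(G)$. Once this is in place the verification is short; the only bookkeeping point is to track the two oriented edges $e$ and $\bar e$ (using $\zeta_{\bar e}=-\zeta_e$) so that the span and generation statements for the full collection indexed by $\EE(G)$ reduce to statements about a fixed orientation $\Oc$.
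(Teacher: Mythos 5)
Your proof is correct. Part (i) is the same as the paper's (both reduce to Lemma~\ref{lem:lookgraphical}(ii)), but for (ii) and (iii) you take a genuinely different, and in fact more self-contained, route. The paper identifies an arbitrary $\zeta\in\Prin(G)^\vee$ with $\langle\ab,\cdot\rangle_{\en}$ using non-degeneracy of the energy pairing, proves integrality of $\ab$ by the Green's-function computation \eqref{eq:integervalues}, telescopes along paths to get (ii), and for (iii) writes $\zeta=\sum_{e\in J}b_e\zeta_e$ with real coefficients and then cites Poincar\'e's total unimodularity of the incidence matrix to force the $b_e$ integral. You instead precompose with $\Delta$ and use the self-duality of the free module $C^0(G,\ZZ)$, so the divisor $E$ representing $\psi\circ\Delta$ is integral by construction and no energy-pairing computation is needed for integrality; you then make explicit the dictionary ``$\{\zeta_e\}_{e\in J}$ spans $\iff$ the underlying subgraph is connected'' and run the spanning-tree telescoping argument inside $J$, which handles (ii) and (iii) uniformly and in effect reproves the needed instance of Poincar\'e's theorem rather than invoking it. What the paper's version buys is that everything stays phrased through the energy pairing (the ambient structure of \S\ref{sec:IGresol}) and (iii) follows in two lines from a classical citation; what yours buys is a single elementary argument with no external input and a cleaner explanation of \emph{why} the total unimodularity holds. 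Your concluding alternative via the isometry with the cut lattice is also valid and is essentially Remark~\ref{rmk:isometry} combined with Example~\ref{ex:WU}.
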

\begin{proof}
(i) Let $D = \Delta(f)$ for $f \in \M(G)$. 
By Lemma~\ref{lem:lookgraphical}(ii) $\zeta_e(D) = (d f)(e)$
which is an integer because $f$ is integer-valued.

\medskip

(ii) Let $\zeta$ be an arbitrary element of $\Prin(G)^\vee$. We need to show that $\zeta=\sum_{e \in \EE(G)}{a_e\zeta_e}$ for some integers $a_e$. Since $\zeta \in \Div_{\RR}^0(G)^\vee$ and $\langle \cdot , \cdot \rangle_{\en}$ is positive definite (and therefore non-degenerate), we must have  $\zeta(\cdot) = \langle \ab , \cdot \rangle_{\en}$ for some $\ab \in \Div_{\RR}^0(G)$ (see \S\ref{sec:lattice}). For all $ p \in V(G) \backslash \{q\}$ we have (see \eqref{eq:useful})
\begin{equation} \label{eq:integervalues}
\langle \ab , \Delta(\chi_p) \rangle_{\en} 
= \ab(p) \ .
\end{equation}
Since $\zeta \in \Prin(G)^\vee$ we must have $\ab(p) = \langle \ab , \Delta(\chi_p) \rangle_{\en} \in \ZZ$ for all $p \in V(G) \backslash \{q\}$. Since $\ab(q)=-\sum_{p \ne q}{\ab(p)}$ we obtain $\ab \in \Div^0(G)$. Let 
\begin{equation}\label{eq:telescope}
\ab = \sum_{p \in V(G)}{\ab(p)(p)}=\sum_{p \ne q}{\ab(p)((p)-(q))} \ .
\end{equation}
 Since $G$ is connected, for each $p \ne q$ there is a directed path from $q$ to $p$ consisting of some oriented edges $\{e^{(i)}\}_{1 \leq i \leq \ell}$ such that $e^{(1)}_-=q$, $e^{(\ell)}_+=p$, and $e^{(i)}_+ = e^{(i+1)}_-$ for $ 1 \leq i \leq \ell-1$. We may write 
\[
(p)-(q)=\sum_{i=1}^{\ell}{(e^{(i)}_+ -e^{(i)}_-)}=\sum_{i=1}^{\ell}{\partial (e^{(i)})} \ .
\]
Substituting this in \eqref{eq:telescope}, we conclude that $\ab=\sum_{e \in \EE(G)}{a_e \partial(e)}$ for some integers $a_e$. Therefore $\zeta=\sum_{e \in \EE(G)}{a_e\zeta_e}$ as we want.

\medskip

(iii) Assume $J \subseteq \EE(G)$ is such that the collection $\{\zeta_e\}_{e \in J}$ spans $\Div_{\RR}^0(G)^\vee$ as a real vector space. We need to show that $\{\zeta_e\}_{e \in J}$ also generates $\Prin(G)^\vee$ as a $\ZZ$-module. Let $\zeta\in \Prin(G)^\vee$. Then $\zeta = \sum_{e \in J}{b_e\zeta_e}$ for some $b_e \in \RR$ because $\{\zeta_e\}_{e \in J}$ spans $\Div_{\RR}^0(G)^\vee$. In other words
\[
\zeta(\cdot) = \langle \bb , \cdot \rangle_{\en}
\quad \text{ with }\quad \bb = \sum_{e \in J}{b_e\partial(e)}
\]
for some $b_e \in \RR$. We need to show that $b_e \in \ZZ$ for all $e \in J$. A computation similar to \eqref{eq:integervalues} shows that we have $\bb \in \Div^0(G)$. It is a well-known classical fact (due to Poincar\'e) that the incidence matrix of $G$ is totally unimodular (see, e.g., \cite[Proposition~5.3]{BiggsBook} and \S\ref{sec:cutlattice}). So $\sum_{e \in J}{b_e\partial(e)} \in \Div^0(G)$ will automatically imply that all $b_e$'s must be integers. 
\end{proof}
\begin{Remark}
It also follows from the proof of Proposition~\ref{prop:rightfunctionals}(ii) that
\begin{itemize}
\item[(i)] $\Prin(G)^\vee \cong \Div^0(G)$ and a canonical isomorphism is furnished by the energy pairing.
\item[(ii)] $C_1(G, \ZZ) \xrightarrow{\partial} C_0(G, \ZZ) \xrightarrow{\deg} \ZZ \rightarrow 0$ is an exact sequence. This statement, when $\ZZ$ is replaced with $\RR$ is classical (see, e.g., \cite[Proposition~12.1 and Proposition~28.1]{Biggs97}). 
\end{itemize}
\end{Remark}

\medskip

We are now ready to apply the results in \S\ref{sec:infarrag} to this setting.

\begin{Theorem} \label{thm:PrinDel}
Let $\H(\Div^0_\RR(G), \{\zeta_e\}_{e\in \EE(G)}) = \{C_{\ab}\}$ be the collection of all polyhedra
\begin{equation} \label{eq:Ca}
C_\ab = \{\bb \in \Div^0_\RR(G): \lfloor \zeta_e(\ab) \rfloor  \leq \zeta_e(\bb) \leq \lceil \zeta_e(\ab) \rceil  \text{ for all } e \in \EE(G) \} \ .
\end{equation}
as $\ab$ varies in $\Div^0_\RR(G)$. Then 
\begin{itemize}
\item[(i)] $\{C_{\ab}\}$ is a polyhedral cell decomposition of $\Div^0_\RR(G)$ by bounded convex polyhedra.  
\item[(ii)] The cell decomposition $\{C_{\ab}\}$ is invariant under the translation by the lattice $\Prin(G)$.
\item[(iii)] The set of $0$-dimensional cells in $\{C_{\ab}\}$ coincides with $\Prin(G)$. 
\item[(iv)] $\{C_{\ab}\}$ is the same as the Delaunay cell decomposition of $\Div^0_\RR(G)$ with respect to the lattice $\Prin(G)$ and the metric induced by the norm
\begin{equation} \label{eq:norm2}
\lVert \pb \rVert = \sqrt{\langle \pb, \pb \rangle_{\en}} = \sqrt{\Ec(\pb)} \ .
\end{equation}
\item[(v)] $\{C_{\ab}\}$ descends to a finite polyhedral cell decomposition of $\Div^0_\RR(G) / \Prin(G)$. 
\end{itemize}
\end{Theorem}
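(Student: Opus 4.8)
The plan is to obtain Theorem~\ref{thm:PrinDel} as an essentially immediate consequence of the general machinery of \S\ref{sec:infarrag} (Theorem~\ref{thm:totunim}), once it is fed the input provided by Proposition~\ref{prop:rightfunctionals}, and then to dispatch part~(iv) by a short computation identifying two positive-definite quadratic forms up to scalar. Concretely, I would apply Theorem~\ref{thm:totunim} with the abstract $\ZZ$-lattice $\Lambda = \Prin(G)$ (equipped with the energy pairing), so that $\Lambda_\RR = \Prin(G)_\RR = \Div^0_\RR(G)$, and with the spanning collection $\{\varphi_i\}_{i \in I} = \{\zeta_e\}_{e \in \EE(G)}$. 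The polyhedra $C_\ab$ of \eqref{eq:Ca} are then exactly the polyhedra $C_\pb$ of \S\ref{sec:infarrag} for this data.

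The hypotheses of Theorem~\ref{thm:totunim} are verified by Proposition~\ref{prop:rightfunctionals}: part~(i) there says $\{\zeta_e\} \subset \Prin(G)^\vee$, part~(ii) says it generates $\Prin(G)^\vee$ as a $\ZZ$-module (and hence, in particular, spans $\Div^0_\RR(G)^\vee$ over $\RR$, so the spanning hypothesis of Theorem~\ref{thm:totunim} holds), and part~(iii) says the collection is totally unimodular. Now Theorem~\ref{thm:totunim}(i) gives part~(i) of Theorem~\ref{thm:PrinDel}; Theorem~\ref{thm:totunim}(ii) gives part~(ii) together with the inclusion $\Prin(G) \subseteq \{0\text{-dimensional cells}\}$; and Theorem~\ref{thm:totunim}(iii) upgrades this to the equality asserted in part~(iii) (that $\Prin(G)$ is \emph{precisely} the set of $0$-dimensional cells) and identifies $\{C_\ab\}$ with the Delaunay decomposition of $\Div^0_\RR(G)$ with respect to $\Prin(G)$ and the metric $\lVert \pb\rVert^2 = \sum_{e \in \EE(G)} \lvert \zeta_e(\pb)\rvert^2$ of \eqref{eq:norm}.

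It then remains to recognize this metric as the energy norm of \eqref{eq:norm2}. Writing $\pb = \Delta(f)$ for suitable $f$ (Lemma~\ref{lem:lookgraphical}(i)), Lemma~\ref{lem:lookgraphical}(ii) gives $\zeta_e(\pb) = (df)(e)$, so $\sum_{e \in \EE(G)} \lvert \zeta_e(\pb)\rvert^2 = \sum_{e \in \EE(G)} \lvert (df)(e)\rvert^2 = 2\sum_{e \in \Oc} \lvert (df)(e)\rvert^2$ for any orientation $\Oc$ (since $(df)(\bar e) = -(df)(e)$). Using $\Delta = \tfrac12 \partial d$ and self-adjointness (Remark~\ref{rmk:selfadjoint}) together with \eqref{eq:useful}, the last sum equals $2\langle \Delta(f), \Delta(f)\rangle_{\en} = 2\langle \pb, \pb\rangle_{\en} = 2\,\Ec(\pb)$; since rescaling a positive-definite quadratic form by a fixed positive constant does not change which lattice point is nearest to a given point, the two Delaunay decompositions coincide, proving~(iv). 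Finally~(v) is immediate: by~(i)--(ii) the decomposition $\{C_\ab\}$ is locally finite, has bounded cells, and is $\Prin(G)$-periodic, hence descends to a finite polyhedral cell decomposition of the compact torus $\Div^0_\RR(G)/\Prin(G)$ (cf.\ Remark~\ref{rmk:delon}(i)). The only step carrying real content is the norm identification in~(iv) (and even there the only subtlety is the harmless factor $2$ coming from counting each unoriented edge twice via $\zeta_e = -\zeta_{\bar e}$); the rest is lining up the notation of \eqref{eq:Ca} with \S\ref{sec:infarrag} and quoting Proposition~\ref{prop:rightfunctionals}.
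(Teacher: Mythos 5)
Your proposal is correct and follows essentially the same route as the paper: the paper's proof likewise consists of invoking Theorem~\ref{thm:totunim} with the input supplied by Proposition~\ref{prop:rightfunctionals} (plus Remark~\ref{rmk:delon}(i) for part~(v)), and then reducing part~(iv) to the computation $\Ec(\pb)=\tfrac12\sum_{e\in\EE(G)}\lvert\zeta_e(\pb)\rvert^2$ via Lemma~\ref{lem:lookgraphical} and Remark~\ref{rmk:selfadjoint}, exactly as you do. The factor of $2$ you track is the same harmless proportionality constant the paper observes, and it indeed does not affect the Delaunay decomposition.
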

\begin{proof}
This result follows from Proposition~\ref{prop:rightfunctionals}, Theorem~\ref{thm:totunim}, and Remark~\ref{rmk:delon}(i). We only need to show that the norm defined in \eqref{eq:norm2} is compatible with the one considered in \eqref{eq:norm}. By Lemma~\ref{lem:lookgraphical}(i) any $\pb \in \Div^0_\RR(G)$ is of the form $\Delta(f)$ for some $f \in C^1(G, \RR)$. By \eqref{eq:useful}, Lemma~\ref{lem:lookgraphical}(ii), and Remark~\ref{rmk:selfadjoint} we have
\[
\begin{aligned}
\Ec(\pb) &= \langle \Delta(f), \Delta(f) \rangle_{\en} = \sum_{v \in V(G)}{f(u) \Delta(f) (u)} \\
&= \frac{1}{2}\sum_{v \in V(G)}{f(u) (\partial d f) (u)} =\frac{1}{2}\sum_{e \in \EE(G)}{(d f)(e)(d f)(e)}\\
&=\frac{1}{2}\sum_{e \in \EE(G)}{\lvert \zeta_e(\pb) \rvert ^2}\ .
\end{aligned}
\]
So the norm defined in \eqref{eq:norm2} is proportional to the norm defined in \eqref{eq:norm} and they induce the same Delaunay cell decomposition. 
\end{proof}

\medskip

The Delaunay cell decomposition $\{C_{\ab}\}$ of Theorem~\ref{thm:PrinDel} will be denoted by ${\Del}(\Prin(G))$. The induced finite cell decomposition of $\Div^0_\RR(G) / \Prin(G)$ will be denoted by ${\Del}(\Prin(G)) / \Prin(G)$. 

\medskip

\begin{Remark} \label{rmk:cells}
\begin{itemize}
\item[]
\item[(i)] Since $\zeta_{\bar{e}}=-\zeta_e$ for all $e \in \EE(G)$ we could alternatively define $C_\ab$ in \eqref{eq:Ca} as
\[
\{\bb \in \Div^0_\RR(G): \zeta_e(\bb) \leq \lceil \zeta_e(\ab) \rceil  \text{ for all } e \in \EE(G) \} \ .
\]
It follows that open cells in this cell complex correspond precisely to equivalence classes of points, where $\ab \sim \bb$ if and only if $\lceil \zeta_e(\ab) \rceil = \lceil \zeta_e(\bb) \rceil$ for all $e \in \EE(G)$.

\item[(ii)] By Lemma~\ref{lem:lookgraphical}(ii) the local picture at the origin is the image of the graphic arrangement defined in \S\ref{sec:BG} under the map $\Delta$. 
\item[(iii)] The cell complexes ${\Del}(\Prin(G))$ and ${\Del}(\Prin(G)) / \Prin(G)$ are related to the cell complexes ${\Del}(L(G))$ and ${\Del}(L(G)) / L(G)$ (defined in \S\ref{sec:cutlattice}) by the (restricted) boundary map (see Remark~\ref{rmk:isometry} and Remark~\ref{rmk:ResolRelation}). The finite cell complex ${\Del}(L(G)) / L(G)$ and the finite cell complex ${\Del}(\Prin(G)) / \Prin(G)$  have the same $f$-vector (i.e. the same number of $i$-dimensional faces for all $i$).
\end{itemize}
\end{Remark}

\medskip

The following lemma will be used in the proof of Theorem~\ref{thm:Ures}.
\begin{Lemma}\label{lem:contract}
Fix a divisor $E \in \Div(G)$. The subcomplex of $\Del(\Prin(G))$ on the lattice points $P(E)=\{D \in \Prin(G):  D \leq E\}$ is a polyhedral subdivision of a contractible space.
\end{Lemma}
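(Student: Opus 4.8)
The plan is to show that $P(E) = \{D \in \Prin(G) : D \leq E\}$ is the vertex set of a convex region inside $\Div^0_\RR(G)$, and that the Delaunay subcomplex it spans equals the intersection of $\Del(\Prin(G))$ with that region (hence is contractible, being a polyhedral subdivision of a convex set). First I would consider the polyhedron
\[
Q(E) = \{\bb \in \Div^0_\RR(G) : \bb \leq E\} = \{\bb \in \Div^0_\RR(G) : \bb(v) \leq E(v) \text{ for all } v \in V(G)\}.
\]
This is a closed convex (and, since it lies in the degree-zero hyperplane and is cut out by finitely many upper-bound inequalities, bounded) polyhedron, and $P(E) = Q(E) \cap \Prin(G)$ is precisely its set of lattice points. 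The key claim is that the subcomplex of $\Del(\Prin(G))$ supported on $P(E)$ has underlying space exactly $Q(E)$, or at least a convex — hence contractible — subset of it.

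The heart of the argument is a local-to-global statement about Delaunay cells: if $\ab \in \Div^0_\RR(G)$ has all its defining lattice points (the nearest points of $\Prin(G)$ under the energy norm, whose convex hull is $C_\ab$) lying in $P(E)$, then the whole cell $C_\ab$ lies in $Q(E)$; conversely every cell of $\Del(\Prin(G))$ meeting the interior of $Q(E)$ has all its vertices in $P(E)$. The first direction is immediate since $C_\ab$ is the convex hull of points in $P(E) \subseteq Q(E)$ and $Q(E)$ is convex. For the second direction I would use the description in Remark~\ref{rmk:cells}(i): a point $\bb$ lies in the relative interior of $C_\ab$ exactly when $\lceil \zeta_e(\bb) \rceil = \lceil \zeta_e(\ab) \rceil$ for all $e$, and the vertices of $C_\ab$ are the lattice points $D \in \Prin(G)$ with $\lfloor \zeta_e(\ab)\rfloor \le \zeta_e(D) \le \lceil \zeta_e(\ab)\rceil$ for all $e$. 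One then has to check that if $\bb \leq E$ then each such vertex $D$ also satisfies $D \leq E$; here the total unimodularity of $\{\zeta_e\}$ (Proposition~\ref{prop:rightfunctionals}(iii), Theorem~\ref{thm:totunim}(iii)) together with the fact that the inequalities $\bb(v) \le E(v)$ are themselves ``integral supporting'' conditions should force the vertices of the Delaunay cell through $\bb$ to stay on the correct side. Concretely, the cells of $\Del(\Prin(G))$ refine the coordinate slab structure finely enough that a cell cannot straddle an integer hyperplane $\{\bb : \bb(v) = E(v)\}$ without having a vertex on the feasible side; this is exactly the kind of statement that total unimodularity guarantees, and I would phrase it via the injection $\varPhi$ into $\RR^{\EE(G)}$ from the proof of Theorem~\ref{thm:totunim}, pulling back the elementary fact for the standard lattice $\ZZ^I$.

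Once this is established, the union of the closed Delaunay cells with all vertices in $P(E)$ is a closed set containing $Q(E)$ in its interior-covered part and contained in it (by convexity of $Q(E)$), so it equals a polyhedral subdivision of the convex body $Q(E)$ — or if one prefers to avoid identifying it exactly with $Q(E)$, it deformation-retracts onto any of its points — and contractibility follows. The main obstacle I anticipate is the second direction of the local claim: ruling out a Delaunay cell with some vertices satisfying $D \le E$ and others violating it. This requires genuinely using that $\{\zeta_e\}$ is totally unimodular (so that Delaunay cells are ``small'' relative to the integer structure) rather than just that $\Del(\Prin(G))$ is a cell decomposition; a cheap way around it, if the direct argument gets technical, is to intersect with the half-spaces $\{\bb(v) \le E(v)\}$ one at a time and argue that each intersection with the Delaunay complex remains a polyhedral subdivision of a convex set, using that each such half-space is bounded by an integer translate of a coordinate hyperplane and hence is a union of cells' closures in a compatible way.
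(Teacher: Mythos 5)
Your setup coincides with the paper's: both identify $P(E)$ as the set of lattice points of the convex polytope $Q(E)=\{\ab\in\Div^0_\RR(G):\ab\leq E\}$ and aim to deduce contractibility from the convexity of $Q(E)$. The paper, however, only claims that the subcomplex is \emph{homotopy equivalent} to $Q(E)$, whereas the heart of your argument is the stronger claim that its underlying space \emph{equals} $Q(E)$ --- equivalently, that every cell meeting the interior of $Q(E)$ has all its vertices in $P(E)$. That claim is false. The facets of $Q(E)$ lie on the hyperplanes $\ab(v)=E(v)$, i.e.\ $\sum_{e_+=v}\zeta_e(\ab)=E(v)$ by Lemma~\ref{lem:nounit}(i); these are \emph{sums} of the defining functionals, are not walls of $\Del(\Prin(G))$, and do cut through the interiors of Delaunay cells. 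Concretely, for $K_3$ as in Example~\ref{ex:PrinResol}, the Delaunay triangle with vertices $0$, $c_1=2(u_1)-(u_2)-(u_3)$ and $-c_3=(u_1)+(u_2)-2(u_3)$ has $u_1$-coordinates $0,2,1$, so the hyperplane $\ab(u_1)=1$ passes through its interior; taking $E=(u_1)+N(u_2)+N(u_3)$ with $N$ large, this cell meets the interior of $Q(E)$ while its vertex $c_1$ is not in $P(E)$. Consequently the union of the closed cells of the subcomplex is in general a \emph{proper}, non-convex subset of $Q(E)$, and your concluding step collapses.

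Total unimodularity does not repair this: Proposition~\ref{prop:rightfunctionals}(iii) guarantees that cells do not straddle the hyperplanes $\zeta_e=k$, but says nothing about the hyperplanes $\sum_{e_+=v}\zeta_e=k$, and your fallback (that each half-space $\{\ab(v)\leq E(v)\}$ is a union of closures of cells) fails for exactly the same reason. What actually has to be proved is only the homotopy equivalence that the paper asserts --- for instance by producing a deformation retraction of $Q(E)$ onto the subcomplex, or by invoking the general fact that the full subcomplex of a lattice Delaunay decomposition induced on the lattice points of a convex region is contractible. Your proposal does not supply such an argument, so the gap at its central step is genuine; your own diagnosis that the ``second direction'' is the main obstacle is accurate, but the proposed resolutions do not overcome it.
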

\begin{proof}
$P(E)$ is precisely the set of lattice points inside the closed convex polytope $Q(E)=\{\ab \in \Div^0_\RR(G):  \ab \leq E\}$. The subcomplex of $\Del(\Prin(G))$ consisting of cells on the lattice points $P(E)$ consists of all Delaunay cells on these lattice points. Recall $\Del(\Prin(G))$ is a tiling of the ambient space. Therefore this subcomplex forms a space which is homotopy equivalent to the polytope $Q(E)$ itself, and therefore is contractible.
\end{proof}

\subsection{Labeling $\Del(\Prin(G))$ and the minimal free resolution of $\II_G$} \label{sec:labelDelPrin}

Let $\Tb = K[\xb,\xb^{-1}]$ denote the Laurent polynomial ring in variables $\{x_v: v \in V(G)\}$. Clearly $\Tb$ is a module over $\Rb$. Consider the $\Rb$-submodule $\UU_G \subset \Tb$ generated by Laurent monomials $\{\xb^D: D \in \Prin(G)\}$. This Laurent monomial module $\UU_G$ may be thought of as the ``universal cover'' of $\II_G$ and many question about $\II_G$ can be reduced to questions about $\UU_G$. For example, the free resolutions of $\UU_G$ and $\II_G$ are closely related. See \cite{BayerSturmfels} for an extensive study of this relation. Since the only effective divisor in $\Prin(G)$ is the all-$0$ divisor, the results of \cite{BayerSturmfels} apply to our situation.

\medskip

Consider the cell decomposition $\Del(\Prin(G))$. By Theorem~\ref{thm:PrinDel} the set of $0$-dimensional cells in $\Del(\Prin(G))$ is precisely $\Prin(G)$. We will label each $0$-cell $D \in \Prin(G)$ by the Laurent monomials $\xb^D$. As usual, we let the label of any other cell to be the least common multiple of the labels of its vertices. This labeled cell complex leads to a complex of free $\Div(G)$-graded $\Rb$-modules
\[
\Fc_G:= \Fc_{\Del(\Prin(G))} = \bigoplus_{\emptyset \ne F \in \Del(\Prin(G))}{\Rb(-\mb_F)}
\]
where $\mb_F$ denotes the monomial label of the face $F$. Let $[F]$ denote the generator of $\Rb(-\mb_F)$. The differential of $\Fc_G$ is the homogenized differential (boundary) operator of the cell complex $\Del(\Prin(G))$:
\begin{equation} \label{eq:differntials}
\partial([F]) = \sum_{{\rm codim(F,F')=1} \atop F' \subset F}{\varepsilon (F,F') \frac{\mb_F}{\mb_{F'}} \ [F']} 
\end{equation}
where $\varepsilon (F,F') \in \{-1, +1\}$ denotes the incidence function indicating the orientation of $F'$ in the boundary of $F$. 

\medskip

\begin{Lemma}\label{lem:nounit}
\begin{itemize}
\item[]
\item[(i)] Let $\ab \in \Div^0_\RR(G)$. Then $\ab(v)=\sum_{e_+=v}\zeta_e(\ab)$. 
\item[(ii)] Let $F=C_\ab$ be a cell in $\Del(\Prin(G))$ corresponding to a point $\ab \in \Div^0_\RR(G)$ (i.e. $\ab \in \relint(F)$). Then $\mb_F= \xb^E$ where $E \in \Div(G)$ is defined by
\begin{equation} \label{eq:prinlabels}
E(v)= \sum_{e_+=v} \lceil \zeta_e(\ab) \rceil \ .
\end{equation}
\item[(iii)] For distinct faces $F' \subsetneq F$ of $\Del(\Prin(G))$ we have $\mb_F \ne \mb_{F'}$.
\end{itemize}
\end{Lemma}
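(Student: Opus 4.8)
The plan is to prove the three statements in order, since each depends on the previous.

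For part (i): I would start from the fact that any $\ab \in \Div^0_\RR(G)$ can be written as $\ab = \Delta(f)$ for some $f \in C^1(G,\RR)$ by Lemma~\ref{lem:lookgraphical}(i). Then I compute $\sum_{e_+=v} \zeta_e(\ab) = \sum_{e_+=v} (df)(e)$ using Lemma~\ref{lem:lookgraphical}(ii), which equals $\sum_{e_+=v}(f(v)-f(e_-)) = \sum_{\{v,w\}\in E(G)}(f(v)-f(w)) = \Delta(f)(v) = \ab(v)$ directly from the definition of the Laplacian. This is a short computation; no obstacle here.

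For part (ii): The monomial label $\mb_F$ of the cell $F=C_\ab$ is, by definition of the cellular complex construction, the least common multiple of the labels $\xb^D$ over all $0$-cells $D \in \Prin(G)$ that are vertices of $F$. Each such $D$ satisfies $\lfloor \zeta_e(\ab)\rfloor \le \zeta_e(D) \le \lceil \zeta_e(\ab)\rceil$ for all $e$, i.e. $D$ lies in the box $C_\ab$; the vertices of this box/cell attain the extreme admissible integer values of $\zeta_e$. Using part (i), $D(v) = \sum_{e_+=v}\zeta_e(D) \le \sum_{e_+=v}\lceil \zeta_e(\ab)\rceil = E(v)$, so $\mb_F$ divides $\xb^E$. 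For the reverse, I need that for each $v$, the supremum of $D(v)$ over vertices $D$ of $F$ actually attains $E(v)$. This follows because the integer point maximizing $\zeta_e$ for all $e$ with $e_+=v$ simultaneously (choosing $\zeta_e(D)=\lceil \zeta_e(\ab)\rceil$) is itself a vertex of the box $C_\ab$ — and by total unimodularity (Proposition~\ref{prop:rightfunctionals}(iii), Theorem~\ref{thm:totunim}(iii)) the $0$-dimensional cells of $\Del(\Prin(G))$ are exactly the lattice points $\Prin(G)$, so this extreme point indeed lies in $\Prin(G)$ and is a vertex of $F$. Hence $\mb_F = \xb^E$ with $E$ as claimed. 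The slight subtlety — and the place to be careful — is that one must use the coherence of the Delaunay/box structure to guarantee that the ``cornermost'' lattice point realizing all the ceilings for edges into $v$ is genuinely a vertex of the face $F$, not merely a lattice point of the ambient box; this is exactly what Theorem~\ref{thm:totunim}(iii) provides.

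For part (iii): Given $F' \subsetneq F$, pick $\ab \in \relint(F)$ and $\ab' \in \relint(F')$. Since $F'$ is a proper face of $F$, there exists some edge $e \in \EE(G)$ on which the defining inequality is tight on $F'$ but not on $F$ — concretely, $\zeta_e$ is non-constant on $F$ (so $\lceil\zeta_e(\ab)\rceil = \lfloor\zeta_e(\ab)\rfloor + 1$) while $\zeta_e(\ab')$ is an integer equal to one of the two bounds. Replacing $e$ by $\bar e$ if needed (using $\zeta_{\bar e} = -\zeta_e$), I may assume $\lceil\zeta_e(\ab')\rceil = \lceil\zeta_e(\ab)\rceil - 1$. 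Then by the formula in part (ii), $E'(e_+) = \sum_{e'_+ = e_+}\lceil\zeta_{e'}(\ab')\rceil < \sum_{e'_+=e_+}\lceil\zeta_{e'}(\ab)\rceil = E(e_+)$, provided the other edges into $e_+$ do not compensate by strictly increasing — but they cannot, since passing from $F$ to the subface $F'$ only tightens (never loosens) each constraint, so $\lceil\zeta_{e'}(\ab')\rceil \le \lceil\zeta_{e'}(\ab)\rceil$ for every $e'$. Therefore $E'(e_+) < E(e_+)$, giving $\mb_{F'} \ne \mb_F$. The main obstacle across the whole lemma is really part (ii): making rigorous that $\mb_F$ is governed precisely by the ceilings via the vertex structure of the Delaunay cells, which rests on the total unimodularity established earlier; once that is in hand, parts (i) and (iii) are routine bookkeeping with floors, ceilings, and the edge-to-head incidence.
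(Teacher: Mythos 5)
Your route is the same as the paper's: part (i) by writing $\ab=\Delta(f)$ and applying Lemma~\ref{lem:lookgraphical}, part (ii) via the lcm of the vertex labels together with the ceiling description of the cells, and part (iii) by comparing $\lceil\zeta_e(\ab')\rceil$ with $\lceil\zeta_e(\ab)\rceil$ for $\ab\in\relint(F)$, $\ab'\in\relint(F')$ and exhibiting one oriented edge where the ceiling strictly drops. In (iii) your explicit replacement of $e$ by $\bar e$ in the case $\zeta_e(\ab')=\lceil\zeta_e(\ab)\rceil$ is actually a point the paper's own proof leaves implicit, and it is needed; the rest of (i) and (iii) is routine and correct.

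The one step that is asserted rather than proved is the reverse inclusion in (ii): you need, for each vertex $v$, a single lattice point $D\in\Prin(G)$ lying in $F$ with $\zeta_e(D)=\lceil\zeta_e(\ab)\rceil$ simultaneously for \emph{every} $e$ with $e_+=v$, and you justify the existence of this ``cornermost'' point by citing Theorem~\ref{thm:totunim}(iii). That theorem identifies the $0$-cells with the lattice and the decomposition with a Delaunay decomposition, but it does not by itself say that the face of $C_\ab$ cut out by the equations $\zeta_e=\lceil\zeta_e(\ab)\rceil$ for $e_+=v$ is nonempty (for a general polytope such a simultaneous attainment of several upper bounds can fail). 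The paper is equally terse here, deferring to Remark~\ref{rmk:cells}, so I would not call your approach wrong, but to close the step you can argue explicitly: write $\ab=\Delta(f)$ and set $g(u)=\lfloor f(u)-f(v)\rfloor$ for all $u\in V(G)$. Then $g\in\M(G)$, so $D:=\Delta(g)\in\Prin(G)$; for every $e$ one has $\zeta_e(D)=g(e_+)-g(e_-)=\lfloor f(e_+)-f(v)\rfloor-\lfloor f(e_-)-f(v)\rfloor\in\{\lfloor\zeta_e(\ab)\rfloor,\lceil\zeta_e(\ab)\rceil\}$, so $D\in C_\ab$ and hence $D$ is a vertex of $F$; and for $e$ with $e_+=v$ one gets $\zeta_e(D)=-\lfloor f(e_-)-f(v)\rfloor=\lceil f(v)-f(e_-)\rceil=\lceil\zeta_e(\ab)\rceil$, whence $D(v)=E(v)$ by part (i). With this inserted, your proof of (ii), and therefore of the whole lemma, is complete.
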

\begin{proof}
(i) By Lemma~\ref{lem:lookgraphical}(i) we may write $\ab = \Delta(f)$ for some $f \in C^1(G, \RR)$. By definition we have $\Delta(f) = \sum_{v}\sum_{e_+=v}(f(e_+)-f(e_-)) (v)$. Therefore, it follows from Lemma~\ref{lem:lookgraphical}(ii) that $\ab(v)=\sum_{e_+=v}\zeta_e(\ab)$.  

(ii) follows from (i) and the fact that open cells in $\Del(\Prin(G))$ correspond precisely to equivalence classes of points, where $\ab \sim \bb$ if and only if $\lceil \zeta_e(\ab) \rceil = \lceil \zeta_e(\bb) \rceil$ for all $e \in \EE(G)$ (Remark~\ref{rmk:cells}(ii)).

\medskip

(iii) Let $F=C_\ab$ for $\ab \in \relint(F)$ and $F'=C_{\ab'}$ for $\ab' \in \relint(F')$. Since $\ab'$ is in $F$ as well, it satisfies $\zeta_e(\ab') \leq \lceil \zeta_e(\ab) \rceil$ for all $e \in \EE(G)$. Therefore we have $\lceil \zeta_e(\ab')\rceil \leq \lceil \zeta_e(\ab) \rceil$. But since $F' \ne F$ there must exist some $e$ such that $\zeta_e(\ab') \in \ZZ$ but $\zeta_e(\ab) \not \in \ZZ$ and therefore $\lceil \zeta_e(\ab')\rceil < \lceil \zeta_e(\ab) \rceil$. The result now follows from part (ii) because for this edge, by \eqref{eq:prinlabels}, the exponent of $x_{e_+}$ in $\mb_{F'}$ must be strictly less than the exponent of $x_{e_+}$ in $\mb_{F}$.
\end{proof}

\begin{Theorem}\label{thm:Ures}
The complex $(\Fc_G, \partial)$ is a minimal $\Div(G)$-graded free resolution of the module $\UU_G$ over $\Rb$. 
\end{Theorem}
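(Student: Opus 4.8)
The plan is to apply the Bayer–Sturmfels criterion (\cite[Proposition~1.2]{BayerSturmfels}, recalled in \S\ref{sec:res}) adapted to the Laurent monomial module $\UU_G$: to show that $(\Fc_G, \partial)$ is a free resolution of $\UU_G$, it suffices to check that for every monomial $\xb^E$ (equivalently every divisor $E \in \Div(G)$), the subcomplex $\Del(\Prin(G))_{\leq \xb^E}$ consisting of all faces whose label divides $\xb^E$ is acyclic over $K$ (or empty). Then for minimality we must verify that none of the ratios $\mb_F / \mb_{F'}$ appearing in the differential is a unit, which is exactly the content of Lemma~\ref{lem:nounit}(iii).

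The key observation is to identify the subcomplex $\Del(\Prin(G))_{\leq \xb^E}$ combinatorially. By the explicit description of the labels in Lemma~\ref{lem:nounit}(ii), a face $F = C_\ab$ has $\mb_F = \xb^{E_F}$ with $E_F(v) = \sum_{e_+ = v}\lceil \zeta_e(\ab)\rceil$, so $\mb_F \mid \xb^E$ means $E_F \leq E$. I would argue that this divisibility condition, together with the fact that $0$-cells of $\Del(\Prin(G))$ are exactly the lattice points $\Prin(G)$ and that for a $0$-cell $D$ the label is literally $\xb^D$, forces $\Del(\Prin(G))_{\leq \xb^E}$ to be precisely the subcomplex supported on the lattice points $P(E) = \{D \in \Prin(G) : D \leq E\}$ — i.e. the polyhedral subcomplex of $\Del(\Prin(G))$ whose vertex set is $P(E)$. (One direction is immediate since the label of $F$ is the lcm of its vertex labels; the reverse direction uses that the Delaunay cells are bounded convex polyhedra equal to the convex hull of their vertices, so a face all of whose vertices lie in $P(E)$ has its label dividing $\xb^E$.) Once this identification is in place, Lemma~\ref{lem:contract} says exactly that this subcomplex is a polyhedral subdivision of a contractible space, hence acyclic over $K$; and when $P(E) = \emptyset$ the subcomplex is empty, which is the degenerate acyclic case. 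This establishes exactness, so $(\Fc_G, \partial)$ is a cellular free resolution of $\UU_G$, and it is polyhedral because $\Del(\Prin(G))$ is a polyhedral complex by Theorem~\ref{thm:PrinDel}(i). Minimality is then immediate from Lemma~\ref{lem:nounit}(iii), since $\mb_F / \mb_{F'}$ a unit would mean $\mb_F = \mb_{F'}$.

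I expect the main obstacle to be the careful justification that $\Del(\Prin(G))_{\leq \xb^E}$ really coincides with the subcomplex on $P(E)$ — in particular the subtlety that a Delaunay face $F$ could a priori have a large label even if we only demand its vertices lie in $P(E)$; ruling this out requires using that Delaunay cells here are bounded polytopes (Theorem~\ref{thm:PrinDel}(i)) equal to the convex hull of their $0$-dimensional faces, combined with the monotonicity of the label formula $E_F(v) = \sum_{e_+=v}\lceil\zeta_e(\ab)\rceil$ in the natural order on cells, exactly as used in Lemma~\ref{lem:nounit}(iii). A secondary technical point is that Bayer–Sturmfels is stated for monomial ideals/modules with a grading by $\ZZ^r$ or by a lattice; one should note that $\Del(\Prin(G))$ is $\Prin(G)$-invariant (Theorem~\ref{thm:PrinDel}(ii)) and the labeling is equivariant, so the infinite complex is genuinely a complex of $\Div(G)$-graded $\Rb$-modules and the acyclicity criterion applies verbatim to the Laurent monomial module $\UU_G$ as remarked at the start of \S\ref{sec:labelDelPrin}.
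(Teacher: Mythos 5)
Your proposal is correct and follows essentially the same route as the paper: exactness via the Bayer--Sturmfels acyclicity criterion applied to the subcomplex on the lattice points $P(E)=\{D\in\Prin(G): D\leq E\}$, whose contractibility is Lemma~\ref{lem:contract}, and minimality via Lemma~\ref{lem:nounit}(iii). The only difference is that you dwell on identifying $\Del(\Prin(G))_{\leq \xb^E}$ with the subcomplex on $P(E)$; since the label of a face is by definition the lcm of its vertex labels, this identification is immediate in both directions and needs no convexity argument.
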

\begin{proof}
We need to show two things:
\begin{itemize}
\item[(i)] $(\Fc_G, \partial)$ is exact, i.e. $(\Fc_G, \partial)$ is a cellular free resolution of $\UU_G$.
\item[(ii)] For distinct faces $F' \subsetneq F$ of $\Del(\Prin(G))$ with $\codim(F,F')=1$ we have $\mb_F \ne \mb_{F'}$, i.e. no unit of $\Rb$ appears in differential maps and the resolution $(\Fc_G, \partial)$ is minimal.
\end{itemize}

By \cite[Proposition~1.2]{BayerSturmfels}, we know (i) is equivalent to 
\begin{itemize}
\item[(i')] For each $E \in \Div(G)$, the subcomplex of $\Del(\Prin(G))$ on the lattice points $\{D \in \Prin(G):  D \leq E\}$ is acyclic over the field $K$, i.e. its reduced homology $\widetilde{H}_i$ with $K$ coefficients vanishes for all $i \geq 0$. 
\end{itemize}
(i') follows from Lemma~\ref{lem:contract} and (ii) follows from Lemma~\ref{lem:nounit}(iii). 
\end{proof}

From Theorem~\ref{thm:Ures} and \cite[Corollary~3.7]{BayerSturmfels} we immediately obtain the following theorem. 
\begin{Theorem}\label{thm:Ires}
The quotient cell complex $\Del(\Prin(G)) / \Prin(G)$ supports a $\Pic(G)$-graded minimal free resolution for $I_G$. 
\end{Theorem}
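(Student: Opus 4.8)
The plan is to deduce Theorem~\ref{thm:Ires} directly from Theorem~\ref{thm:Ures} by invoking the general machinery of \cite{BayerSturmfels} relating a Laurent monomial module to the monomial ideal obtained as its quotient by the lattice action. The key point is that $\UU_G \subset \Tb$ is a lattice module for the lattice $\Prin(G)$ acting on $\Tb = K[\xb,\xb^{-1}]$, and that $\II_G$ (or rather the ideal $\MM_G$ or the quotient lattice ideal it defines) is precisely the image of $\UU_G$ under passage to the quotient. More precisely, \cite[Corollary~3.7]{BayerSturmfels} states that if $\Fc$ is a free resolution of a lattice module $M$ supported on a cell complex $\C$ that is equivariant under the lattice action $L$, and the action of $L$ on $\C$ is free, then the quotient complex $\C / L$ supports a free resolution of the corresponding quotient module, graded now by the quotient of the grading group by $L$. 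Here $L = \Prin(G)$, the grading group is $\Div(G)$, and $\Div(G)/\Prin(G) = \Pic(G)$, so the resulting resolution is $\Pic(G)$-graded.

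First I would verify the hypotheses of \cite[Corollary~3.7]{BayerSturmfels} in our situation. The resolution $(\Fc_G, \partial)$ of $\UU_G$ is supported on $\Del(\Prin(G))$ by Theorem~\ref{thm:Ures}. By Theorem~\ref{thm:PrinDel}(ii) the cell decomposition $\Del(\Prin(G))$ is invariant under translation by $\Prin(G)$, and this translation action is free on cells (translation by a nonzero lattice vector moves every point, hence every cell, since $\Prin(G)$ consists exactly of the $0$-cells by Theorem~\ref{thm:PrinDel}(iii) and the tiling is face-to-face). The monomial labeling is equivariant: translating a cell $F = C_\ab$ by $D \in \Prin(G)$ sends it to $C_{\ab + D}$, and by the formula \eqref{eq:prinlabels} in Lemma~\ref{lem:nounit}(ii) the label transforms by $\xb^E \mapsto \xb^{E+D}$ because $\zeta_e(\ab + D) = \zeta_e(\ab) + \zeta_e(D)$ with $\zeta_e(D) \in \ZZ$, so $\lceil \zeta_e(\ab+D)\rceil = \lceil \zeta_e(\ab)\rceil + \zeta_e(D)$ and summing over $e_+ = v$ gives $E(v) + D(v)$. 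Thus the whole setup descends, and $\Del(\Prin(G))/\Prin(G)$ — which is a finite cell complex by Theorem~\ref{thm:PrinDel}(v) — supports a $\Pic(G)$-graded free resolution of the quotient module $\UU_G / \Prin(G) \cong \Rb / \II_G$ (or the appropriate twist giving $I_G$ itself). Minimality is inherited because the condition in Lemma~\ref{lem:nounit}(iii) that $\mb_F \ne \mb_{F'}$ for $F' \subsetneq F$ is preserved under the quotient: if two faces of $\Del(\Prin(G))/\Prin(G)$ had equal labels, lifting to adjacent faces in $\Del(\Prin(G))$ sharing a codimension-one incidence would contradict Lemma~\ref{lem:nounit}(iii).

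The main technical point to be careful about — and what I expect to be the one genuinely delicate step — is the bookkeeping identifying the quotient of the Laurent monomial module $\UU_G$ under the $\Prin(G)$-action with the ideal $I_G$ as a $\Pic(G)$-graded module, rather than with $\Rb/I_G$ or some shift thereof. One needs that the grading group $\Div(G)$ for $\UU_G$, modulo $\Prin(G)$, is exactly $\Pic(G)$, and that under this quotient the Laurent monomials $\xb^D$ for $D \in \Prin(G)$ all collapse to the image of $\xb^0 = 1$, while the minimal generators of the cokernel presentation assemble into the binomial generators $\xb^{D_1} - \xb^{D_2}$ with $D_1 \sim D_2$. This is precisely the content of \cite[Section~3]{BayerSturmfels} on lattice ideals versus lattice modules, together with the fact — used implicitly throughout \S\ref{sec:IGresol} and guaranteed because the only effective divisor in $\Prin(G)$ is $0$ — that the generic deformation/genericity hypotheses needed to apply \cite[Corollary~3.7]{BayerSturmfels} hold here. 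Once the correct module identification is pinned down, the statement follows formally; the geometry has already been done in Theorems~\ref{thm:totunim}, \ref{thm:PrinDel}, and \ref{thm:Ures}.
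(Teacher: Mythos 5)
Your proposal is correct and follows the same route as the paper, which deduces Theorem~\ref{thm:Ires} from Theorem~\ref{thm:Ures} together with \cite[Corollary~3.7]{BayerSturmfels}; the paper states this in one line, while you additionally spell out the verification of the hypotheses (equivariance of the labeling, freeness of the lattice action, descent of minimality, and the identification $\Div(G)/\Prin(G)=\Pic(G)$), all of which checks out.
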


\begin{Example} \label{ex:PrinResol}

Consider the graph $K_3$ with a fixed orientation as in Figure~\ref{fig:K30}. 

\begin{figure}[h!]
\begin{center}

\begin{tikzpicture} [scale = .30, very thick = 20mm]

\node (n1) at (34,12) [Cwhite] {$u_1$};
  \node (n2) at (30,6)  [Cwhite] {$u_3$};
  \node (n3) at (38,6)  [Cwhite] {$u_2$};

\node (n1) at (34,5.2) [Cwhite] {$e_1$};
  \node (n2) at (31.7,9)  [Cwhite] {$e_3$};
  \node (n3) at (36.3,9)  [Cwhite] {$e_2$};

  \node (n1) at (34,11) [Cgray] {};
  \node (n2) at (31,6)  [Cgray] {};
  \node (n3) at (37,6)  [Cgray] {};
 \foreach \from/\to in {n3/n1,n1/n2,n2/n3}
  \draw[black][->] (\from) -- (\to);

\end{tikzpicture}

\caption{Graph $K_3$ and a fixed orientation $\Oc$}
\label{fig:K30}
\end{center}
\end{figure}
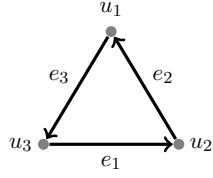

The lattice $\Prin(G)$ is two dimensional and is depicted in Figure~\ref{fig:PrinLattice}. This lattice ``lives in'' $C_0(G,\RR)=\span\{(u_1),(u_2),(u_3)\} \cong \RR^3$. In the picture $c_1=\Delta(\chi_{u_1})=2(u_1)-(u_2)-(u_3)$, $c_2=\Delta(\chi_{u_2})=-(u_1)+2(u_2)-(u_3)$, and $c_3=\Delta(\chi_{u_3})=-(u_1)-(u_2)+2(u_3)$.

\begin{figure}[h!]

\begin{center}
\begin{tikzpicture} [scale = .30, very thick = 20mm]

\node (n42) at (20,1)  [Cblack] {};
\node (n41) at (14,1)  [Cblack] {}; 
\node (n43) at (26,1)  [Cblack] {};
\node (n44) at (32,1)  [Cblack] {};
\node (n11) at (14,11) [Cblack] {};  
\node (n12) at (20,11) [Cblack] {};  
\node (n13) at (26,11) [Cblack] {};  
\node (n14) at (32,11) [Cblack] {};
\node (n21) at (11,6)  [Cblack] {}; 
\node (n22) at (17,6)  [Cblack] {}; 
\node (n23) at (23,6)  [Cblack] {}; 
\node (n24) at (29,6)  [Cblack] {};  
\node (n25) at (35,6)  [Cblack] {};
\node (n4) at (14,1)  [Cblack] {};
\node (n1) at (14,11) [Cblack] {};
\node (n2) at (11,6)  [Cblack] {};
\node (n3) at (17,6)  [Cblack] {};
\node (n51) at (23,-4) [Cblack] {}; 
\node (n52) at (29,-4) [Cblack] {};   
\node (n70) at (20,-8.3) [C0] {}; 

\node (n18) at (35,16) [C0] {$\zeta_{e_1}=0$};
\node (n40) at (6.7,1)  [C0] {$\zeta_{e_2}=0$};
\node (n10) at (17,16) [C0] {$\zeta_{e_3}=0$}; 
\node (n181) at (38,11) [C0] {$\zeta_{e_1}=1$};
\node (n401) at (3.7,6)  [C0] {$\zeta_{e_2}=1$};
\node (n101) at (11,16) [C0] {$\zeta_{e_3}=1$};

\node (m24) at (30.5,6.8)  [C0] {$c_1$};
\node (mm) at (30.5,-4.5) [C0] {$c_2$};  
\node (m42) at (19,0.17)  [C0] {$c_3$};
\node (m42) at (25,0.17)  [C0] {$0$};

\foreach \from/\to in {n40/n41, n401/n21}
    \draw[black][dashed] (\from) -- (\to);
\foreach \from/\to in {n14/n18, n70/n51,n25/n181}
    \draw[green][dashed] (\from) -- (\to);
\foreach \from/\to in {n10/n12,n1/n3, n101/n11}
    \draw[red][dashed] (\from) -- (\to);

\foreach \from/\to in {n11/n14,n21/n25,n41/n44,n51/n52}
  \draw[black][] (\from) -- (\to);
\foreach \from/\to in {n11/n22,n22/n42,n42/n51,n12/n23,n23/n43,n43/n52,n13/n24,n24/n44,n14/n25,n21/n41}
  \draw[red][] (\from) -- (\to);
\foreach \from/\to in {n11/n21,n12/n22,n22/n41,n13/n23,n23/n42,n14/n24,n24/n43,n43/n51,n25/n44,n44/n52}
  \draw[green][] (\from) -- (\to);

\end{tikzpicture}
\caption{The lattice $(\Prin(G), \langle \cdot , \cdot \rangle_{\en})$ and the associated cellular decomposition of the ambient space $\Div_{\RR}^0(G)$}
\label{fig:PrinLattice}
\end{center}
\end{figure}
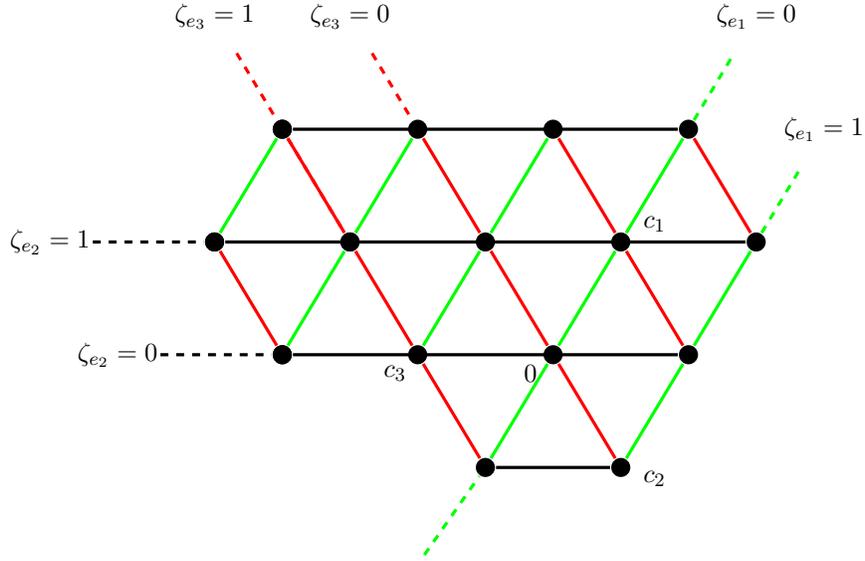

The cell decomposition $\Del(\Prin(G))$ is the Delaunay decomposition of $\Div^0_{\RR}(G)$ with respect to the principal lattice and the energy distance (Theorem~\ref{thm:PrinDel}(iv)) which coincides with the infinite hyperplane arrangement \eqref{eq:Ca}. 
The quotient cell complex $\Del(\Prin(G))/\Prin(G)$ of the torus has one $0$-cell $\{v\}$ (orbit of the origin), three $1$-cells $\{e,e',e''\}$ (orbits of green, red, and black edges), and two $2$-cells $\{f,f'\}$ (orbits of upward and downward triangles). 

In Figure~\ref{fig:fundPrin} we have chosen a fundamental domain for the lattice, and have labeled all cells of this fundamental domain according to the recipe described in the beginning of \S\ref{sec:labelDelPrin} or, equivalently, in Lemma~\ref{lem:nounit}(ii). For simplicity we have used $x_i$ instead of $x_{u_i}$. The labeled cell complex in Figure~\ref{fig:fundPrin} is enough to completely describe a minimal free resolution for both $\II_G$ and $\UU_G$. Concretely, the minimal resolution of $\II_G$ is as follows:
\[
 0 \rightarrow \Rb(-\mb_{f}) \oplus \Rb(-\mb_{f'}) \xrightarrow{\partial_2} \Rb(-\mb_{e}) \oplus \Rb(-\mb_{e'})\oplus \Rb(-\mb_{e''}) \xrightarrow{\partial_1} \Rb(-\mb_v) \ .
\]
As usual, assume $[F]$ denotes the generator of $\Rb(-\mb_F)$. Let
\[
\mb_e = x_1^2 \ ,
\quad
\mb_{e'} = x_1 x_2 \ ,
\quad
\mb_{e''} = x_2^2 \ ,
\]
\[
\mb_f = x_1^2 x_2 \ ,
\quad
\mb_{f'}=x_1 x_2^2 \ .
\]
The homogenized differential operator (see \eqref{eq:differntials}) $(\partial_1, \partial_2)$ of the cell complex is described as follows:

\[
\partial_1([e])= \frac{x_1^2}{1}[v] -\frac{x_1^2}{\frac{x_1^2}{x_2x_3}}[v] =  ({x_1^2} - {x_2x_3})[v]\ ,
\]
\[
\partial_1([e'])= \frac{x_1 x_2}{\frac{x_1x_2}{x_3^2}}[v] - \frac{x_1x_2}{1}[v] = ({x_3^2} - {x_1x_2})[v]\ , 
\]
\[
\partial_1([e''])= \frac{x_2^2}{\frac{x_2^2}{x_1x_3}}[v] - \frac{x_2^2}{1}[v] = ({x_1x_3} - {x_2^2})[v]\ , 
\]
\[
\partial_2([f])= \frac{x_1^2 x_2}{x_1^2}[e] -  \frac{x_1^2 x_2}{\frac{x_1^2 x_2}{x_3}}[e''] + \frac{x_1^2 x_2}{x_1 x_2}[e'] = {x_2}[e] - {x_3}[e'']  + {x_1}[e']   \ ,
\]
\[
\partial_2([f'])= \frac{x_1 x_2^2}{\frac{x_1x_2^2}{x_3}}[e] -  \frac{x_1 x_2^2}{x_2^2}[e''] + \frac{x_1 x_2^2}{x_1 x_2}[e'] = {x_3}[e] - {x_1}[e'']  + {x_2}[e']   \ .
\]
Clearly $\II_G$ is the image of $\partial_1$ after identifying $[v]$ with $1 \in \Rb$ (see Theorem~\ref{thm:Cori}).
Note that, since the labeling is compatible with the action of the lattice, any other fundamental domain would give rise to the exact same description of the differential maps.

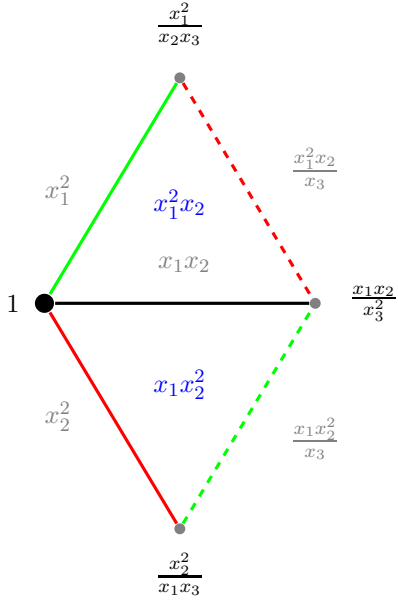
\begin{figure}[h!]

\begin{center}

\begin{tikzpicture} [scale = .60, very thick = 20mm]

\node (n4) at (14,1)  [Cgray] {};
\node (n1) at (14,11) [Cgray] {}; 
\node (n2) at (11,6)  [Cblack] {}; 
\node (n3) at (17,6)  [Cgray] {}; 

\node (m1) at (14,0) [C0] {$\frac{x_2^2}{x_1x_3}$};
\node (m1) at (14,12.2) [C0] {$\frac{x_1^2}{x_2x_3}$};
\node (m3) at (18.3,6) [C0] {$\frac{x_1 x_2}{x_3^2}$};
\node (m) at (10.3,6)  [C0] {$1$};

\foreach \from/\to in {n4/n2}
  \draw[red][] (\from) -- (\to);
\foreach \from/\to in {n1/n2}
  \draw[green][] (\from) -- (\to);
\foreach \from/\to in {n2/n3}
  \draw[black][] (\from) -- (\to);
\foreach \from/\to in {n1/n3}
  \draw[red][dashed] (\from) -- (\to);
\foreach \from/\to in {n4/n3}
  \draw[green][dashed] (\from) -- (\to);

\node (m10) at (11.3,8.5) [C0] {\textcolor{gray}{$x_1^2$}};
\node (m14) at (11.3,3.5) [C0] {\textcolor{gray}{$x_2^2$}};
\node (m23) at (14.1,6.86) [C0] {\textcolor{gray}{$x_1x_2$}};
\node (m023) at (17,9.0) [C0] {\textcolor{gray}{$\frac{x_1^2x_2}{x_3}$}};
\node (m023) at (17,3.0) [C0] {\textcolor{gray}{$\frac{x_1 x_2^2}{x_3}$}};

\node (m123) at (14,8.2) [C0] {\textcolor{blue}{$x_1^2 x_2$}};
\node (m423) at (14,4.2) [C0] {\textcolor{blue}{$x_1x_2^2$}};

\end{tikzpicture}
\caption{A choice of fundamental domain with labels}
\label{fig:fundPrin}
\end{center}

\end{figure}
\end{Example}

\medskip

\begin{Remark} \label{rmk:isometry}
It follows from the computation
\[
\begin{aligned}
\langle \Delta(f), \Delta(g) \rangle_{\en} &= \sum_{v \in V(G)}{f(u) \Delta(g) (u)} = \sum_{v \in V(G)}{f(u) (\partial_{\Oc} d_{\Oc} g) (u)} \\
&=\sum_{e \in \EE(G)}{(d_{\Oc} f)(e)(d_{\Oc} g)(e)} 
\end{aligned}
\]
that there is an isometry between the principal lattice $(\Prin(G) , \langle \cdot, \cdot \rangle_{\en})$ and the {\em cut lattice} (lattice of integral cocyles) $(L(G) , \langle \cdot , \cdot \rangle )$ defined in \S\ref{sec:cutlattice}. It is natural to ask whether there are other ideals defined directly in terms of the cut lattice and, if so, whether there are nice relations between these ideals. These questions will be answered in this work (see \S\ref{sec:relate}, especially Remark~\ref{rmk:ResolRelation}). 
\end{Remark}

\begin{Remark}\label{rmk:Mg}
It is possible to give a polyhedral cellular free resolution of the ideal $\MM_G^q$ using the local picture at the origin of $\Del(\Prin(G))$ (or, alternatively, using the graphic hyperplane arrangement -- see Remark~\ref{rmk:cells}(ii)) and study its Gr\"obner relation with $\II_G$, similar to what we will do for $\OO_G^q$ in relation to $\JJ_G$ in \S\ref{sec:matroidlawrence}. Instead, we will show (in \S\ref{sec:relate}) that one could alternatively relate $\II_G$ to $\JJ_G$ and $\MM_G^q$ to $\OO_G^q$ via a regular sequence. As a corollary, this gives an alternate way to describe polyhedral cellular free resolutions of all these ideals and to compare their Betti numbers. 
\end{Remark}

\begin{Remark}
The minimal free resolution of $\MM_G^q$ is a Koszul complex when $G$ is a tree because $\MM_G^q$ is generated by the variables $\{x_v : v \ne q\}$ (see Theorem~\ref{thm:Cori}). When $G$ is a complete graph, the minimal free resolution of $\MM_G^q$ is given by a Scarf complex (see, e.g., \cite[Corollary~6.9]{PostnikovShapiro04}).
\end{Remark}

\section{Graphs, arrangements, and integral cuts} \label{sec:arrg}

\subsection{Graphic arrangements and connected partitions} \label{sec:BG}
Following \cite{GreenZaslavsky}, we define the {\em graphic hyperplane arrangement} as follows. An important feature that we want to emphasize in this section is that this arrangement naturally ``lives in'' the Euclidean space $C^0(G,\RR)$, i.e. the vector space of all real-valued functions on $V(G)$ endowed with the bilinear form 
\[
\langle f_1, f_2\rangle = \sum_{v\in V(G)}{f_1(v)f_2(v)} \ .
\]

Recall that $C^1(G, \RR)$ denotes the vector space of real-valued functions on $\EE(G)$ and $d \colon C^0(G,\RR) \rightarrow C^1(G,\RR)$ denotes the usual coboundary map.

For each edge $e \in \EE(G)$, let $\H_{e} \subset C^0(G,\RR)$ denote the hyperplane
\[
\H_{e} = \{f \in C^0(G,\RR): (d f) (e)=0 \} \ .
\]

Note that $\H_{\bar{e}}=\H_{e}$. Consider the arrangement 
\[
\H'_G= \{\H_e : e \in \EE(G)\} 
\]
in $C^0(G,\RR)$. Since $G$ is connected, we know $\bigcap_{e \in \EE(G)}{\H_e}$ is the $1$-dimensional space of constant functions on $V(G)$, which is the same as the kernel of $d$. We define the {\em graphic arrangement} corresponding to $G$, denoted by $\H_G$, to be the restriction of $\H'_G$ to the hyperplane 
\begin{equation}\label{eq:kerperp}
(\Ker(d))^{\perp} = \{f \in C^0(G, \RR) : \sum_{v\in V(G)}f(v)=0\} \ .
\end{equation}

\medskip

The intersection poset of $\H_G$ (i.e. the collection of nonempty intersections of hyperplanes $\H_e$ ordered by reverse inclusion) is naturally isomorphic to the poset of connected partitions of $G$ (i.e. partitions of $V(G)$ whose blocks induce connected subgraphs). See, e.g., \cite[p.112]{GreenZaslavsky}. 

\medskip

It is well-known that there is a one-to-one correspondence between acyclic orientations of $G$ and the regions of $\H_G$ (see, e.g., \cite[Lemma~7.1 and Lemma~7.2]{GreenZaslavsky}). Given any function $f \in C^0(G,\RR)$ one can label each vertex $v$ with the real number $f(v)$. In this way we obtain an acyclic partial orientation of $G$ by directing $v$ to $u$ if $f(u) < f(v)$. Recall this means we have an acyclic orientation on the graph $G/f$ obtained by contracting all unoriented edges (i.e. all edges  $\{u,v\}$ with $f(u)=f(v)$).

\medskip

We are mainly interested in acyclic orientations of $G$ with a {\em unique source} at $q \in V(G)$. For this purpose, we fix a real number $c>0$ and define

\[
\H^{q,c}=\{f\in C^0(G, \RR): f(q)=-c \}  \ .
\]
The restriction of the arrangement $\H_G$ to $\H^{q,c}$ will be denoted by $\H_G^{q,c}$. We denote the {\em bounded complex} (i.e. the polyhedral complex consisting of bounded cells) of $\H_G^{q,c}$ by $\B_G^{q,c}$. 

\begin{Remark}
\begin{itemize}
\item[]
\item[(i)] By \eqref{eq:kerperp}, the restriction of $\H_G$ to $\H^{q,c}$ coincides with the restriction of $\H_G$ to
\[
(\H^{q,c})'=\{f\in C^0(G, \RR): \sum_{v \ne q}f(v)=c \}  \ .
\]

\item[(ii)] We will see in \S\ref{sec:grobrel} (e.g. Lemma~\ref{lem:labels}(ii)) that it is most natural (although not necessary) to choose $0 <c<1$.
\end{itemize}
\end{Remark}

The following lemma relates regions of $\B_G^{q,c}$ to acyclic orientations with unique source at $q$ (see also \cite[Theorem~7.3]{GreenZaslavsky}).

\begin{Lemma} \label{lem:BG}
Each $f \in \B_G^{q,c}$ gives an acyclic partial orientation of $G$ with a unique source at $q$. {In particular $f(v) \geq f(q)$ for any edge $\{v,q\} \in E(G)$.}
\end{Lemma}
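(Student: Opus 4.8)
The plan is to unwind the definitions and use the geometry of the hyperplane arrangement. Recall that $\B_G^{q,c}$ is the bounded complex of the restriction $\H_G^{q,c}$ of the graphic arrangement to the affine hyperplane $\H^{q,c} = \{f : f(q) = -c\}$. A point $f$ lying in this arrangement induces an acyclic partial orientation of $G$, as explained right before Lemma~\ref{lem:BG}: direct $v \to u$ whenever $f(u) < f(v)$ and contract each edge $\{u,v\}$ with $f(u) = f(v)$; the resulting orientation on $G/f$ is acyclic because the values $f(\cdot)$ give a compatible potential (no directed cycle can have strictly decreasing potential all the way around). So the only real content is the claim about the source at $q$, equivalently the claim $f(v) \geq f(q)$ for every neighbor $v$ of $q$ (and, more precisely, for every vertex $v$, though the statement only asserts it for neighbors, which is what forces $q$ to be a source of the partial orientation).

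First I would set up a contradiction: suppose $f \in \B_G^{q,c}$ but $q$ is \emph{not} a unique source of the induced acyclic partial orientation. Since the orientation on $G/f$ is acyclic, it does have at least one source; if $q$ is not a source, then either there is an edge oriented \emph{into} $q$ (some neighbor $v$ with $f(v) < f(q)$), or $q$'s block is not a source block of $G/f$. In either case I want to produce a ray inside $\H_G^{q,c}$ starting at $f$ and staying in the (closed) cell of $f$, along which the point escapes to infinity — contradicting the assumption that $f$ lies in a \emph{bounded} cell. The natural ray is: translate in the direction of the characteristic function $\chi_A$ (or its negative), where $A$ is an appropriately chosen ``down-set'' with respect to the partial order induced by $f$.

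Here is the key step in more detail. Let $A = \{ u \in V(G) : f(u) \leq f(q) \}$; by assumption $q \in A$, and by the failure of the source condition $A$ either strictly contains $q$'s block or — in the borderline case where $A$ is exactly $q$'s block — $A \ne V(G)$ and there is an edge leaving $A$ into strictly higher values, which means that along the direction $-\chi_A$ (lowering all of $A$ uniformly) no hyperplane $\H_e$ of the arrangement is crossed: for an edge inside $A$ the coboundary is unchanged, for an edge inside $A^c$ it is unchanged, and for an edge $e$ between $A$ and $A^c$ the value $(df)(e)$ only grows in absolute value in the same sign, so we never hit $(df)(e) = 0$. Crucially, moving in the direction $-\chi_A$ keeps $f(q)$ fixed only if $q \notin A$, so instead I move in the direction $\chi_{A^c} = \mathbf{1} - \chi_A$ restricted to be tangent to $\H^{q,c}$ — i.e. raise everything outside $A$, which fixes $f(q)$ since $q \in A$, and which is the same arrangement-direction as $-\chi_A$ up to the global translation $\mathbf{1}$ that the arrangement (living in $(\Ker d)^\perp$ after projection, equivalently cut out by coboundary conditions) does not see. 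Along this ray $f + t\,\chi_{A^c}$, $t \geq 0$, every coboundary $(df)(e)$ either stays constant or moves monotonically away from $0$, so the ray never leaves the closed cell containing $f$; but the ray is unbounded, so that cell is unbounded, contradicting $f \in \B_G^{q,c}$. Hence $q$ must be a source, and by acyclicity of $G/f$ and connectedness of $G$ it is the unique source. The displayed consequence $f(v) \geq f(q)$ for $\{v,q\} \in E(G)$ is then immediate: if $f(v) < f(q)$ the edge would be oriented into $q$, contradicting that $q$ is a source.

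The main obstacle I anticipate is bookkeeping the two ``directions'' correctly: the arrangement $\H_G$ genuinely lives on the quotient $C^0(G,\RR)/(\text{constants})$ (equivalently on $(\Ker d)^\perp$), while the slice $\H^{q,c}$ is an affine chart for that quotient, so I must make sure the escaping ray I build is actually tangent to $\H^{q,c}$ (fixes $f(q) = -c$) \emph{and} monotone with respect to every coboundary functional $(df)(e)$ simultaneously — and handle the degenerate case where $A$ equals $q$'s connected block separately from the case where it is strictly larger. Once the direction is chosen as $\chi_{A^c}$ with $A = \{u : f(u) \le f(q)\}$, the monotonicity check is a short edge-by-edge case analysis (edges inside $A$, inside $A^c$, and across), and the only subtlety is verifying $A \subsetneq V(G)$ precisely when $q$ fails to be the unique source, which follows from connectedness of $G$ together with acyclicity of the induced orientation on $G/f$.
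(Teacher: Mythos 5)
Your setup and the reduction to ``$q$ is the unique source'' are fine, but the escaping-ray construction has a genuine gap. The arrangement $\H_G^{q,c}$, and hence its bounded complex $\B_G^{q,c}$, lives in the codimension-two affine subspace $\{f : f(q)=-c,\ \sum_{v}f(v)=0\}$ of $C^0(G,\RR)$, and a cell is bounded or not \emph{as a subset of that subspace}. Your ray $f+t\,\chi_{A^c}$ fixes $f(q)$ but changes $\sum_v f(v)$ by $t\,\lvert A^c\rvert\neq 0$, so it immediately leaves this subspace and says nothing about whether the cell of $\B_G^{q,c}$ containing $f$ is bounded. The appeal to ``the arrangement does not see the translation by $\mathbf 1$'' does not repair this: the hyperplanes $\H_e$ are indeed invariant under adding constants, but the slice $\{f(q)=-c\}$ is not, so neither $\chi_{A^c}$ nor $-\chi_A$ (nor any constant shift of them) is tangent to both defining hyperplanes at once.

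A symptom that something must be wrong is that your construction never uses the hypothesis that $q$ fails to be the unique source. For \emph{every} point $f$ of the arrangement, the set $A=\{u: f(u)\le f(q)\}$ is a proper nonempty subset containing $q$ (proper because $\sum_{v\ne q}f(v)=c>0$ forces some $f(v)>f(q)$), and your edge-by-edge check shows that $f+t\,\chi_{A^c}$ preserves every sign condition and fixes $f(q)$ for all $t\ge 0$. If such a ray certified unboundedness, every cell of $\B_G^{q,c}$ would be unbounded. (Indeed, inside the slice $\{f(q)=-c\}$ alone the restricted arrangement is central --- all $\H_e$ pass through the constant function $-c\cdot\mathbf 1$ --- so there every cell genuinely is an unbounded cone; the normalization $\sum_v f(v)=0$ is exactly what makes the bounded complex nontrivial.) The paper's proof avoids both problems: assuming $s\ne q$ is a source of the induced acyclic orientation, it takes a vertex $w$ where $f$ is maximal and moves along $\chi_w-\chi_s$, which is tangent to both defining hyperplanes, and whose sign-preservation genuinely uses that $s$ is a local minimum and $w$ a global maximum of $f$. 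To salvage your approach you would need a direction supported away from $q$ with total sum zero, chosen using the alleged extra source, which essentially forces you back to the paper's argument.
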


\begin{proof}
Since we are considering the orientation on $G / f$ we may assume $f(u) \ne f(v)$ for any $\{u, v\} \in E(G)$.
Since any acyclic orientation of $G$ has at least one source vertex%
\footnote{It is an elementary fact that {\em any} acyclic orientation of $G$ has at least one source and one sink.}, 
it suffices to show that no vertex $v \ne q$ can be a source in the orientation corresponding to $f$. 

Let $w$ be a vertex such that $f(w)$ is maximum (i.e. $f(w) \geq f(v)$ for all $v \in V(G)$). To obtain a contradiction, assume $s \ne q$ is a source and therefore $f(v) > f(s)$ for all $\{v, s\} \in V(G)$. 

Recall that $\chi_v$ denotes the characteristic function of $v \in V(G)$. It is straightforward to check that 
\[
f_t=f + t (\chi_w-\chi_s) \in C^0(G, \RR)
\]
 also belongs to the same cell as $f$ for {\em any} $t \geq 0$. 
However, not all $f_t$ for $t \geq 0$ can be contained in the bounded complex because they constitute a ray in $C^0(G, \RR)$ emanating from $f$.
\end{proof}

\medskip

\begin{Remark} \label{sec:nopartorient}
It follows (see also \cite[Corollary~7.3]{GreenZaslavsky}) that the number of $i$-dimensional cells in $\B_G^{q,c}$ is equal to the number of acyclic partial orientations of $G$ with $(i+2)$ (connected) components having a unique source at $q$. For an example, see Example~\ref{exam:1}.
\end{Remark}

\subsection{Lattice of integral cuts and graphic infinite arrangements}
\label{sec:cutlattice}
Fix an arbitrary orientation $\Oc \subset \EE(G)$. Consider the restricted coboundary map $d_\Oc : C^0(G,\ZZ) \to C_{\Oc}^1(G,\ZZ)$ and the usual bilinear form on $C_\Oc^1(G,\ZZ)$ defined by 
\begin{equation}\label{eq:LGpairing}
\langle g_1, g_2 \rangle = \sum_{e \in \Oc}{g_1(e)g_2(e)} \ .
\end{equation}
The {\em lattice of integral cuts} (with respect to the orientation $\Oc$) is by definition the group of integral coboundaries $\Image(d_\Oc)$ inside $C_\Oc^1(G,\ZZ)$ with its bilinear form induced from \eqref{eq:LGpairing}. It is denoted by $L(G,\Oc)$. When the orientation is clear we simply denote it by $L(G)$.

\begin{Remark} \label{rmk: orientvslattice}
Consider the (unrestricted) coboundary map 
\[
d : C^0(G,\ZZ) \to C^1(G,\ZZ) \cong C_{\Oc}^1(G,\ZZ) \oplus C_{\bar{\Oc}}^1(G,\ZZ) \ .
\]
Its image $\Lambda=\Image(d)$ is isomorphic to the lattice $\{(a,-a) : a \in L(G,\Oc)\}$. The choice of the orientation $\Oc$ gives a splitting of $C^1(G,\ZZ)$ and of $\Lambda$. 
\end{Remark}

\medskip

We may identify $C^0(G,\ZZ)$ with $\ZZ^{V(G)}$ and $C_\Oc^1(G,\ZZ)$ with $\ZZ^{\Oc}$. If we also fix a labeling on the vertices and edges of the graph, then $d_\Oc$ is represented by the matrix $B^T$, where $B$ is the $n \times m$ vertex-edge incidence matrix of $G$. In this case, the lattice of integral cuts $L(G)$ is $\Image(B^T) \hookrightarrow \ZZ^m$. It is a well-known classical fact (due to Poincar\'e) that the matrix $B$ is totally unimodular in the sense of Example~\ref{ex:WU} (see, e.g., \cite[Proposition~5.3]{BiggsBook}). Therefore Theorem~\ref{thm:totunim}(iii) and Remark~\ref{rmk:delon} apply to this situation.
The Delaunay cell decomposition corresponding to the lattice $L(G)$ will be denoted by ${\Del}(L(G))$.
It follows from the total unimodularity that every element with minimal nonempty support in $L(G)$ is an integral multiple of a {\em bonds} (i.e. minimal edge-cuts, or, equivalently, edge-cuts connecting two connected subgraphs) (see, e.g., \cite[\S{1} and \S5]{Tutte})


\section{Graphic oriented matroid ideal and Lawrence ideal}
\label{sec:matroidlawrence}
We next study some natural ideals associated to the cell complexes introduced in \S\ref{sec:arrg}. See \cite{Popescu} and \cite{novik2002syzygies} for a more general study of such constructions.

\subsection{Graphic oriented matroid ideal}

An {\em oriented hyperplane arrangement} is a real hyperplane arrangement along with a choice of a ``positive side'' for each hyperplane. Equivalently, one may fix a set of linear forms vanishing on hyperplanes to fix the ``orientation''. For any oriented hyperplane arrangement one can define (see \cite{novik2002syzygies}) the associated {\em oriented matroid ideal}: let $\{h_j\}$ be $m$ nonzero linear forms defining the hyperplane arrangement $\A$ with hyperplanes $\H_j = \{ \pb \in V : h_j(\pb)=c_j\}$ in a real affine space $V$. The oriented matroid ideal associated to $\A$ is the ideal in $2m$ variables of the form:
\[
\OO_{\A} = \langle \mb(\pb) : \pb \in V \rangle \subset K[\wb,\zb]
\]
where for each $\pb \in V$
\[
\mb(\pb)=\prod_{h_i(\pb)>c_i}w_i  \prod_{h_i(\pb)<c_i}z_i \ .
\]
Note that any two points in the relative interior of a cell will give rise to the same monomial.
\medskip

Consider the hyperplane arrangement $\H_G^{q,c}$ (defined in \S\ref{sec:BG}) which is contained in a codimension $2$ affine subspace of $C^0(G, \RR)$. Fixing an orientation $\Oc$ of the graph  $G$ will fix the linear forms $(d f)(e)=f(e_{+})-f(e_{-})$ for $e \in \Oc$ and gives an orientation to the hyperplane arrangement $\H_G^{q,c}$. The oriented matroid ideal associated to this oriented hyperplane arrangement $\H_G^{q,c}$ will be denoted by $\OO_G^q$ (instead of $\OO_{\H_G^{q,c}}$) and will be called the {\em graphic oriented matroid ideal} associated to $G$ and $q$. It follows from the discussion in \S\ref{sec:BG} that this ideal is independent of the choice of the real number $c>0$. In this situation, we may consider the variables $\wb$ as $\{y_e: e \in \Oc\}$ and the variables $\zb$ as $\{y_{\bar{e}}: e \in \Oc\}$ and then $\OO_G^q \subset \Sb$.

\subsection{Graphic Lawrence ideal}

For any embedded integral lattice $L \hookrightarrow \ZZ^m$ one can define (see \cite[Chapter 7]{SturmfelsGrobnerConvex}) a binomial ideal $\JJ_L$ in $2m$ variables, called the {\em Lawrence ideal} of $L$, by the following formula:

\[ 
\JJ_L= \langle \wb^{a^+}\zb^{a^-}-\wb^{a^-}\zb^{a^+}: a^+,a^-\in \NN^m,\ a={a^+}-{a^-}\in L \rangle \subset K[\wb,\zb] \ .
\]

When the lattice $L$ is unimodular, the Lawrence ideal $\JJ_L$ is called unimodular (\cite{Popescu}).

\medskip

For simplicity, the unimodular Lawrence ideal associated to the unimodular lattice of integral cuts $L(G)$ will be denoted by $\JJ_G$ (instead of $\JJ_{L(G)}$) and will be called the {\em graphic Lawrence ideal} of $G$. Again, we may consider the variables $\wb$ as $\{y_e: e \in \Oc\}$ and the variables $\zb$ as $\{y_{\bar{e}}: e \in \Oc\}$ and then $\JJ_G \subset \Sb$.


\medskip

\subsection{Labeling $\B_G^{q,c}$ and the minimal free resolution of $\OO_G^q$} \label{sec:labelBG}
The bounded polyhedral cell complex $\B_G^{q,c}$ (defined in \S\ref{sec:BG}) supports a minimal free resolution for the ideal $\OO_G^q$. To see this, we need to label the vertices of $\B_G^{q,c}$ appropriately:  each vertex $f \in \B_G^{q,c}$ is labeled by the monomial 

\begin{equation} \label{eq:BLabels}
\mb(f) = \prod_{e \in \EE(G) \atop (d f)(e)>0}{y_{e}} \ .
\end{equation}
\begin{Remark}
Fixing an orientation $\Oc$ will result in the factorization of $\mb(f)$ as
\[
\mb(f) =\prod_{e \in \Oc \atop  f(e_+) -f(e_-)>0}{y_{e}} \prod_{e \in \Oc \atop  f(e_-) -f(e_+)>0}{y_{\bar{e}}}  \ .
\]
\end{Remark}

In this way, we obtain a labeling of all cells by the least common multiple construction. It is easily seen that the label of any cell will be $\mb(f)$ (as in \eqref{eq:BLabels}) for any point $f$ in the relative interior of that cell. 

The following result is an application of \cite[Theorem~1.3(b)]{novik2002syzygies} for the hyperplane arrangement $\H_G^q$.

\begin{Theorem}
\label{thm:bounded}
The labeled polyhedral cell complex $\B_G^{q,c}$ gives a $C^1(G,\ZZ)$-graded minimal free resolution for $\OO_G^q$. 
In particular, $\OO_G^q$ is minimally generated by the monomials $\mb(f)$, as $f$ ranges over the vertices of $\B_G^{q,c}$. 
\end{Theorem}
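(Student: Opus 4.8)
The plan is to recognize $\OO_G^q$ as the oriented matroid ideal of a genuine affine hyperplane arrangement, to identify the labeled complex $\B_G^{q,c}$ with the labeled bounded complex of that arrangement, and then to quote \cite[Theorem~1.3(b)]{novik2002syzygies} verbatim. First I would unwind the dictionary. Fix the orientation $\Oc$ and regard $\H_G^{q,c}$ as a real affine space of dimension $n-2$ inside $C^0(G,\RR)$: its direction space is $(\Ker d)^\perp \cap \{f : f(q)=0\}$, which has dimension $n-2$ because $G$ is connected. The $m$ hyperplanes of the arrangement are the traces of the $\H_e$, $e \in \Oc$, cut out by the affine-linear forms $f \mapsto (df)(e) = f(e_+)-f(e_-)$; choosing $(df)(e)>0$ as the positive side for each $e \in \Oc$ orients the arrangement. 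Under the identification $\wb = \{y_e : e \in \Oc\}$, $\zb = \{y_{\bar e} : e \in \Oc\}$, $\Sb = K[\wb,\zb]$, the monomial attached to a point $f$ in \cite{novik2002syzygies} is $\prod_{e \in \Oc,\ (df)(e)>0} y_e \prod_{e \in \Oc,\ (df)(e)<0} y_{\bar e}$, which is exactly $\mb(f)$ of \eqref{eq:BLabels}. Hence the oriented matroid ideal of $\H_G^{q,c}$ is $\OO_G^q$, and its labeled bounded complex is the labeled $\B_G^{q,c}$ of \S\ref{sec:labelBG}.

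Next I would check the hypotheses needed to invoke \cite[Theorem~1.3(b)]{novik2002syzygies}. The one genuinely verifiable point is that $\B_G^{q,c}$ is nonempty, i.e. the arrangement has a bounded region; this is immediate from Lemma~\ref{lem:BG} together with the existence of an acyclic orientation of $G$ with unique source $q$ — for instance the one induced by any $f$ that is injective along each edge and satisfies $f(q) < f(v)$ for all $v \ne q$, which is acyclic with source exactly $q$ since $G$ is connected and loopless. (Equivalently, $\H_G^{q,c}$ is essential in its ambient affine space, as the forms $\{(df)(e)\}_{e \in \Oc}$ span its dual, so the arrangement has a vertex.) Granting this, \cite[Theorem~1.3(b)]{novik2002syzygies} asserts precisely that the labeled complex $\B_G^{q,c}$ supports a minimal free resolution of $\OO_G^q$: exactness follows from the Bayer--Sturmfels criterion \cite[Proposition~1.2]{BayerSturmfels} once one knows that each subcomplex $(\B_G^{q,c})_{\le \mb}$ is acyclic over $K$ — this contractibility statement for bounded complexes of real arrangements is the technical core of \cite{novik2002syzygies} — and minimality holds because crossing a wall of $\B_G^{q,c}$ replaces exactly one factor $y_e$ of a label by $y_{\bar e}$, so $\mb_F \ne \mb_{F'}$ for distinct faces and no $\mb_F / \mb_{F'}$ is a unit. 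The resolution is $\ZZ^{\EE(G)}$-graded; since no label ever contains both $y_e$ and $y_{\bar e}$, this fine grading agrees with the $C^1(G,\ZZ)$-grading (via $\deg y_e = e^\ast$), which gives the stated grading. The ``in particular'' clause is then automatic: in a minimal free resolution $F_\bullet$ of $\OO_G^q$, a basis of $F_0 = \bigoplus_{v} \Sb(-\mb(v))$, the sum over the $0$-cells $v$ of $\B_G^{q,c}$, maps onto a minimal generating set of $\OO_G^q$.

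The only real work, and the step to be careful about, is confirming that the conventions of \cite[Theorem~1.3(b)]{novik2002syzygies} match our situation: that their result applies to an affine arrangement obtained by slicing a central arrangement (not merely to arrangements in general position), that their bounded complex and its labeling coincide with ours, and that the oriented matroid ideal they resolve is generated by the labels of the $0$-cells of the bounded complex, so that it agrees with our definition $\OO_G^q = \langle \mb(f) : f \in \H_G^{q,c} \rangle$. No homotopy-theoretic input beyond what is already in \cite{novik2002syzygies} — the contractibility of $\B_G^{q,c}$ and of its subcomplexes $(\B_G^{q,c})_{\le \mb}$ — needs to be reproved here.
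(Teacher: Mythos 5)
Your proposal is correct and follows essentially the same route as the paper: the paper's proof likewise consists of citing \cite[Theorem~1.3(b)]{novik2002syzygies} for the oriented arrangement $\H_G^{q,c}$, observing that minimality is immediate from the description of the labels, and attributing the acyclicity of the subcomplexes $(\B_G^{q,c})_{\leq \mb}$ to the contractibility result of Bj\"orner and Ziegler. Your additional checks (nonemptiness of the bounded complex and the matching of the $C^1(G,\ZZ)$-grading) are consistent with, and slightly more explicit than, what the paper records.
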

The fact that there is no unit in the corresponding differential maps is immediate from the description of the labelings. All subcomplexes $(\B_G^{q,c})_{\leq \mb}$ are in fact contractible, by a result of Bj\"orner and Ziegler (\cite[Theorem~4.5.7]{orientedmatroid}). See \cite{novik2002syzygies} for more details, and Example~\ref{exam:1} and Figure~\ref{fig:arrangement} for an example.

\subsection{Labeling $\Del(L(G))$ and the minimal free resolution of $\JJ_G$} \label{sec:labelDel}

Fix an arbitrary orientation $\Oc \subset \EE(G)$ of $G$ and consider the lattice of integral cuts $L(G)$ as in \S\ref{sec:cutlattice}. As we have already discussed, it comes equipped with a canonical polyhedral cell decomposition of the ambient real vector space $L(G)_\RR=L(G) \otimes \RR = \Image (d_\Oc \colon C^0(G, \RR) \rightarrow C_{\Oc}^1(G, \RR))$. This polyhedral cell decomposition, denoted by $\Del(L(G))$, can be thought of as an infinite hyperplane arrangement (Theorem~\ref{thm:totunim}(iii)), or more naturally, as the Delaunay decomposition of the ambient space with respect to the lattice $L(G)$ and the metric induced by its natural pairing \eqref{eq:LGpairing} (See Remark~\ref{rmk:delon}(ii)). We make this a labelled cell complex by assigning the label
\begin{equation} \label{eq:blabel}
\bb(a)=\prod_{e \in \EE(G)} { {y_{e}^{a(e)}} }
\end{equation}
to each vertex $a \in L(G) \hookrightarrow C^1(G, \RR)$.

\begin{Remark}
Fixing an orientation $\Oc$ will result in the factorization of this Laurent monomial as
\[
\bb(a)=\prod_{e \in \Oc} { {y_{e}^{a(e)}} }  \prod_{e \in \bar{\Oc}} { {y_{e}^{-a(e)}} } = \prod_{e \in \Oc} { {y_{e}^{a(e)}} } / \prod_{e \in \Oc} { {y_{\bar{e}}^{a(e)}} }
\]
for $a \in L(G)$. 
\end{Remark}

As usual, we extend the labeling to all faces by the least common multiple rule. The associated complex of free $C^1(G, \ZZ)$-graded $\Sb$-modules (see \S\ref{sec:res}) is not $\Sb$-finite. By \cite[Theorem~3.1]{Popescu} this complex is a minimal cellular free resolution of the (Laurent) monomial module generated by the labels of the lattice points in $L(G)$. This Laurent monomial module can be thought of as the ``universal cover'' of $\JJ_G$; the Delaunay cell complex is invariant under the translation by $L(G)$ (Theorem~\ref{thm:totunim} and Remark~\ref{rmk:delon}), and the labeling is also compatible with this action. So we obtain a well defined finite cell complex on the quotient torus $L(G)_\RR / L(G)$, which we denote by $\Del(L(G)) / L(G)$. The following theorem is an application of \cite[Theorem~3.5]{Popescu} (or \cite[Theorem~3.2]{BayerSturmfels}) to our setting.
\begin{Theorem} \label{thm:Jresol}
The quotient cell complex $\Del(L(G)) / L(G)$ supports a $(C^1(G, \ZZ)/{\Lambda})$-graded minimal free resolution for $\JJ_G$.
\end{Theorem}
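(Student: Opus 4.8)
The plan is to invoke the general theory of Lawrence ideals together with the Delaunay/cellular resolution machinery already assembled in the excerpt. The key observation is that $\JJ_G$ is the \emph{unimodular} Lawrence ideal of the lattice of integral cuts $L(G)$, and that in Theorem~\ref{thm:totunim}(iii) (applied via Poincar\'e's total unimodularity of the incidence matrix $B$, as recorded in \S\ref{sec:cutlattice}) we have produced the Delaunay cell decomposition $\Del(L(G))$ of $L(G)_\RR$ with the property that its $0$-cells are exactly the lattice points of $L(G)$. First I would set up the labelled cell complex $\Del(L(G))$ precisely as in \S\ref{sec:labelDel}: each vertex $a \in L(G)$ receives the Laurent monomial $\bb(a) = \prod_{e} y_e^{a(e)}$, and every higher-dimensional face receives the least common multiple of the labels of its vertices. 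The central fact, already quoted in the excerpt as \cite[Theorem~3.1]{Popescu}, is that this labelled complex is a \emph{minimal} cellular free resolution of the Laurent monomial module $\UU$ generated by $\{\bb(a) : a \in L(G)\}$; this is the analogue for $\JJ_G$ of what Theorem~\ref{thm:Ures} does for $\UU_G$ and $\II_G$.

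The next step is the descent from the universal cover to the quotient. Because $\Del(L(G))$ is invariant under translation by $L(G)$ (Theorem~\ref{thm:totunim}(iii) and Remark~\ref{rmk:delon}(i)) and the labelling $\bb(\cdot)$ is equivariant — translating a lattice point $a$ by $b \in L(G)$ multiplies $\bb(a)$ by the monomial $\bb(b)$, which is a unit in the Laurent ring but, more to the point, is exactly the amount by which the $\Sb$-grading shifts — the finite quotient cell complex $\Del(L(G))/L(G)$ on the torus $L(G)_\RR/L(G)$ inherits a labelling in the grading group $C^1(G,\ZZ)/\Lambda$. One then applies \cite[Theorem~3.5]{Popescu} (equivalently \cite[Theorem~3.2]{BayerSturmfels}), which says precisely that when a lattice $L$ acts freely on a labelled complex supporting a minimal resolution of the corresponding monomial module, the quotient complex supports a minimal free resolution of the associated lattice ideal — here the Lawrence ideal $\JJ_G$. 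The only hypothesis to check for that black box is that no nonzero element of $L(G)$ labels a monomial comparable to $1$, i.e.\ that $L(G)$ contains no effective (nonzero, nonnegative) vector; but a nonzero coboundary $d_\Oc f$ with all coordinates $\geq 0$ would force $f$ to be non-constant yet monotone along every edge of the connected graph $G$, which is impossible. So the hypotheses of the cited theorems are met.

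Finally I would record the bookkeeping that makes the statement exactly as phrased: the resolution is $(C^1(G,\ZZ)/\Lambda)$-graded because that is the quotient of the fine grading $C^1(G,\ZZ)$ of $\Sb$ by the subgroup $\Lambda = \Image(d)$ over which $\JJ_G$ is homogeneous (see Remark~\ref{rmk: orientvslattice}), and the differentials are the homogenized cellular boundary maps of $\Del(L(G))/L(G)$ as in \eqref{eq:differntials}, with all ratios $\bb_F/\bb_{F'}$ non-units by the same argument used for $\Del(\Prin(G))$ in Lemma~\ref{lem:nounit}(iii) transported through the isometry of Remark~\ref{rmk:isometry} — or, more directly, because an edge $F' \subsetneq F$ of codimension one in a Delaunay decomposition coming from a totally unimodular arrangement always changes at least one of the defining inequalities from equality to strict inequality, forcing a strict drop in some exponent. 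The main obstacle is not conceptual — all the heavy lifting is in Theorem~\ref{thm:totunim} and in the cited results of Popescu and Bayer--Sturmfels — but rather in verifying carefully that our lattice $L(G)$, its pairing \eqref{eq:LGpairing}, and the chosen labelling \eqref{eq:blabel} fit the exact framework of those theorems (in particular that the ``universal cover'' monomial module is $\JJ_G$'s in the sense required), so that the quotient statement is a legitimate application rather than merely an analogy.
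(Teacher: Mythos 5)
Your proposal is correct and follows essentially the same route as the paper: label the vertices of $\Del(L(G))$ by the Laurent monomials $\bb(a)$, invoke \cite[Theorem~3.1]{Popescu} for the minimal cellular resolution of the Laurent monomial module upstairs, and descend to the quotient torus via \cite[Theorem~3.5]{Popescu} (or \cite[Theorem~3.2]{BayerSturmfels}) using the $L(G)$-equivariance of the decomposition and the labeling. The extra checks you record (that $L(G)$ contains no nonzero effective element, and that no units occur in the differentials) are exactly the hypotheses the paper leaves to the cited general machinery, so they are welcome but not a departure.
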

 Here $\Lambda$ is the image of the (unrestricted) coboundary map $d : C^0(G,\ZZ) \to C^1(G,\ZZ)$ (see Remark~\ref{rmk: orientvslattice}).

\medskip

\subsection{Gr\"obner relation between $\JJ_G$ and $\OO_G^q$} \label{sec:grobrel}

Recall that the hyperplane arrangement $\H_G^{q,c}$ is naturally sitting inside $C^0(G, \RR)$, and the Delaunay decomposition $\Del(L(G))$ is an infinite hyperplane arrangement naturally sitting inside $C_{\Oc}^1(G, \RR)$. The obvious map between these ambient spaces is the (restricted) coboundary map $d_\Oc \colon C^0(G, \RR) \rightarrow C_{\Oc}^1(G, \RR)$. As we will see, this map relates the corresponding hyperplane arrangements and cell complexes, and this relation translates into precise algebraic relations between $\JJ_G$ and $\OO_G^q$. 

\medskip

First note that $\Ker(d) =\Ker(d_\Oc)$ is the $1$-dimensional space of constant functions on $V(G)$, and we have
\[
L(G)_\RR = \Image (d_\Oc) \cong C^0(G, \RR) / \Ker(d)  \cong C^0(G, \RR) \cap (\Ker(d))^{\perp} \ .
\]
Let $e \in \EE(G)$. Under the induced isomorphism $d_\Oc \colon C^0(G, \RR) \cap (\Ker(d))^{\perp} \xrightarrow{\sim} L(G)_\RR$, the hyperplane 
\[
\H_{e}|_{(\Ker(d))^{\perp}} = \{f \in C^0(G,\RR): (d f) (e)=0 \} \cap (\Ker(d))^{\perp} 
\]
is mapped to the hyperplane 
\[
\G_e=\{a \in L(G)_\RR: \varphi_e(a) = 0\} \ ,
\]
where $\varphi_e$ is the restriction of the functional $e=e^{\ast\ast} \in C_1(G, \ZZ)$ to $L(G)_\RR$. 
By Example~\ref{ex:WU}, Proposition~\ref{thm:totunim}(iii), and Remark~\ref{rmk:delon}(ii), the hyperplanes $\G_e$ are precisely the hyperplanes passing through the origin in $\Del(L(G))$.  

Recall from \S\ref{sec:BG} that the hyperplane arrangement $\H_G^{q,c}$ has another hyperplane defined by 
\begin{equation} \label{eq:lastplane}
(\H^{q,c})'|_{(\Ker(d))^{\perp}} =\{f\in C^0(G, \RR): \sum_{v \ne q}{f(v)}=c \}  \cap (\Ker(d))^{\perp} \ .
\end{equation}

The real vector space $L(G)_\RR$ is spanned by $\{d_\Oc(\chi_v): v \ne q\}$. Under the induced isomorphism $d_\Oc \colon C^0(G, \RR) \cap (\Ker(d_\Oc))^{\perp} \xrightarrow{\sim} L(G)_\RR$, the hyperplane \eqref{eq:lastplane} is mapped to the affine hyperplane
\[
\G^{q,c}=\{a \in C^1(G, \RR): a = \sum_{v \ne q}{f(v) d_\Oc(\chi_v)} \text{ with } \sum_{v \ne q}{f(v)}=c\} \ .
\]
This is a hyperplane passing through all points $\{c \cdot d_\Oc(\chi_v): v \ne q\}$. 

\medskip

We denote the restriction of the arrangement $\{\G_e\}_{e \in \EE(G)}$ to the affine hyperplane $\G^{q,c}$ by $\G_G^{q,c}$. It follows that $\G_G^{q,c}$,  upto a linear transformation, coincides with the arrangement $\H_G^{q,c}$, and therefore its bounded complex, which we denote by $\A_G^{q,c}$, may be identified with $\B_G^{q,c}$.

\medskip

Next we show that these geometric considerations nicely relate the labeling of $\B_G^{q,c}$ by monomials (described in \S\ref{sec:labelBG}) with the natural  labeling of $\A_G^{q,c}$ induced by $\Del(L(G))$ (described in \S\ref{sec:labelDel}).  For this purpose, we will see that it is most natural to assume $0<c<1$. With this assumption, if the hyperplane $\G^{q,c}$ intersects a Delaunay cell $C$, then ${C}$ must contain the origin. By the least common multiple labeling rule, this means that all such cells $C$ have monomial labels in $\Sb$. 

\medskip

To concretely describe these induced monomial labels, it suffices to find the labels of the vertices in $\G_G^{q,c}$ induced from the labels of the rays in the central hyperplane arrangement  
$\{ \G_e: e \in \EE(G)\}$. These rays correspond to bonds $d_\Oc(\chi_B)$ for $B \subset V(G)$ (see \S\ref{sec:cutlattice}). Such a ray intersects $\G^{q,c}$ if and only if for some real number $t>0$ we have
\[
t d_\Oc(\chi_B) = \sum_{v \ne q}{f(v)d_\Oc(\chi_v)} \ ,
\]
or equivalently
\[
d_\Oc(t\chi_B-\sum_{v \ne q}{f(v)\chi_v})=0 \ .
\]
Since the kernel of $d_\Oc$ consists of constant functions we must have 
\begin{equation} \label{eq:eval}
t\sum_{v \in B}{\chi_v}-\sum_{v \ne q}{f(v)\chi_v} = k \sum_{v}{\chi_v}
\end{equation}
for some constant $k \in \RR$. 

\medskip

We claim that $q \not\in B$. 
Indeed, if $q \in B$, then evaluating \eqref{eq:eval} at $q$ we obtain $k=t$ and therefore 
\[t\sum_{v \in B^c}{\chi_v}=-\sum_{v \ne q}{f(v)\chi_v}  \ .\] 
This implies that $f(v)=-t <0$ for $v \in B^c$ and $f(v)=0$ for $v \in B \backslash \{q\}$. But this is impossible because $\sum_{v \ne q}{f(v)}=c$ by assumption.

\medskip

Since $q \not \in B$, by evaluating \eqref{eq:eval} at $q$ we obtain $k=0$ and therefore 
\[
t\sum_{v \in B}{\chi_v}=\sum_{v \ne q}{f(v)\chi_v}   \ ,
\]
which implies that $f(v)=t$ for $v \in B$ and $f(v) = 0$ for $v \in B^c \backslash \{q\}$. Since $\sum_{v \ne q}{f(v)}=c$, we must have $t=\frac{c}{|B|}$. 
Conversely, for any nonempty subset $B \subset V(G) \backslash \{q\}$, the ray corresponding to the simple cut $d_\Oc(\chi_B)$ intersects $\G^{q,c}$ at the point $\frac{c}{|B|}d_\Oc(\chi_B)$. If we fix $0<c<1$, then we always have $0<\frac{c}{|B|} <1$ which means that the point of intersection belongs to a cell in $\Del(L(G))$ containing the origin. We summarize these observations in the following proposition.

\begin{Proposition} \label{prop:rays}
Let $\emptyset \ne B \subset V(G)$. The ray corresponding to the bond $d_\Oc(\chi_B)$ intersects $\G^{q,c}$ if and only if $q \not \in B$. If $0<c<1$, then the point of intersection belongs to a cell in $\Del(L(G))$ containing the origin.
\end{Proposition}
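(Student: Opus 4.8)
The plan is to reduce both claims to elementary linear algebra, using the explicit parametrization of the ray together with the description of $\Del(L(G))$ from Theorem~\ref{thm:totunim}(iii). First I would parametrize the ray attached to the bond $d_\Oc(\chi_B)$ as $\{t\,d_\Oc(\chi_B) : t>0\}$ and observe that it meets $\G^{q,c}$ exactly when there are a scalar $t>0$ and a function $f\in C^0(G,\RR)$ with $\sum_{v\ne q}f(v)=c$ and $t\,d_\Oc(\chi_B)=d_\Oc\big(\sum_{v\ne q}f(v)\chi_v\big)$. Since $\Ker(d_\Oc)$ is the line of constant functions, this is equivalent to the vertex-wise identity
\[
t\,\chi_B-\sum_{v\ne q}f(v)\,\chi_v=k\cdot\mathbf 1
\]
for some $k\in\RR$. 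Evaluating this at $q$ pins down $k$ ($k=t$ if $q\in B$, $k=0$ if $q\notin B$), and evaluating at the remaining vertices then determines $f$ in terms of $t$.

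From here the equivalence falls out. If $q\in B$, the identity forces $f(v)=-t$ for $v\in B^{c}$ and $f(v)=0$ for $v\in B\setminus\{q\}$, so $\sum_{v\ne q}f(v)=-t\,|B^{c}|\le 0<c$, a contradiction (the case $B=V(G)$ being vacuous, as then $d_\Oc(\chi_B)=0$ and there is no ray); hence an intersection forces $q\notin B$. Conversely, if $q\notin B$ the identity gives $f(v)=t$ on $B$ and $f(v)=0$ on $B^{c}\setminus\{q\}$, so $\sum_{v\ne q}f(v)=t\,|B|$, and choosing $t=c/|B|>0$ produces an honest point of intersection, namely $p=\tfrac{c}{|B|}\,d_\Oc(\chi_B)$.

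For the last assertion, suppose $0<c<1$, so $0<t=c/|B|<1$. Recall from Theorem~\ref{thm:totunim}(iii) that $\Del(L(G))$ is the arrangement $\H(L(G)_\RR,\{\varphi_e\}_{e\in\EE(G)})$, in which the cell through a point $a$ is $C_a=\{s:\lfloor\varphi_e(a)\rfloor\le\varphi_e(s)\le\lceil\varphi_e(a)\rceil\ \text{for all }e\}$ and the origin is the $0$-cell $\{\mathbf 0\}$. Since $\chi_B$ is $\{0,1\}$-valued, $d_\Oc(\chi_B)(e)=\chi_B(e_+)-\chi_B(e_-)\in\{-1,0,1\}$ for every $e$, and the $\varphi_e$ pick out these coordinates up to sign; hence $\varphi_e(p)=t\,\varphi_e(d_\Oc(\chi_B))\in(-1,1)$ for all $e$, so $\lfloor\varphi_e(p)\rfloor\le 0\le\lceil\varphi_e(p)\rceil$. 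This says exactly that $\mathbf 0\in C_p$, i.e.\ the cell of $\Del(L(G))$ whose relative interior contains $p$ has the origin among its vertices.

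I do not expect a real obstacle here: the whole argument is a short computation. The only points needing a little care are the degenerate case $B=V(G)$ in the ``only if'' direction, and checking that $\mathbf 0$ is genuinely a \emph{face} of the cell $C_p$ rather than just a point of its closure — which is automatic, since $\mathbf 0$ is itself a $0$-dimensional cell of the face-to-face decomposition $\Del(L(G))$.
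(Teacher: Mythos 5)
Your argument is correct and is essentially the paper's own: the same parametrization of the ray, the same reduction via $\Ker(d_\Oc)=\{\text{constants}\}$ to the identity $t\chi_B-\sum_{v\ne q}f(v)\chi_v=k\cdot\mathbf 1$, the same case split on whether $q\in B$, and the same conclusion $t=c/|B|\in(0,1)$ forcing the intersection point into a cell containing the origin. Your extra care with the degenerate case $B=V(G)$ and with verifying $\mathbf 0\in C_p$ explicitly via the inequalities of Theorem~\ref{thm:totunim}(iii) only tightens details the paper leaves implicit.
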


The vertices of $\A_G^{q,c}$ are the points of intersections with these rays. For each vertex of $\A_G^{q,c}$ we may assign the label corresponding to the $1$-dimensional cell of $\Del(L(G))$ containing that vertex. If we assume $0<c<1$, this is a (non-Laurent) monomial label that coincides with the labeling rule for $\B_G^{q,c}$ described in \S\ref{sec:labelBG}. From this point of view, it is straightforward to describe these labels combinatorially.

\begin{Lemma} \label{lem:labels}
For any $A \subsetneq V(G)$ with $q \in A$ the following holds.
\begin{itemize}
\item[(i)] The label of the point $d_\Oc(\chi_{A^c})$ in the labeled complex $\Del(L(G))$ is
\[ \bb(d_\Oc(\chi_{A^c})) = \frac{\prod_{e \in \EE(A^c, A)}{y_e} }{\prod_{e \in \EE(A,A^c)}{y_e} } \ .\]
\item[(ii)] For $0<c<1$, the induced label on the vertex $\A_G^{q,c}$ corresponding to the bond $d_\Oc(\chi_{A^c})$ is 
\[
\prod_{e \in \EE(A^c, A)}{y_e} \ . 
\]
\end{itemize}
\end{Lemma}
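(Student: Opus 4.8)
The plan is to obtain (i) by directly expanding the labeling rule of \S\ref{sec:labelDel}, and then to deduce (ii) from (i) together with Proposition~\ref{prop:rays} and the description of the cells of $\Del(L(G))$ as the bounded boxes of the infinite arrangement $\H(L(G)_\RR,\{\varphi_e\}_{e\in\EE(G)})$ from Theorem~\ref{thm:totunim}(iii). For (i), regard $d_\Oc(\chi_{A^c})$ as the element of $C^1(G,\ZZ)$ with value $\chi_{A^c}(e_+)-\chi_{A^c}(e_-)$ on $e$, so that by \S\ref{sec:labelDel} its label is $\bb(d_\Oc(\chi_{A^c}))=\prod_{e\in\EE(G)}y_e^{\,\chi_{A^c}(e_+)-\chi_{A^c}(e_-)}$. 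Now read off the exponents: $\chi_{A^c}(e_+)-\chi_{A^c}(e_-)$ equals $+1$ exactly when $e_+\in A^c$ and $e_-\in A$ (that is, $e\in\EE(A^c,A)$), equals $-1$ exactly when $e\in\EE(A,A^c)$, and equals $0$ when both endpoints of $e$ lie in $A$ or both lie in $A^c$. Grouping the factors gives $\bb(d_\Oc(\chi_{A^c}))=\bigl(\prod_{e\in\EE(A^c,A)}y_e\bigr)\big/\bigl(\prod_{e\in\EE(A,A^c)}y_e\bigr)$, which is (i); no connectedness hypothesis is used here.

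For (ii): since $q\in A$ we have $q\notin A^c$, so by Proposition~\ref{prop:rays} and the computation preceding it the ray $\{t\,d_\Oc(\chi_{A^c}):t>0\}$ meets $\G^{q,c}$ in the single point $p=\tfrac{c}{|A^c|}\,d_\Oc(\chi_{A^c})$, and since $0<c<1$ we have $\varphi_e(p)=\tfrac{c}{|A^c|}\bigl(\chi_{A^c}(e_+)-\chi_{A^c}(e_-)\bigr)\in(-1,1)$ for every $e$, so the Delaunay cell $C$ of $\Del(L(G))$ having $p$ in its relative interior also contains the origin (cf.\ Proposition~\ref{prop:rays}). This $C$ is a $1$-cell exactly when $d_\Oc(\chi_{A^c})$ is a bond, i.e.\ when $G[A]$ and $G[A^c]$ are both connected, and it is this case --- the one in which $p$ is a genuine vertex of $\A_G^{q,c}$ --- that I would handle first. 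Feeding the values $\varphi_e(p)$ into the inequalities that define the cells of $\Del(L(G))$ identifies $C$ as $\{a\in L(G)_\RR:\varphi_e(a)=0\text{ when both endpoints of }e\text{ lie in }A\text{ or both in }A^c,\ \text{and }0\le\varphi_e(a)\le1\text{ for }e\in\EE(A^c,A)\}$, which in the connected case is the segment $[0,\,d_\Oc(\chi_{A^c})]$. Hence the label of $C$ is $\lcm\bigl(\bb(0),\bb(d_\Oc(\chi_{A^c}))\bigr)$, and by (i) this equals $\lcm\bigl(1,\ \prod_{e\in\EE(A^c,A)}y_e\big/\prod_{e\in\EE(A,A^c)}y_e\bigr)=\prod_{e\in\EE(A^c,A)}y_e$, the last step because the numerator and denominator involve disjoint variables, so the least common multiple simply deletes the negative exponents; this is (ii).

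The step that requires the most care is the identification of $C$ above, since it is exactly there that the two hypotheses are consumed: $0<c<1$ forces $C$ to stay incident to the origin (so its label is an honest monomial rather than a Laurent monomial), and $q\in A$ is what puts $p$ onto $\G^{q,c}$ in the first place (Proposition~\ref{prop:rays}). For a general $A$ with $q\in A$ but $G[A]$ or $G[A^c]$ disconnected --- where ``the vertex corresponding to $d_\Oc(\chi_{A^c})$'' should be read as the minimal Delaunay cell $C$ through $p$ --- the same computation still applies: $C$ then has dimension $\bigl(\#\text{ components of }G[A]\bigr)+\bigl(\#\text{ components of }G[A^c]\bigr)-1$, and its lattice-point vertices are exactly the cuts $d_\Oc(\chi_S)$ as $S$ ranges over unions of connected components of $G[A^c]$; by (i) each of these labels involves only the variables $y_e$ with $e\in\EE(A^c,A)$, carrying $y_e$ to the first power precisely when $e_+\in S$, so taking the least common multiple over all such $S$ yields $\prod_{e\in\EE(A^c,A)}y_e$ once again.
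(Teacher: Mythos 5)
Your proof is correct and follows essentially the same route as the paper: part (i) by directly expanding the label $\bb(d_\Oc(\chi_{A^c}))=\prod_e y_e^{\chi_{A^c}(e_+)-\chi_{A^c}(e_-)}$ and sorting edges into $\EE(A^c,A)$, $\EE(A,A^c)$, and the rest, and part (ii) by taking the least common multiple of the labels of $\mathbf{0}$ and $d_\Oc(\chi_{A^c})$ on the $1$-cell joining them and invoking Proposition~\ref{prop:rays}. Your extra identification of the Delaunay cell and the discussion of the non-bond case go beyond what the lemma asserts but do not change the argument.
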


\begin{proof}
(i) By \eqref{eq:blabel} we have
\[
\begin{aligned}
\bb(d_\Oc(\chi_{A^c}))&=\prod_{e \in \EE(G)} { {y_{e}^{d(\chi_{A^c})(e)}} } \\
&=\prod_{e \in \EE(G)}  { {y_{e}^{\chi_{A^c}(e_+)-\chi_{A^c}(e_-)} } } \\ 
&= \frac{\prod_{e \in \EE(A^c, A)}{y_e} }{\prod_{e \in \EE(A,A^c)}{y_e} } \ .
\end{aligned}
\]

(ii) The label of the origin is $\bb(\mathbf{0})=1$. Therefore, by the least common multiple construction, the label of the $1$-dimensional cell $\{\mathbf{0}, d_\Oc(\chi_{A^c}) \}$ in $\Del(L(G))$ is $\prod_{e \in \EE(A^c, A)}{y_e}$. The result now follows from Proposition~\ref{prop:rays}.
\end{proof}

\medskip

Since the labeled complex $\A_G^{q,c}$ (for $0 <c<1$) coincides with the labeled complex $\B_G^{q,c}$, we might as well think of the ideal $\OO_G^q$ as constructed from $\A_G^{q,c}$. The advantage of this point of view is a precise Gr\"obner relation between $\OO_G^q$ and $\JJ_G$ coming from the described relation of $\A_G^{q,c}$ and $\Del(L(G))$. 

\begin{Lemma}\label{lem:bij}
Intersection of cells in $\Del(L(G))$ with the hyperplane $\G^{q,c}$ induces a bijection between $(i+1)$-dimensional cells of $\Del(L(G)) / L(G)$ and $i$-dimensional cells of $\A_G^{q,c}$ for all $0 \leq i \leq n-2$. 
\end{Lemma}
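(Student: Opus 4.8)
The plan is to exploit the explicit geometric picture set up in \S\ref{sec:grobrel}: the Delaunay complex $\Del(L(G))$ is an infinite central hyperplane arrangement in $L(G)_\RR = \Image(d_\Oc)$ whose central hyperplanes are exactly the $\G_e$, and $\G^{q,c}$ is an affine hyperplane passing through the points $c \cdot d_\Oc(\chi_v)$ for $v \ne q$. Since $0 < c < 1$, Proposition~\ref{prop:rays} tells us that $\G^{q,c}$ meets only those Delaunay cells that contain the origin; these cells form precisely the \emph{star} of the origin in $\Del(L(G))$, which (after contracting by $L(G)$, since the origin is the only lattice point in any such cell up to translation) is in bijection with the cells of $\Del(L(G))/L(G)$ whose closure contains the image of the origin. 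So the map ``intersect with $\G^{q,c}$'' is really the statement that the star of the origin in a central arrangement, cut by a generic-enough affine hyperplane not through the origin, recovers the bounded complex $\A_G^{q,c}$ with dimensions dropped by one.

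First I would set up the correspondence at the level of the central arrangement $\{\G_e\}_{e\in\EE(G)}$ in $L(G)_\RR$. A nonzero cell $C$ in the star of the origin is a cone (the origin lies in $\overline C$); writing $\dim C = i+1$, its relative interior is a union of rays, and I claim $C \mapsto C \cap \G^{q,c}$ is a well-defined $i$-dimensional polyhedron in $\A_G^{q,c}$. Well-definedness needs that $\G^{q,c}$ actually meets $\relint(C)$: this is where I use $0<c<1$ together with the computation already carried out before Proposition~\ref{prop:rays}, namely that the ray through a bond $d_\Oc(\chi_B)$ meets $\G^{q,c}$ exactly when $q\notin B$, at the parameter $t = c/|B| \in (0,1)$, hence lands in a cell through the origin; a general cone $C$ in the star is spanned by such rays (its $1$-faces), and convexity plus $c<1$ forces the affine slice to hit the interior. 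The dimension drops by exactly one because $C$ is a cone and $\G^{q,c}$ is an affine hyperplane not through the cone point, so it is transverse to the cone's ``radial'' direction.

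Next I would pass to the quotient by $L(G)$. Every Delaunay cell containing the origin contains no other lattice point (the $0$-cells of $\Del(L(G))$ are exactly the lattice points, Theorem~\ref{thm:totunim}(iii)), so distinct such cells have distinct images in $\Del(L(G))/L(G)$, and conversely every $(i+1)$-cell of the quotient lifts uniquely to a cell of the star of \emph{some} lattice point, which we may translate to the origin. This identifies $(i+1)$-cells of $\Del(L(G))/L(G)$ with $(i+1)$-cells in the star of the origin upstairs. Composing with the slicing bijection of the previous paragraph gives a bijection to $i$-cells of $\A_G^{q,c}$. For surjectivity onto $\A_G^{q,c}$: every vertex of $\A_G^{q,c}$ is, by construction in \S\ref{sec:grobrel}, the intersection point of $\G^{q,c}$ with a ray $\G_e$-type ray of the central arrangement, i.e.\ with a bond ray $d_\Oc(\chi_B)$, $q\notin B$ — these are exactly the $1$-cells of the star — and higher cells of $\A_G^{q,c}$ are cut from the higher cones of the star since $\A_G^{q,c}$ was defined precisely as the restriction of $\{\G_e\}$ to $\G^{q,c}$ (and its bounded complex equals this slice because $\G^{q,c}$ meets only finitely many cones of the star). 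Finally, the range $0 \le i \le n-2$ is just the statement that $\dim \A_G^{q,c} = n-2$ (the arrangement $\H_G^{q,c}$ lives in a codimension-$2$ subspace of $C^0(G,\RR)$, equivalently $\dim L(G)_\RR = n-1$ so the star of the origin has top cells of dimension $n-1$) together with the fact that the origin (a $0$-cell) is not in the image of a positive-dimensional cell of $\A_G^{q,c}$, so only $(i+1)$-cells with $i+1 \ge 1$ occur.

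The main obstacle I expect is the \textbf{transversality/well-definedness} step: making rigorous that $\G^{q,c}$ meets the relative interior of \emph{every} cone in the star of the origin (not just the rays), in exactly one connected piece, and that this slice has dimension exactly one less. The clean way to handle it is to work in the image $\varPhi(L(G)_\RR) \subset \RR^{\EE(G)}$ as in the proof of Theorem~\ref{thm:totunim}, where the star of the origin of $\Del(L(G))$ becomes the star of $\mathbf 0$ in the standard cubical Delaunay decomposition of $\RR^{\Oc}$ restricted to the subspace; there each star cell is a product of intervals $[-1,0]$ or $[0,1]$ in the coordinates, the hyperplane $\G^{q,c}$ becomes $\sum_{v\ne q} a_v$-type affine condition, and transversality is an elementary convexity computation using $0<c<1$ exactly as in the bond-ray calculation preceding Proposition~\ref{prop:rays}. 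Once that lemma-level fact is in hand, the bijection and the dimension bookkeeping are formal.
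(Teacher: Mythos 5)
Your overall strategy (reduce to the star of the origin, then slice with $\G^{q,c}$) is the same as the paper's, but the middle step contains a genuine error that breaks the count. You assert that ``every Delaunay cell containing the origin contains no other lattice point'' and conclude that distinct cells of the star of the origin have distinct images in $\Del(L(G))/L(G)$. This is false: by Theorem~\ref{thm:totunim}(iii) the $0$-cells of $\Del(L(G))$ are exactly the lattice points, so every positive-dimensional Delaunay cell has \emph{all} of its vertices at lattice points; an $(i+1)$-cell through the origin contains at least $i+2$ lattice points, and translating it by the negative of any of its vertices produces another cell of the star lying in the same $L(G)$-orbit. Concretely, for $K_3$ (Example~\ref{ex:CutResol}) the star of the origin has $6$ edges and $6$ triangles, while $\Del(L(G))/L(G)$ has only $3$ one-cells and $2$ two-cells, and $\A_G^{q,c}$ has $3$ vertices and $2$ edges; your claimed bijection between star cells and quotient cells is really a many-to-one map. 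The paper's proof addresses exactly this point: among the lifts of a given quotient cell that contain the origin, one singles out the representative whose remaining vertices all lie in $\{d_\Oc(\chi_B): q\notin B\}$ (using $d_\Oc(\chi_{B^c})=-d_\Oc(\chi_B)$), and by Proposition~\ref{prop:rays} these distinguished representatives are the ones matched with cells of $\A_G^{q,c}$.

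The same over-counting resurfaces in your slicing step. A cone of the star whose extreme rays are a mix of ``positive'' and ``negative'' bonds (e.g.\ $\mathrm{cone}(a_1,-a_2)$ in the $K_3$ example, with $q=u_3$) \emph{does} meet $\G^{q,c}$ in its relative interior --- the functional $a\mapsto\sum_{v\ne q}f(v)$ takes the value $1$ on $a_1$ and $-1$ on $-a_2$, so it passes through $c\in(0,1)$ on the interior --- but the resulting slice is an \emph{unbounded} polyhedron and hence is not a cell of $\A_G^{q,c}$, which is by definition the bounded complex of the restricted arrangement. So it is neither true that ``intersect with $\G^{q,c}$'' is injective on the star, nor that every nonempty slice lands in $\A_G^{q,c}$. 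What must be shown (and what the paper's argument supplies) is that the star cells whose non-origin vertices are all positive bonds are precisely those whose slice is a bounded cell of $\A_G^{q,c}$, and that these constitute exactly one representative per positive-dimensional $L(G)$-orbit. Your transversality discussion is fine for the all-positive cones, but it does not address either of these two points, which are the actual content of the lemma.
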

\begin{proof}
It suffices to only consider cells in $\Del(L(G))$ containing the origin; all other cells in $\Del(L(G))$ can be obtained by translating such cells by $L(G)$. 
The primitive (or indecomposable) elements of $L(G)$ correspond to bonds (see \S\ref{sec:cutlattice}). Therefore the vertex set of any cell in $\Del(L(G))$ containing the origin is of the form $\{\mathbf{0}\} \cup P$ for some $P \subset \{d_\Oc(\chi_B) : \emptyset \ne B \subset V(G)\}$. Since $d_\Oc(\chi_{B^c})=-d_\Oc(\chi_B)$, it suffices to restrict our attention to the case where $P \subset \{d_\Oc(\chi_B) : \emptyset \ne B \subset V(G), q \not \in B\}$. 
 By Proposition~\ref{prop:rays}, these are precisely those cells that have nonempty intersection with $\G^{q,c}$.
\end{proof}

\begin{Proposition} \label{prop:grobrelation}
\begin{itemize}
\item[]
\item[(i)] A generating set for the ideal $\JJ_G$ is 
\[ \left\{ {\prod_{e \in \EE(A^c, A)}{y_e} }-{\prod_{e \in \EE(A,A^c)}{y_e} } : A \subsetneq V(G), q \in A \right\} .\]
If we consider only those subsets $A$ of $V(G)$ such that both $G[A]$ and $G[A^c]$ are connected, then we have a minimal generating set for $\JJ_G$.
\item[(ii)] The minimal generating set in part (i) is also a Gr\"obner basis with respect to {\em any} term order (i.e. is a universal Gr\"obner basis).
\item[(iii)] A minimal generating set for the ideal $\OO_G^q$ is 
\[ \left\{{\prod_{e \in \EE(A^c, A)}{y_e} } : A \subsetneq V(G), q \in A, G[A] \text{ and } G[A^c] \text{ are connected} \right\} .\]
\item[(iv)] $\OO_G^q$ is the initial ideal of $\JJ_G$ with respect to any term order $\prec_q$ with the property that 
\[
{\prod_{e \in \EE(A,A^c)}{y_e} }  \ \prec_q {\prod_{e \in \EE(A^c, A)}{y_e} } 
\]
for every $A \subsetneq V(G)$ with $q \in A$ such that both $G[A]$ and $G[A^c]$ are connected.
\end{itemize}

\end{Proposition}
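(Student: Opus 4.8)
The plan is to derive parts (i) and (ii) from the general theory of Lawrence ideals, part (iii) from the cellular resolution of $\OO_G^q$ established in Theorem~\ref{thm:bounded} together with the combinatorial dictionary set up in \S\ref{sec:grobrel}, and finally to obtain part (iv) as a formal consequence of (ii) and (iii). Recall that $\JJ_G = \JJ_{L(G)}$ is the (unimodular) Lawrence ideal of the cut lattice. For any Lawrence ideal it is known (see \cite[Chapter~7]{SturmfelsGrobnerConvex} and \cite{Popescu}) that the Graver basis, every reduced Gr\"obner basis, the universal Gr\"obner basis, and every minimal generating set all coincide, and consist of the binomials $\wb^{u^+}\zb^{u^-} - \wb^{u^-}\zb^{u^+}$ as $u$ runs over the Graver elements of the underlying lattice (the nonzero lattice elements admitting no proper conformal decomposition). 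So for (i) and (ii) it remains to identify the Graver elements of $L(G)$ and to match their binomials with the displayed ones. By the results of Tutte recalled in \S\ref{sec:cutlattice}, every nonzero element of $L(G)$ is a sum of primitive elements each conforming to it; hence a Graver element of $L(G)$ must be primitive, and conversely a primitive element (having minimal support, with all entries in $\{-1,0,1\}$) admits no proper conformal decomposition. Thus the Graver elements of $L(G)$ are exactly the primitive ones, i.e.\ the bonds, i.e.\ the partitions $V(G) = A \sqcup A^c$ with $G[A]$ and $G[A^c]$ connected, and requiring $q \in A$ singles out one representative of each bond. For such an $A$, the cut $d_\Oc(\chi_{A^c})$ has positive support $\EE(A^c,A)$ and negative support $\EE(A,A^c)$ --- this is exactly the computation in Lemma~\ref{lem:labels}(i) --- so its binomial is $\prod_{e \in \EE(A^c,A)}{y_e} - \prod_{e \in \EE(A,A^c)}{y_e}$. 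This gives the minimal-generating-set claim of (i) and the universal-Gr\"obner-basis claim of (ii); since dropping the connectedness hypothesis only enlarges this set, it remains a generating set, which is the first assertion of (i). (Alternatively, the minimal-generating-set claim is immediate from Theorem~\ref{thm:Jresol}, whose one-dimensional cell orbits in $\Del(L(G))/L(G)$ are indexed by the bonds.)

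For part (iii), note first that $\OO_G^q$ is independent of the parameter $c>0$, so we may assume $0<c<1$. By Theorem~\ref{thm:bounded}, $\OO_G^q$ is minimally generated by the labels $\mb(f)$ of the vertices of $\B_G^{q,c}$, and by \S\ref{sec:grobrel} this labeled complex is identified with $\A_G^{q,c}$, the bounded complex cut out of $\Del(L(G))$ by $\G^{q,c}$. By Lemma~\ref{lem:bij} in the case $i=0$, the vertices of $\A_G^{q,c}$ are in bijection with the one-dimensional cells of $\Del(L(G))/L(G)$ through the origin, hence with the bonds $d_\Oc(\chi_{A^c})$ for which $q \notin A^c$ --- equivalently, with the partitions $\{A, A^c\}$ such that $q \in A$ and both $G[A]$ and $G[A^c]$ are connected. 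By Lemma~\ref{lem:labels}(ii), the label carried by the vertex corresponding to such a bond is precisely $\prod_{e \in \EE(A^c,A)}{y_e}$. Combining these observations proves (iii).

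For part (iv), let $\prec_q$ be any term order satisfying the stated inequality. By part (ii) the binomials $g_A := \prod_{e \in \EE(A^c,A)}{y_e} - \prod_{e \in \EE(A,A^c)}{y_e}$, with $A \subsetneq V(G)$, $q \in A$, and $G[A]$, $G[A^c]$ connected, form a Gr\"obner basis of $\JJ_G$ with respect to $\prec_q$. The hypothesis on $\prec_q$ says precisely that $\ini_{\prec_q}(g_A) = \prod_{e \in \EE(A^c,A)}{y_e}$ for each such $A$, so $\ini_{\prec_q}(\JJ_G)$ is generated by these monomials, which by part (iii) is exactly $\OO_G^q$.

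The step requiring the most care is part (iii): one must keep track of the fact that $\B_G^{q,c}$ and $\A_G^{q,c}$ agree \emph{as labeled complexes} (which is where the restriction $0<c<1$ is used, via \S\ref{sec:grobrel}), so that the resolution-theoretic input of Theorem~\ref{thm:bounded} can be merged with the purely combinatorial statements of Lemma~\ref{lem:bij} and Lemma~\ref{lem:labels}; after that the remaining steps are bookkeeping. For (i) and (ii) the only substantive ingredient is the standard fact that for Lawrence ideals the Graver basis is simultaneously a universal Gr\"obner basis and a minimal generating set, combined with the dictionary --- via Tutte's theory --- between Graver elements of the cut lattice and bonds.
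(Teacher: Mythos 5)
Your proposal is correct and follows essentially the same route as the paper: part (ii) via the Lawrence-ideal theorem of \cite[Theorem~7.1]{SturmfelsGrobnerConvex}, part (iii) via Theorem~\ref{thm:bounded} together with the identification of the labeled complexes $\B_G^{q,c}$ and $\A_G^{q,c}$ and Lemma~\ref{lem:labels}(ii), and part (iv) as a formal consequence of (ii) and (iii). The only (minor) divergence is in part (i), where you identify the minimal generators as the Graver elements of $L(G)$ and match these with bonds via Tutte's conformal decomposition, whereas the paper reads them off the cellular resolution of Theorem~\ref{thm:Jresol} via \cite[proof of Theorem~3.2]{BayerSturmfels} --- a variant you yourself note as an alternative, and both arguments rest on ingredients already established in the paper.
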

\begin{proof}

(i) It follows from the discussion in \S\ref{sec:res}, Theorem~\ref{thm:Jresol} and \cite[proof of Theorem~3.2]{BayerSturmfels} that a minimal generating set for $\JJ_G$ is given by binomials 
\[
\frac{\mb_F}{\mb_{F'}} - \frac{\mb_F}{\mb_{\mathbf{0}}} \ ,
\]
where $F$ is in a fundamental set of representatives of $1$-cells in $\Del(L(G))$ connecting $\mathbf{0}$ to  $F=d_\Oc(\chi_{A^c})$ for $A \subsetneq V(G)$ and $q \in A$.

By Lemma~\ref{lem:labels}(i), we have 
\[
\mb_{F'} = \bb(d_\Oc(\chi_{A^c})) = \frac{\prod_{e \in \EE(A^c, A)}{y_e} }{\prod_{e \in \EE(A,A^c)}{y_e}} ,
\quad
\mb_{\mathbf{0}} = 1 ,
\]\[
\mb_{F} = \lcm(\mb_{F'}, \mb_{\mathbf{0}}) = \prod_{e \in \EE(A^c, A)}{y_e} 
\]
and therefore
\[
\frac{\mb_F}{\mb_{F'}} - \frac{\mb_F}{\mb_{\mathbf{0}}}= \prod_{e \in \EE(A, A^c)}{y_e}-\prod_{e \in \EE(A^c, A)}{y_e} \ .
\]
The rest of part (i) is immediate.

\medskip

(ii) follows from the general fact that in any Lawrence ideal, a minimal binomial generating set is a Gr\"obner basis with respect to any term order (\cite[Theorem~7.1]{SturmfelsGrobnerConvex}). In our concrete situation, one can also easily verify (as in the proof of Theorem~\ref{thm:Cori} given in \cite[Theorem~5.1]{FarbodFatemeh}) that the $S$-polynomial of the two binomials corresponding to the cuts $(A,A^c)$ and $(B,B^c)$ can be reduced to zero by the binomials corresponding to the cuts $(A \backslash B, (A \backslash B)^c )$
and $(B \backslash A, (B \backslash A)^c )$. 

\medskip

(iii) It follows from the discussion in \S\ref{sec:res}, Theorem~\ref{thm:bounded}, and the fact that the labeled cell complex $\A_G^{q,c}$ coincides with the labeled complex $\B_G^{q,c}$, that a minimal generating set for $\OO_G^q$ is given by the monomials $\mb_F$ as $F$ varies over the vertices of the bounded cell complex $\A_G^{q,c}$. By Proposition~\ref{prop:rays} and Lemma~\ref{lem:labels}(ii), these labels are precisely of the form
\[
\prod_{e \in \EE(A^c, A)}{y_e}
\]
for $A \subsetneq V(G)$ with  $q \in A$ such that the edges between $(A,A^c)$ form a bond.

\medskip

 (iv) follows from (ii) and (iii).
\end{proof}

\medskip

\begin{Example}\label{exam:1}

Consider the graph $G$ depicted in Figure~\ref{fig:graph} with the fixed orientation $\Oc$. Let $q$ be the distinguished (red) vertex at the bottom. 
Acyclic partial orientations of $G$ with unique source at $q$ are depicted in Figures~\ref{fig:2partition}--\ref{fig:4partition}.

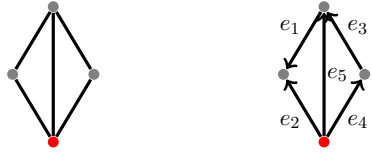
\begin{figure}[h]

\begin{center}

\begin{tikzpicture} [scale = .18, very thick = 10mm]

  \node (n4) at (4,1)  [Cred] {};
  \node (n1) at (4,11) [Cgray] {};
  \node (n2) at (1,6)  [Cgray] {};
  \node (n3) at (7,6)  [Cgray] {};
  \foreach \from/\to in {n4/n2,n1/n3}
    \draw[] (\from) -- (\to);
\foreach \from/\to in {n2/n1,n4/n3,n1/n4}
    \draw[] (\from) -- (\to);

    \node (n4) at (24,1)  [Cred] {};  \node (m4) at (25,6)  [Cwhite] {$e_5$};
  \node (n1) at (24,11) [Cgray] {};   \node (m1) at (26.5,9.5) [Cwhite] {$e_3$};
  \node (n2) at (21,6)  [Cgray] {};  \node (m1) at (21.5,9.5) [Cwhite] {$e_1$};
  \node (n3) at (27,6)  [Cgray] {};  \node (m1) at (26.5,2.5) [Cwhite] {$e_4$}; \node (m1) at (21.5,2.5) [Cwhite] {$e_2$};
 \foreach \from/\to in {n4/n2,n3/n1,n1/n2,n4/n3,n4/n1}
  \draw[black][->] (\from) -- (\to);

\end{tikzpicture}
\caption{Graph $G$ and a fixed orientation $\Oc$}
\label{fig:graph}
\end{center}

\end{figure}

\begin{figure}[h]

\begin{center}

\begin{tikzpicture} [scale = .17, very thick = 10mm]

  \node (n4) at (4,1)  [Cred] {};
  \node (n1) at (4,11) [Cgray] {};
  \node (n2) at (1,6)  [Cgray] {};
  \node (n3) at (7,6)  [Cgray] {};
  \foreach \from/\to in {n4/n2,n1/n3}
    \draw[] (\from) -- (\to);
\foreach \from/\to in {n2/n1,n4/n3,n4/n1}
    \draw[blue][->] (\from) -- (\to);

    \node (n4) at (14,1)  [Cred] {};
  \node (n1) at (14,11) [Cgray] {};
  \node (n2) at (11,6)  [Cgray] {};
  \node (n3) at (17,6)  [Cgray] {};
  \foreach \from/\to in {n4/n2,n3/n1,n4/n1}
   \draw[blue][->] (\from) -- (\to);
     \foreach \from/\to in {n1/n2,n4/n3}
    \draw[] (\from) -- (\to);

    \node (n4) at (24,1)  [Cred] {};
  \node (n1) at (24,11) [Cgray] {};
  \node (n2) at (21,6)  [Cgray] {};
  \node (n3) at (27,6)  [Cgray] {};
  \foreach \from/\to in {n1/n2,n1/n3}
    \draw[] (\from) -- (\to);
    \foreach \from/\to in {n4/n2,n4/n3,n4/n1}
    \draw[blue][->] (\from) -- (\to);

  \node (n4) at (34,1)  [Cred] {};
  \node (n1) at (34,11) [Cgray] {};
  \node (n2) at (31,6)  [Cgray] {};
  \node (n3) at (37,6)  [Cgray] {};
   \foreach \from/\to in {n4/n2,n4/n3}
    \draw[] (\from) -- (\to);
    \foreach \from/\to in {n2/n1,n3/n1,n4/n1}
    \draw[blue][->] (\from) -- (\to);

    \node (n4) at (44,1)  [Cred] {};
  \node (n1) at (44,11) [Cgray] {};
  \node (n2) at (41,6)  [Cgray] {};
  \node (n3) at (47,6)  [Cgray] {};
 \foreach \from/\to in {n1/n3,n4/n3,n4/n1}
    \draw[] (\from) -- (\to);
    \foreach \from/\to in {n1/n2,n4/n2}
    \draw[blue][->] (\from) -- (\to);

    \node (n4) at (54,1)  [Cred] {};
  \node (n1) at (54,11) [Cgray] {};
  \node (n2) at (51,6)  [Cgray] {};
  \node (n3) at (57,6)  [Cgray] {};
 \foreach \from/\to in {n1/n2,n4/n2,n4/n1}
    \draw[] (\from) -- (\to);
    \foreach \from/\to in {n1/n3,n4/n3}
    \draw[blue][->] (\from) -- (\to);

\end{tikzpicture}

\caption{Acyclic partial orientations with $2$ components}
\label{fig:2partition}
\end{center}
\medskip

\begin{center}

\begin{tikzpicture} [scale = .17, very thick = 10mm]

 \node (n4) at (-6,13)  [Cred] {};
  \node (n1) at (-6,23) [Cgray] {};
  \node (n2) at (-9,18)  [Cgray] {};
  \node (n3) at (-3,18)  [Cgray] {};
\foreach \from/\to in {n2/n4}
   \draw[] (\from) -- (\to);
 \foreach \from/\to in {n4/n3,n2/n1,n3/n1,n4/n1}
    \draw[blue][->] (\from) -- (\to);

 \node (n4) at (4,13)  [Cred] {};
  \node (n1) at (4,23) [Cgray] {};
  \node (n2) at (1,18)  [Cgray] {};
  \node (n3) at (7,18)  [Cgray] {};
 \foreach \from/\to in {n3/n4}
   \draw[] (\from) -- (\to);
 \foreach \from/\to in {n4/n2,n2/n1,n3/n1,n4/n1}
    \draw[blue][->] (\from) -- (\to);

 \node (n4) at (14,13)  [Cred] {};
  \node (n1) at (14,23) [Cgray] {};
  \node (n2) at (11,18)  [Cgray] {};
  \node (n3) at (17,18)  [Cgray] {};
  \foreach \from/\to in {n1/n3}
    \draw[] (\from) -- (\to);
 \foreach \from/\to in {n4/n2,n4/n3, n2/n1,n4/n1}
    \draw[blue][->] (\from) -- (\to);

      \node (n4) at (24,13)  [Cred] {};
  \node (n1) at (24,23) [Cgray] {};
  \node (n2) at (21,18)  [Cgray] {};
  \node (n3) at (27,18)  [Cgray] {};
   \foreach \from/\to in {n1/n2}
    \draw[] (\from) -- (\to);
    \foreach \from/\to in {n1/n3, n3/n4,n2/n4,n1/n4}
    \draw[blue][<-] (\from) -- (\to);
\end{tikzpicture}

\vspace{-.5cm}



\begin{tikzpicture}  [scale = .18, very thick = 10mm]
\node (n4) at (-16,13)  [Cred] {};
  \node (n1) at (-16,23) [Cgray] {};
  \node (n2) at (-19,18)  [Cgray] {};
  \node (n3) at (-13,18)  [Cgray] {};
\foreach \from/\to in {n2/n1,n2/n4}
    \draw[blue][<-] (\from) -- (\to);

 \foreach \from/\to in {n4/n3,n1/n3}
    \draw[blue][->] (\from) -- (\to);

\foreach \from/\to in {n1/n4}
    \draw[] (\from) -- (\to);


 \node (n4) at (-6,13)  [Cred] {};
  \node (n1) at (-6,23) [Cgray] {};
  \node (n2) at (-9,18)  [Cgray] {};
  \node (n3) at (-3,18)  [Cgray] {};
\foreach \from/\to in {n2/n1,n2/n4,n1/n4}
    \draw[blue][<-] (\from) -- (\to);

 \foreach \from/\to in {n4/n3}
    \draw[blue][->] (\from) -- (\to);

\foreach \from/\to in {n1/n3}
    \draw[] (\from) -- (\to);
 \node (n4) at (4,13)  [Cred] {};

  \node (n1) at (4,23) [Cgray] {};
  \node (n2) at (1,18)  [Cgray] {};
  \node (n3) at (7,18)  [Cgray] {};
  \foreach \from/\to in {n2/n4}
    \draw[blue][<-] (\from) -- (\to);
\foreach \from/\to in {n2/n1}
    \draw[] (\from) -- (\to);
 \foreach \from/\to in {n4/n3,n1/n3,n4/n1}
    \draw[blue][->] (\from) -- (\to);

 \node (n4) at (14,13)  [Cred] {};
  \node (n1) at (14,23) [Cgray] {};

  \node (n2) at (11,18)  [Cgray] {};
  \node (n3) at (17,18)  [Cgray] {};

  \foreach \from/\to in {n1/n2,n3/n1}
    \draw[blue][<-] (\from) -- (\to);

 \foreach \from/\to in {n4/n3,n4/n1}
    \draw[blue][->] (\from) -- (\to);
 \foreach \from/\to in {n2/n4}
    \draw[] (\from) -- (\to);

      \node (n4) at (24,13)  [Cred] {};
  \node (n1) at (24,23) [Cgray] {};

  \node (n2) at (21,18)  [Cgray] {};
  \node (n3) at (27,18)  [Cgray] {};
  \foreach \from/\to in {n2/n1,n1/n3}
    \draw[blue][<-] (\from) -- (\to);

 \foreach \from/\to in {n4/n2,n4/n1}
    \draw[blue][->] (\from) -- (\to);
 \foreach \from/\to in {n3/n4}
    \draw[] (\from) -- (\to);

\end{tikzpicture}

\caption{Acyclic partial orientations with $3$ components}
\label{fig:3partition}
\end{center}

\medskip
\begin{center}

\begin{tikzpicture}  [scale = .17, very thick = 10mm]

 \node (n4) at (4,13)  [Cred] {};
  \node (n1) at (4,23) [Cgray] {};
  \node (n2) at (1,18)  [Cgray] {};
  \node (n3) at (7,18)  [Cgray] {};
  \foreach \from/\to in {n2/n1,n2/n4}
    \draw[blue][<-] (\from) -- (\to);
 \foreach \from/\to in {n4/n3,n3/n1,n4/n1}
    \draw[blue][->] (\from) -- (\to);

 \node (n4) at (14,13)  [Cred] {};
  \node (n1) at (14,23) [Cgray] {};
  \node (n2) at (11,18)  [Cgray] {};
  \node (n3) at (17,18)  [Cgray] {};
  \foreach \from/\to in {n1/n2,n3/n1}
    \draw[blue][<-] (\from) -- (\to);
 \foreach \from/\to in {n4/n3,n4/n2,n4/n1}
    \draw[blue][->] (\from) -- (\to);

      \node (n4) at (24,13)  [Cred] {};
  \node (n1) at (24,23) [Cgray] {};
  \node (n2) at (21,18)  [Cgray] {};
  \node (n3) at (27,18)  [Cgray] {};
  \foreach \from/\to in {n1/n2,n1/n3, n3/n4,n2/n4,n1/n4}
    \draw[blue][<-] (\from) -- (\to);

      \node (n4) at (34,13)  [Cred] {};
  \node (n1) at (34,23) [Cgray] {};
  \node (n2) at (31,18)  [Cgray] {};
  \node (n3) at (37,18)  [Cgray] {};
  \foreach \from/\to in {n2/n1,n3/n1, n3/n4,n2/n4,n1/n4}
    \draw[blue][<-] (\from) -- (\to);
\end{tikzpicture}
\caption{Acyclic partial orientations with $4$ components}
\label{fig:4partition}
\end{center}


\end{figure}


\medskip

Consider the arrangement $\H'_G=\{\H_{e_1},\ldots, \H_{e_5}\}$.  The graphic arrangement $\H_G^{q,c}$ (for some $c > 0$) is two-dimensional and is depicted in Figure~\ref{fig:arrangement}. Its bounded complex $\B_G^{q,c}$ is the bounded part of this figure. Recall that the graphic arrangement ``lives in'' $C^0(G,\RR)$, which may be identified with $\RR^4$ after fixing a labeling of the vertices. For each hyperplane labeled $\H_e$, the small arrow next to it denotes the side where $(d f)(e) >0$. The hyperplane $\H_{\bar{e}}$ coincides with $\H_e$, but its arrow will be reversed. We have also labeled the $0$-cells according to \eqref{eq:BLabels}. 
\setlength{\unitlength}{1.1pt}
\begin{figure}[h!]
\begin{center}
    \begin{picture}(100,195)(0,-55)

     \thicklines
     \put(60,0){\circle*{4}}
      \put(15,-10){$y_{\bar{e}_1} y_{e_4} y_{e_5}$}

 \put(26,66){\circle*{4}}
      \put(-23,65){$y_{e_2} y_{e_3} y_{e_5}$}

      \put(160,0){\circle*{4}}
      \put(130,-10){$y_{\bar{e}_3} y_{e_4}$}

 \put(-40,0){\circle*{4}}
      \put(-95,-10){$y_{\bar{e}_1} y_{e_3} y_{e_5}$}

      \put(60,100){\circle*{4}}
      \put(24,98){$y_{e_1} y_{e_2}$}

      \put(60,50){\circle*{4}}
      \put(15,45){$y_{e_2} y_{e_4} y_{e_5}$}

      \put(60,0){\line(0,-1){30}}
\put(60,50){\line(2,-1){100}}  \put(60,50){\line(-2,1){65}}     \put(60,100){\line(0,1){30}}
      \put(60,-20){\vector(-1,0){10}}
      \put(56,-43){$H_{e_3}$}

 \put(175,0){\vector(0,1){10}}
      \put(180,10){$H_{e_2}$}

 \put(-50,-10){\vector(1,-1){10}}
      \put(-60,-43){$H_{e_4}$}

      \put(30,130){\line(1,-1){20}}
      \put(160,0){\line(1,-1){20}}
      \put(170,-10){\vector(-2,-3){7}}
      \put(193,-40){$H_{e_5}$}

  \put(14,73){\vector(2,3){7}}
      \put(-30,80){$H_{e_1}$}

      \thicklines
      \put(-40,0){\line(1,0){230}}
      \put(-40,0){\line(-1,0){30}}
      \put(60,0){\line(-1,0){80}}
      \put(60,0){\line(0,1){130}}

\put(160,0){\line(3,-2){35}}
\put(-40,0){\line(1,1){130}} \put(-40,0){\line(-1,-1){30}}
\put(160,0){\line(-1,1){130}} \put(160,0){\line(1,-1){30}}

\put(-40,-10){$\pb_1$}
\put(64,-10){$\pb_2$}
\put(155,10){$\pb_3$}
\put(63,55){$\pb_4$}
\put(21,74){$\pb_5$}
\put(65,98){$\pb_6$}

    \end{picture}
    \caption{$\H_G^q$, $\B_G^q$, and the monomial labels on the vertices}
    \label{fig:arrangement}
\end{center}
\end{figure}
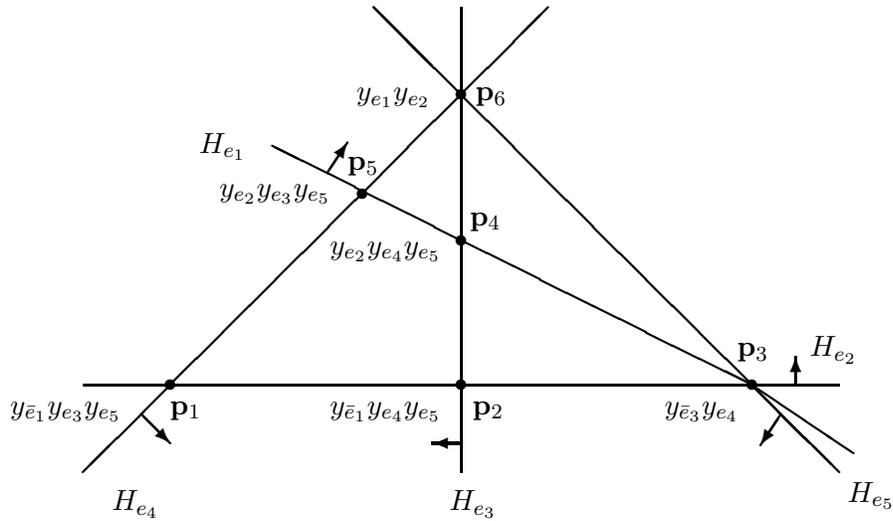

The polynomial ring $\Sb$ has $10$ variables:
\[\{y_e, y_{\bar{e}}: e \in \Oc\} = \{y_{e_1},y_{e_2},y_{e_3},y_{e_4},y_{e_5} ; y_{\bar{e}_1},y_{\bar{e}_2},y_{\bar{e}_3},y_{\bar{e}_4},y_{\bar{e}_5}\} \ .\] 

 By Theorem~\ref{thm:bounded}, the associated oriented matroid ideal $\OO_G^q$ is minimally generated by the labels of the $0$-cells:
\begin{equation} \label{eq:OGex}
\OO_G^q=\langle y_{\bar{e}_1} y_{e_4} y_{e_{5}}, y_{e_2}y_{e_3}y_{e_5}, y_{\bar{e}_3} y_{e_4}, y_{\bar{e}_1} y_{e_3}y_{e_5},
y_{e_1} y_{e_2},y_{e_2}y_{e_4}y_{e_5}\rangle \ .
\end{equation}
Note that the indices appearing in the minimal generating set correspond precisely to the oriented edges leaving the connected partition containing $q$ (i.e. the blue edges in Figure~\ref{fig:2partition}). This is what we expect by Proposition~\ref{prop:grobrelation}(iii). 

The lattice of integral cuts $L(G)$ is $3$-dimensional. Instead of drawing it, we may directly write a minimal generating set for $\JJ_G$ using Proposition~\ref{prop:grobrelation}(i):
\[
\JJ_{G} =\langle y_{\bar{e}_1} y_{e_4} y_{e_{5}}-y_{{e}_1} y_{\bar{e}_4} y_{\bar{e}_{5}}, 
y_{e_2}y_{e_3}y_{e_5}-y_{\bar{e}_2}y_{\bar{e}_3}y_{\bar{e}_5}, 
y_{\bar{e}_3}y_{e_4}-y_{{e}_3}y_{\bar{e}_4}, y_{\bar{e}_1} y_{e_3}y_{e_5}-y_{{e}_1} y_{\bar{e}_3}y_{\bar{e}_5},\]
\[
y_{e_1} y_{e_2}-y_{\bar{e}_1} y_{\bar{e}_2},y_{e_2}y_{e_4}y_{e_5}-y_{\bar{e}_2}y_{\bar{e}_4}y_{\bar{e}_5}\rangle\ .
\] 
The first term in each binomial is the dominant term for the term order $\prec_q$. The bounded complex $\B_G^q$ has six $0$-cells $\{\pb_1, \ldots , \pb_6\}$, nine $1$-cells $\{E_1, \ldots , E_9\}$, and four $2$-cells $\{F_1, \ldots , F_4\}$. These numbers correspond to the acyclic orientations of Figure~\ref{fig:2partition}, Figure~\ref{fig:3partition}, and Figure~\ref{fig:4partition}, as well as the Betti numbers of $\OO_G^q$ and $\JJ_G$.
Moreover, $\B_G^q$  supports a minimal free resolution for $\OO_G^q$. To explicitly describe this minimal resolution, let 
\[
E_1=\{\pb_1,\pb_2\} , \quad E_2=\{\pb_2,\pb_3\}, \quad E_3=\{\pb_1,\pb_5\}, \quad E_4=\{\pb_2,\pb_4\}, \quad E_5=\{\pb_3,\pb_4\} 
\]
\[
E_6=\{\pb_4,\pb_5\}, \quad E_7=\{\pb_5,\pb_6\} , \quad E_8=\{\pb_4,\pb_6\} , \quad E_9=\{\pb_3,\pb_6\} \ ,
\]
\[
F_1=\{\pb_1,\pb_2,\pb_4,\pb_5\}, \quad F_2=\{\pb_2,\pb_3,\pb_4\} , \quad F_3=\{\pb_4,\pb_5,\pb_6\}, \quad F_4=\{\pb_3,\pb_4,\pb_6\} \ .
\]
We extend the labeling on the vertices to the whole $\B_G^q$ by the least common multiple construction. For example,  
\[
\mb_{E_2}=y_{\bar{e}_1}y_{\bar{e}_3} y_{e_4} y_{e_5}, \ 
\mb_{E_4}=y_{\bar{e}_1} y_{e_2} y_{e_4} y_{e_5}, \ 
\mb_{E_5}=y_{e_2} y_{\bar{e}_3} y_{e_4} y_{e_5} , \ 
\mb_{E_6}=y_{e_2}y_{e_3} y_{e_4} y_{e_5}, \ 
\]
\[
\mb_{F_2}= y_{\bar{e}_1} y_{e_2} y_{\bar{e}_3} y_{e_4}y_{e_5}\ .
\] 
Then the minimal resolution of $\OO_G^q$ is as follows.
\[
 0 \rightarrow \bigoplus_{i=1}^4\Sb(-\mb_{F_i}) \xrightarrow{\partial_2} \bigoplus_{i=1}^9\Sb(-\mb_{E_i}) \xrightarrow{\partial_1} \bigoplus_{i=1}^6\Sb(-\mb_{\pb_i}) \xrightarrow{\partial_0} \Sb \twoheadrightarrow \Sb /\OO_G^q \ .
\]
As usual, assume $[F]$ denotes the generator of $\Sb(-\mb_F)$. The homogenized differential operator of the cell complex $(\partial_0, \partial_1, \partial_2)$ is as described in \eqref{eq:differntials}. For example 

\[
\partial_0([\pb_i])= \mb_{\pb_i} = \mb(\pb_i) \ ,
\]

\[
\partial_1([E_6])= \frac{y_{e_2}y_{e_3} y_{e_4} y_{e_5}}{y_{e_2} y_{e_4} y_{e_5}}[\pb_4] -\frac{y_{e_2}y_{e_3} y_{e_4} y_{e_5}}{y_{e_2}y_{e_3} y_{e_5}}[\pb_5] = y_{e_3}[\pb_4] - y_{e_4}[\pb_4]\ ,
\]

\medskip

\[
\begin{aligned}
\partial_2([F_2]) &= \frac{y_{\bar{e}_1} y_{e_2} y_{\bar{e}_3} y_{e_4}y_{e_5}}{y_{\bar{e}_1}y_{\bar{e}_3} y_{e_4} y_{e_5}}[E_2] -\frac{y_{\bar{e}_1} y_{e_2} y_{\bar{e}_3} y_{e_4}y_{e_5}}{y_{\bar{e}_1} y_{e_2} y_{e_4} y_{e_5}}[E_4]+\frac{y_{\bar{e}_1} y_{e_2} y_{\bar{e}_3} y_{e_4}y_{e_5}}{y_{e_2} y_{\bar{e}_3} y_{e_4} y_{e_5}}[E_5] \\
&= y_{e_2}[E_2] - y_{\bar{e}_3}[E_4] + y_{\bar{e}_1}[E_5] \ .
\end{aligned}
\]

Although $\JJ_G$ has the same Betti table as $\OO_G^q$, it is not possible to read the minimal free resolution for $\JJ_G$ directly from $\B_G^q$; one really needs to consider the cell decomposition of the torus $L(G)_{\RR}/L(G)$. 

\end{Example}

\begin{Example} \label{ex:CutResol}

Consider the graph $K_3$ with a fixed orientation as in Figure~\ref{fig:K30}.

The lattice of integral cuts $L(G)$ is two-dimensional and is depicted in Figure~\ref{fig:CutLattice}. This picture should be compared with Figure~\ref{fig:PrinLattice} (see Remark~\ref{rmk:isometry}). This lattice ``lives in'' $C_{\Oc}^1(G,\RR)=\span\{e_1^*,e_2^*,e_3^*\} \cong \RR^3$. In the picture $a_1=d_{\Oc}(\chi_{u_1})=e^\ast_2-e^\ast_3$, $a_2=d_{\Oc}(\chi_{u_2})=e^\ast_1-e^\ast_2$, and $a_3=d_{\Oc}(\chi_{u_3})=e^\ast_3-e^\ast_1$.

\begin{figure}[h!]

\begin{center}
\begin{tikzpicture} [scale = .30, very thick = 20mm]

\node (n42) at (20,1)  [Cblack] {};
\node (n41) at (14,1)  [Cblack] {}; 
\node (n43) at (26,1)  [Cblack] {};
\node (n44) at (32,1)  [Cblack] {};
\node (n11) at (14,11) [Cblack] {};  
\node (n12) at (20,11) [Cblack] {};  
\node (n13) at (26,11) [Cblack] {};  
\node (n14) at (32,11) [Cblack] {};
\node (n21) at (11,6)  [Cblack] {}; 
\node (n22) at (17,6)  [Cblack] {}; 
\node (n23) at (23,6)  [Cblack] {}; 
\node (n24) at (29,6)  [Cblack] {};  
\node (n25) at (35,6)  [Cblack] {};
\node (n4) at (14,1)  [Cblack] {};
\node (n1) at (14,11) [Cblack] {};
\node (n2) at (11,6)  [Cblack] {};
\node (n3) at (17,6)  [Cblack] {};
\node (n51) at (23,-4) [Cblack] {}; 
\node (n52) at (29,-4) [Cblack] {};   
\node (n61) at (27,2.8) [Cgray] {};  
\node (n62) at (27,-.7) [Cgray] {}; 
\node (n72) at (27,-9) [C0] {};  
\node (n70) at (20,-8.3) [C0] {}; 

\node (n18) at (35,16) [C0] {$\varphi_1=0$};
\node (n40) at (6.7,1)  [C0] {$\varphi_2=0$};
\node (n10) at (17,16) [C0] {$\varphi_3=0$}; 
\node (n181) at (38,11) [C0] {$\varphi_1=1$};
\node (n401) at (3.7,6)  [C0] {$\varphi_2=1$};
\node (n101) at (11,16) [C0] {$\varphi_3=1$};

\node (m24) at (30.5,6.8)  [C0] {$a_1$};
\node (mm) at (30.5,-4.5) [C0] {$a_2$};  
\node (m42) at (19,0.17)  [C0] {$a_3$};
\node (m42) at (25,0.17)  [C0] {$0$};

\node (n71) at (27,16) [C0] {$\G^{q,c}$};

\foreach \from/\to in {n71/n62,n72/n61}
    \draw[blue][dashed] (\from) -- (\to);
\foreach \from/\to in {n40/n41, n401/n21}
    \draw[black][dashed] (\from) -- (\to);
\foreach \from/\to in {n14/n18, n70/n51,n25/n181}
    \draw[green][dashed] (\from) -- (\to);
\foreach \from/\to in {n10/n12,n1/n3, n101/n11}
    \draw[red][dashed] (\from) -- (\to);

\foreach \from/\to in {n11/n14,n21/n25,n41/n44,n51/n52}
  \draw[black][] (\from) -- (\to);
\foreach \from/\to in {n11/n22,n22/n42,n42/n51,n12/n23,n23/n43,n43/n52,n13/n24,n24/n44,n14/n25,n21/n41}
  \draw[red][] (\from) -- (\to);
\foreach \from/\to in {n11/n21,n12/n22,n22/n41,n13/n23,n23/n42,n14/n24,n24/n43,n43/n51,n25/n44,n44/n52}
  \draw[green][] (\from) -- (\to);

\foreach \from/\to in {n62/n61}
    \draw[blue][] (\from) -- (\to);
\node (n61) at (27,2.8) [Cgray] {};  
\node (n62) at (27,-.7) [Cgray] {}; 
\node (n63) at (27,1) [Cgray] {}; 

\end{tikzpicture}
\caption{Cut lattice $L(G)$}
\label{fig:CutLattice}
\end{center}
\end{figure}
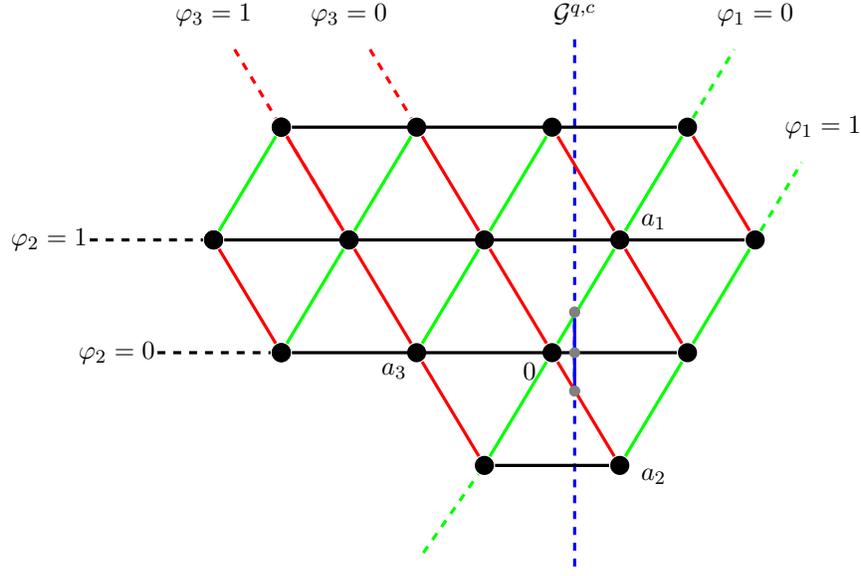

The cell decomposition $\Del(L(G))$ is the Delaunay decomposition of $L(G)_\RR$ with respect to the cut lattice and the usual Euclidean metric (cf. Remark~\ref{rmk:delon}(ii)), which coincides with an infinite hyperplane arrangement (Theorem~\ref{thm:totunim}(ii) and \S\ref{sec:cutlattice}). The hyperplanes at the origin are defined by $\varphi_i = e_i |_{L(G)_\RR} = 0$. 
The quotient cell decomposition $\Del(L(G))/L(G)$ of the torus $L(G)_\RR / L(G)$ has one $0$-cell $\{\pb\}$ (the orbit of the origin), three $1$-cells $\{E,E',E''\}$ (the orbits of the green, red, and black edges), and two $2$-cells $\{F,F'\}$ (the orbits of the upward and downward triangles). 
Assume that $q=u_3$ is the distinguished vertex. The hyperplane $\G^{q,c}$ is the hyperplane passing through points $c a_1$ and $c a_2$. In the figure $c$ is roughly $\frac{1}{3}$. The bounded complex of the intersection of this hyperplane with the arrangement at the origin is denoted by a solid blue segment. This is $\A_G^{q,c}$, which is combinatorially equivalent to $\B_G^{q,c}$ (via the coboundary map).

In Figure~\ref{fig:fund}, we have chosen a fundamental domain for the lattice, and have labeled all cells of this fundamental domain according to the recipe described in \S\ref{sec:labelDel}. This labeling induces a labeling on $\A_G^{q,c}$ (compatible with the labeling of $\B_G^{q,c}$) which is also given in the figure. The labelled cell complexes in Figure~\ref{fig:fund} are enough to completely describe minimal free resolutions for $\JJ_G$ and for $\OO_G$. Concretely, the minimal resolution of $\JJ_G$ is as follows:
\[
 0 \rightarrow \Sb(-\mb_{F}) \oplus \Sb(-\mb_{F'}) \xrightarrow{\partial_2} \Sb(-\mb_{E}) \oplus \Sb(-\mb_{E'})\oplus \Sb(-\mb_{E''}) \xrightarrow{\partial_1} \Sb(-\mb_{\pb}) \ .
\]
As usual, assume $[F]$ denotes the generator of $\Sb(-\mb_F)$. The labels of cells in $\Del(L(G)) / L(G)$ are:
\[
\mb_E = y_{e_2}y_{\bar{e}_3} \ ,
\quad
\mb_{E'} =  y_{e_1}y_{\bar{e}_3} \ ,
\quad
\mb_{E''} =  y_{e_1}y_{\bar{e}_2} \ ,
\]
\[
\mb_F =   y_{e_1} y_{e_2}y_{\bar{e}_3} \ ,
\quad
\mb_{F'}= y_{e_1}y_{\bar{e}_2}y_{\bar{e}_3} \ .
\]
The homogenized differential operator (see \eqref{eq:differntials}) of the cell complex $(\partial_1, \partial_2)$ is described as follows: 

\[
\partial_1([E])= \frac{y_{e_2}y_{\bar{e}_3}}{1}[\pb] -\frac{y_{e_2}y_{\bar{e}_3}}{\frac{y_{e_2}y_{\bar{e}_3}}{y_{\bar{e}_2}y_{e_3}}}[\pb] =  ({y_{e_2}y_{\bar{e}_3}} - {y_{\bar{e}_2}y_{e_3}})[\pb]\ ,
\]
\[
\partial_1([E'])= \frac{y_{e_1}y_{\bar{e}_3}}{\frac{y_{e_1}y_{\bar{e}_3}}{y_{\bar{e}_1}y_{e_3}}}[\pb] - \frac{y_{e_1}y_{\bar{e}_3}}{1}[\pb] = ({y_{\bar{e}_1}y_{e_3}}-{y_{e_1}y_{\bar{e}_3}})[\pb]\ , 
\]
\[
\partial_1([E''])= \frac{y_{e_1}y_{\bar{e}_2}}{\frac{y_{e_1}y_{\bar{e}_2}}{y_{\bar{e}_1}y_{e_2}}}[\pb] - \frac{y_{e_1}y_{\bar{e}_2}}{1}[\pb] = ({y_{\bar{e}_1}y_{e_2}} - {y_{e_1}y_{\bar{e}_2}})[\pb]\ , 
\]
\[
\partial_2([F])= \frac{y_{e_1}y_{e_2}y_{\bar{e}_3}}{y_{e_2}y_{\bar{e}_3}}[E] -  \frac{y_{e_1}y_{e_2}y_{\bar{e}_3}}{\frac{y_{e_1}y_{e_2}y_{\bar{e}_3}}{y_{e_3}}}[E''] + \frac{y_{e_1}y_{e_2}y_{\bar{e}_3}}{y_{e_1}y_{\bar{e}_3}}[E'] = {y_{e_1}}[E] - {y_{e_3}}[E'']  + {y_{e_2}}[E']   \ ,
\]
\[
\partial_2([F'])= \frac{y_{e_1}y_{\bar{e}_2}y_{\bar{e}_3}}{\frac{y_{e_1}y_{\bar{e}_2}y_{\bar{e}_3}}{{y_{\bar{e}_1}}}}[E] -  \frac{y_{e_1}y_{\bar{e}_2}y_{\bar{e}_3}}{y_{e_1}y_{\bar{e}_2}}[E''] + \frac{y_{e_1}y_{\bar{e}_2}y_{\bar{e}_3}}{y_{e_1}y_{\bar{e}_3}}[E'] = {y_{\bar{e}_1}}[E] - {y_{\bar{e}_3}}[E'']  + {y_{\bar{e}_2}}[E']   \ .
\]
Clearly $\JJ_G$ is the image of $\partial_1$ after identifying $[\pb]$ with $1$ (see Proposition~\ref{prop:grobrelation}). Since the labeling is compatible with the action of the lattice, any translation of this fundamental domain would give rise to the exact same description of the differential maps. 

The minimal resolution of $\OO_G^q$ can be read from the bounded complex $\A_G^{q,c}$. If we identify the name of each cell in $\A_G^{q,c}$ with the name of the associated cell in $\Del(L(G))$, we have 
\[
 0 \rightarrow \Sb(-\mb_{F}) \oplus \Sb(-\mb_{F'}) \xrightarrow{\tilde{\partial}_1} \Sb(-\mb_{E}) \oplus \Sb(-\mb_{E'})\oplus \Sb(-\mb_{E''}) \xrightarrow{\tilde{\partial}_0} \Sb \ ,
\]
where
\[
\tilde{\partial}_0([E])=\mb_E = y_{e_2}y_{\bar{e}_3} \ ,
\]
\[
\tilde{\partial}_0([E'])=\mb_{E'} =  y_{e_1}y_{\bar{e}_3} \ ,
\]
\[
\tilde{\partial}_0([E''])=\mb_{E''} =  y_{e_1}y_{\bar{e}_2} \ ,
\]
\[
\tilde{\partial}_1([F])= \frac{y_{e_1} y_{e_2}y_{\bar{e}_3}}{y_{e_2}y_{\bar{e}_3}}[E] - \frac{y_{e_1} y_{e_2}y_{\bar{e}_3}}{y_{e_1}y_{\bar{e}_3}}[E'] = y_{e_1}[E]-y_{e_2}[E']\ ,
\]
\[
\tilde{\partial}_1([F'])= \frac{y_{e_1}y_{\bar{e}_2}y_{\bar{e}_3}}{y_{e_1}y_{\bar{e}_3}}[E'] - \frac{y_{e_1}y_{\bar{e}_2}y_{\bar{e}_3}} {y_{e_1}y_{\bar{e}_2}} [E'']  = y_{\bar{e}_2}[E'] - y_{\bar{e}_3} [E''] \ .
\]
The ideal $\OO_G^q$ is the image of $\tilde{\partial}_0$ (see Proposition~\ref{prop:grobrelation}).
This example is, of course, closely related to Example~\ref{ex:PrinResol}. The general relationship between these two constructions is explained in Remark~\ref{rmk:ResolRelation}. 


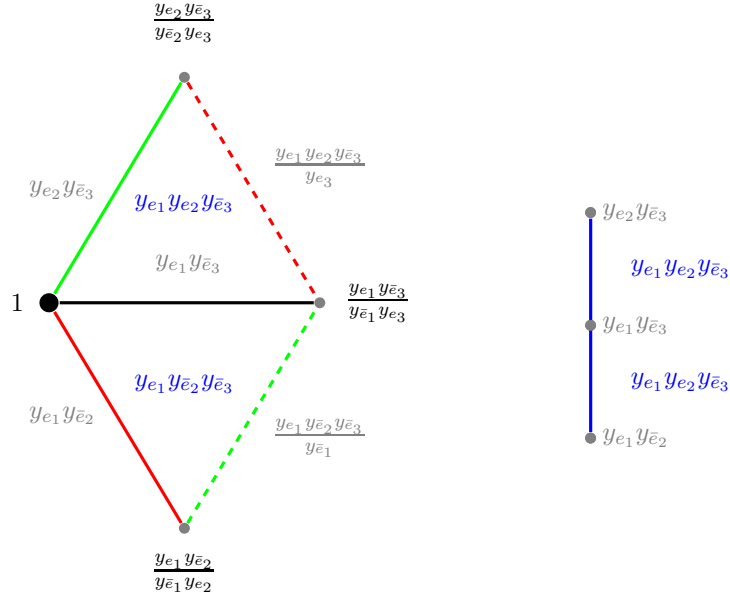
\begin{figure}[h!]

\begin{center}

\begin{tikzpicture} [scale = .60, very thick = 20mm]

\node (n4) at (14,1)  [Cgray] {};
\node (n1) at (14,11) [Cgray] {}; 
\node (n2) at (11,6)  [Cblack] {}; 
\node (n3) at (17,6)  [Cgray] {}; 

\node (m1) at (14,0) [C0] {$\frac{y_{e_1}y_{\bar{e}_2}}{y_{\bar{e}_1}y_{{e}_2}}$};
\node (m1) at (14,12.2) [C0] {$\frac{y_{e_2}y_{\bar{e}_3}}{y_{\bar{e}_2}y_{{e}_3}}$};
\node (m3) at (18.3,6) [C0] {$\frac{y_{e_1}y_{\bar{e}_3}}{y_{\bar{e}_1}y_{{e}_3}}$};
\node (m) at (10.3,6)  [C0] {$1$};

\foreach \from/\to in {n4/n2}
  \draw[red][] (\from) -- (\to);
\foreach \from/\to in {n1/n2}
  \draw[green][] (\from) -- (\to);
\foreach \from/\to in {n2/n3}
  \draw[black][] (\from) -- (\to);

\foreach \from/\to in {n1/n3}
  \draw[red][dashed] (\from) -- (\to);
\foreach \from/\to in {n4/n3}
  \draw[green][dashed] (\from) -- (\to);

\node (m10) at (11.3,8.5) [C0] {\textcolor{gray}{$y_{e_2}y_{\bar{e}_3}$}};
\node (m14) at (11.3,3.5) [C0] {\textcolor{gray}{$y_{e_1}y_{\bar{e}_2}$}};
\node (m23) at (14.1,6.86) [C0] {\textcolor{gray}{$y_{e_1}y_{\bar{e}_3}$}};
\node (m023) at (17,9.0) [C0] {\textcolor{gray}{$\frac{y_{e_1}y_{e_2}y_{\bar{e}_3}}{y_{{e}_3}}$}};
\node (m023) at (17,3.0) [C0] {\textcolor{gray}{$\frac{y_{e_1}y_{\bar{e}_2}y_{\bar{e}_3}}{y_{\bar{e}_1}}$}};

\node (m123) at (14,8.2) [C0] {\textcolor{blue}{$y_{e_1}y_{e_2}y_{\bar{e}_3}$}};
\node (m423) at (14,4.2) [C0] {\textcolor{blue}{$y_{e_1}y_{\bar{e}_2}y_{\bar{e}_3}$}};

\node (nn4) at (23,3)  [Cgray] {};
\node (nn1) at (23,8) [Cgray] {}; 
\node (nn0) at (23,5.5) [Cgray] {}; 

\node (m1) at (24,5.5) [C0] {\textcolor{gray}{$y_{e_1}y_{\bar{e}_3}$}};
\node (m1) at (24,3) [C0] {\textcolor{gray}{$y_{e_1}y_{\bar{e}_2}$}};
\node (m1) at (24,8) [C0] {\textcolor{gray}{$y_{e_2}y_{\bar{e}_3}$}};

\node (m1) at (25,4.25) [C0] {\textcolor{blue}{$y_{e_1}y_{e_2}y_{\bar{e}_3}$}};
\node (m1) at (25,6.75) [C0] {\textcolor{blue}{$y_{e_1}y_{e_2}y_{\bar{e}_3}$}};

\foreach \from/\to in {nn4/nn1}
  \draw[blue][] (\from) -- (\to);

\node (nn4) at (23,3)  [Cgray] {};
\node (nn1) at (23,8) [Cgray] {}; 
\node (nn0) at (23,5.5) [Cgray] {}; 

\end{tikzpicture}
\caption{A choice of fundamental domain with labels (left) , $\A_G^{q,c}$ with its induced labels (right)}
\label{fig:fund}
\end{center}

\end{figure}
\end{Example}

\subsection{Potential theory and Gr\"obner weight functionals for $\JJ_G$} \label{sec:PotJG}
Let $C_0(G,\RR)$ denote the real vector space spanned by $V(G)$, and let $C_1(G,\RR)$ denote the real vector space spanned by $\EE(G)$. The usual boundary operator $\partial \colon C_1(G,\RR) \rightarrow C_0(G,\RR)$ is defined by 
\[
(\partial(\sigma))(v) = \sum_{e_+=v}{\sigma(e)} - \sum_{e_-=v}{\sigma(e)} \ .
\]

An element $\sigma \in C_1(G,\RR)$ gives a map $\sigma \colon C^1(G,\ZZ) \rightarrow \RR$ by sending $f$ to $f(\sigma)$. So it may be thought of as a weight functional for the ideal $\JJ_G$. Our next goal is to study the weight functionals $\sigma \in C_1(G, \RR)$ that represent the term order $\prec_q$ in Proposition~\ref{prop:grobrelation}(iv). For our application, a very important class of examples arises from weight functionals representing $<_q$ for $\II_G$ as studied in \S\ref{sec:wt1} (see Lemma~\ref{lem:bq}, Definition~\ref{def:M_intwt}, or \eqref{eq:wt1}).

\begin{Proposition} \label{prop:plusworks}
Let $\vartheta \in C^0(G, \RR)$ be any weight functional representing $<_q$ for $\II_G$ (i.e. $\MM_G^q=\ini_{\vartheta}{(\II_G)}$). Then the $1$-chain $\sigma \in C_1(G,\RR)$ defined by 
\[
\sigma(e) = \vartheta(e_+) \quad \text{for all } e \in \EE(G)
\]
represents a term order $\prec_q$ for $\JJ_G$ with $\OO_G^q = \ini_{\sigma}(\JJ_G)$.
\end{Proposition}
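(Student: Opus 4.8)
The plan is to reduce the statement to a single numerical inequality and then recognize that inequality as (part of) the defining property of a weight functional representing $<_q$ for $\II_G$. First, unwinding the definitions: for $A \subsetneq V(G)$ with $q \in A$, using $\sigma(e) = \vartheta(e_+)$ and grouping oriented edges by their heads,
\[
\deg_\sigma\Bigl(\textstyle\prod_{e \in \EE(A^c,A)} y_e\Bigr) = \sum_{e \in \EE(A^c,A)} \vartheta(e_+) = \sum_{v \in V(G)} \vartheta(v)\, D(A^c,A)(v) = \vartheta(D(A^c,A)),
\]
since $D(A^c,A)$ is supported on $A^c$ and $D(A^c,A)(v)$ counts exactly the $e \in \EE(A^c,A)$ with $e_+ = v$; symmetrically $\deg_\sigma(\prod_{e \in \EE(A,A^c)} y_e) = \vartheta(D(A,A^c))$. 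Hence the $\sigma$-initial form of the generator $\prod_{e \in \EE(A^c,A)} y_e - \prod_{e \in \EE(A,A^c)} y_e$ of $\JJ_G$ (Proposition~\ref{prop:grobrelation}(i)) is the single monomial $\prod_{e \in \EE(A^c,A)} y_e$ precisely when $\vartheta(D(A^c,A)) > \vartheta(D(A,A^c))$.

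Second, I would verify this inequality for every such $A$ with $G[A]$ and $G[A^c]$ connected. By Theorem~\ref{thm:Cori}(ii), $\xb^{D(A^c,A)} - \xb^{D(A,A^c)}$ lies in the minimal Gr\"obner basis of $(\II_G, <_q)$ with leading monomial $\xb^{D(A^c,A)}$. Since $\vartheta$ represents $<_q$, its $\vartheta$-initial form lies in $\ini_\vartheta(\II_G) = \MM_G^q$, a monomial ideal whose only candidate monomials here are $\xb^{D(A^c,A)}$ and $\xb^{D(A,A^c)}$; but $\xb^{D(A,A^c)} \notin \MM_G^q$, because a generator $\xb^{D(B^c,B)}$ ($q \in B$) dividing it would force $B^c \subseteq A$ with no edges from $B^c$ to $A \cap B$, and since $B^c$ and $A\cap B$ partition $A$ this contradicts connectedness of $G[A]$ unless $B = A^c$, which is excluded by $q \in B$. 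Thus $\ini_\vartheta(\xb^{D(A^c,A)} - \xb^{D(A,A^c)}) = \xb^{D(A^c,A)}$, i.e. $\vartheta(D(A^c,A)) > \vartheta(D(A,A^c))$. (This is exactly the criterion already used in the proof of Lemma~\ref{lem:bq}, cf. \cite[proof of Proposition~1.11]{SturmfelsGrobnerConvex}.)

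Finally, I would assemble these. After replacing $\vartheta$ by $\vartheta + k\mathbf{1}$ with $k \gg 0$ — harmless, since every generator of $\JJ_G$ is homogeneous of total degree $\lvert \EE(A^c,A)\rvert = \lvert \EE(A,A^c)\rvert$, so such a shift does not change any difference of $\sigma$-degrees — we may assume $\vartheta \geq 0$, hence $\sigma \geq 0$; let $\prec_q$ be any term order refining the weight $\sigma$. By the first two steps $\prod_{e \in \EE(A,A^c)} y_e \prec_q \prod_{e \in \EE(A^c,A)} y_e$ for every $A$ with $q \in A$ and $G[A], G[A^c]$ connected, so $\prec_q$ is of the type considered in Proposition~\ref{prop:grobrelation}(iv). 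Moreover the binomials of Proposition~\ref{prop:grobrelation}(i) form a universal Gr\"obner basis (Proposition~\ref{prop:grobrelation}(ii)), hence a Gr\"obner basis for $\prec_q$, so by the standard relation between a weight order and its refinements their $\sigma$-initial forms — which by the first step are exactly the monomials $\prod_{e \in \EE(A^c,A)} y_e$ — generate $\ini_\sigma(\JJ_G)$. By Proposition~\ref{prop:grobrelation}(iii) this ideal is precisely $\OO_G^q$. Therefore $\ini_\sigma(\JJ_G) = \OO_G^q = \ini_{\prec_q}(\JJ_G)$ is a monomial ideal, so $\sigma$ represents the term order $\prec_q$ for $\JJ_G$, as claimed.

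I expect the main obstacle to be the second step: extracting the \emph{strict} inequality $\vartheta(D(A^c,A)) > \vartheta(D(A,A^c))$ from the bare hypothesis that $\vartheta$ represents $<_q$ for $\II_G$. Everything there hinges on the non-membership $\xb^{D(A,A^c)} \notin \MM_G^q$, which uses the explicit minimal Gr\"obner basis of Theorem~\ref{thm:Cori} together with connectedness of $G[A]$; the remaining steps are routine bookkeeping with cuts and the standard passage between weight orders and term orders.
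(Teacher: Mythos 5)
Your first step (the degree bookkeeping) is correct and is the same reduction the paper makes: the paper writes the difference $\vartheta(D(A^c,A))-\vartheta(D(A,A^c))$ as $\sigma(d\chi_{A^c})=\sum_v\Delta(\vartheta)(v)\chi_{A^c}(v)$ via adjointness of $\partial$ and $d$, but it is the same quantity. The gap is in your second step. The deduction that a generator $\xb^{D(B^c,B)}$ of $\MM_G^q$ dividing $\xb^{D(A,A^c)}$ forces $B^c\subseteq A$ is unjustified: divisibility only constrains the \emph{support} of $D(B^c,B)$, i.e.\ the vertices of $B^c$ that actually have a neighbor in $B$, and this can be a proper subset of $B^c$. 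In fact the non-membership claim is false. Take $G$ the path $q$--$a$--$b$ and $A=\{q,a\}$; then $\xb^{D(A,A^c)}=x_a$, which \emph{is} a minimal generator of $\MM_G^q=\langle x_a,x_b\rangle$, coming from $B=\{q\}$, $B^c=\{a,b\}\not\subseteq A$, with $\supp(D(B^c,B))=\{a\}$. Worse, the strict inequality you want genuinely does not follow from the bare hypothesis $\ini_\vartheta(\II_G)=\MM_G^q$: on this path the functional $\vartheta$ with $(\vartheta(q),\vartheta(a),\vartheta(b))=(0,2,1)$ satisfies $\ini_\vartheta(\II_G)=\langle x_a,x_b\rangle=\MM_G^q$ (the \emph{reduced} Gr\"obner basis is $\{x_a-x_q,\,x_b-x_q\}$, and $\vartheta$ selects the correct initial term of each), yet for $A=\{q,a\}$ one has $\vartheta(D(A^c,A))=\vartheta(b)=1<2=\vartheta(a)=\vartheta(D(A,A^c))$; accordingly, for the edge $e$ oriented $a\to b$ the associated $\sigma$ gives $\sigma(e)=1<2=\sigma(\bar e)$, so $\ini_\sigma(y_e-y_{\bar e})=-y_{\bar e}\notin\OO_G^q$.

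What your step~2 is really trying to prove is that $\ini_\vartheta(\II_G)=\MM_G^q$ forces $\vartheta$ to select the stated leading term of \emph{every} binomial in the Gr\"obner basis of Theorem~\ref{thm:Cori}; since that basis is not reduced ($\xb^{D(A,A^c)}$ can lie in $\MM_G^q$, exactly as above), this is a strictly stronger condition than the hypothesis, and it can fail. The paper's proof does not attempt your non-membership argument: it computes $\partial(\sigma)=\Delta(\vartheta)$ and then invokes the cut inequalities \eqref{eq:wt1char}, i.e.\ it in effect assumes that $\vartheta$ lies in the cone $\{\eta:\eta(\Delta(\chi_B))>0\text{ for all nonempty }B\text{ with }q\notin B\}$ of \S\ref{sec:grocone}. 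That is the hypothesis under which the argument actually closes, and it is satisfied by $b_q$ and $\vartheta_q$, the only functionals later fed into this proposition (Definition~\ref{def:O_intwt}, Proposition~\ref{prop:regularJ}); under the literal hypothesis your argument cannot be repaired because the conclusion itself fails. The remaining parts of your write-up --- the identification of the $\sigma$-degrees with $\vartheta(D(A^c,A))$ and $\vartheta(D(A,A^c))$, and the passage from the weight $\sigma$ to a refining term order via the universal Gr\"obner basis of Proposition~\ref{prop:grobrelation} --- are fine.
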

\begin{proof}
By Proposition~\ref{prop:grobrelation}, the term order $\prec_q$ is characterized by requiring 
\[
{\prod_{e \in \EE(A,A^c)}{y_e} }  \ \prec_q {\prod_{e \in \EE(A^c, A)}{y_e} } 
\]
for every $A \subsetneq V(G)$, where $q \in A$ with $G[A]$ and $G[A^c]$ connected. 
Since (see Lemma~\ref{lem:labels})
\[
\frac{\prod_{e \in \EE(A^c, A)}{y_e} }{\prod_{e \in \EE(A,A^c)}{y_e} }=
\prod_{e \in \EE(G)} { {y_{e}^{d(\chi_{A^c})(e)}} } \ ,
\]


we have $\OO_G^q = \ini_{\sigma}(\JJ_G)$ if and only if 
\begin{equation}\label{eq:sigpos}
\sigma(d(\chi_{A^c}))=\sum_{e \in \EE(G)}  {\sigma(e) \cdot (d(\chi_{A^c}))(e)} >0
\end{equation}
 for all bonds $d(\chi_{A^c})(e)$ associated to $A \subsetneq V(G)$ with $q \in A$. Since $\partial$ is the adjoint to $d$, \eqref{eq:sigpos} is equivalent to

\begin{equation}\label{eq:sigmapos}
\sum_{v \in V(G)}{(\partial(\sigma))(v) \cdot \chi_{A^c}(v) } > 0 \ .
\end{equation}
Since $\sigma(e) = \vartheta(e_+)$, we have 
\[
\begin{aligned}
(\partial(\sigma))(v) &= \sum_{e_+=v}{\sigma(e)} - \sum_{e_-=v}{\sigma(e)} = \sum_{e_+=v}{\vartheta(e_+)} - \sum_{e_-=v}{\vartheta(e_+)} \\
&= \deg(v){\vartheta(v)} - \sum_{\{u,v\} \in E(G)}{\vartheta(u)} = \Delta(\vartheta)(v) \ .
\end{aligned}
\]
Therefore (see \eqref{eq:wt1char})
\[
\sum_{v \in V(G)}{(\partial(\sigma))(v) \cdot \chi_{A^c}(v) } = 
\sum_{v \in V(G)}{\Delta(\vartheta)(v) \cdot \chi_{A^c}(v) } >0
\]
and \eqref{eq:sigmapos} holds.
\end{proof}

\begin{Definition} \label{def:O_intwt}
Let $\vartheta_q \in C^0(G,\ZZ)$ denote the non-negative, integral functional defined in Definition~\ref{def:M_intwt}. We denote by $\lambda_q$ the associated non-negative, integral weight functional in $C_1(G,\RR)$ defined by
\[
\lambda_q(e) = \vartheta_q(e_+) \quad \text{for all } e \in \EE(G)
\]
as in Proposition~\ref{prop:plusworks} .
\end{Definition}

\subsection{Gr\"obner cone of $\OO_G^q$} \label{sec:Ocone}
Next we will describe the Gr\"obner cone associated to $\OO_G^q$. As in \S\ref{sec:grocone}, this cone is intimately related to potential theory and Green's functions. 

The description of this cone is most elegant when $G$ does not have a cut vertex. Cut vertices introduce linear subspaces in the Gr\"obner cone and are slightly tedious (but similar) to deal with. Throughout this section, we will therefore assume that $G$ is $2$-vertex-connected. This condition is equivalent to assuming that the lattice $L(G)$ is indecomposable (\cite[Proposition~4]{Bacher}).

\begin{Proposition}
Assume $G$ is $2$-vertex-connected. Then $\sigma \in C_1(G, \RR)$ represents a term order $\prec_q$ for $\JJ_G$ with $\OO_G^q = \ini_{\sigma}(\JJ_G)$ if and only if for all $p \in V(G) \backslash \{q\}$ we have
\[
\beta_p :=(\partial(\sigma))(p) > 0 \ .
\]
\end{Proposition}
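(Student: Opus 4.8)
The plan is to reduce, via Proposition~\ref{prop:grobrelation} and the adjointness computation already carried out in the proof of Proposition~\ref{prop:plusworks}, to an inequality ranging over the bonds of $G$, and then to use $2$-vertex-connectivity to access the ``singleton complement'' bonds. Recall from Proposition~\ref{prop:grobrelation} that the binomials $g_A = \prod_{e\in\EE(A^c,A)}y_e - \prod_{e\in\EE(A,A^c)}y_e$, as $A$ ranges over the subsets of $V(G)$ with $q\in A$ and $G[A],G[A^c]$ connected, form a universal Gr\"obner basis of $\JJ_G$, and that $\OO_G^q$ is exactly the initial ideal obtained when the leading term of each $g_A$ is $\prod_{e\in\EE(A^c,A)}y_e$. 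Hence $\sigma$ represents a term order $\prec_q$ for $\JJ_G$ with $\OO_G^q=\ini_\sigma(\JJ_G)$ if and only if, for every such admissible $A$,
\[
\sigma\bigl(d(\chi_{A^c})\bigr) = \sum_{e\in\EE(G)}\sigma(e)\,\bigl(d(\chi_{A^c})\bigr)(e) > 0 .
\]
(The forward implication here uses, exactly as in the proof of Proposition~\ref{prop:plusworks}, that the competing monomial $\prod_{e\in\EE(A,A^c)}y_e$ is \emph{not} in $\OO_G^q$, so $\sigma$ is forced to select $\prod_{e\in\EE(A^c,A)}y_e$; the reverse implication is immediate because the $g_A$ are a universal Gr\"obner basis.) Since $\partial$ is adjoint to $d$, the left-hand side equals $\sum_{v\in V(G)}(\partial(\sigma))(v)\,\chi_{A^c}(v) = \sum_{v\in A^c}\beta_v$. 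Thus the statement reduces to the assertion that $\OO_G^q=\ini_\sigma(\JJ_G)$ if and only if $\sum_{v\in A^c}\beta_v>0$ for every $A\subsetneq V(G)$ with $q\in A$ and $G[A],G[A^c]$ connected.

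For the ``if'' direction: assuming $\beta_p>0$ for all $p\ne q$, the complement $A^c$ of any admissible $A$ is a nonempty subset of $V(G)\setminus\{q\}$, so $\sum_{v\in A^c}\beta_v$ is a sum of strictly positive numbers over a nonempty index set, hence positive; by the reduction, $\OO_G^q=\ini_\sigma(\JJ_G)$. For the ``only if'' direction: fix $p\in V(G)\setminus\{q\}$ and take $A=V(G)\setminus\{p\}$. Then $q\in A$, the induced subgraph $G[A^c]=G[\{p\}]$ is connected, and $G[A]=G-p$ is connected precisely because $G$ is $2$-vertex-connected, i.e.\ has no cut vertex. So $A$ is admissible, and the reduction gives $0<\sum_{v\in A^c}\beta_v=\beta_p$.

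The substance of the argument lies entirely in the reduction step, which merely repackages Proposition~\ref{prop:grobrelation} together with the adjointness manipulation from the proof of Proposition~\ref{prop:plusworks}. The single genuinely new — and indispensable — ingredient is the elementary observation that, under $2$-vertex-connectivity, deleting any non-$q$ vertex leaves a connected graph, so that the bonds separating a single vertex $p\ne q$ from the rest of $G$ occur among the inequalities cutting out the Gr\"obner cone; this is exactly where the hypothesis enters. (If $p\ne q$ were a cut vertex, $\beta_p$ would be unconstrained, which is the source of the extra linear subspaces in the Gr\"obner cone mentioned before the proposition, equivalently of the failure of $L(G)$ to be indecomposable.) I expect no real obstacle beyond keeping the bookkeeping of orientations and signs straight in the adjointness step and being precise about what ``represents a term order'' means, both of which are already handled in the proof of Proposition~\ref{prop:plusworks}.
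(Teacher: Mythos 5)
Your proof is correct and follows essentially the same route as the paper: both reduce, via Proposition~\ref{prop:grobrelation} and the adjointness of $\partial$ and $d$, to the positivity of $\sum_{v\in A^c}\beta_v$ over all bonds with $q\in A$, and both use $2$-vertex-connectivity only to guarantee that the star of each vertex $p\ne q$ is itself a bond, forcing $\beta_p>0$. The only difference is expository: you spell out the reduction step that the paper dispatches with ``we have already seen.''
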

\begin{proof}
We have already seen that $\sigma \in C_1(G, \RR)$ represents a term order $\prec_q$ for $\JJ_G$ with $\OO_G^q = \ini_{\sigma}(\JJ_G)$ if and only if \eqref{eq:sigmapos} holds for all bonds $d(\chi_{A^c})(e)$ associated to $A \subsetneq V(G)$ with $q \in A$. Since we have assumed there is no cut vertex, the star of every vertex gives a bond, so it is necessary (setting $A^c=\{p\}$ for $p\ne q$ in \eqref{eq:sigmapos}) to have $\beta_p=(\partial(\sigma))(p) > 0$. This condition is also sufficient because then for any bond $d(\chi_{A^c})(e)$ associated to $A \subsetneq V(G)$ with $q \in A$, we get 
\[
\sum_{v \in V(G)}{(\partial(\sigma))(v) \cdot \chi_{A^c}(v) }=\sum_{v \in V(G)}{\beta_v \cdot \sum_{p \in A^c} \chi_p(v) } = \sum_{p \in A^c}{\beta_p} >0 
\]
and \eqref{eq:sigmapos} holds.
\end{proof}

Therefore $\sigma \in C_1(G, \RR)$ is a solution to $\partial(\sigma)=\beta$ for $\beta=\sum_{p \in V(G)}{\beta_v(v)}$ in $\Div^0(G)$ with $\beta_p >0$ for $p \ne q$.

\medskip

After identifying $C_1(G,\RR)$ with $C^1(G, \RR)$ (by sending $e$ to $e^\ast$) we have the orthogonal (``Hodge'') decomposition
\[
C_1(G,\RR) \cong \Ker(\partial) \oplus \Image(d) \ . 
\]
Let $\sigma = \sigma' + \sigma''$ for $\sigma' \in \Ker(\partial)$ and $\sigma'' = d(\psi) \in \Image(d)$ for $\psi \in C^0(G,\RR)$. Then $\partial(\sigma)=\beta$ if and only if $\partial d(\psi) = \partial(\sigma'')=\beta$. By Remark~\ref{rmk:selfadjoint} $\partial d = 2\Delta$, so 
\[
\Delta{\psi} = \frac{1}{2}\beta \ .
\]

 It follows from the definition of the Green's function $j_q(p, v)$, together with the fact that the Laplacian operator has a one dimensional zero-eigenspace generated by $\mathbf{1}$, that:
\[
\psi=\frac{1}{2}\sum_{p \in V(G)}{\beta_p j_q(p, \cdot )} + k \cdot \mathbf{1}
\]
for some constant $k \in \RR$. Therefore
\[
\sigma(e)=\sigma'(e) + \sigma''(e) =\sigma'(e) + (d(\psi))(e) = \sigma'(e) + \frac{1}{2}\sum_{p \in V(G)}{\beta_p (j_q(p, e_+ ) - j_q(p, e_- ))}\ .
\]
We summarize these observations in the following theorem.

\begin{Theorem}
Assume $G$ is $2$-vertex connected. The $1$-chain $\sigma \in C_1(G, \RR)$ represents $\prec_q$ for $\JJ_G$ if and only if there exist $\sigma'  \in \Ker(\partial)$ and real numbers $\beta'_p >0$ (for $p \in V(G)$) such that 
\[
\sigma(e)= \sigma'(e) + \sum_{p \in V(G)}{\beta'_p (j_q(p, e_+ ) - j_q(p, e_- ))}
\]
for all $e \in \EE(G)$. 
\end{Theorem}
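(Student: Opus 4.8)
The plan is to string together the equivalences already assembled in the discussion immediately preceding the statement. First I would invoke the preceding Proposition: since $G$ is $2$-vertex-connected, $\sigma$ represents $\prec_q$ for $\JJ_G$ (equivalently $\OO_G^q = \ini_\sigma(\JJ_G)$) if and only if $\beta_p := (\partial(\sigma))(p) > 0$ for every $p \in V(G) \setminus \{q\}$. Since the boundary of a $1$-chain always has degree zero, this is the same as saying $\partial(\sigma) = \beta$ for the degree-zero divisor $\beta = \sum_{p \in V(G)} \beta_p\,(p)$ whose coefficients at all vertices $\ne q$ are strictly positive.

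Next I would use the orthogonal (Hodge) decomposition $C_1(G,\RR) \cong \Ker(\partial) \oplus \Image(d)$ to write $\sigma = \sigma' + \sigma''$ with $\sigma' \in \Ker(\partial)$ and $\sigma'' = d(\psi)$ for some $\psi \in C^0(G,\RR)$. Then $\partial(\sigma) = \partial(\sigma'') = \partial d(\psi) = 2\Delta(\psi)$ by Remark~\ref{rmk:selfadjoint}, so the condition $\partial(\sigma) = \beta$ becomes $\Delta(\psi) = \tfrac{1}{2}\beta$. Because $\Delta(j_q(p,\cdot)) = (p) - (q)$ and $\deg(\beta) = 0$, the $1$-chain $\tfrac{1}{2}\sum_p \beta_p\, j_q(p,\cdot)$ is a particular solution, and since $\Ker(\Delta) = \RR\cdot\mathbf{1}$ the general solution is $\psi = \tfrac{1}{2}\sum_p \beta_p\, j_q(p,\cdot) + k\,\mathbf{1}$ for some $k \in \RR$. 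Applying $d$ kills the constant term, so $\sigma''(e) = \tfrac{1}{2}\sum_p \beta_p\bigl(j_q(p,e_+) - j_q(p,e_-)\bigr)$; setting $\beta'_p := \tfrac{1}{2}\beta_p > 0$ for $p \ne q$ and taking $\beta'_q$ to be any positive number — harmless, since $j_q(q,\cdot) \equiv 0$ so this term contributes nothing — puts $\sigma$ in the asserted form with $\sigma' \in \Ker(\partial)$.

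For the converse, starting from a $\sigma$ of the stated form, the second summand equals $d(\varphi)$ with $\varphi := \sum_p \beta'_p\, j_q(p,\cdot)$, hence $\partial(\sigma) = \partial d(\varphi) = 2\Delta(\varphi) = 2\sum_p \beta'_p\bigl((p) - (q)\bigr)$, so $(\partial(\sigma))(p) = 2\beta'_p > 0$ for every $p \ne q$, and the preceding Proposition applies in the other direction. I do not expect a genuine obstacle here: the only points that need a little care are bookkeeping the factor $2$ coming from $\partial d = 2\Delta$, confirming that $\beta \in \Div^0(G)$ so that $\tfrac12\beta$ does lie in the image of $\Delta$, and observing that the $p = q$ summand is vacuous because $j_q(q,\cdot)$ vanishes identically. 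Everything else is a direct transcription of the preceding Proposition, Remark~\ref{rmk:selfadjoint}, and the defining property of the Green's functions.
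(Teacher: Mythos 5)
Your proposal is correct and follows essentially the same route as the paper: the preceding Proposition reduces the question to $\partial(\sigma)=\beta$ with $\beta_p>0$ for $p\ne q$, the Hodge decomposition $C_1(G,\RR)\cong\Ker(\partial)\oplus\Image(d)$ together with $\partial d=2\Delta$ converts this to $\Delta\psi=\tfrac12\beta$, and the Green's functions plus $\Ker(\Delta)=\RR\cdot\mathbf{1}$ give the stated form. Your explicit treatment of the converse and of the vacuous $p=q$ term (since $j_q(q,\cdot)\equiv 0$) only makes explicit what the paper leaves implicit; the lone quibble is that $\tfrac12\sum_p\beta_p\,j_q(p,\cdot)$ is a $0$-cochain, not a $1$-chain.
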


In other words $\sigma$ (up to an element of the ``extended cycle space'' $\Ker(\partial)$) is in the interior of the cone generated by the vectors $(j_q(p, e_+ ) - j_q(p, e_- ))_{e \in \EE(G)}$ for various $p \in V(G)$. It is easy, using \cite[Construction~3.1]{FarbodMatt12}, to show that these vectors are independent.

\section{Regular sequences for $\OO_G^q$ and $\JJ_G$}
\label{sec:main}


\subsection{Linear system of parameters for $\OO_G^q$}
The ideal $\OO_G^q\subset \Sb$ is a squarefree monomial ideal. Let $\Sigma_G^q$ denote its associated simplicial complex on $2m$ vertices $\{y_e: e \in \EE(G)\}$. 

\medskip

For each spanning tree $T$ of $G$, let $\Oc_T$ denote the orientation of $T$ with a unique source at $q$ (i.e. the orientation obtained by orienting all paths away from $q$). For an example, see Figure~\ref{fig:spanning trees}.

\begin{Proposition}\label{prop:primdecom}
\begin{itemize}
\item[]
\item[(i)] The number of facets of $\Sigma_G^q$ is the same as the number of spanning trees of $G$. For each spanning tree $T$, the corresponding facet $\tau_T$ is:
\[
\tau_T = \{y_e: e \in \EE(G) \backslash \Oc_T\} \ .
\] 

\item[(ii)] For each spanning tree $T$ of $G$, let 
$P_T=\langle y_e: e \in \Oc_T \rangle$. 
The minimal prime decomposition of $\Oc_G^q$ is
\[
\OO_G^q = \bigcap_{T} P_T \ ,
\]
the intersection being over all spanning trees of $G$. 

\item[(iii)] For each facet $\tau$ of $\Sigma_G^q$ we have $|\tau|=2m-n+1$. Therefore 
\[\dim(K[\Sigma_G^q])=2m-n+1 \ .\]

\item[(iv)] $\Sigma_G^q$ is Cohen-Macaulay. 
\end{itemize}
\end{Proposition}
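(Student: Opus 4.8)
The plan is to prove the four parts of Proposition~\ref{prop:primdecom} in order, since each feeds into the next. First I would establish (i) by analyzing which subsets of $\{y_e : e \in \EE(G)\}$ are faces of $\Sigma_G^q$. Recall that $\OO_G^q$ is generated by the monomials $\prod_{e \in \EE(A^c,A)} y_e$ for $A \subsetneq V(G)$ with $q \in A$ and both $G[A], G[A^c]$ connected (Proposition~\ref{prop:grobrelation}(iii)); these generators correspond to the blue edges leaving the $q$-component in the acyclic partial orientations with two components. A set $\sigma = \{y_e : e \in F\}$ (for $F \subseteq \EE(G)$) is a nonface iff it contains some generator, i.e. iff $F$ contains $\EE(A^c,A)$ for some such $A$. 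So $\sigma$ is a face iff for every directed bond $\EE(A^c,A)$ (with $q \in A$), at least one edge of that bond is missing from $F$. I would argue that the maximal faces are exactly the complements $\tau_T = \{y_e : e \in \EE(G) \setminus \Oc_T\}$ where $\Oc_T$ is the away-from-$q$ orientation of a spanning tree $T$: on one hand, $\EE(G) \setminus \Oc_T$ contains no directed bond leaving a $q$-containing set, because for any $A \ni q$ with $A \ne V(G)$, the tree $T$ has an edge directed from $A$ to $A^c$ in $\Oc_T$ (follow the tree path from $q$ out to $A^c$; the first edge crossing into $A^c$ is in $\Oc_T \cap \EE(A,A^c)$... more precisely one shows some edge of $\Oc_T$ lies in $\EE(A^c,A)$'s complement appropriately), so $\tau_T$ is a face; on the other hand, any face $\sigma$ not contained in some such $\tau_T$ would have to contain a full directed bond, giving a contradiction. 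Counting spanning trees then gives the facet count. This identification of the facets is, I expect, the main obstacle — the bookkeeping of which edges of $\Oc_T$ cross a given cut, and verifying both inclusions, requires care.

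For part (ii), the Stanley--Reisner correspondence gives $I_\Sigma = \bigcap_{\tau \text{ facet}} \langle y_e : y_e \notin \tau \rangle$, and by (i) the complement of $\tau_T$ inside the variable set is exactly $\Oc_T$, so $P_T = \langle y_e : e \in \Oc_T \rangle$ and $\OO_G^q = \bigcap_T P_T$. One should check this is the \emph{minimal} prime decomposition, i.e. no $\Oc_T$ contains another $\Oc_{T'}$; since $|\Oc_T| = n-1$ for every spanning tree $T$, no containment among distinct $\Oc_T$ is possible, so all the $P_T$ are needed. Part (iii) is then immediate: $|\tau_T| = 2m - |\Oc_T| = 2m - (n-1) = 2m-n+1$ is the same for every facet, so $\Sigma_G^q$ is pure of that dimension, and $\dim K[\Sigma_G^q]$ equals the maximal facet cardinality $2m-n+1$ by the general fact recalled in \S\ref{sec:reg}.

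For part (iv), the cleanest route is to show $\Sigma_G^q$ is shellable, hence Cohen--Macaulay. Since the facets $\tau_T$ are indexed by spanning trees, I would order the spanning trees by a shelling-friendly order — e.g. an order refining a linear order on edges and use a matroid/greedoid-style exchange argument on the graphic matroid — and check the shelling condition: for consecutive facets, the intersection with earlier facets is a pure codimension-one subcomplex. Concretely, passing between $\tau_T$ and $\tau_{T'}$ corresponds to an edge swap in the spanning tree exchange graph, which changes $\Oc_T$ and $\Oc_{T'}$ in a controlled way (re-orienting the path in the fundamental cycle), and this translates into the standard shelling condition for the ``broken circuit''-type complex. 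Alternatively, one can invoke that $\Sigma_G^q$ is (the join of a simplex with) the independence complex / a matroid complex — more precisely, its facets are complements of bases of the graphic matroid on $\Oc$ together with all of $\bar\Oc$ — and every matroid complex is shellable (a classical result of Provan--Billera), which immediately gives Cohen--Macaulayness; I would likely present this matroid-theoretic argument as the primary one and remark on the shelling. This step is routine modulo citing the right structural result once (i) and (ii) are in hand.
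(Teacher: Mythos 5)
Your parts (i)--(iii) follow the paper's route: identify the faces of $\Sigma_G^q$ as edge sets missing at least one edge from every directed bond $\EE(A^c,A)$ with $q\in A$, show each $\tau_T$ is such a set and is maximal, and deduce (ii) and (iii) from the Stanley--Reisner correspondence and purity. The one step you defer --- that every face is contained in some $\tau_T$ --- is exactly where the paper does the real work: it runs a greedy algorithm growing $A$ from $\{q\}$, at each stage using non-divisibility of the given monomial by $\prod_{e\in\EE(A^c,A)}y_e$ to find an edge of that bond avoiding $F$, and assembles these edges into a spanning tree $T$ with $\Oc_T\cap F=\emptyset$. As written, your sentence for this direction restates the claim rather than proving it, so you would need to supply that construction. (Minor slip: with the paper's convention $\EE(A,B)=\{e: e_+\in A,\ e_-\in B\}$, the first tree edge crossing from $A$ into $A^c$ lies in $\Oc_T\cap\EE(A^c,A)$, not in $\EE(A,A^c)$.)

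Part (iv) is where the proposal breaks. The structural claim you lean on --- that $\Sigma_G^q$ is (the join of a simplex with) a matroid complex, with facets given by complements of bases of the graphic matroid on a fixed orientation $\Oc$ together with all of $\bar\Oc$ --- is false. The sets $\Oc_T$ are not contained in any fixed orientation: they are $q$-rooted arborescences, and which orientation of a tree edge occurs depends on $T$. Moreover $\{\Oc_T\}$ fails basis exchange, so neither it nor the complementary family $\{\tau_T\}$ is the basis system of a matroid. For $K_3$ with vertices $q,u,v$, take $\Oc_{T_2}=\{q\to u,\ u\to v\}$ and $\Oc_{T_3}=\{q\to v,\ v\to u\}$; deleting $q\to u$ from $\Oc_{T_2}$ and adding either element of $\Oc_{T_3}\setminus\Oc_{T_2}$ yields $\{q\to v,\ u\to v\}$ or $\{u\to v,\ v\to u\}$, neither of which is an arborescence. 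Hence the Provan--Billera theorem does not apply. Shellability of $\Sigma_G^q$ does hold, but the paper obtains it (in a remark) from the fact that $\Sigma_G^q$ is the initial complex of the toric ideal $\JJ_G$, via regular triangulations and line shellings, not from matroid structure. The paper's actual proof of (iv) sidesteps shellability entirely: by Theorem~\ref{thm:bounded} the minimal free resolution of $\Sb/\OO_G^q$ is supported on the $(n-2)$-dimensional bounded complex $\B_G^{q,c}$, so $\pd_\Sb(\Sb/\OO_G^q)=n-1$, and the Auslander--Buchsbaum formula gives $\depth(\Sb/\OO_G^q)=2m-(n-1)=\dim(\Sb/\OO_G^q)$. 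You should either adopt that argument or replace the matroid claim with an explicitly verified shelling order.
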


\begin{proof}
(i) By Proposition~\ref{prop:grobrelation}, we know that $\OO_G^q$ is generated by monomials of the form $\prod_{e \in \EE(A^c,A)}{y_e}$, where $q \in A \subsetneq V(G)$ and $\EE(A^c,A) \subset \EE(G)$ denotes the set of oriented edges from $A$ to its complement $A^c$.

First we show that for each spanning tree $T$, the monomial $m_T:=\prod_{e \in \EE(G) \backslash \Oc_T}{y_e}$ does not belong to $\OO_G^q$. Clearly $m_T \in \OO_G^q$ if and only if $m_T$ is divisible by one of the given generators $\prod_{e \in \EE(A^c,A)}{y_e}$. But  
\[
\prod_{e \in \EE(A^c,A)}{y_e} \,\,\, \mid \, \prod_{e \in \EE(G) \backslash \Oc_T}{y_e} \quad\quad \iff \quad\quad  \EE(A^c,A) \subseteq (\EE(G) \backslash \Oc_T) \ .
\]
However, it follows from the definition of $\Oc_T$ that it must contain some element of $\EE(A^c,A)$ for any $A$.
This shows that $\tau_T =\{y_e: e \in \EE(G) \backslash \Oc_T\}$ is a face in the simplicial complex $\Sigma_G^q$. 

Next we show that $\tau_T$ must be a facet; for $f \in \Oc_T$ removing $f$ from the tree gives a partition of $V(T)=V(G)$ into two connected subsets $B$ and $B^c$ with $f_- \in B$ and $f_+ \in B^c$. Then the monomial $m_T \cdot y_f$ is divisible by $\prod_{e \in \EE(B^c,B)}{y_e}$. 

It remains to show that for any monomial $m = \prod_{e \in F}{y_e}$ that does not belong to $\OO_G^q$ we have $F\subseteq (\EE(G) \backslash \Oc_T)$ for some spanning tree $T$. To show this, we repeatedly use the fact that $m$ is not divisible by generators of the form $\prod_{e \in \EE(A^c,A)}{y_e}$ for various $A$, and construct a spanning tree $T$. This procedure is explained in Algorithm~\ref{alg:FindTree}. Note that if $\prod_{e \in F}{y_e}$ is not divisible by $\prod_{e \in \EE(A^c,A)}{y_e}$ then there exists an $e \in \EE(A^c,A)$ such that $e \not \in  F$. The orientation $\Oc_T$ is also induced by Algorithm~\ref{alg:FindTree}. 

\begin{algorithm}
\caption{Finding a facet containing a given monomial not belonging to $\OO_G^q$}
\KwIn{\\ A monomial $m = \prod_{e \in F}{y_e}$ not belonging to $\OO_G^q$.}
\KwOut{\\ A spanning tree $T$ such that $F\subseteq (\EE(G) \backslash \Oc_T)$.}

\BlankLine
{\bf Initialization:}
\\$A=\{q\}$, 
\\$T=\emptyset$.

\BlankLine

\While{$A \ne V(G)$}{
Find an oriented edge $e$ such that $e \in E(A,A^c)$ and $e \not \in  F$,\\
$T=T \cup \{e\}$, \\
$A=A \cup \{e_+\}$,
}
Output $T$.
\label{alg:FindTree}
\end{algorithm}
\medskip

(ii) follows from (i) and \cite[Theorem~1.7]{MillerSturmfels}.

\medskip

(iii) follows from (i) and the fact that $\dim(K[\Sigma_G^q])$ is equal to the maximal cardinality of the faces of $\Sigma_G^q$. 

\medskip

(iv) The Krull dimension of $K[\Sigma_G^q]=\Sb / \OO_G^q$ is $2m-n+1$ by part (iii). 
By the Auslander--Buchsbaum formula (for graded rings and modules, see \cite[page~437]{Singular}),
\[
\depth(\Sb / \OO_G^q) = \depth(\Sb) - \pd_\Sb(\Sb / \OO_G^q) = 2m-n+1
\]
 because $\pd_\Sb(\Sb / \OO_G^q)=n-1$ by Theorem~\ref{thm:bounded}.
Therefore $\dim(\Sb / \OO_G^q)=\depth(\Sb / \OO_G^q)$ and $K[\Sigma_G^q]$ is Cohen-Macaulay. 
\end{proof}

\medskip

\begin{Remark}
\begin{itemize}
\item[]
\item[(i)]
Proposition~\ref{prop:primdecom}(iii) can be strengthened; the simplicial complex $\Sigma_G^q$ is in fact shellable. Since $\JJ_G$  is the lattice ideal associated to the free abelian group $\Lambda = \Image{\partial^\ast}$, it is a toric ideal (in the sense of \cite[Chapter~4]{SturmfelsGrobnerConvex}). $\Sigma_G^q$ is precisely the {\em initial complex} of $\JJ_G$ with respect to $\prec_q$ (in the sense of \cite[Chapter~8]{SturmfelsGrobnerConvex}). Let $\sigma \in C_1(G, \RR)$ be any weight functional representing the term order $\prec_q$ for $\JJ_G$ (e.g. $\vartheta_q$ of Definition~\ref{def:O_intwt} -- see also \S\ref{sec:Ocone}). By \cite[Theorem~8.3]{SturmfelsGrobnerConvex} $\sigma$ provides us with a regular triangulation of $\Sigma_G^q$. This is accomplished by ``lifting'' each point $y_e$ into the next dimension by the height $\sigma(e)$, and then projecting back the lower face of the resulting positive cone. This is a unimodular triangulation because the ideal $\OO_G^q$ is squarefree (\cite[Corollary~8.9]{SturmfelsGrobnerConvex}). The associated Gr\"obner fan studied in \S\ref{sec:PotJG} coincides with the associated secondary fan of this triangulation.

It is well-known that given any regular triangulation, one can obtain shelling orders using the {\em line shelling} technique (see, e.g., \cite[Theorem~9.5.10]{TriangulationBook}). For the shellability of some seemingly related simplicial complexes, see \cite{Kozlov, Alex, Jujic}.

\item[(ii)] A minimal free resolution of the Alexander dual of $\OO_G^q$ can be obtained by the construction given in \cite{BatziesWelker} (see also \cite{AntonFatemeh} for some related results). Following the results of \cite{van2015orlik} in an ongoing project, we further explore connections to the relation space of the hyperplane arrangements, and the Orlik-Terao ideals.
\end{itemize}
\end{Remark}



\medskip

We are now ready to give a particularly nice {\em linear system of parameters} (abbreviated as {\em l.s.o.p.}) for $\OO_G^q$. Note that since $K[\Sigma_G^q]=\Sb / \OO_G^q$ is Cohen-Macaulay, every {\em homogeneous system of parameters} (in particular every l.s.o.p.) is regular (\cite[p.35]{Stanley96}).

\medskip

First we introduce some notation. For each $v \in V(G)$ we choose a distinguished incoming edge to $v$ and denote it by $e_v$. In other words, we fix a distinguished subset $\{e_v: v \in V(G)\ \} \subset \EE(G)$ of cardinality $n$ in such a way that $(e_v)_{+}=v$.

\medskip

For each $v$ define the set of linear forms
\[
\Lc_v = \{y_e - y_{e_v}:\  e \in \EE(G) \ , e \ne e_v \ , e_{+}=(e_v)_{+}=v \}
\]
and let
\begin{equation} \label{eq:L}
\Lc=  \bigcup_{v\in V(G)}{\Lc_v} \ .
\end{equation}
We also let 
\[
\Lc^{(q)}= \Lc \cup \{y_{e_q}\} \ .
\] 
Clearly, $|\Lc_v|=\deg(v)-1$ for $v\in V(G)$, $|\Lc|=2m-n$, and $|\Lc^{(q)}|=2m-n+1$.

\begin{Proposition}
\label{prop:reg}
The set $\Lc^{(q)}$ forms an l.s.o.p. (and thus a regular sequence) for $K[\Sigma_G^q]=\Sb/\OO_G^q$. 
\end{Proposition}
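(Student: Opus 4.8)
The plan is to verify condition (ii) of Lemma~\ref{lem:Sta}: since $K[\Sigma_G^q]$ has Krull dimension $2m-n+1 = |\Lc^{(q)}|$ by Proposition~\ref{prop:primdecom}(iii), it suffices to show that for every facet $\tau_T$ of $\Sigma_G^q$, the restrictions of the $2m-n+1$ elements of $\Lc^{(q)}$ to $\tau_T$ span a vector space of dimension $|\tau_T| = 2m-n+1$, i.e. they are linearly independent as linear forms in the variables $\{y_e : y_e \in \tau_T\}$. (Once this holds, $\Lc^{(q)}$ is an l.s.o.p. by Lemma~\ref{lem:Sta}, and it is automatically a regular sequence because $K[\Sigma_G^q]$ is Cohen--Macaulay by Proposition~\ref{prop:primdecom}(iv) together with Theorem~\ref{thm:CMhsop}.)

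So fix a spanning tree $T$, with facet $\tau_T = \{y_e : e \in \EE(G) \setminus \Oc_T\}$. I need to understand the restrictions $\theta|_{\tau_T}$ for $\theta \in \Lc^{(q)}$. For a generator $y_e - y_{e_v} \in \Lc_v$, the restriction kills whichever of $y_e, y_{e_v}$ does \emph{not} lie in $\tau_T$, i.e.\ whichever lies in $\Oc_T$. The key combinatorial observation is that for each vertex $v$, among all oriented edges $e$ with $e_+ = v$, exactly one lies in $\Oc_T$ when $v \ne q$ (the unique edge of the tree pointing into $v$ in the source-at-$q$ orientation), and none lies in $\Oc_T$ when $v = q$ (since $q$ is a source). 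I would organize the argument by the chosen distinguished edges $e_v$: it is convenient (and legitimate, since the claim is independent of the choice) to choose, for each $v \ne q$, the distinguished edge $e_v$ to be precisely the tree-edge oriented into $v$, so $e_v \in \Oc_T$ and hence $y_{e_v} \notin \tau_T$; for $v=q$ choose $e_q$ arbitrarily among edges into $q$ — all of which satisfy $e_q \in \EE(G)\setminus\Oc_T$, so $y_{e_q}\in\tau_T$. With this choice, for $v \ne q$ every form $y_e - y_{e_v} \in \Lc_v$ restricts to just $y_e$ (a single distinct variable of $\tau_T$, since $e \ne e_v$ forces $e \in \EE(G)\setminus\Oc_T$), and $y_{e_q}$ restricts to itself. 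Counting: the variables of $\tau_T$ are the $2m - (n-1) = 2m-n+1$ oriented edges outside $\Oc_T$; among these, exactly one is $e_q$, and the remaining $2m-n$ are accounted for bijectively by the restrictions of $\Lc = \bigcup_v \Lc_v$ (for each vertex $v \ne q$ the $\deg(v)-1$ edges into $v$ other than $e_v$, and for $v=q$ all $\deg(q)$ edges into $q$, which together with $e_q$ give all $\deg(q)$; summing $\sum_v(\#\{e:e_+=v\} ) = 2m$ and subtracting the $n-1$ tree edges in $\Oc_T$ gives $2m-n+1 = (2m-n) + 1$). Thus the restrictions of $\Lc^{(q)}$ are exactly the $2m-n+1$ distinct variables spanning the coordinate space of $\tau_T$, so they are linearly independent. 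For a general (non-adapted) choice of distinguished edges $\{e_v\}$ one reduces to this case by noting that replacing one $e_v$ by another incoming edge $e_v'$ at the same vertex changes $\Lc_v$ by an invertible linear substitution fixing all other variables, hence does not change the span; alternatively, one re-runs the spanning argument directly, observing that within $\Lc_v$ one can always eliminate variables to recover each $y_e$ with $e\in(\EE(G)\setminus\Oc_T)$, $e_+=v$.

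The main obstacle I anticipate is purely bookkeeping: making the bijection between $\Lc^{(q)}$ and the variable set of $\tau_T$ completely precise, and handling the distinguished-edge choices cleanly so that the argument is genuinely choice-independent rather than only working for the adapted choice. There is a small subtlety at the vertex $q$ — one must check that $\Lc_q$ together with $y_{e_q}$ restricts to a \emph{spanning} (indeed linearly independent) set of the $\deg(q)$ variables $\{y_e : e_+ = q\}$, all of which lie in $\tau_T$; this holds because $\Lc_q = \{y_e - y_{e_q} : e_+=q,\ e\ne e_q\}$ together with $y_{e_q}$ is visibly a triangular, hence invertible, change of coordinates on those $\deg(q)$ variables. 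Everything else (the dimension count $2m-n+1$, Cohen--Macaulayness forcing regularity) is already established in Proposition~\ref{prop:primdecom} and Theorem~\ref{thm:CMhsop}, so the proof reduces to this linear-algebra verification facet by facet.
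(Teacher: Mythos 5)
Your proposal is correct and takes essentially the same route as the paper: both verify the criterion of Lemma~\ref{lem:Sta} facet by facet, using Proposition~\ref{prop:primdecom}(i) to see that for each spanning tree exactly one incoming variable at each $v\ne q$ is missing from $\tau_T$ while all incoming variables at $q$ survive, so the restrictions span a space of dimension $\sum_v(\deg(v)-1)+1=2m-n+1$. Your version merely makes explicit the triangular change-of-coordinates bookkeeping (and the independence from the choice of distinguished edges $e_v$) that the paper asserts in a single sentence.
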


\begin{proof}
We will use the criterion of Kind and Kleinschmidt (\cite[pp.81-82]{Stanley96}, \cite{KindKleinschmidt}). Note that by Proposition~\ref{prop:primdecom}(iii), $\dim K[\Sigma_G^q]=|\Lc^{(q)}|$. 
For each facet $\tau$ and each vertex $v \ne q$, by Proposition~\ref{prop:primdecom}(i), all but one variable $y_e$ with $e^+=v$ appear in $\tau$. Again by Proposition~\ref{prop:primdecom}(i), all variables $y_e$ with $e^+=q$ appear in $\tau$. 
It follows that the dimension of the vector space spanned by the restrictions of forms in $\Lc^{(q)}$ to the facet $\tau$ is equal to $\sum_{v} (\deg(v)-1)+1=2m-n+1$ which is equal to $|\tau|$ by Proposition~\ref{prop:primdecom}(iii), and the conditions in \cite[Lemma 2.4]{Stanley96} are satisfied.
\end{proof}

\medskip

\begin{Example}\label{exam:Delta}
For the graph in Example~\ref{exam:1}, $\OO_G^q$ is the Stanley-Reisner ideal of the simplicial complex $\Sigma_G^q$ given by facets 

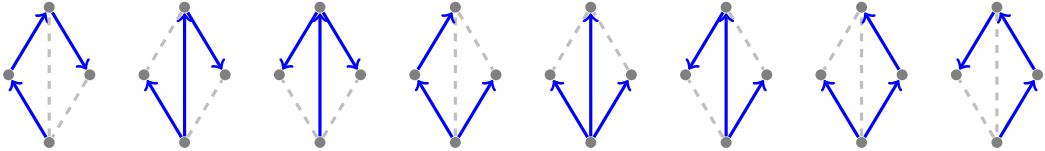
\begin{figure}[h!]

\begin{center}

\begin{tikzpicture} [scale = .18, very thick = 10mm]

  \node (n4) at (4,1)  [Cgray] {};
  \node (n1) at (4,11) [Cgray] {};
  \node (n2) at (1,6)  [Cgray] {};
  \node (n3) at (7,6)  [Cgray] {};

\foreach \from/\to in {n2/n1,n1/n3,n4/n2}
    \draw[blue][->] (\from) -- (\to);
 
\foreach \from/\to in {n4/n1,n3/n4}
    \draw[lightgray][dashed] (\from) -- (\to);

    \node (n4) at (14,1)  [Cgray] {};
  \node (n1) at (14,11) [Cgray] {};
  \node (n2) at (11,6)  [Cgray] {};
  \node (n3) at (17,6)  [Cgray] {};
  \foreach \from/\to in {n4/n2,n1/n3,n4/n1}
   \draw[blue][->] (\from) -- (\to);

     \foreach \from/\to in {n1/n2,n4/n3}
    \draw[lightgray][dashed] (\from) -- (\to);

    \node (n4) at (24,1)  [Cgray] {};
  \node (n1) at (24,11) [Cgray] {};
  \node (n2) at (21,6)  [Cgray] {};
  \node (n3) at (27,6)  [Cgray] {};
  \foreach \from/\to in {n4/n2,n4/n3}
    \draw[lightgray][dashed] (\from) -- (\to);

    \foreach \from/\to in {n4/n1,n1/n3,n1/n2}
    \draw[blue][->] (\from) -- (\to);

  \node (n4) at (34,1)  [Cgray] {};
  \node (n1) at (34,11) [Cgray] {};
  \node (n2) at (31,6)  [Cgray] {};
  \node (n3) at (37,6)  [Cgray] {};
   \foreach \from/\to in {n4/n1,n1/n3}
    \draw[lightgray][dashed] (\from) -- (\to);

    \foreach \from/\to in {n2/n1,n4/n2,n4/n3}
    \draw[blue][->] (\from) -- (\to);

    \node (n4) at (44,1)  [Cgray] {};
  \node (n1) at (44,11) [Cgray] {};
  \node (n2) at (41,6)  [Cgray] {};
  \node (n3) at (47,6)  [Cgray] {};
 \foreach \from/\to in {n1/n2,n3/n1}
    \draw[lightgray][dashed] (\from) -- (\to);

    \foreach \from/\to in {n4/n2,n4/n3,n4/n1}
    \draw[blue][->] (\from) -- (\to);

    \node (n4) at (54,1)  [Cgray] {};
  \node (n1) at (54,11) [Cgray] {};
  \node (n2) at (51,6)  [Cgray] {};
  \node (n3) at (57,6)  [Cgray] {};
 \foreach \from/\to in {n1/n3,n4/n2}
    \draw[lightgray][dashed] (\from) -- (\to);

    \foreach \from/\to in {n4/n3,n4/n1,n1/n2}
    \draw[blue][->] (\from) -- (\to);


   \node (n4) at (64,1)  [Cgray] {};
  \node (n1) at (64,11) [Cgray] {};
  \node (n2) at (61,6)  [Cgray] {};
  \node (n3) at (67,6)  [Cgray] {};
 \foreach \from/\to in {n1/n2,n4/n1}
    \draw[lightgray][dashed] (\from) -- (\to);

    \foreach \from/\to in {n4/n3,n4/n2,n3/n1}
    \draw[blue][->] (\from) -- (\to);


 \node (n4) at (74,1)  [Cgray] {};
  \node (n1) at (74,11) [Cgray] {};
  \node (n2) at (71,6)  [Cgray] {};
  \node (n3) at (77,6)  [Cgray] {};
 \foreach \from/\to in {n4/n2,n1/n4}
    \draw[lightgray][dashed] (\from) -- (\to);

    \foreach \from/\to in {n4/n3,n3/n1,n1/n2}
    \draw[blue][->] (\from) -- (\to);

\end{tikzpicture}

\caption{Spanning trees $T$ and orientations $\Oc_T$ corresponding to $\tau_1,\tau_2,\ldots,\tau_8$}
\label{fig:spanning trees}
\end{center}
\end{figure}


\[\tau_1=\{y_{e_1},y_{e_3},y_{e_4},y_{e_5},y_{\bar{e}_2},y_{\bar{e}_4},y_{\bar{e}_5}\}, \ 
\tau_2=\{y_{e_1},y_{e_3},y_{e_4},y_{\bar{e}_1},y_{\bar{e}_2},y_{\bar{e}_4},y_{\bar{e}_5}\},\]
\[
\tau_3=\{y_{{e}_2},y_{e_3},y_{e_4},y_{\bar{e}_1},y_{\bar{e}_2},y_{\bar{e}_4},y_{\bar{e}_5}\}, \ 
\tau_4=\{y_{e_1},y_{e_3},y_{{e}_5},y_{\bar{e}_2},y_{\bar{e}_3},y_{\bar{e}_4},y_{\bar{e}_5}\},\]
\[
\tau_5=\{y_{e_1},y_{e_3},y_{\bar{e}_1},y_{\bar{e}_2},y_{\bar{e}_3},y_{\bar{e}_4}, y_{\bar{e}_5}\}, \ 
\tau_6=\{y_{e_2},y_{e_3},y_{\bar{e}_1},y_{\bar{e}_2},y_{\bar{e}_3},y_{\bar{e}_4},y_{\bar{e}_5}\},
\]
\[
\tau_7=\{y_{e_1},y_{e_5},y_{\bar{e}_1},y_{\bar{e}_2},y_{\bar{e}_3},y_{\bar{e}_4},y_{\bar{e}_5}\}, \
\tau_8=\{y_{e_2},y_{e_5},y_{\bar{e}_1},y_{\bar{e}_2},y_{\bar{e}_3},y_{\bar{e}_4},y_{\bar{e}_5}\}.
\]

See Proposition~\ref{prop:primdecom}(i) and Figure~\ref{fig:spanning trees}.

If we choose $\{e_1, e_3, e_4,\bar{e}_4\}$ as our distinguished set of incoming edges to vertices, we have 
\[
\Lc_{u_1}=\{y_{e_2}-y_{e_1}\} \ , \quad
\Lc_{u_2}=\{y_{\bar{e}_1}-y_{e_3}, y_{e_5}-y_{e_3}\} \ ,
\]
\[
\Lc_{u_3}=\{y_{\bar{e}_3}-y_{e_4}\} \ , \quad
\Lc_{u_4}=\{y_{\bar{e}_2}-y_{\bar{e}_4}, y_{\bar{e}_5}-y_{\bar{e}_4}\} \ .
\]
Therefore
\[
\Lc= \bigcup_{v \in V(G)} \Lc_v = \{y_{e_2}-y_{e_1}, y_{\bar{e}_1}-y_{e_3}, y_{e_5}-y_{e_3}, y_{\bar{e}_3}-y_{e_4}, y_{\bar{e}_2}-y_{\bar{e}_4}, y_{\bar{e}_5}-y_{\bar{e}_4} \} 
\]
and
\[
\Lc^{(q)}=\Lc\cup \{y_{\bar{e}_4} \} \ .
\]

Note that $|\Lc^{(q)}|=7=2 \times 5 - 4 +1$. The restrictions of linear forms of $\Lc^{(q)}$ to
$\tau_1$ are

\[
\Lc^{(q)} | _{\tau_1}= \{ -y_{e_1}, -y_{e_3}, y_{e_5}-y_{e_3}, -y_{e_4}, y_{\bar{e}_2}, y_{\bar{e}_5}, y_{\bar{e}_4} \} \ ,
\]
which span a vector space of dimension $|\tau_1|=7=2 \times 5 - 4 +1$. Similarly, the restrictions of the linear forms of $\Lc^{(q)}$ to the other $\tau_i$'s span a vector space of dimension $|\tau_i|$.
\end{Example}

\medskip

\subsection{Linear system of parameters for $\JJ_G$}

Next we use \cite[Proposition 15.15]{Eisenbud} to give a regular sequence for $\Sb/\JJ_G$. 

\begin{Proposition}\label{prop:regularJ}
The set $\Lc$ forms a regular sequence for $\Sb/\JJ_G$.
\end{Proposition}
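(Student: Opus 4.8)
The plan is to deduce this directly from Proposition~\ref{prop:Eis} together with the results already established for $\OO_G^q$. Recall that by Proposition~\ref{prop:grobrelation}(iv), $\OO_G^q = \ini_{\prec_q}(\JJ_G)$, and moreover by Proposition~\ref{prop:plusworks} (see also \S\ref{sec:PotJG} and Definition~\ref{def:O_intwt}) there is an explicit non-negative integral weight functional $\lambda_q \in C_1(G,\RR)$ that represents $\prec_q$ for $\JJ_G$, so $\OO_G^q = \ini_{\lambda_q}(\JJ_G)$. The key point is that the linear forms in $\Lc$ (defined in \eqref{eq:L}) are $\ZZ$-homogeneous of degree one in $\Sb$, and in fact each $y_e - y_{e_v}$ is its own $\lambda_q$-initial form: this is because $\lambda_q(e) = \vartheta_q(e_+) = \vartheta_q(v) = \vartheta_q((e_v)_+) = \lambda_q(e_v)$ whenever $e_+ = (e_v)_+ = v$, so the two monomials $y_e$ and $y_{e_v}$ have equal $\lambda_q$-weight and neither term is dropped. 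Hence $\ini_{\lambda_q}(y_e - y_{e_v}) = y_e - y_{e_v}$ for every element of $\Lc$.

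First I would record the above observation precisely: for each element $g \in \Lc$ we have $\ini_{\lambda_q}(g) = g$, viewing $g \in \Sb$ and its image in $\Sb/\JJ_G$. Next, by Proposition~\ref{prop:reg} the set $\Lc^{(q)} = \Lc \cup \{y_{e_q}\}$ is an l.s.o.p.\ for $K[\Sigma_G^q] = \Sb/\OO_G^q$, and since $K[\Sigma_G^q]$ is Cohen--Macaulay by Proposition~\ref{prop:primdecom}(iv), Theorem~\ref{thm:CMhsop} tells us $\Lc^{(q)}$ is an $(\Sb/\OO_G^q)$-regular sequence. Any initial segment of a regular sequence is again a regular sequence, so in particular $\Lc$ (suitably ordered as an initial segment, placing $y_{e_q}$ last) is an $(\Sb/\OO_G^q)$-regular sequence; and by Theorem~\ref{lem:herzog} the order does not matter since $\Sb$ carries a nice grading and all elements lie in $\mm$. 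Thus $\Lc$ is a regular sequence for $\Sb/\ini_{\lambda_q}(\JJ_G) = \Sb/\OO_G^q$, and its elements equal their own $\lambda_q$-initial forms.

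Now I would invoke Proposition~\ref{prop:Eis} with $I = \JJ_G$, $\omega = \lambda_q$, and $f_1, \dots, f_d$ the elements of $\Lc$ (so $d = |\Lc| = 2m-n$): since $\ini_{\lambda_q}(f_i) = f_i$ and these form an $(\Sb/\ini_{\lambda_q}(\JJ_G))$-regular sequence, the proposition yields that $f_1, \dots, f_d$ is an $(\Sb/\JJ_G)$-regular sequence. (One should note that $\lambda_q$ need not be strictly positive on all variables, but since $\Lc$ consists of $\ZZ$-homogeneous forms, Remark~\ref{rmk:posnotneeded} removes the positivity hypothesis; alternatively one perturbs $\lambda_q$ by adding $c\mathbf{1}$ with $c \gg 0$ to make it positive without changing any initial forms of degree-one elements.) The condition $(f_1,\dots,f_d)(\Sb/\JJ_G) \ne \Sb/\JJ_G$ needed in Proposition~\ref{prop:Eis} is automatic by Remark~\ref{rmk:weakvsstrong}, since $\Sb$ is nicely graded, $\Sb/\JJ_G \ne 0$, and the $f_i$ lie in $\mm$. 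This completes the argument.

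The main obstacle — really the only subtle point — is verifying cleanly that each $y_e - y_{e_v} \in \Lc$ is fixed by $\ini_{\lambda_q}$, i.e.\ that the weight functional $\lambda_q$ assigns equal values to the two monomials $y_e$ and $y_{e_v}$. This hinges on the defining property $\lambda_q(e) = \vartheta_q(e_+)$ from Definition~\ref{def:O_intwt}, which makes the weight of $y_e$ depend only on the head $e_+$; since all edges in $\Lc_v$ share the head $v$, their $y$-variables are equiweighted and the binomial is $\lambda_q$-homogeneous. Everything else is a formal chaining of Proposition~\ref{prop:reg}, Theorem~\ref{thm:CMhsop}, Theorem~\ref{lem:herzog}, Remark~\ref{rmk:posnotneeded}, and Proposition~\ref{prop:Eis}.
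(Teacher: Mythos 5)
Your proposal is correct and follows essentially the same route as the paper's own proof: both observe that the elements of $\Lc$ are fixed by $\ini_{\lambda_q}$ because $\lambda_q(e)=\vartheta_q(e_+)$ depends only on the head of $e$, then combine Proposition~\ref{prop:reg} with Proposition~\ref{prop:Eis} to transfer the regular sequence from $\Sb/\OO_G^q$ to $\Sb/\JJ_G$. Your extra care about the initial-segment/permutation issue and the non-positivity of $\lambda_q$ (via Remark~\ref{rmk:posnotneeded}) only makes explicit details the paper leaves implicit.
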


\begin{proof}
Let $\lambda_q \in C_1(G,\RR)$ be the integral, non-negative weight functional defined in Definition~\ref{def:O_intwt}. 
Any element of $\Lc$ is of the form $g=y_e - y_{e_v}$ with $e_+=(e_v)_+=v$ for some $v \in V(G)$. Since $\lambda_q(e)=\lambda_q(e_v)$ depends only on $v$ by the construction in Proposition~\ref{prop:plusworks}, we obtain $\ini_{\lambda_q}(g)=g$ and $\tilde{g}=g$. Therefore $\{\ini_{\lambda_q}(g) : g \in \Lc\} = \Lc$ which is a regular sequence on $\Sb / \ini_{\lambda_q}(\JJ_G)=\Sb / \OO_G^q$ by Proposition~\ref{prop:reg}. So we may apply \cite[Proposition 15.15]{Eisenbud} to conclude that $\Lc$ is a $(\Sb/\JJ_G)$-regular sequence.
\end{proof}

\begin{Remark}
It follows from \cite[p.35]{Stanley96} and Proposition~\ref{prop:CM} that $\Lc$ also forms a (partial) l.s.o.p. for $\Sb/\JJ_G$.
\end{Remark}

\section{$\II_G$ from $\JJ_G$ and $\MM_G^q$ from $\OO_G^q$} 
\label{sec:relate}

A common and powerful technique in the theory of divisors on graphs and chip-firing games is to relate divisors to orientations. Given an orientation, one can form a divisor from the associated indegrees or outdegrees (see, e.g., \cite[Theorem~2.3]{Lovasz91}, \cite[Theorem~3.3]{BN1}, \cite{HopPerk}, \cite{FarbodFatemeh},  \cite{ABKS}, and \cite{Fatemehreliability}). Algebraically, there is a good justification for the strength of this method related to the regular sequences studied in \S\ref{sec:main}.

\medskip

Recall that $\Rb=K[\xb]$ denotes the polynomial ring in $n$ variables $\{x_v: v \in V(G)\}$ and $\Sb=K[\yb]$ denotes the polynomial ring in $2m$ variables $\{y_e: e \in \EE(G)\}$. There is a canonical surjective $K$-algebra homomorphism 
\[
\phi \colon \Sb \rightarrow \Rb
\]
defined by sending $y_e$ to $x_{e_+}$ for all $e \in \EE(G)$. The kernel of this map is precisely the ideal generated by $\Lc$ (defined in \eqref{eq:L}), which we denote by $\aa= \langle \Lc \rangle$. 
The induced isomorphism 
\[
\bar{\phi} \colon \Sb /\aa \xrightarrow{\sim} \Rb
\]
is the ``algebraic indegree map'', and it relates the ideals $\II_G$ and $\MM_G^q$ to the ideals $\JJ_G$ and $\OO_G^q$. 

\begin{Proposition}\label{prop:specialize}
\begin{itemize}
\item[]
\item[(i)] $\bar{\phi}(\JJ_G +\aa)  = \II_G$. In other words $\bar{\phi}$ induces an isomorphism $(\Sb / \JJ_G ) \otimes_{\Sb} (\Sb / \aa) \cong \Rb / \II_G$.
\item[(ii)] $\bar{\phi}(\OO_G^q +\aa )  = \MM_G^q$. In other words $\bar{\phi}$ induces an isomorphism $(\Sb / \OO_G^q) \otimes_{\Sb} (\Sb / \aa) \cong \Rb / \MM_G^q$.
\end{itemize}
\end{Proposition}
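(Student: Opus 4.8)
The plan is to prove both parts by a direct computation with the generators, using the explicit generating sets for $\JJ_G$, $\OO_G^q$, $\II_G$, and $\MM_G^q$ that have already been established (Proposition~\ref{prop:grobrelation} and Theorem~\ref{thm:Cori}), together with the bookkeeping that $\aa = \Ker(\phi) = \langle \Lc \rangle$. The second sentence in each part is a formal reinterpretation of the first: since $\bar\phi \colon \Sb/\aa \xrightarrow{\sim} \Rb$ is an isomorphism and $(\Sb/\JJ_G)\otimes_\Sb(\Sb/\aa) \cong \Sb/(\JJ_G+\aa)$, saying $\bar\phi(\JJ_G+\aa) = \II_G$ is exactly saying $\bar\phi$ descends to an isomorphism $\Sb/(\JJ_G+\aa) \cong \Rb/\II_G$. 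So the whole content is the two ideal equalities $\phi(\JJ_G) + \II_G' = \II_G$-type statements, and it suffices to check the images of generators in both directions.

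For part (i): first I would compute $\phi$ on the universal Gr\"obner basis of $\JJ_G$ from Proposition~\ref{prop:grobrelation}(i), namely the binomials $\prod_{e\in\EE(A^c,A)} y_e - \prod_{e\in\EE(A,A^c)} y_e$ for $A\subsetneq V(G)$ with $q\in A$. Under $y_e \mapsto x_{e_+}$, the monomial $\prod_{e\in\EE(A^c,A)} y_e$ maps to $\prod_{e\in\EE(A^c,A)} x_{e_+} = \xb^{D(A^c,A)}$ (since the heads of edges in $\EE(A^c,A)$ lie in $A$ and counting multiplicities gives exactly the divisor $D(A^c,A)$), and similarly $\prod_{e\in\EE(A,A^c)} y_e \mapsto \xb^{D(A,A^c)}$. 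Hence $\phi$ sends this binomial generator of $\JJ_G$ to $\xb^{D(A^c,A)} - \xb^{D(A,A^c)}$, which by Theorem~\ref{thm:Cori} is precisely a Gr\"obner basis element of $\II_G$. This shows $\phi(\JJ_G) \subseteq \II_G$, hence $\bar\phi(\JJ_G + \aa) \subseteq \II_G$. For the reverse inclusion, I would observe that $\II_G$ is \emph{generated} by these same binomials $\xb^{D(A^c,A)} - \xb^{D(A,A^c)}$ (Theorem~\ref{thm:Cori}, together with the remark that the minimal Gr\"obner basis is a minimal generating set), each of which we have just exhibited as $\bar\phi$ of an element of $\JJ_G+\aa$; so $\II_G \subseteq \bar\phi(\JJ_G+\aa)$. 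Combining the two inclusions gives equality. Part (ii) is entirely parallel: $\OO_G^q$ is generated (minimally) by the monomials $\prod_{e\in\EE(A^c,A)} y_e$ with $G[A], G[A^c]$ connected (Proposition~\ref{prop:grobrelation}(iii)), and $\phi$ sends each such monomial to $\xb^{D(A^c,A)}$, which is $\LM_{<_q}(\xb^{D(A^c,A)} - \xb^{D(A,A^c)})$ by Theorem~\ref{thm:Cori}(i), i.e. a generator of $\MM_G^q = \ini_{<_q}(\II_G)$. Again both inclusions follow by comparing these explicit generating sets, using that $\MM_G^q$ is minimally generated by exactly the $\xb^{D(A^c,A)}$ over subsets $A$ with both induced subgraphs connected.

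The one genuine subtlety to be careful about — and the step I would slow down on — is the passage from $\phi(\JJ_G)$ to $\bar\phi(\JJ_G+\aa)$ in the reverse-inclusion argument, i.e. making sure no information is lost modulo $\aa$. Since $\bar\phi$ is an isomorphism, $\bar\phi(\JJ_G+\aa) = \bar\phi((\JJ_G+\aa)/\aa)$, and $(\JJ_G+\aa)/\aa$ is generated over $\Sb/\aa$ by the images of the generators of $\JJ_G$; applying the isomorphism, $\bar\phi(\JJ_G+\aa)$ is the ideal of $\Rb$ generated by $\{\phi(g) : g \text{ a generator of } \JJ_G\}$. This is why it is legitimate to work generator-by-generator, and why the equality $\bar\phi(\JJ_G+\aa)=\II_G$ reduces cleanly to the equality of the two finite generating sets computed above. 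A secondary point worth stating explicitly is that $\phi(\aa) = 0$ by definition of $\aa = \Ker(\phi)$, so the $\aa$-summand contributes nothing extra beyond allowing us to replace $\JJ_G$ by $\JJ_G+\aa$; the role of $\aa$ here is purely to make the tensor-product reformulation literally correct. No hard estimates or structural theorems are needed beyond Theorem~\ref{thm:Cori} and Proposition~\ref{prop:grobrelation}; the proof is essentially a verification that the ``algebraic indegree map'' $y_e \mapsto x_{e_+}$ does to the cut-based generators exactly what one expects on divisors, namely $\prod_{e\in\EE(A^c,A)} y_e \mapsto \xb^{D(A^c,A)}$.
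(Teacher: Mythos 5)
Your proof is correct and is essentially the paper's own argument: the paper's proof is a one-line version of the same generator-by-generator comparison, noting that $\bar{\phi}$ sends $\prod_{e \in \EE(A^c,A)}{y_e}+\aa$ to $\xb^{D(A^c,A)}$ and then invoking the generating sets of Theorem~\ref{thm:Cori} and Proposition~\ref{prop:grobrelation}. One small slip in your parenthetical justification: by the paper's convention $\EE(A,B)=\{e: e_+\in A,\ e_-\in B\}$, the heads of edges in $\EE(A^c,A)$ lie in $A^c$ (not $A$); since the divisor $D(A^c,A)$ is likewise supported on $A^c$, your displayed identity $\prod_{e\in\EE(A^c,A)}x_{e_+}=\xb^{D(A^c,A)}$ is nevertheless correct.
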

\begin{proof}
The map $\bar{\phi}$ sends $\prod_{e \in \EE(A^c,A)}{y_e} + \aa$ to $\xb^{D(A^c,A)}$. So the proposition immediately follows from examining the generating sets described in Theorem~\ref{thm:Cori} and in Proposition~\ref{prop:grobrelation}. 
\end{proof}

\begin{Remark} \label{rmk:Rtilde}
The variables $y_e$ with $e_+=q$ do not appear in the support of any element of $\OO_G^q$ (see Theorem~\ref{prop:grobrelation}(iii)). Likewise, the variable $x_q$ does not appear in the support of any element of $\MM_G^q$ (see Theorem~\ref{thm:Cori}). Therefore we also have an isomorphism $\bar{\phi}(\OO_G^q + \langle \Lc^{(q)} \rangle) = \bar{\phi}(\OO_G^q +\aa + \langle y_{e_q} \rangle)  \cong \MM_G^q+\langle x_q \rangle$. In other words $(\Sb / \OO_G^q) \otimes_{\Sb} (\Sb / \langle \Lc^{(q)}\rangle) \cong \tilde{\Rb} / \MM_G^q$, where $\tilde{\Rb} = K[\{x_v\}_{ v \ne q}]$.
\end{Remark}

\medskip

\begin{Theorem} \label{thm:betti_coincide}
\begin{itemize}
\item[]
\item[(i)] The polyhedral cell complex $\B_G^{q,c}$ (equivalently, $\A_G^{q,c}$) supports a $\Div(G)$-graded (and $\ZZ$-graded) minimal free resolution for $\MM_G^q$. 

\item[(ii)] The quotient labeled cell complex $\Del(L(G)) / L(G)$ supports a $\Pic(G)$-graded (and $\ZZ$-graded) minimal free resolution for $\II_G$.

\item[(iii)] The $\ZZ$-graded Betti diagrams of $\JJ_G$, $\II_G$, $\OO_G^q$, and $\MM_G^q$ coincide.
\end{itemize}
\end{Theorem}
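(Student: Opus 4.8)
The plan is to build the resolutions in (i) and (ii) by \emph{specializing} the two ``orientation'' resolutions of \S\ref{sec:matroidlawrence} along the regular sequences of \S\ref{sec:main}, and then to deduce (iii) by combining the degree count coming from minimality with the semicontinuity of Betti numbers under Gr\"obner degeneration.

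For (i): by Theorem~\ref{thm:bounded} the labeled complex $\B_G^{q,c}$ supports a minimal cellular free resolution $\Fc$ of $\Sb/\OO_G^q$. By Proposition~\ref{prop:reg} the set $\Lc^{(q)}$ is an l.s.o.p.\ for $\Sb/\OO_G^q$, which is Cohen--Macaulay by Proposition~\ref{prop:primdecom}(iv); hence $\Lc^{(q)}$ is $\Sb/\OO_G^q$-regular by Theorem~\ref{thm:CMhsop}, and since all its elements lie in $\mm$, Theorem~\ref{lem:herzog} shows that the subsequence $\Lc$ (in any order) is again $\Sb/\OO_G^q$-regular. I would now apply Theorem~\ref{thm:Eis}(iv) once for each element of $\Lc$ (each successive quotient is again a polynomial ring, as the elements of $\Lc$ are differences of variables), obtaining a minimal cellular free resolution supported on the \emph{same} complex $\B_G^{q,c}$ of $(\Sb/\OO_G^q)\otimes_{\Sb}(\Sb/\aa)$, which equals $\Rb/\MM_G^q$ by Proposition~\ref{prop:specialize}(ii); the monomial label $\mb_F$ of each face becomes $\bar{\phi}(\mb_F)$, so in particular vertices now carry the labels $\xb^{D(A^c,A)}$. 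As $\phi$ is homogeneous for the relevant grading homomorphisms, the resulting resolution is $\Div(G)$-graded (and $\ZZ$-graded). Part (ii) is proved identically, starting from the minimal cellular free resolution of $\Sb/\JJ_G$ supported on $\Del(L(G))/L(G)$ (Theorem~\ref{thm:Jresol}), using that $\Lc$ is a regular sequence on $\Sb/\JJ_G$ of elements in $\mm$ (Proposition~\ref{prop:regularJ}), specializing by Theorem~\ref{thm:Eis}(iv) along $\Lc$, and identifying the result with $\Rb/\II_G$ via Proposition~\ref{prop:specialize}(i); the supporting complex is again $\Del(L(G))/L(G)$, with labels pushed forward by $\bar{\phi}$, and the $(C^1(G,\ZZ)/\Lambda)$-grading specializes to the $\Pic(G)$-grading.

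For (iii): the elements of $\Lc$ are homogeneous of degree $1$, so Theorem~\ref{thm:Eis}(iii), applied repeatedly, gives
\[
\beta_{i,j}(\Sb/\OO_G^q)=\beta_{i,j}(\Rb/\MM_G^q),\qquad \beta_{i,j}(\Sb/\JJ_G)=\beta_{i,j}(\Rb/\II_G)
\]
for all $i,j$. It remains to prove $\beta_{i,j}(\Sb/\OO_G^q)=\beta_{i,j}(\Sb/\JJ_G)$. By Proposition~\ref{prop:plusworks} and Definition~\ref{def:O_intwt} we have $\OO_G^q=\ini_{\lambda_q}(\JJ_G)$, so by the flat degeneration of Theorem~\ref{thm:flat} and the well-known semicontinuity of graded Betti numbers, $\beta_{i,j}(\Sb/\JJ_G)\le\beta_{i,j}(\Sb/\OO_G^q)$ for every $i,j$. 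On the other hand, minimality of the two cellular resolutions gives that, for $i\geq 1$, $\sum_j\beta_{i,j}(\Sb/\JJ_G)$ is the number of $i$-cells of $\Del(L(G))/L(G)$ while $\sum_j\beta_{i,j}(\Sb/\OO_G^q)$ is the number of $(i-1)$-cells of $\B_G^{q,c}$ (the shift by one reflecting the single $0$-cell, of label $1$, that contributes the free summand $\Sb=F_0$ on the $\JJ_G$ side; for $i=0$ both Betti numbers equal $1$). By Lemma~\ref{lem:bij} these counts agree for all $i$. A sum of nonnegative terms that is term-by-term bounded above with the same total must agree term-by-term, whence $\beta_{i,j}(\Sb/\JJ_G)=\beta_{i,j}(\Sb/\OO_G^q)$ for all $i,j$, and all four $\ZZ$-graded Betti diagrams coincide.

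The main obstacle is exactly this comparison of $\OO_G^q$ with $\JJ_G$: passing to an initial ideal only yields an inequality of Betti numbers, so the equality must be forced using the precise equality of the $f$-vectors of $\B_G^{q,c}$ and $\Del(L(G))/L(G)$ provided by Lemma~\ref{lem:bij}. An alternative route that avoids semicontinuity altogether is to check that the bijection of Lemma~\ref{lem:bij} (intersection with $\G^{q,c}$) matches the monomial \emph{labels} as well; this is essentially the content of the label computation in \S\ref{sec:grobrel} (Lemma~\ref{lem:labels}), since taking the least common multiple with the label $1$ of the origin turns the label of any Delaunay cell through the origin into the genuine monomial attached to the corresponding bounded cell, giving the refined equality $\beta_{i,j}(\Sb/\JJ_G)=\beta_{i,j}(\Sb/\OO_G^q)$ directly. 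I would present the semicontinuity argument and only remark on this alternative.
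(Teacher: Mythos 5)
Your proposal is correct. Parts (i) and (ii) follow the paper's own route almost verbatim (specialize the minimal cellular resolutions of Theorems~\ref{thm:bounded} and~\ref{thm:Jresol} along the regular sequence $\Lc$ using Theorem~\ref{thm:Eis}(iv) and Proposition~\ref{prop:specialize}); you are in fact slightly more careful than the paper in justifying that $\Lc$ itself is regular on $\Sb/\OO_G^q$ (extracting it from the l.s.o.p.\ $\Lc^{(q)}$ via Cohen--Macaulayness and Theorem~\ref{lem:herzog}) and in iterating Theorem~\ref{thm:Eis} one linear form at a time. The genuine divergence is in (iii), at the key step $\beta_{i,j}(\Sb/\JJ_G)=\beta_{i,j}(\Sb/\OO_G^q)$: the paper gets the graded equality directly by observing that the bijection of Lemma~\ref{lem:bij} is compatible with the monomial labels (Lemma~\ref{lem:labels}), so corresponding cells contribute in the same degree; you instead invoke upper semicontinuity of $\ZZ$-graded Betti numbers under the Gr\"obner degeneration $\OO_G^q=\ini_{\lambda_q}(\JJ_G)$ and squeeze, using only the \emph{ungraded} cell count from Lemma~\ref{lem:bij} to force term-by-term equality. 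Both work. Your route imports one external (but standard) fact not stated in the paper -- semicontinuity in the flat family of Theorem~\ref{thm:flat}, which does apply here since $\JJ_G$ is $\ZZ$-homogeneous -- in exchange for not having to match labels across the bijection; it is also a reusable trick whenever total Betti numbers of an ideal and its initial ideal are known to agree. The paper's route is self-contained and yields the finer statement that the two resolutions are isomorphic as graded complexes. You correctly identify the paper's argument as your ``alternative,'' so nothing is missing.
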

\begin{proof}
(i) By Theorem~\ref{thm:bounded}, we know that $\B_G^{q,c}$ gives a $C^1(G,\ZZ)$-graded minimal free resolution for $\Sb / \OO_G^q$. The same statement is true if we replace $\B_G^{q,c}$ with $\A_G^{q,c}$ by the discussion in \S\ref{sec:grobrel}. 
By \cite[Lemma~3.15]{EisenbudSyz} (see also \cite[Proposition~1.1.5]{Bruns}) and Proposition~\ref{prop:reg}, if we replace all the labels $\mb_F$ with $\mb_F + \aa$, we obtain a minimal cellular free resolution for $(\Sb / \OO_G^q)/\otimes_{\Sb} (\Sb / \aa) \cong \Rb / \MM_G^q$ (see Proposition~\ref{prop:specialize}(ii)). Alternatively we could replace all labels $\mb_F$ with $\mb_F + \langle \Lc^{(q)} \rangle$ to obtain a minimal cellular free resolution for $(\Sb / \OO_G^q) \otimes_{\Sb} (\Sb / \langle \Lc^{(q)}\rangle) \cong \tilde{\Rb} / \MM_G^q$. The new labels are easily seen to be $\Div(G)$ and $\ZZ$-homogeneous, and the resulting minimal free resolution is $\Div(G)$ and $\ZZ$-graded.

\medskip

(ii) follows similarly from Theorem~\ref{thm:Jresol}, \cite[Lemma~3.15]{EisenbudSyz}, Proposition~\ref{prop:regularJ}, and Proposition~\ref{prop:specialize}(i). 

\medskip

(iii) The fact that the (ungraded) Betti numbers of $\JJ_G$ and $\OO_G^q$ coincide follows from Lemma~\ref{lem:bij}. By the labeling compatibility described in Lemma~\ref{lem:labels} the $\ZZ$-graded Betti numbers of $\JJ_G$ and $\OO_G^q$ coincide as well. Since all elements of $\Lc$ are homogeneous (linear) forms, the relabeling of cells described above (in passing from $\JJ_G$ to $\II_G$ and from $\OO_G^q$ to $\MM_G^q$) does not change the $\ZZ$-degrees. Therefore the $\ZZ$-graded Betti diagrams of all four ideals coincide.
\end{proof}

\begin{Remark}
Recall from Remark~\ref{sec:nopartorient} that the number of $i$-dimensional cells in $\B_G^{q,c}$ is equal to the number of acyclic partial orientations of $G$ with $(i+2)$ (connected) components having unique source at $q$. So one immediately obtains a combinatorial description of the (ungraded) Betti numbers in terms of acyclic partial orientations. This interpretation for the Betti numbers of $I_G$ was conjectured in \cite{perkinson} and was proved in \cite{FarbodFatemeh} and \cite{Madhu}. 
\end{Remark}

\medskip

\begin{Example}\label{exam:3}
We return to Examples~\ref{exam:1}. We described the sequence $\Lc^{(q)}$ in Example~\ref{exam:Delta}. 
For simplicity we let $x_i=x_{u_i}$. By sending $\{y_{e_2}, y_{e_1}\}$ to $x_1$, $\{y_{\bar{e}_1}, y_{e_5}, y_{e_3}\}$ to $x_{2}$,
and $\{y_{\bar{e}_3}, y_{e_4}\}$ to $x_{3}$, $\OO_G^q$ in \eqref{eq:OGex} is sent to the ideal
\[
\langle x_2^2 x_3,x_1 x_2^2,x_3^2,x_2^{3},x_1^{2},x_1x_2x_3\rangle
\]
which is precisely $\MM_G^q=\ini_{<_q}(\II_G)$ by Theorem~\ref{thm:Cori}(ii).
The minimal cellular free resolution of $\MM_G^q$ is obtained from the minimal cellular free resolution of $\OO_G^q$ (described in Examples~\ref{exam:1}) by ``relabeling'' (i.e. by replacing each $y_e$ with $x_{e_+}$). We first relabel the complex in Figure~\ref{fig:arrangement} to obtain Figure~\ref{fig:arrangement2}. The resulting labeled complex gives a minimal free resolution for $\MM_G^q$ which is precisely the minimal free resolution of $\OO_G^q$ ``relabeled''. Concretely, we first extend the labels $\mb'(\pb_i)$ on the vertices to the whole of $\B_G^q$ by the least common multiple construction. For example,  
\[
\mb_{E_2}=y_{\bar{e}_1}y_{\bar{e}_3} y_{e_4} y_{e_5} \mapsto \mb'_{E_2}= x_2^2 x_3^2 \ ,
\]
\[
\mb_{E_4}=y_{\bar{e}_1} y_{e_2} y_{e_4} y_{e_5} \mapsto \mb'_{E_4}= x_1x_2^2x_3\ ,
\]
\[
\mb_{E_5}=y_{e_2} y_{\bar{e}_3} y_{e_4} y_{e_5} \mapsto \mb'_{E_5}= x_1x_2x_3^2\ ,
\]
\[
\mb_{E_6}=y_{e_2}y_{e_3} y_{e_4} y_{e_5} \mapsto \mb'_{E_6}=x_1x_2^2x_3 \ ,
\]
\[
\mb_{F_2}= y_{\bar{e}_1} y_{e_2} y_{\bar{e}_3} y_{e_4}y_{e_5} \mapsto \mb'_{F_2}= x_1x_2^2x_3^2\ .
\] 
The minimal resolution of $\MM_G^q$ is as follows.
\[
 0 \rightarrow \bigoplus_{i=1}^4\Rb(-\mb'_{F_i}) \xrightarrow{\partial'_2} \bigoplus_{i=1}^9\Rb(-\mb'_{E_i}) \xrightarrow{\partial'_1} \bigoplus_{i=1}^6\Rb(-\mb'_{\pb_i}) \xrightarrow{\partial'_0} \Rb \twoheadrightarrow \Rb /\MM_G^q \ .
\]
Assume $[[F]]$ denotes the generator of $\Rb(-\mb'_F)$. The homogenized differential operator of the cell complex $(\partial'_0,\partial'_1, \partial'_2)$ is as described in \eqref{eq:differntials}. For example:

\[
\partial'_0([[\pb_i]])= \mb'_{\pb_i} = \mb'(\pb_i) \ ,
\]
\[
\partial'_1([[E_6]]) = x_2[[\pb_4]] - x_3[[\pb_4]]\ ,
\]
\[
\partial'_2([[F_2]]) =x_1[[E_2]] - x_3[[E_4]] + x_2[[E_5]] \ .
\]

Although $\JJ_G$ and $\II_G$ have the same Betti table as $\OO_G^q$ and $\MM_G^q$, it is not possible to read the minimal free resolutions for $\JJ_G$ or $\II_G$ directly from $\B_G^q$; one really needs to consider the cell decomposition of $L(G)_{\RR}/L(G)$ or of $\Div_{\RR}^0(G)/\Prin(G)$.
\end{Example}

\medskip

\setlength{\unitlength}{1.1pt}
\begin{figure}[h!]
\begin{center}
    \begin{picture}(100,195)(0,-55)

     \thicklines
     \put(60,0){\circle*{4}}
      \put(35,-13){$x_2^2x_3$}

 \put(26,66){\circle*{4}}
      \put(-13,60){$x_1x_2^2$}

      \put(160,0){\circle*{4}}
      \put(140,-13){$x_3^2$}

 \put(-40,0){\circle*{4}}
      \put(-75,-13){$x_2^3$}

      \put(60,100){\circle*{4}}
      \put(40,98){$x_1^2$}

      \put(60,50){\circle*{4}}
      \put(20,45){$x_1x_2x_3$}

      \put(60,0){\line(0,-1){30}}
\put(60,50){\line(2,-1){100}}  \put(60,50){\line(-2,1){65}}     \put(60,100){\line(0,1){30}}
      \put(60,-20){\vector(-1,0){10}}
      \put(56,-43){$H_{e_3}$}

 \put(175,0){\vector(0,1){10}}
      \put(180,10){$H_{e_2}$}

 \put(-50,-10){\vector(1,-1){10}}
      \put(-60,-43){$H_{e_4}$}

      \put(30,130){\line(1,-1){20}}
      \put(160,0){\line(1,-1){20}}
      \put(170,-10){\vector(-2,-3){7}}
      \put(193,-40){$H_{e_5}$}

  \put(14,73){\vector(2,3){7}}
      \put(-30,80){$H_{e_1}$}

      \thicklines
      \put(-40,0){\line(1,0){230}}
      \put(-40,0){\line(-1,0){30}}
      \put(60,0){\line(-1,0){80}}
      \put(60,0){\line(0,1){130}}

\put(160,0){\line(3,-2){35}}
\put(-40,0){\line(1,1){130}} \put(-40,0){\line(-1,-1){30}}
\put(160,0){\line(-1,1){130}} \put(160,0){\line(1,-1){30}}

\put(-40,-10){$\pb_1$}
\put(64,-10){$\pb_2$}
\put(155,10){$\pb_3$}
\put(63,55){$\pb_4$}
\put(21,74){$\pb_5$}
\put(65,98){$\pb_6$}

    \end{picture}
    \caption{The relabeled bounded complex $\B_G^{q,c}$ giving a minimal free resolution of $\MM_G^{q}$}
    \label{fig:arrangement2}
\end{center}
\end{figure}
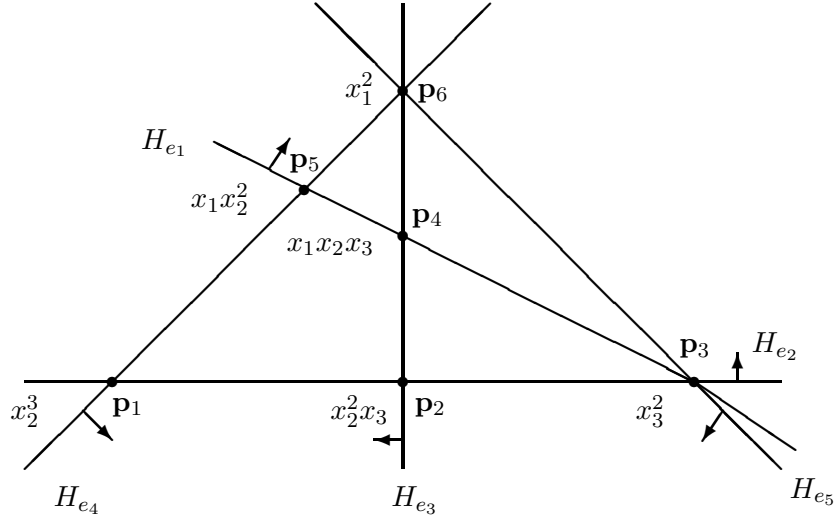

\begin{Remark} \label{rmk:ResolRelation}
There is an isometry between the principal lattice $(\Prin(G) , \langle \cdot, \cdot \rangle_{\en})$ and the cut lattice $(L(G) , \langle \cdot , \cdot \rangle )$ (Remark~\ref{rmk:isometry}). So the Delaunay decompositions $\Del(\Prin(G))$ and $\Del(L(G))$ are combinatorially equivalent (compare Figure~\ref{fig:PrinLattice} with Figure~\ref{fig:CutLattice}) and the relabeling of cells in $\Del(L(G))$ described above correspond to the labels that were given to cells of $\Del(\Prin(G))$ in \S\ref{sec:labelDelPrin}. Therefore the resolution of $\II_G$ described in Theorem~\ref{thm:Ires} coincides with the resolution of $\II_G$ obtained from the resolution of $\JJ_G$ in Theorem~\ref{thm:Jresol} by ``relabeling'' as in Theorem~\ref{thm:betti_coincide}.  For example, the resolution of $\II_G$ described in Example~\ref{ex:PrinResol} can alternatively be obtained from the resolution of $\JJ_G$ described in Example~\ref{ex:CutResol}.

It is straightforward to give an alternate {\em proof} for Theorem~\ref{thm:Ures} and Theorem~\ref{thm:Ires} using these observations.
\end{Remark}


\medskip

\section{Some consequences of our main results}
\label{sec:conseq}
\subsection{Cohen-Macaulayness}
\label{sec:CM}
For a polynomial ring $S$, a term order $<$ and an ideal $I \subset S$, it is known that $S/I$ is Cohen-Macaulay if and only if $S/\ini_{<}(I)$ is Cohen-Macaulay (see, e.g., \cite[Corollary 3.3.5]{HHBook}).
\begin{Proposition}\label{prop:CM}
The modules $\Sb / \OO_G^q$, $\Rb / \MM_G^q$, $\Sb / \JJ_G$, and $\Rb / \II_G$ are all Cohen-Macaulay.
\end{Proposition}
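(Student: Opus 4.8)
The plan is to bootstrap from the single ``easiest'' case — the squarefree monomial ideal $\OO_G^q$ — and then propagate Cohen-Macaulayness through all the isomorphisms and regular-sequence statements already established in \S\ref{sec:main} and \S\ref{sec:relate}. First I would recall that $\Sb/\OO_G^q = K[\Sigma_G^q]$ is Cohen-Macaulay; this is exactly Proposition~\ref{prop:primdecom}(iv), which follows from the Auslander--Buchsbaum formula together with the projective dimension computed from Theorem~\ref{thm:bounded}. So there is nothing further to prove for $\Sb/\OO_G^q$.

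Next I would pass from $\OO_G^q$ to $\MM_G^q$. By Proposition~\ref{prop:specialize}(ii) we have $\Rb/\MM_G^q \cong (\Sb/\OO_G^q)\otimes_\Sb (\Sb/\aa)$, and by Proposition~\ref{prop:reg} the linear forms $\Lc^{(q)}$ form a regular sequence on $\Sb/\OO_G^q$; more precisely, modding out the variables $y_{e_q}$ and the linear forms in $\Lc$ realizes $\tilde\Rb/\MM_G^q$ (with $\tilde\Rb=K[\{x_v\}_{v\ne q}]$, see Remark~\ref{rmk:Rtilde}) as a quotient of $\Sb/\OO_G^q$ by part of a maximal regular sequence. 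Killing a regular sequence of length $\ell$ drops both depth and Krull dimension by exactly $\ell$, so the Cohen-Macaulay property is preserved; hence $\tilde\Rb/\MM_G^q$, and therefore $\Rb/\MM_G^q$ (which differs only by the polynomial variable $x_q$, a further nonzerodivisor), is Cohen-Macaulay. Alternatively, one can invoke the standard fact quoted at the start of \S\ref{sec:CM} that $S/I$ is Cohen-Macaulay iff $S/\ini_<(I)$ is, applied to $I=\II_G$ with initial ideal $\MM_G^q$ — but I would prefer to derive $\II_G$ from $\JJ_G$ to keep everything inside the regular-sequence framework.

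For $\Sb/\JJ_G$: by Theorem~\ref{thm:betti_coincide}(iii) the $\ZZ$-graded Betti diagram of $\JJ_G$ coincides with that of $\OO_G^q$, so $\pd_\Sb(\Sb/\JJ_G) = \pd_\Sb(\Sb/\OO_G^q) = n-1$; and since $\JJ_G$ is the Lawrence (toric) ideal of the lattice $L(G)\hookrightarrow\ZZ^m$ of rank $m-n+1$, a standard dimension count gives $\dim(\Sb/\JJ_G) = 2m - (m-n+1) = m+n-1$, whence by Auslander--Buchsbaum $\depth(\Sb/\JJ_G) = 2m - (n-1) = 2m-n+1$. One checks $\dim = \depth$ directly, or more cleanly: Proposition~\ref{prop:regularJ} says $\Lc$ is a regular sequence on $\Sb/\JJ_G$ of length $2m-n$, and the quotient is $\Rb/\II_G$; adjoining $x_q$ as one more nonzerodivisor exhibits a regular sequence of length $2m-n+1$ on $\Sb/\JJ_G$, forcing $\depth(\Sb/\JJ_G)\ge 2m-n+1$, which matches the Krull dimension. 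Finally, since $\Rb/\II_G$ is obtained from the Cohen-Macaulay ring $\Sb/\JJ_G$ by killing the regular sequence $\Lc$ (Proposition~\ref{prop:regularJ}, Proposition~\ref{prop:specialize}(i)), it too is Cohen-Macaulay. The main obstacle is the bookkeeping around Krull dimensions — making sure the length of the regular sequences $\Lc$ and $\Lc^{(q)}$ exactly accounts for the drop in dimension so that $\depth=\dim$ is genuinely attained rather than merely bounded below — but since all the relevant dimensions, projective dimensions, and regular-sequence lengths have already been computed in Propositions~\ref{prop:primdecom}, \ref{prop:reg}, \ref{prop:regularJ} and Theorems~\ref{thm:bounded}, \ref{thm:betti_coincide}, this is routine once assembled in the right order.
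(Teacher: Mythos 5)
Your overall strategy is sound and the first half closely parallels the paper: the paper also starts from Proposition~\ref{prop:primdecom}(iv) for $\Sb/\OO_G^q$, and for $\Rb/\MM_G^q$ it does the same bookkeeping you describe (it shows $\depth(\Rb/\MM_G^q)\ge 1$ because $x_q$ is absent from the generators, and computes $\dim(\Rb/\MM_G^q)=1$ from $\dim(\Sb/\OO_G^q)=2m-n+1$ and the l.s.o.p.\ $\Lc^{(q)}$); your variant, ``killing a regular sequence preserves Cohen--Macaulayness,'' is an equally valid and arguably cleaner way to phrase that step. Where you genuinely diverge is the last two ideals: the paper disposes of $\Sb/\JJ_G$ and $\Rb/\II_G$ in one line by the standard fact quoted at the start of \S\ref{sec:CM} that $S/I$ is Cohen--Macaulay if and only if $S/\ini_<(I)$ is, applied to $\ini_{\prec_q}(\JJ_G)=\OO_G^q$ and $\ini_{<_q}(\II_G)=\MM_G^q$. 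Your insistence on staying inside the regular-sequence framework is a legitimate alternative, but it is also where the problems appear.

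Two concrete issues. First, the rank of the cut lattice $L(G)=\Image(d_\Oc)$ is $n-1$ (the coboundary map has the constants as kernel), not $m-n+1$; the latter is the rank of the cycle/flow lattice. With your figure the dimension count gives $\dim(\Sb/\JJ_G)=m+n-1$, which does \emph{not} equal the depth $2m-n+1$ you then exhibit, so the asserted match $\dim=\depth$ fails with your own numbers. The correct count $\dim(\Sb/\JJ_G)=2m-(n-1)=2m-n+1$ repairs this. Second, you extend the regular sequence by claiming $x_q$ is a nonzerodivisor on $\Rb/\II_G$; this is true (lattice ideals are saturated with respect to the variables) but is not proved anywhere in the paper and is not obvious, so as written it is a gap. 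The cleaner fix is to run the argument of Proposition~\ref{prop:regularJ} with $\Lc^{(q)}=\Lc\cup\{y_{e_q}\}$ in place of $\Lc$: since $y_{e_q}$ is its own initial form under $\lambda_q$ and $\Lc^{(q)}$ is regular on $\Sb/\OO_G^q$ by Proposition~\ref{prop:reg}, Proposition~\ref{prop:Eis} gives a regular sequence of length $2m-n+1$ on $\Sb/\JJ_G$ directly, whence $\depth=\dim$ and Cohen--Macaulayness of $\Sb/\JJ_G$, and then killing $\Lc$ yields $\Rb/\II_G$ as you intended.
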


\begin{proof}
By Proposition~\ref{prop:primdecom}(iv) we have that $\Sb/\OO_G^q$ is Cohen-Macaulay. 
For $\Rb / \MM_G^q$, first observe that by Theorem~\ref{thm:Cori} the variable $x_q$ does not appear in the support of any of the given monomial generators of $\MM_G^q$. This implies that $\depth(\Rb / \MM_G^q)\geq 1$. On the other hand, $\dim(\Rb/\MM_G^q)=1$. 
Therefore $\MM_G^q$ is also Cohen-Macaulay. 
Since $\ini_{\prec_q}(\JJ_G)=\OO_G^q$ and $\ini_{<_q}(\II_G)=\MM_G^q$, we immediately conclude that $\Sb/\JJ_G$ and $\Rb/\II_G$ are also Cohen-Macaulay.
\end{proof}

We remark that the Cohen-Macaulay property of $\Rb / \MM_G^q$ and $\Rb / \II_G$ also follows from the results of \cite{FarbodFatemeh} and \cite{Madhu}.
\medskip

\subsection{Multiplicities}
\label{sec:mult}
For a finitely generated (graded) module $M$ of dimension $d>0$ over a polynomial ring, the {\em multiplicity} of $M$ is defined to be the leading coefficient of the Hilbert polynomial of $M$ (i.e. the polynomial defining $i \mapsto \dim(M_i)$ for $i >> 0$). We will denote this quantity by $e(M)$. Since the Hilbert polynomial is completely determined by the Betti table (see, e.g., \cite[Theorem~8.20 and Proposition~8.23]{MillerSturmfels}), the multiplicity is also determined by the Betti table. The following result easily follows.

\begin{Theorem}
\[
e(\Sb/\OO_G^q)=e(\Sb/\JJ_G)=e(\Rb/\MM_G^q)=e(\Rb/\II_G)=\kappa(G)\ ,
\]
where $\kappa(G)$ denotes the number of spanning trees of $G$.
\end{Theorem}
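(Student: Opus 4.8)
The claim is that all four quotient rings have multiplicity equal to $\kappa(G)$, the number of spanning trees of $G$. The plan is to reduce everything to a single computation via the results already established. First I would invoke Theorem~\ref{thm:betti_coincide}(iii): the $\ZZ$-graded Betti diagrams of $\JJ_G$, $\II_G$, $\OO_G^q$, and $\MM_G^q$ coincide. Since the Hilbert series (hence the Hilbert polynomial, hence the multiplicity) of a graded module over a polynomial ring is completely determined by its graded Betti table (see, e.g., \cite[Theorem~8.20 and Proposition~8.23]{MillerSturmfels}), it follows immediately that
\[
e(\Sb/\OO_G^q)=e(\Sb/\JJ_G)=e(\Rb/\MM_G^q)=e(\Rb/\II_G) \ .
\]
So there is really only one number to compute, and I may compute it using whichever of the four rings is most convenient.

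The most convenient choice is $\Rb/\MM_G^q$, since $\MM_G^q$ is a monomial ideal and its quotient ring is very concrete. By Proposition~\ref{prop:CM}, $\Rb/\MM_G^q$ is Cohen--Macaulay of Krull dimension $1$ (this dimension count is carried out in the proof of Proposition~\ref{prop:CM}, using that $x_q$ does not divide any generator and that $\Lc^{(q)}$ is an l.s.o.p. for $\Sb/\OO_G^q$). For a one-dimensional graded quotient, the multiplicity equals the (eventually constant) value $\dim_K (\Rb/\MM_G^q)_i$ for $i \gg 0$. Equivalently, after killing a linear system of parameters — here a single general linear form $\ell$ — one gets an Artinian ring $\Rb/(\MM_G^q + (\ell))$ whose total $K$-dimension equals $e(\Rb/\MM_G^q)$, precisely because $\Rb/\MM_G^q$ is Cohen--Macaulay so $\ell$ is a nonzerodivisor. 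But $\dim_K \Rb/(\MM_G^q+(\ell))$ is exactly the number of standard monomials of $\MM_G^q$ up to the action of $\ell$, which counts the $K$-basis of $\Rb/\II_G$ in each fixed degree large enough; and the standard monomials of $\MM_G^q$ are precisely the $q$-reduced divisors ($G$-parking functions with respect to $q$), by Remark~\ref{rmk:reducedpotential} together with the fact that $\MM_G^q = \ini_{<_q}(\II_G)$. It is a classical fact (Dhar, and \cite{CoriRossinSalvy02}, \cite{PostnikovShapiro04}) that the number of $q$-reduced divisors — equivalently, the number of cosets in $\Pic^0(G)$, equivalently the number of standard monomials in each graded piece of $\Rb/\II_G$ for large degree — equals $\kappa(G)$, the number of spanning trees. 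That pins down the multiplicity.

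Alternatively, and perhaps cleaner, I would argue via $\Sb/\OO_G^q$ directly: by Proposition~\ref{prop:primdecom}(ii), $\OO_G^q = \bigcap_T P_T$ where the intersection is over spanning trees $T$ and $P_T = \langle y_e : e \in \Oc_T\rangle$ is a prime of height $n-1$, hence of dimension $2m-(n-1) = 2m-n+1 = \dim(\Sb/\OO_G^q)$. Thus every minimal prime is of maximal dimension, so the multiplicity of $\Sb/\OO_G^q$ is the sum over minimal primes $P$ of $\mathrm{length}_{(\Sb/\OO_G^q)_P}$; since $\OO_G^q$ is a radical (squarefree) monomial ideal, each such local length is $1$, and therefore $e(\Sb/\OO_G^q) = \#\{\text{spanning trees}\} = \kappa(G)$ by Proposition~\ref{prop:primdecom}(i). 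Combining this with the Betti-table equality from Theorem~\ref{thm:betti_coincide}(iii) gives all four equalities. The only genuinely non-formal input is the combinatorial identification — already recorded in Proposition~\ref{prop:primdecom} — of the facets of $\Sigma_G^q$ (equivalently, the minimal primes of $\OO_G^q$) with spanning trees of $G$; once that is in hand, there is no real obstacle, just the bookkeeping of quoting \cite[Theorem~8.20, Proposition~8.23]{MillerSturmfels} for "Betti table determines multiplicity" and the standard fact that a radical ideal of pure dimension has all multiplicities equal to one.
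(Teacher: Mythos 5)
Your proposal is correct, and your ``alternative'' route is in fact the paper's own proof verbatim in outline: reduce to $\Sb/\OO_G^q$ via the equality of Betti tables (Theorem~\ref{thm:betti_coincide}(iii) plus the fact that the Betti table determines the Hilbert polynomial), then use the minimal prime decomposition $\OO_G^q=\bigcap_T P_T$ of Proposition~\ref{prop:primdecom}, note that every $P_T$ is a coordinate subspace prime of the same (maximal) dimension $2m-n+1$ with $e(\Sb/P_T)=1$ and local length one since the ideal is radical, and conclude by the associativity formula that $e(\Sb/\OO_G^q)=\kappa(G)$. Your first route, through $\Rb/\MM_G^q$, is a genuinely different and also valid argument: since $\Rb/\MM_G^q$ is one-dimensional and Cohen--Macaulay (Proposition~\ref{prop:CM}), its multiplicity is the eventual constant value of the Hilbert function, which counts standard monomials of $\MM_G^q$ in a fixed large degree, i.e.\ pairs consisting of a $G$-parking function together with a free exponent on $x_q$; for large degree this count stabilizes at the number of $G$-parking functions, which is $\kappa(G)$. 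What this buys is a direct link to the chip-firing combinatorics ($q$-reduced divisors, $|\Pic^0(G)|=\kappa(G)$), at the cost of importing that classical enumeration as an external input, whereas the paper's route needs only the facet--spanning-tree bijection already proved in Proposition~\ref{prop:primdecom} and the standard additivity of multiplicity over minimal primes of maximal dimension.
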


\begin{proof}
All these ideals have the same Betti table and hence the same multiplicity. It suffices to compute  the multiplicity of $\Sb/\OO_G^q=K[\Sigma_G^q]$.
By Proposition~\ref{prop:primdecom}(ii), we have 
\[
\OO_G^q = \bigcap_{T} P_T \ ,
\]
the intersection being over all spanning trees of $G$. 
By Proposition~\ref{prop:primdecom}(iii), we have $\dim(\Sb/\OO_G^q)=2m-n+1$. 
Also, for each spanning tree $T$ we have $P_T=\langle y_e: e \in \Oc_T \rangle$ and therefore
\[
\dim(\Sb/P_T)=2m-n+1\quad \text{and} \quad e(\Sb/P_T)=1 \ .
\]

In this situation (see, e.g., \cite[Lemma 5.3.11]{Singular}) we have
\[
e(\Sb/\OO_G^q)=\sum_{T}e(\Sb/P_T) \ ,
\]
the sum being over all spanning trees of $G$. 
\end{proof}

For $\Rb / \II_G$, the multiplicity was recently computed in \cite{OCarroll} using a different method. There are other related ideals with the same multiplicity (see e.g. \cite[Proposition 3.7]{kateri2014family}).

\subsection{Alexander dual of $\MM_G^q$ and cocellular free resolution} \label{sec:alex}
In \cite{MadhuBernd}, Riemann-Roch theory for graphs is linked to Alexander duality for the ideal $\MM_G^q$.  Recall that $\MM_G^q \subset \tilde{\Rb} = K[\{x_v\}_{ v \ne q}]$ (see Remark~\ref{rmk:Rtilde}). Here we quickly study the Alexander dual of $\MM_G^q$ and use Theorem~\ref{thm:bounded} to obtain its minimal cocellular free resolution. 

\medskip

We define the divisor 
\[
\ab = \sum_{v \in V(G)}{(\deg(v))(v)} \ .
\]
It follows from Theorem~\ref{thm:Cori} and Theorem~\ref{thm:betti_coincide}(i) that:
\begin{itemize}
\item[(i)] $\MM_G^q$ is generated in degree preceding $\ab$.
\item[(ii)] $\MM_G^q+\langle \{x_v^{\ab(v)+1}\}_{v \ne q} \rangle = \MM_G^q$; this this is because for each $v\neq q$ in $V(G)$,  the star of the vertex $v$ forms a cut and therefore $x_v^{\deg(v)} \in \MM_G^q$.
\item[(iii)] All face labels in the labeled cell complex $B_G^{q,c}$ resolving $\MM_G^q$ (as in Theorem~\ref{thm:betti_coincide}(i)) divide $\xb^{\ab+\mathbf{1}}$. In fact a stronger statement is true; all vertex labels divide $\xb^{\ab}$. 
\end{itemize}
 Consider the cellular complex $\B_G^{q,c}$ with labels $\mb_F'$ for cells $F$ as in the proof of Theorem~\ref{thm:betti_coincide}(i). Relabel each cell $F$ with $\xb^{\ab+\mathbf{1}}/\mb_F'$. For simplicity, let us call $\B_G^{q,c}$ with its new labels $\Dc$. Let $\Dc_{\leq \ab}$ denote the subcomplex consisting of cells with labels dividing $\ab$. Let $(\MM_G^q)^{[\ab]}$ denote the Alexander dual of $\MM_G^q$ with respect to $\ab$ (\cite[Definition~5.20]{MillerSturmfels}). In this setting, \cite[Theorem~5.37]{MillerSturmfels} gives the following result: 

\begin{Proposition}
The polyhedral complex $(\Dc_G)_{\leq \ab}$ supports a minimal (cocellular) resolution for the ideal $(\MM_G^q)^{[\ab]}$.
\end{Proposition}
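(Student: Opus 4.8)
The strategy is to invoke the Alexander duality machinery for monomial ideals developed in \cite[Chapter~5]{MillerSturmfels}, applied to the specific labeled cell complex coming from Theorem~\ref{thm:betti_coincide}(i). By Theorem~\ref{thm:betti_coincide}(i), the bounded polyhedral complex $\B_G^{q,c}$, with the labels $\mb_F'$ obtained by relabeling $\B_G^{q,c}$ via $\bar\phi$ (sending $y_e \mapsto x_{e_+}$), supports a minimal cellular free resolution of $\MM_G^q$. Points (i), (ii), and (iii) listed just before the proposition are exactly the hypotheses that let one dualize this cellular resolution into a cocellular resolution of the Alexander dual $(\MM_G^q)^{[\ab]}$: we need $\MM_G^q$ to be generated in degrees $\preceq \ab$, we need $\ab$ to be chosen so that passing to $\MM_G^q+\langle x_v^{\ab(v)+1}\rangle$ does not change the ideal, and we need all face labels of the supporting complex to divide $\xb^{\ab+\mathbf 1}$. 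So the first step is simply to verify these three bullet points carefully from Theorem~\ref{thm:Cori} and the description of the vertex labels $\mb(f)=\prod_{(df)(e)>0} y_e$ from \S\ref{sec:labelBG}: the key combinatorial fact is that for each $v\ne q$ the star of $v$ is an edge-cut, so $x_v^{\deg(v)}\in\MM_G^q$, which gives (ii), and since each vertex label of $\B_G^{q,c}$ is a product $\prod y_{e}$ over the edges leaving some connected block and each variable $x_v$ appears in it at most $\deg(v)$ times, all vertex labels divide $\xb^{\ab}$, giving (iii) (and hence (i)).

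Second, I would recall the relevant statement of Alexander duality for resolutions. Given a monomial ideal $I$ generated in degrees $\preceq \ab$ with $I=I+\langle x_v^{\ab(v)+1}\rangle$ componentwise, if a labeled cell complex $\Dc$ supports a (minimal) free resolution of $I$ and all its face labels divide $\xb^{\ab+\mathbf 1}$, then the \emph{relabeled} complex — replace each face label $\mb_F$ by $\xb^{\ab+\mathbf 1}/\mb_F$ — supports a minimal \emph{cocellular} resolution of the Alexander dual $I^{[\ab]}$; this is precisely \cite[Theorem~5.37]{MillerSturmfels} together with the definition \cite[Definition~5.20]{MillerSturmfels}. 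Concretely, the cocellular complex is obtained by taking the relabeled complex $\Dc_G$ (which is $\B_G^{q,c}$ with labels $\xb^{\ab+\mathbf 1}/\mb_F'$), restricting to the subcomplex $(\Dc_G)_{\le \ab}$ of faces whose label divides $\xb^{\ab}$, and reading off the co-differential. So the second step is: apply this theorem verbatim with $I=\MM_G^q$, $\Dc=\B_G^{q,c}$ with labels $\mb_F'$, and the distinguished vector $\ab=\sum_v \deg(v)(v)$.

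Third, I would check that the output really is \emph{minimal}. Minimality is inherited because the original cellular resolution of $\MM_G^q$ supported on $\B_G^{q,c}$ is minimal (Theorem~\ref{thm:betti_coincide}(i), which in turn rests on Theorem~\ref{thm:bounded} and Theorem~\ref{thm:Eis}(iv)): no unit appears in the differentials, equivalently $\mb_{F'}'\ne\mb_F'$ for $F'\subsetneq F$, and this non-equality is preserved under the relabeling $\mb_F'\mapsto \xb^{\ab+\mathbf 1}/\mb_F'$, hence also in the cocellular co-differential. Alternatively one can cite that Alexander duality takes minimal resolutions to minimal resolutions, part of the content of \cite[Theorem~5.37]{MillerSturmfels}. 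I would then just state the conclusion: $(\Dc_G)_{\le\ab}$ supports a minimal cocellular free resolution of $(\MM_G^q)^{[\ab]}$.

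\textbf{Main obstacle.} The only genuine content beyond citation is pinning down that the chosen $\ab$ is simultaneously large enough for hypotheses (i) and (iii) and small enough / "tight" enough that hypothesis (ii) holds — i.e. that $\ab(v)=\deg(v)$ is exactly the right exponent vector. The upper bound $\deg(v)$ for the exponent of $x_v$ in any face label of $\B_G^{q,c}$ requires noting that a face label is the lcm of vertex labels, each vertex label involves at most $\deg(v)$ oriented edges with head $v$ (there are exactly $\deg(v)$ oriented edges with head $v$ in $\EE(G)$, but $y_{e}$ and $y_{\bar e}$ never both divide a single vertex label since $(df)(e)>0$ and $(df)(\bar e)>0$ cannot both hold), so the lcm still has $x_v$-exponent $\le\deg(v)$ after relabeling via $\bar\phi$; and the equality $\MM_G^q+\langle x_v^{\deg(v)+1}\rangle=\MM_G^q$ holds because the star-cut generator $\prod_{e\in\EE(\{v\}^c,\{v\})}y_e$ maps under $\bar\phi$ to $x_v^{\deg(v)}$, already in $\MM_G^q$. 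Once this bookkeeping is done, everything else is a direct invocation of \cite[Theorem~5.37]{MillerSturmfels}.
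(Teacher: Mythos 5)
Your proposal is correct and follows the same route as the paper: verify the three bullet conditions (generation in degrees $\preceq \ab$, stability under adding $x_v^{\ab(v)+1}$ via the star-cuts, and divisibility of all face labels by $\xb^{\ab+\mathbf{1}}$), then apply \cite[Theorem~5.37]{MillerSturmfels} to the relabeled complex $\Dc_G$ restricted to $(\Dc_G)_{\leq \ab}$. The paper's proof is exactly this one-line citation, with the bookkeeping you spell out (the $x_v$-exponent bound $\deg(v)$ coming from the squarefree $y$-labels under $\bar{\phi}$) stated as the preceding remarks.
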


This observation has been made (independently) in \cite{Anton}. 


\subsection{Graphic matroid ideal and $h$-vectors}
\label{sec:hvector}
Let $\tilde{\Sb}=K[\zb]$ denote the polynomial ring in $m$ variables $\{z_e: e \in E(G)\}$. 
There is a surjective $K$-algebra homomorphism
\[
\pi \colon \Sb \rightarrow \tilde{\Sb}
\]
defined by sending both $y_e$ and $y_{\bar{e}}$ to $z_e$. The kernel of this map is the ideal generated by 
\[
\K=\{y_e-y_{\bar{e}} : e \in \Oc\}
\]
for some fixed orientation $\Oc$. We will denote this kernel by $\bff = \langle \K \rangle$. We get an induced isomorphism
\[
\tilde{\pi} \colon \Sb/\bff \rightarrow \tilde{\Sb} \ .
\]
We define the (unoriented) {\em graphic matroid ideal} $\Mat_G \subset \tilde{S}$ to be the image of $\OO_G^q+\bff$ under this isomorphism. Concretely, $\Mat_G$ is obtained from $\OO_G^q$ by identifying the variables $y_e$ and $y_{\bar{e}}$ and replacing them with $z_e$.

\begin{Lemma} \label{lem:Mat}
\begin{itemize}
\item[]
\item[(i)] $\K$ forms a regular sequence for $\Sb / \OO_G^q$.
\item[(ii)] $\B_G^{q,c}$ (equivalently $\A_G^{q,c}$) supports a minimal free resolution for $\Mat_G$. 
\item[(iii)] $\Mat_G$ is independent of the choice of $q$.
\item[(iv)] The $\ZZ$-graded Betti diagram of $\Mat_G$ coincides with the $\ZZ$-graded Betti diagrams of $\JJ_G$, $\II_G$, $\OO_G^q$, and $\MM_G^q$.
\end{itemize}
\end{Lemma}
\begin{proof}
(i) follows from \cite[Corollary~2.7]{novik2002syzygies}. Alternatively, by the explicit description of the facets in Proposition~\ref{prop:primdecom}(i), the restriction of each linear form in $\K$ spans a vector space of dimension $1$ and therefore the result follows from \cite[Lemma 2.4]{Stanley96}. 

(ii) follows from (i) and \cite[Lemma~3.15]{EisenbudSyz}.

There are several ways to see (iii). For example, it follows from (ii) and the discussion in \S\ref{sec:grobrel} (e.g. Proposition~\ref{prop:grobrelation}) that $\Mat_G$ is minimally generated by monomials 
\begin{equation}\label{eq:MatGens}
\{\prod_{e \in E(A^c,A)}{z_e} : A \subsetneq V(G), G[A] \text{ and } G[A^c] \text{ are connected} \} 
\end{equation}
 where $E(A^c,A)$ denotes the set of (unoriented) edges connecting $G[A]$ and $G[A^c]$. This description is independent of the choice of the base vertex $q$.

(iv) follows from Theorem~\ref{thm:betti_coincide} and \cite[Lemma~3.15]{EisenbudSyz}.
\end{proof}

It is a fact, essentially due to Hilbert, that the Hilbert series of a module is completely determined by its graded Betti table and its dimension. The numerator of the Hilbert series is called the $h$-polynomial. Its coefficients are obtained from the Betti numbers as an alternating sum and they form the $h$-vector (see, e.g, \cite[Theorem~8.20 and Theorem~8.23]{MillerSturmfels}). So we immediately obtain, from Lemma~\ref{lem:Mat}(iv), the following result.
\begin{Lemma} \label{lem:h}
The $h$-vectors of $\Sb / \JJ_G$, $ \Rb / \II_G$, $\Sb / \OO_G^q$, $\Rb / \MM_G^q$, and $\tilde{\Sb} / \Mat_G$ coincide.
\end{Lemma}

The ideal $\Mat_G$ has been extensively studied in the literature (see, e.g., \cite{StanleyCM}, \cite[Section~III.3]{Stanley96}, \cite[Section~3]{novik2002syzygies}). A more well known presentation of this ideal is by its prime decomposition; for each spanning tree $T$ of $G$, let 
$I_T=\langle z_e: e \in T \rangle$. The minimal prime decomposition of $\Mat_G$ is
\begin{equation} \label{eq:MatPrim}
\Mat_G = \bigcap_{T} I_T \ ,
\end{equation}
the intersection being over all spanning trees of $G$. This can be proved the same way as Proposition~\ref{prop:primdecom}(ii) (or can be deduced from it). 

\medskip

From \eqref{eq:MatPrim} it is evident that $\Mat_G$ is the Stanley-Reisner ideal of the simplicial complex $\Sigma$ of independent sets of the cographic matroid (i.e. the matroid whose bases are the complements of spanning trees of $G$). Therefore the $h$-polynomial of $\tilde{\Sb} / \Mat_G$ is precisely $T(1,y)$, where $T(x,y)$ is the Tutte polynomial of the graph (\cite[page 236]{Bjorner}). By Lemma~\ref{lem:h}, we obtain the following result:

\begin{Corollary} \label{cor:h}
$T(1,y)$ is the $h$-polynomial for $\Sb / \JJ_G$, $ \Rb / \II_G$, $\Sb / \OO_G^q$, $\Rb / \MM_G^q$, and $\tilde{\Sb} / \Mat_G$.
\end{Corollary}
Postnikov and Shapiro in \cite{PostnikovShapiro04} prove this result for $\tilde{\Rb} / \MM_G^q$ (equivalently, for $\Rb / \MM_G^q$) by a combinatorial argument. Merino's work in \cite{Merino} proves this result for $ \Rb / \II_G$ using deletion-contraction methods. A bijective proof of Merino's result was later presented in \cite{CoriLeBorgne} (see also \cite{FarbodMatt12}). We believe that Corollary~\ref{cor:h} gives a unified and more conceptual proof of these results. Moreover, Merino's theorem (stating that $T(1,y)$ is the generating function for the number of $q$-reduced divisors in various degrees) is a straightforward consequence of Corollary~\ref{cor:h} and Theorem~\ref{thm:Cori}. 


\begin{Remark}
It follows from our descriptions of the minimal free resolutions that the Castelnuovo-Mumford regularity of $\Sb / \JJ_G$, $ \Rb / \II_G$, $\Sb / \OO_G^q$, $\Rb / \MM_G^q$, and $\tilde{\Sb} / \Mat_G$ is equal to $g = m-n + 1$. Moreover, $g$ is equal to the projective dimension of the squarefree Alexander dual of $\OO_G^q$ (called
spanning tree ideal of $G$ in the literature). The minimal free resolution
of this ideal is explicitly given in \cite{Fatemehreliability}, and the degenerations from the toric ideals associated to graphs to the spanning tree ideals are studied in \cite{kateri2014family} in the context of statistical models.
\end{Remark}


\section*{Acknowledgments}
The first author is grateful to Volkmar Welker for his support and many helpful conversations during this project. She is also grateful to J\"urgen Herzog for helpful discussions related to regular sequences and flat families. The second author would like to thank his advisor Matthew Baker for his support throughout this project and for many useful conversations. We would like to thank Bernd Sturmfels for useful discussions. We would like to thank Winfried Bruns for valuable comments on an earlier draft of this paper. We also thank the anonymous referee for valuable suggestions. 



\begin{bibdiv}
\begin{biblist}

\bib{ABKS}{article}{
      author={An, Yang},
      author={Baker, Matthew},
      author={Kuperberg, Greg},
      author={Shokrieh, Farbod},
       title={Canonical representatives for divisor classes on tropical curves
  and the matrix-tree theorem},
        date={2014},
        ISSN={2050-5094},
     journal={Forum Math. Sigma},
      volume={2},
       pages={e24, 25},
         url={http://dx.doi.org/10.1017/fms.2014.25},
      review={\MR{3264262}},
}

\bib{Bacher}{article}{
      author={Bacher, Roland},
      author={de~la Harpe, Pierre},
      author={Nagnibeda, Tatiana},
       title={The lattice of integral flows and the lattice of integral cuts on
  a finite graph},
        date={1997},
        ISSN={0037-9484},
     journal={Bull. Soc. Math. France},
      volume={125},
      number={2},
       pages={167\ndash 198},
         url={http://www.numdam.org/item?id=BSMF_1997__125_2_167_0},
      review={\MR{1478029 (99c:05111)}},
}

\bib{BX06}{incollection}{
      author={Baker, Matthew},
      author={Faber, Xander},
       title={Metrized graphs, {L}aplacian operators, and electrical networks},
        date={2006},
   booktitle={Quantum graphs and their applications},
      series={Contemp. Math.},
      volume={415},
   publisher={Amer. Math. Soc.},
     address={Providence, RI},
       pages={15\ndash 33},
         url={http://www.ams.org/bookstore-getitem/item=CONM-415},
      review={\MR{2277605 (2007j:05136)}},
}

\bib{Bruns}{book}{
      author={Bruns, Winfried},
      author={Herzog, J{\"u}rgen},
       title={Cohen-{M}acaulay rings},
      series={Cambridge Studies in Advanced Mathematics},
   publisher={Cambridge University Press},
     address={Cambridge},
        date={1993},
      volume={39},
        ISBN={0-521-41068-1},
         url={http://dx.doi.org/10.1017/CBO9780511608681},
      review={\MR{1251956 (95h:13020)}},
}

\bib{BiggsBook}{book}{
      author={Biggs, Norman},
       title={Algebraic graph theory},
     edition={Second},
      series={Cambridge Mathematical Library},
   publisher={Cambridge University Press},
     address={Cambridge},
        date={1993},
        ISBN={0-521-45897-8},
      review={\MR{1271140 (95h:05105)}},
}

\bib{Biggs97}{article}{
      author={Biggs, Norman},
       title={Algebraic potential theory on graphs},
        date={1997},
        ISSN={0024-6093},
     journal={Bull. London Math. Soc.},
      volume={29},
      number={6},
       pages={641\ndash 682},
      review={\MR{MR1468054 (98m:05120)}},
}

\bib{Biggs99}{article}{
      author={Biggs, Norman},
       title={Chip-firing and the critical group of a graph},
        date={1999},
        ISSN={0925-9899},
     journal={J. Algebraic Combin.},
      volume={9},
      number={1},
       pages={25\ndash 45},
         url={http://dx.doi.org/10.1023/A:1018611014097},
      review={\MR{1676732 (2000h:05103)}},
}

\bib{Bjorner}{incollection}{
      author={Bj{\"o}rner, Anders},
       title={The homology and shellability of matroids and geometric
  lattices},
        date={1992},
   booktitle={Matroid applications},
      series={Encyclopedia Math. Appl.},
      volume={40},
   publisher={Cambridge Univ. Press},
     address={Cambridge},
       pages={226\ndash 283},
         url={http://dx.doi.org/10.1017/CBO9780511662041.008},
      review={\MR{1165544 (94a:52030)}},
}

\bib{Lovasz91}{article}{
      author={Bj{\"o}rner, Anders},
      author={Lov{\'a}sz, L{\'a}szl{\'o}},
      author={Shor, Peter~W.},
       title={Chip-firing games on graphs},
        date={1991},
        ISSN={0195-6698},
     journal={European J. Combin.},
      volume={12},
      number={4},
       pages={283\ndash 291},
      review={\MR{1120415 (92g:90193)}},
}

\bib{orientedmatroid}{book}{
      author={Bj{\"o}rner, Anders},
      author={Las~Vergnas, Michel},
      author={Sturmfels, Bernd},
      author={White, Neil},
      author={Ziegler, G{\"u}nter~M.},
       title={Oriented matroids},
     edition={Second},
      series={Encyclopedia of Mathematics and its Applications},
   publisher={Cambridge University Press},
     address={Cambridge},
        date={1999},
      volume={46},
        ISBN={0-521-77750-X},
         url={http://dx.doi.org/10.1017/CBO9780511586507},
      review={\MR{1744046 (2000j:52016)}},
}

\bib{BN1}{article}{
      author={Baker, Matthew},
      author={Norine, Serguei},
       title={Riemann-{R}och and {A}bel-{J}acobi theory on a finite graph},
        date={2007},
        ISSN={0001-8708},
     journal={Adv. Math.},
      volume={215},
      number={2},
       pages={766\ndash 788},
      review={\MR{MR2355607 (2008m:05167)}},
}

\bib{boocher}{article}{
      author={Boocher, Adam},
       title={Free resolutions and sparse determinantal ideals},
        date={2012},
        ISSN={1073-2780},
     journal={Math. Res. Lett.},
      volume={19},
      number={4},
       pages={805\ndash 821},
      review={\MR{3008416}},
}

\bib{Popescu}{article}{
      author={Bayer, Dave},
      author={Popescu, Sorin},
      author={Sturmfels, Bernd},
       title={Syzygies of unimodular {L}awrence ideals},
        date={2001},
        ISSN={0075-4102},
     journal={J. Reine Angew. Math.},
      volume={534},
       pages={169\ndash 186},
         url={http://dx.doi.org/10.1515/crll.2001.040},
      review={\MR{1831636 (2002c:13027)}},
}

\bib{FarbodMatt12}{article}{
      author={Baker, Matthew},
      author={Shokrieh, Farbod},
       title={Chip-firing games, potential theory on graphs, and spanning
  trees},
        date={2013},
        ISSN={0097-3165},
     journal={J. Combin. Theory Ser. A},
      volume={120},
      number={1},
       pages={164\ndash 182},
         url={http://dx.doi.org/10.1016/j.jcta.2012.07.011},
      review={\MR{2971705}},
}

\bib{BayerSturmfels}{article}{
      author={Bayer, Dave},
      author={Sturmfels, Bernd},
       title={Cellular resolutions of monomial modules},
        date={1998},
        ISSN={0075-4102},
     journal={J. Reine Angew. Math.},
      volume={502},
       pages={123\ndash 140},
         url={http://dx.doi.org/10.1515/crll.1998.083},
      review={\MR{1647559 (99g:13018)}},
}

\bib{Bak}{article}{
      author={Bak, Per},
      author={Tang, Chao},
      author={Wiesenfeld, Kurt},
       title={Self-organized criticality},
        date={1988},
        ISSN={1050-2947},
     journal={Phys. Rev. A (3)},
      volume={38},
      number={1},
       pages={364\ndash 374},
         url={http://dx.doi.org/10.1103/PhysRevA.38.364},
      review={\MR{949160 (89g:58126)}},
}

\bib{BatziesWelker}{article}{
      author={Batzies, Ekkehard},
      author={Welker, Volkmar},
       title={Discrete {M}orse theory for cellular resolutions},
        date={2002},
        ISSN={0075-4102},
     journal={J. Reine Angew. Math.},
      volume={543},
       pages={147\ndash 168},
         url={http://dx.doi.org/10.1515/crll.2002.012},
      review={\MR{1887881 (2003b:13017)}},
}

\bib{conca}{incollection}{
      author={Conca, Aldo},
      author={Ho{\c{s}}ten, Serkan},
      author={Thomas, Rekha~R.},
       title={Nice initial complexes of some classical ideals},
        date={2006},
   booktitle={Algebraic and geometric combinatorics},
      series={Contemp. Math.},
      volume={423},
   publisher={Amer. Math. Soc.},
     address={Providence, RI},
       pages={11\ndash 42},
         url={http://dx.doi.org/10.1090/conm/423/08073},
      review={\MR{2298753 (2008h:13035)}},
}

\bib{CoriLeBorgne}{article}{
      author={Cori, Robert},
      author={Le~Borgne, Yvan},
       title={The sand-pile model and {T}utte polynomials},
        date={2003},
        ISSN={0196-8858},
     journal={Adv. in Appl. Math.},
      volume={30},
      number={1-2},
       pages={44\ndash 52},
         url={http://dx.doi.org/10.1016/S0196-8858(02)00524-9},
        note={Formal power series and algebraic combinatorics (Scottsdale, AZ,
  2001)},
      review={\MR{1979782 (2004d:05095)}},
}

\bib{ChinburgRumely}{article}{
      author={Chinburg, Ted},
      author={Rumely, Robert},
       title={The capacity pairing},
        date={1993},
        ISSN={0075-4102},
     journal={J. Reine Angew. Math.},
      volume={434},
       pages={1\ndash 44},
         url={http://dx.doi.org/10.1515/crll.1993.434.1},
      review={\MR{1195689 (94b:14019)}},
}

\bib{CoriRossinSalvy02}{article}{
      author={Cori, Robert},
      author={Rossin, Dominique},
      author={Salvy, Bruno},
       title={Polynomial ideals for sandpiles and their {G}r\"obner bases},
        date={2002},
        ISSN={0304-3975},
     journal={Theoret. Comput. Sci.},
      volume={276},
      number={1-2},
       pages={1\ndash 15},
         url={http://dx.doi.org/10.1016/S0304-3975(00)00397-2},
      review={\MR{MR1896344 (2003b:13041)}},
}

\bib{Conway}{book}{
      author={Conway, John~H.},
      author={Sloane, Neil J.~A.},
       title={Sphere packings, lattices and groups},
     edition={Third},
      series={Grundlehren der Mathematischen Wissenschaften [Fundamental
  Principles of Mathematical Sciences]},
   publisher={Springer-Verlag},
     address={New York},
        date={1999},
      volume={290},
        ISBN={0-387-98585-9},
        note={With additional contributions by E. Bannai, R. E. Borcherds, J.
  Leech, S. P. Norton, A. M. Odlyzko, R. A. Parker, L. Queen and B. B. Venkov},
      review={\MR{1662447 (2000b:11077)}},
}

\bib{Dhar1}{article}{
      author={Dhar, Deepak},
       title={Self-organized critical state of sandpile automaton models},
        date={1990},
        ISSN={0031-9007},
     journal={Phys. Rev. Lett.},
      volume={64},
      number={14},
       pages={1613\ndash 1616},
         url={http://dx.doi.org/10.1103/PhysRevLett.64.1613},
      review={\MR{1044086 (90m:82053)}},
}

\bib{TriangulationBook}{book}{
      author={De~Loera, Jes{\'u}s~A.},
      author={Rambau, J{\"o}rg},
      author={Santos, Francisco},
       title={Triangulations},
      series={Algorithms and Computation in Mathematics},
   publisher={Springer-Verlag},
     address={Berlin},
        date={2010},
      volume={25},
        ISBN={978-3-642-12970-4},
         url={http://dx.doi.org/10.1007/978-3-642-12971-1},
        note={Structures for algorithms and applications},
      review={\MR{2743368 (2011j:52037)}},
}

\bib{AntonFatemeh}{article}{
      author={Dochtermann, Anton},
      author={Mohammadi, Fatemeh},
       title={Cellular resolutions from mapping cones},
        date={2014},
        ISSN={0097-3165},
     journal={J. Combin. Theory Ser. A},
      volume={128},
       pages={180\ndash 206},
      review={\MR{3265923}},
}

\bib{Anton}{article}{
      author={Dochtermann, Anton},
      author={Sanyal, Raman},
       title={Laplacian ideals, arrangements, and resolutions},
        date={2014},
        ISSN={0925-9899},
     journal={J. Algebraic Combin.},
      volume={40},
      number={3},
       pages={805\ndash 822},
         url={http://dx.doi.org/10.1007/s10801-014-0508-7},
      review={\MR{3265234}},
}

\bib{EisenbudSyz}{book}{
      author={Eisenbud, David},
       title={The geometry of syzygies},
      series={Graduate Texts in Mathematics},
   publisher={Springer-Verlag},
     address={New York},
        date={2005},
      volume={229},
        ISBN={0-387-22215-4},
         url={http://dx.doi.org/10.1007/b137572},
        note={A second course in commutative algebra and algebraic geometry},
      review={\MR{2103875 (2005h:13021)}},
}

\bib{Eisenbud}{book}{
      author={Eisenbud, David},
       title={Commutative algebra},
      series={Graduate Texts in Mathematics},
   publisher={Springer-Verlag},
     address={New York},
        date={1995},
      volume={150},
        ISBN={0-387-94268-8; 0-387-94269-6},
         url={http://dx.doi.org/10.1007/978-1-4612-5350-1},
        note={With a view toward algebraic geometry},
      review={\MR{1322960 (97a:13001)}},
}

\bib{Alex}{article}{
      author={Engstr{\"o}m, Alexander},
       title={Complexes of directed trees and independence complexes},
        date={2009},
        ISSN={0012-365X},
     journal={Discrete Math.},
      volume={309},
      number={10},
       pages={3299\ndash 3309},
         url={http://dx.doi.org/10.1016/j.disc.2008.09.033},
      review={\MR{2526748 (2010g:05101)}},
}

\bib{Erdahl}{article}{
      author={Erdahl, Robert~M.},
      author={Ryshkov, Sergej~S.},
       title={On lattice dicing},
        date={1994},
        ISSN={0195-6698},
     journal={European J. Combin.},
      volume={15},
      number={5},
       pages={459\ndash 481},
         url={http://dx.doi.org/10.1006/eujc.1994.1049},
      review={\MR{1292957 (95f:52016)}},
}

\bib{Gabrielov}{article}{
      author={Gabrielov, Andrei},
       title={Abelian avalanches and {T}utte polynomials},
        date={1993},
        ISSN={0378-4371},
     journal={Phys. A},
      volume={195},
      number={1-2},
       pages={253\ndash 274},
         url={http://dx.doi.org/10.1016/0378-4371(93)90267-8},
      review={\MR{1215018 (94c:82085)}},
}

\bib{Gathmann}{article}{
      author={Gathmann, Andreas},
      author={Kerber, Michael},
       title={A {R}iemann-{R}och theorem in tropical geometry},
        date={2008},
        ISSN={0025-5874},
     journal={Math. Z.},
      volume={259},
      number={1},
       pages={217\ndash 230},
         url={http://dx.doi.org/10.1007/s00209-007-0222-4},
      review={\MR{2377750 (2009a:14014)}},
}

\bib{Singular}{book}{
      author={Greuel, Gert-Martin},
      author={Pfister, Gerhard},
       title={A \textbf{{S}ingular} introduction to commutative algebra},
     edition={Extended edition},
   publisher={Springer},
     address={Berlin},
        date={2008},
        ISBN={978-3-540-73541-0},
         url={http://dx.doi.org/10.1007/978-3-540-73542-7},
        note={With contributions by Olaf Bachmann, Christoph Lossen and Hans
  Sch{\"o}nemann},
      review={\MR{2363237 (2008j:13001)}},
}

\bib{GreenZaslavsky}{article}{
      author={Greene, Curtis},
      author={Zaslavsky, Thomas},
       title={On the interpretation of {W}hitney numbers through arrangements
  of hyperplanes, zonotopes, non-{R}adon partitions, and orientations of
  graphs},
        date={1983},
        ISSN={0002-9947},
     journal={Trans. Amer. Math. Soc.},
      volume={280},
      number={1},
       pages={97\ndash 126},
         url={http://dx.doi.org/10.1090/S0002-9947-1983-0712251-1},
      review={\MR{MR712251 (84k:05032)}},
}

\bib{HHBook}{book}{
      author={Herzog, J{\"u}rgen},
      author={Hibi, Takayuki},
       title={Monomial ideals},
      series={Graduate Texts in Mathematics},
   publisher={Springer-Verlag London Ltd.},
     address={London},
        date={2011},
      volume={260},
        ISBN={978-0-85729-105-9},
         url={http://dx.doi.org/10.1007/978-0-85729-106-6},
      review={\MR{2724673 (2011k:13019)}},
}

\bib{hopkins}{article}{
      author={Hopkins, Sam},
       title={Another proof of {W}ilmes' conjecture},
        date={2014},
        ISSN={0012-365X},
     journal={Discrete Math.},
      volume={323},
       pages={43\ndash 48},
         url={http://dx.doi.org/10.1016/j.disc.2014.01.010},
      review={\MR{3166053}},
}

\bib{HopPerk}{article}{
      author={Hopkins, Sam},
      author={Perkinson, David},
       title={Orientations, semiorders, arrangements, and parking functions},
        date={2012},
        ISSN={1077-8926},
     journal={Electron. J. Combin.},
      volume={19},
      number={4},
       pages={Paper 8, 31},
      review={\MR{3001645}},
}

\bib{Jujic}{article}{
      author={Joji{\'c}, Du{\v{s}}ko},
       title={Shellability of complexes of directed trees},
        date={2013},
        ISSN={0354-5180},
     journal={Filomat},
      volume={27},
      number={8},
       pages={1551\ndash 1559},
         url={http://dx.doi.org/10.2298/FIL1308551J},
      review={\MR{3244028}},
}

\bib{KindKleinschmidt}{article}{
      author={Kind, Bernd},
      author={Kleinschmidt, Peter},
       title={Sch\"albare {C}ohen-{M}acauley-{K}omplexe und ihre
  {P}arametrisierung},
        date={1979},
        ISSN={0025-5874},
     journal={Math. Z.},
      volume={167},
      number={2},
       pages={173\ndash 179},
         url={http://dx.doi.org/10.1007/BF01215121},
      review={\MR{534824 (80k:13010)}},
}

\bib{kateri2014family}{article}{
      author={Kateri, Maria},
      author={Mohammadi, Fatemeh},
      author={Sturmfels, Bernd},
       title={A family of quasisymmetry models},
        date={2015},
     journal={Journal of Algebraic Statistics},
      volume={6},
      number={1},
       pages={1\ndash 16},
         url={http://dx.doi.org/10.18409/jas.v6i1.33},
}

\bib{Kozlov}{article}{
      author={Kozlov, Dmitry~N.},
       title={Complexes of directed trees},
        date={1999},
        ISSN={0097-3165},
     journal={J. Combin. Theory Ser. A},
      volume={88},
      number={1},
       pages={112\ndash 122},
         url={http://dx.doi.org/10.1006/jcta.1999.2984},
      review={\MR{1713484 (2000j:05036)}},
}

\bib{Lorenzini1}{article}{
      author={Lorenzini, Dino~J.},
       title={Arithmetical graphs},
        date={1989},
        ISSN={0025-5831},
     journal={Math. Ann.},
      volume={285},
      number={3},
       pages={481\ndash 501},
         url={http://dx.doi.org/10.1007/BF01455069},
      review={\MR{1019714 (91b:14026)}},
}

\bib{Levine1}{article}{
      author={Levine, Lionel},
      author={Propp, James},
       title={What is {$\dots$} a sandpile?},
        date={2010},
        ISSN={0002-9920},
     journal={Notices Amer. Math. Soc.},
      volume={57},
      number={8},
       pages={976\ndash 979},
      review={\MR{2667495}},
}

\bib{horia}{unpublished}{
      author={Mania, Horia},
       title={Wilmes' conjecture and boundary divisors},
        date={2012},
         url={http://arxiv.org/abs/1210.8109},
        note={Preprint available at \href{http://arxiv.org/abs/1210.8109}{{\tt
  ar{X}iv:1210.8109}}},
}

\bib{Massey}{book}{
      author={Massey, William~S.},
       title={A basic course in algebraic topology},
      series={Graduate Texts in Mathematics},
   publisher={Springer-Verlag},
     address={New York},
        date={1991},
      volume={127},
        ISBN={0-387-97430-X},
      review={\MR{1095046 (92c:55001)}},
}

\bib{Merino}{article}{
      author={Merino~L{\'o}pez, Criel},
       title={Chip firing and the {T}utte polynomial},
        date={1997},
        ISSN={0218-0006},
     journal={Ann. Comb.},
      volume={1},
      number={3},
       pages={253\ndash 259},
         url={http://dx.doi.org/10.1007/BF02558479},
      review={\MR{1630779 (99k:90232)}},
}

\bib{Fatemeh}{unpublished}{
      author={Mohammadi, Fatemeh},
       title={Prime splittings of determinantal ideals},
        date={2012},
         url={http://arxiv.org/abs/1208.2930},
        note={Preprint available at \href{http://arxiv.org/abs/1208.2930}{{\tt
  ar{X}iv:1208.2930}}},
}

\bib{Fatemehreliability}{article}{
      author={Mohammadi, Fatemeh},
       title={Divisors on graphs, orientations, syzygies, and system
  reliability},
    language={English},
        date={2015},
        ISSN={0925-9899},
     journal={Journal of Algebraic Combinatorics},
       pages={1\ndash 19},
         url={http://dx.doi.org/10.1007/s10801-015-0641-y},
}

\bib{MillerSturmfels}{book}{
      author={Miller, Ezra},
      author={Sturmfels, Bernd},
       title={Combinatorial commutative algebra},
      series={Graduate Texts in Mathematics},
   publisher={Springer-Verlag},
     address={New York},
        date={2005},
      volume={227},
        ISBN={0-387-22356-8},
         url={http://dx.doi.org/10.1007/b138602},
      review={\MR{2110098 (2006d:13001)}},
}

\bib{MadhuBernd}{article}{
      author={Manjunath, Madhusudan},
      author={Sturmfels, Bernd},
       title={Monomials, binomials and {R}iemann--{R}och},
        date={2013},
        ISSN={0925-9899},
     journal={J. Algebraic Combin.},
      volume={37},
      number={4},
       pages={737\ndash 756},
         url={http://dx.doi.org/10.1007/s10801-012-0386-9},
      review={\MR{3047017}},
}

\bib{FarbodFatemeh}{article}{
      author={Mohammadi, Fatemeh},
      author={Shokrieh, Farbod},
       title={Divisors on graphs, connected flags, and syzygies},
        date={2014},
        ISSN={1073-7928},
     journal={Int. Math. Res. Not. IMRN},
      number={24},
       pages={6839\ndash 6905},
      review={\MR{3291642}},
}

\bib{mohammadi2015algebraic}{article}{
      author={Mohammadi, Fatemeh},
      author={S{\'a}enz-de Cabez{\'o}n, Eduardo},
      author={Wynn, Henry~P},
       title={The algebraic method in tree percolation},
        date={2015},
     journal={arXiv preprint arXiv:1510.04036},
}

\bib{mohammadi2015types}{article}{
      author={Mohammadi, Fatemeh},
      author={S{\'a}enz-de Cabez{\'o}n, Eduardo},
      author={Wynn, Henry~P},
       title={Types of signature analysis in reliability based on hilbert
  series},
        date={2015},
     journal={arXiv preprint arXiv:1510.04427},
}

\bib{Madhu}{article}{
      author={Manjunath, Madhusudan},
      author={Schreyer, Frank-Olaf},
      author={Wilmes, John},
       title={Minimal free resolutions of the {$G$}-parking function ideal and
  the toppling ideal},
        date={2015},
        ISSN={0002-9947},
     journal={Trans. Amer. Math. Soc.},
      volume={367},
      number={4},
       pages={2853\ndash 2874},
         url={http://dx.doi.org/10.1090/S0002-9947-2014-06248-X},
      review={\MR{3301884}},
}

\bib{MZ}{incollection}{
      author={Mikhalkin, Grigory},
      author={Zharkov, Ilia},
       title={Tropical curves, their {J}acobians and theta functions},
        date={2008},
   booktitle={Curves and abelian varieties},
      series={Contemp. Math.},
      volume={465},
   publisher={Amer. Math. Soc.},
     address={Providence, RI},
       pages={203\ndash 230},
         url={http://dx.doi.org/10.1090/conm/465/09104},
      review={\MR{2457739 (2011c:14163)}},
}

\bib{novik2002syzygies}{article}{
      author={Novik, Isabella},
      author={Postnikov, Alexander},
      author={Sturmfels, Bernd},
       title={Syzygies of oriented matroids},
        date={2002},
        ISSN={0012-7094},
     journal={Duke Math. J.},
      volume={111},
      number={2},
       pages={287\ndash 317},
         url={http://dx.doi.org/10.1215/S0012-7094-02-11124-7},
      review={\MR{1882136 (2003b:13023)}},
}

\bib{OCarroll}{article}{
      author={O'Carroll, Liam},
      author={Planas-Vilanova, Francesc},
      author={Villarreal, Rafael~H.},
       title={Degree and algebraic properties of lattice and matrix ideals},
        date={2014},
        ISSN={0895-4801},
     journal={SIAM J. Discrete Math.},
      volume={28},
      number={1},
       pages={394\ndash 427},
         url={http://dx.doi.org/10.1137/130922094},
      review={\MR{3180844}},
}

\bib{OdaSeshadri}{article}{
      author={Oda, Tadao},
      author={Seshadri, C.~S.},
       title={Compactifications of the generalized {J}acobian variety},
        date={1979},
        ISSN={0002-9947},
     journal={Trans. Amer. Math. Soc.},
      volume={253},
       pages={1\ndash 90},
         url={http://dx.doi.org/10.2307/1998186},
      review={\MR{536936 (82e:14054)}},
}

\bib{perkinson}{incollection}{
      author={Perkinson, David},
      author={Perlman, Jacob},
      author={Wilmes, John},
       title={Primer for the algebraic geometry of sandpiles},
        date={2013},
   booktitle={Tropical and non-{A}rchimedean geometry},
      series={Contemp. Math.},
      volume={605},
   publisher={Amer. Math. Soc., Providence, RI},
       pages={211\ndash 256},
         url={http://dx.doi.org/10.1090/conm/605/12117},
      review={\MR{3204273}},
}

\bib{PostnikovShapiro04}{article}{
      author={Postnikov, Alexander},
      author={Shapiro, Boris},
       title={Trees, parking functions, syzygies, and deformations of monomial
  ideals},
        date={2004},
        ISSN={0002-9947},
     journal={Trans. Amer. Math. Soc.},
      volume={356},
      number={8},
       pages={3109\ndash 3142},
         url={http://dx.doi.org/10.1090/S0002-9947-04-03547-0},
      review={\MR{MR2052943 (2005a:05066)}},
}

\bib{Raynaud}{article}{
      author={Raynaud, Michel},
       title={Sp\'ecialisation du foncteur de {P}icard},
        date={1970},
        ISSN={0073-8301},
     journal={Inst. Hautes \'Etudes Sci. Publ. Math.},
      number={38},
       pages={27\ndash 76},
      review={\MR{0282993 (44 \#227)}},
}

\bib{FarbodMonodromy}{article}{
      author={Shokrieh, Farbod},
       title={The monodromy pairing and discrete logarithm on the {J}acobian of
  finite graphs},
        date={2010},
        ISSN={1862-2976},
     journal={J. Math. Cryptol.},
      volume={4},
      number={1},
       pages={43\ndash 56},
         url={http://dx.doi.org/10.1515/JMC.2010.002},
      review={\MR{2660333 (2011m:05187)}},
}

\bib{StanleyCM}{incollection}{
      author={Stanley, Richard~P.},
       title={Cohen-{M}acaulay complexes},
        date={1977},
   booktitle={Higher combinatorics ({P}roc. {NATO} {A}dvanced {S}tudy {I}nst.,
  {B}erlin, 1976)},
   publisher={Reidel},
     address={Dordrecht},
       pages={51\ndash 62. NATO Adv. Study Inst. Ser., Ser. C: Math. and Phys.
  Sci., 31},
      review={\MR{0572989 (58 \#28010)}},
}

\bib{Stanley96}{book}{
      author={Stanley, Richard~P.},
       title={Combinatorics and commutative algebra},
     edition={Second edition},
      series={Progress in Mathematics},
   publisher={Birkh\"auser Boston Inc.},
     address={Boston, MA},
        date={1996},
      volume={41},
        ISBN={0-8176-3836-9},
         url={http://dx.doi.org/10.1007/b139094},
      review={\MR{1453579 (98h:05001)}},
}

\bib{SturmfelsGrobnerConvex}{book}{
      author={Sturmfels, Bernd},
       title={Gr\"obner bases and convex polytopes},
      series={University Lecture Series},
   publisher={American Mathematical Society},
     address={Providence, RI},
        date={1996},
      volume={8},
        ISBN={0-8218-0487-1},
         url={http://www.ams.org/bookstore-getitem/item=ULECT-8},
      review={\MR{1363949 (97b:13034)}},
}

\bib{Tutte}{book}{
      author={Tutte, William~T.},
       title={Introduction to the theory of matroids},
      series={Modern Analytic and Computational Methods in Science and
  Mathematics, No. 37},
   publisher={American Elsevier Publishing Co., Inc., New York},
        date={1971},
      review={\MR{0276117 (43 \#1865)}},
}

\bib{van2015orlik}{article}{
      author={Van~Le, Dinh},
      author={Mohammadi, Fatemeh},
       title={On the {O}rlik--{T}erao ideal and the relation space of a
  hyperplane arrangement},
        date={2015},
     journal={Advances in Applied Mathematics},
      volume={71},
       pages={34\ndash 51},
}

\end{biblist}
\end{bibdiv}


\end{document}